\tikzstyle{small dot}=[fill=black, draw=black, shape=circle, scale=0.3]
\tikzstyle{Dot}=[fill=black, draw=black, shape=circle, scale=0.6]
\tikzstyle{red dot}=[fill=red, draw=red, shape=circle, scale=0.5]
\tikzstyle{blue dot}=[fill={rgb,255: red,0; green,17; blue,255}, draw={rgb,255: red,0; green,17; blue,255}, shape=circle, scale=0.5]
\tikzstyle{Box?}=[shape=rectangle, draw=black, inner sep=0.05 cm]
\tikzstyle{Up Brace}=[-, tikzit draw={rgb,255: red,229; green,6; blue,36}, decoration={{brace,amplitude=3pt, mirror}}, decorate]
\tikzstyle{Down Brace}=[-, decoration={{brace, amplitude=3pt}}, decorate, tikzit draw={rgb,255: red,0; green,4; blue,255}]
\tikzstyle{Dashed Green}=[-, draw={rgb,255: red,59; green,201; blue,34}, dashed]
\tikzstyle{Dashed}=[-, dashed]
\tikzstyle{Red}=[-, draw={rgb,255: red,255; green,0; blue,4}]
\tikzstyle{Blue}=[-, draw={rgb,255: red,2; green,18; blue,255}]
\tikzstyle{Green}=[-, draw={rgb,255: red,0; green,208; blue,0}]
\tikzstyle{Grey Dashed}=[-, dashed, draw={rgb,255: red,168; green,168; blue,168}]
\tikzstyle{Arrow}=[-, ->]
\tikzstyle{Mapsto}=[-, {|->}]
\tikzstyle{Red Dashed}=[-, dashed, draw={rgb,255: red,236; green,0; blue,3}]
\tikzstyle{Blue Dashed}=[-, draw={rgb,255: red,20; green,63; blue,252}, dashed]
\tikzstyle{Grey}=[-, draw={rgb,255: red,170; green,170; blue,170}]
\tikzstyle{Hook arrow}=[-, right hook->]
\newcommand{\Z}{\mathbb{Z}}
\newcommand{\al}{\alpha}
\newcommand{\cL}{\mathcal{L}}
\newcommand{\cM}{\mathcal{M}}
\newcommand{\lgl}{\langle}
\newcommand{\rgl}{\rangle}
\newcommand{\dl}{\delta}
\newcommand{\un}{\underline}
\newcommand{\jw}{\prescript{J}{}{W}}
\newcommand{\ibj}{\tensor*[^I]{\mathcal{B}}{^J}}
\newcommand{\jbk}{\tensor*[^J]{\mathcal{B}}{^K}}
\newcommand{\ibk}{\tensor*[^I]{\mathcal{B}}{^K}}
\newcommand{\isj}{\tensor*[^I]{\mathcal{S}}{^J}}
\newcommand{\jsk}{\tensor*[^J]{\mathcal{S}}{^K}}
\newcommand{\isk}{\tensor*[^I]{\mathcal{S}}{^K}}
\newcommand{\ol}{\overline}
\newcommand{\jbs}{J\mathbb{BS}\text{Bim}}
\newcommand{\tA}{\tilde{\mathcal{A}}}
\theoremstyle{plain}
\newtheorem{thm}{Theorem}[section]
\newtheorem{lem}[thm]{Lemma}
\newtheorem{prop}[thm]{Proposition}
\newtheorem{cor}[thm]{Corollary}
\theoremstyle{definition}
\newtheorem{defn}[thm]{Definition}
\newtheorem{conj}[thm]{Conjecture}
\newtheorem{exmp}[thm]{Example}
\newtheorem{rem}[thm]{Remark}
\title{The Diagrammatic Spherical Category}
\author{Tasman Fell \\ University of Sydney}
\date{February 2025}
\begin{document}

\maketitle

\begin{abstract}
    We construct a diagrammatic categorification of the spherical module over the Hecke algebra. We establish a basis for the morphism spaces of this category, and prove that it is equivalent to an existing algebraic spherical category. 
\end{abstract}

\section{Introduction}
Let $G$ be a reductive algebraic group over an algebraically closed field of characteristic $p$, with Weyl group $W$ generated by simple reflections $S$. A natural goal is to find the characters of simple $G$-modules. Lusztig gave a conjectural answer in 1980 \cite{lusztig1980}, known as Lusztig's conjecture, which expressed simple characters in terms of affine Kazhdan-Lusztig polynomials. Lusztig's conjecture has been shown to hold for large $p$ --- see work of Andersen, Jantzen and Soergel \cite{andersen1994}, Kashiwara and Tanisaki \cite{kashiwara1995}, \cite{kashiwara1996}, Kazhdan and Lusztig \cite{kazhdan1993}, \cite{kazhdan1994iii}, \cite{kazhdan1994iv}, and Lusztig \cite{lusztig1994}. However, Williamson has provided an infinite family of counter-examples to the conjecture \cite{williamson2017}. Further work has demonstrated that the correct formula requires replacing Kazhdan-Lusztig polynomials with $p$-Kazhdan-Lusztig polynomials. In particular, Riche and Williamson have demonstrated that simple characters can be computed using the $p$-canonical basis of the \textit{spherical module} $M(J)$ \cite{RicheWilliamson2021}, a module over the Hecke algebra which depends on a choice $J \subset S$. However, while Kazhdan-Lusztig polynomials can be computed using a recursive formula, there is no known recursive formula for computing $p$-Kazhdan-Lusztig polynomials. Instead, they are found by computing object decompositions in Hecke categories defined over prime characteristic.

The diagrammatic Hecke category $Kar(\mathcal{D})$, introduced by Elias and Williamson \cite{SoergelCalculus}, is well-suited to such computations, since it is well-behaved in characteristic $p$, and is easier to compute with than algebraic Hecke categories such as Soergel bimodules. In order to use Riche and Williamson's simple character formula, we need to compute object decompositions not in a Hecke category, but in a \textit{spherical category}, that is, a categorification of the spherical module $M(J)$. For this purpose, it will be useful to have a diagrammatic spherical category, analogous to the diagrammatic Hecke category. The purpose of this paper is to construct such a category. More precisely, we construct a diagrammatic category $\mathcal{M}_{BS}(J)$, whose Karoubi envelope $\mathcal{M}(J) = Kar(\mathcal{M}_{BS}(J))$ is a spherical category. This was done in type A by Elias \cite{Thicker}, and this paper extends his work to all types. 

The paper presents three main results. The first main result is a basis for the morphism spaces of $\mathcal{M}_{BS}(J)$, called the \textit{double-leaves basis}. To be precise, the objects of $\mathcal{M}_{BS}(J)$ are indexed by expressions $\un{x}$ in the Coxeter system. Given two objects $\un{x}, \un{y} \in \mathcal{M}_{BS}(J)$, we present an algorithm which constructs a collection $\mathcal{SDL}_{\un{x}, \un{y}}$ of \textit{double-leaf maps} in Hom$_{\mathcal{M}_{BS}(J)}(\un{x}, \un{y})$. Let $R := \text{Sym}(\mathfrak{h}^*)$, where $\mathfrak{h}$ is a realization\footnote{Subject to certain conditions; see Section \ref{realizations}.} of $(W, S)$. Then we have the following.
\begin{thm} \label{result1}
    The collection $\mathcal{SDL}_{\un{x}, \un{y}}$ is a basis of Hom$_{\mathcal{M}_{BS}(J)}(\un{x}, \un{y})$ as a right $R$-module. 
\end{thm}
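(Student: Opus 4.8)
The plan is to follow the now-standard strategy for double-leaves bases, originally due to Libedinsky for Soergel bimodules and axiomatised by Elias--Williamson in \cite{SoergelCalculus}, adapting it to the module-category setting. The starting point must be the analogous statement one level down: the double-leaves theorem for the diagrammatic Bott--Samelson category $\mathcal{D}$, which says that light leaves morphisms, composed in pairs (a ``down'' light leaf followed by an ``up'' light leaf), form an $R$-basis for $\mathrm{Hom}_{\mathcal{D}}(\underline{x},\underline{y})$. Since $\mathcal{M}_{BS}(J)$ is built from $\mathcal{D}$ by adjoining a new generating object (the ``spherical'' object, often drawn as a shaded region or a special boundary) together with new generating morphisms and relations, I would first set up the precise generators-and-relations presentation of $\mathcal{M}_{BS}(J)$ and identify the morphism $\mathcal{SDL}_{\underline{x},\underline{y}}$ as a set built by the light-leaves algorithm, where at each step one either applies a Bott--Samelson light-leaf move or a move involving the new spherical generators.

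The proof then splits into the two usual halves: spanning and linear independence. For spanning, the key step is to show that any diagram in $\mathrm{Hom}_{\mathcal{M}_{BS}(J)}(\underline{x},\underline{y})$ can be rewritten, modulo lower terms and the relations, as an $R$-linear combination of double-leaf maps. This is done by induction, pushing the spherical boundary/region to one side using the defining relations so that any diagram decomposes as (Bott--Samelson part) $\circ$ (a small ``trunk'' living near the boundary), then invoking the Bott--Samelson double-leaves theorem on the Bott--Samelson part and checking that the finitely many possible trunks are themselves expressible via the spherical light-leaf moves. I would organise this with a suitable notion of degree or a filtration by Bruhat order on $W^J$ (the minimal-length coset representatives), so that ``lower terms'' has a precise meaning and the induction is well-founded; the finitary hypothesis on $J$ guarantees $W_J$ is finite, which is exactly what makes the local rewriting terminate. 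For linear independence, the cleanest route is the evaluation/pairing argument: one constructs enough linear functionals — typically via a ``local intersection form'' or by evaluating double-leaf maps against each other and extracting the leading term — to show the pairing matrix between down-leaves and up-leaves is unitriangular with respect to the chosen order, hence invertible over $R$. Here one would lean on the fact, presumably established earlier, that $\mathcal{M}_{BS}(J)$ has a well-defined (degree-preserving, $R$-bilinear) form coming from the module-category structure, and that the new spherical cup/cap generators interact with it in the expected ``non-degenerate'' way.

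The main obstacle I expect is the spanning step at the point where the spherical generators meet the Bott--Samelson diagrammatics: one must verify that sliding the boundary past every Bott--Samelson generator (dots, trivalent vertices, $2m_{st}$-valent vertices) and across polynomial boxes produces only double-leaf maps plus strictly lower terms, and in particular that the new relations of $\mathcal{M}_{BS}(J)$ are rich enough to reduce every ``trunk'' diagram. In type A this is Elias's computation in \cite{Thicker}; extending it to all (finitary) types means handling the full $2m_{st}$-valent vertices and the Jones--Wenzl-type relations that govern the finite parabolic $W_J$, and checking that no obstruction to the rewriting appears — this is where a careful case analysis on pairs of simple reflections, and the properties of the Jones--Wenzl projector $\jw$ in the category of $W_J$-diagrams, will be needed. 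A secondary technical point is bookkeeping: ensuring the light-leaves algorithm in $\mathcal{M}_{BS}(J)$ is set up so that $\mathcal{SDL}_{\underline{x},\underline{y}}$ has exactly the right cardinality, namely $\sum_{w \in W^J} (\text{number of subexpressions of }\underline{x}\text{ for }w)(\text{number for }\underline{y})$, matching the rank predicted by the categorified spherical module, so that spanning plus the unitriangular pairing forces the basis conclusion.
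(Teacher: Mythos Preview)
Your high-level split into spanning and linear independence is correct, and your induction on the Bruhat order of $\prescript{J}{}{W}$ is the right organising principle for spanning. But both halves diverge substantially from the paper's actual argument.

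For linear independence, the paper does not use a local intersection form or a pairing of double leaves against each other. Instead it builds an auxiliary diagrammatic category $\mathcal{M}^{\text{std}}$ of ``spherical standard diagrammatics'', localises at the fraction field $Q$ of $R$, and proves an equivalence $Kar(\mathcal{M}_{BS,Q}) \simeq J\mathrm{Std}\mathrm{Bim}_Q$. In the target category a morphism $\underline{x}\to\underline{y}$ is literally a matrix indexed by pairs of subexpressions, with entries in $Q$. One then defines a partial order $\preceq$ on pairs $(\un{e},\un{f})$ via \emph{coset strolls} (not ordinary strolls) and shows the matrix of $\mathbb{SDL}_{\un{f},\un{e}}$ is upper-triangular with nonzero diagonal for this order. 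Your unitriangular-pairing idea might be made to work, but it would require a bilinear form on $\mathcal{M}_{BS}(J)$ that the paper never constructs; the localisation route is what buys linear independence here.

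For spanning, your ``push the boundary aside and invoke Bott--Samelson double leaves on the rest'' is the right first move, but the hard part is not a Jones--Wenzl projector computation for $W_J$. After pushing wall strands to the top one has a non-spherical light leaf $\un{x}\to(\un{u},\un{z})$ with $\un{u}$ in $W_J$, and must show that plugging $\un{u}$ into the wall yields a \emph{spherical} light leaf. The paper does this by exploiting the freedom in constructing non-spherical light leaves: it chooses the intermediate expressions $\un{w}_k=(\un{u}_k,\un{z}_k)$ factored through $W_J\times\prescript{J}{}{W}$, and picks the rex moves in each $\phi_k$ so that the resulting diagram already looks spherical. The delicate case is $d_k=D0,\,d_k'=X0$, where one needs a combinatorial lemma guaranteeing a rex move built from a ``sweep'' of $2m$-valent vertices so that two-colour associativity drags a trivalent vertex from right to left across the whole $\un{z}$-part. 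This is the genuine technical core, and your proposal does not anticipate it.
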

\noindent The name ``double-leaves'' follows the basis of the Hecke category of the same name, constructed by Libedinsky \cite{double-leaves} in the algebraic setting, and by Elias and Williamson \cite{SoergelCalculus} in the diagrammatic setting.

Using this basis, we prove our second main result, which is that our diagrammatic category $\mathcal{M}(J)$ does indeed categorify the spherical module.

\begin{thm}
    We have an isomorphism of $H$-modules \[ch: [\mathcal{M}(J)] \xrightarrow{\sim} M(J),\] where $[\mathcal{M}(J)]$ denotes the split Grothendieck group.
\end{thm}

The third main result of this paper is that the diagrammatic spherical category is equivalent to the \textit{algebraic spherical category}\footnote{This final result requires a strong assumption on our realization, namely that it is reflection-faithful.}. To explain what this algebraic spherical category is, it is necessary to introduce \textit{singular Soergel bimodules}. 

\subsection{Singular Soergel bimodules} Singular Soergel bimodules were studied by Williamson \cite{Singular} in his thesis, following earlier work in this direction by Soergel, Stroppel and others. The category $\mathcal{S}\mathbb{S}$Bim of singular Soergel bimodules is a 2-category associated to a Coxeter system $(W, S)$. To define it, we first construct the 2-category $\mathcal{S}\mathbb{BS}$Bim of \textit{singular Bott-Samelson bimodules}. Its objects are finitary\footnote{I.e. the subgroup $W_J \leq W$ generated by $J$ is finite.} subsets $J \subset S$. The 1-morphisms in Hom$_{\mathcal{S}\mathbb{S}\text{Bim}}(I, J)$ are a certain class of $(R^J, R^I)$-bimodules, for invariant rings $R^J, \; R^I$\footnote{See Chapter \ref{Singular Soergel Bimodules and Standard Bimodules}.}. The 2-morphisms are bimodule morphisms. Then, the category $\mathcal{S}\mathbb{S}$Bim has the same objects as $\mathcal{S}\mathbb{BS}$Bim, and we set $$\text{Hom}_{\mathcal{S}\mathbb{S}\text{Bim}}(I, J) := Kar(\text{Hom}_{\mathcal{S}\mathbb{BS}\text{Bim}}(I, J)),$$ where `$Kar$' denotes the Karoubian closure.

Just as the category $\mathbb{S}$Bim categorifies the Hecke algebra, the 2-category $\mathcal{S}\mathbb{S}$Bim categorifies the \textit{Schur algebroid $\mathcal{S}$} (see Section \ref{The Schur algebroid}), which can be viewed as a 1-category. The fact that $\mathbb{S}$Bim categorifies the Hecke algebra $H$ can be viewed as a special case of this higher categorification. That is, there is a Hom set Hom$_{\mathcal{S}}(\varnothing, \varnothing) \cong H$ in $\mathcal{S}$, with corresponding Hom category Hom$_{\mathcal{S}\mathbb{S}\text{Bim}}(\varnothing, \varnothing) \cong \mathbb{S}$Bim in $\mathcal{S}\mathbb{S}$Bim.

Similarly, there is a Hom set Hom$_{\mathcal{S}}(\varnothing, J)$ in $\mathcal{S}$ which is isomorphic to the spherical module $M(J)$. Therefore, the corresponding Hom category Hom$_{\mathcal{S}\mathbb{S}\text{Bim}}(\varnothing, J)$ in $\mathcal{S}\mathbb{S}$Bim categorifies $M(J)$, and is thus a spherical category. We call Hom$_{\mathcal{S}\mathbb{S}\text{Bim}}(\varnothing, J)$ the \textit{algebraic spherical category}. The third main result of this paper is that our diagrammatic and algebraic spherical categories are equivalent as module categories. More precisely, the diagrammatic spherical category $\mathcal{M}(J)$ is a right module category over the diagrammatic Hecke category $Kar(\mathcal{D})$. Similarly, the algebraic spherical category Hom$_{\mathcal{S}\mathbb{S}\text{Bim}}(\varnothing, J)$ is a right module category over Hom$_{\mathcal{S}\mathbb{S}\text{Bim}}(\varnothing, \varnothing) \cong \mathbb{S}$Bim. Work of Elias and Williamson \cite{SoergelCalculus} showed that we have an equivalence $Kar(\mathcal{D}) \cong \mathbb{S}$Bim. Then we have the following:

\begin{thm} \label{result2}
    Suppose our realization $\mathfrak{h}$ is reflection faithful (see Definition \ref{reflection faithful}). Then we have an equivalence of categories $\mathcal{M}_{BS}(J) \xrightarrow{\sim} \text{Hom}_{\mathcal{S}\mathbb{BS}\text{Bim}}(\varnothing, J)$. After taking Karoubian closures, this gives us an equivalence between the diagrammatic and algebraic spherical categories, $\mathcal{M}(J)$ and Hom$_{\mathcal{S}\mathbb{S}\text{Bim}}(\varnothing, J)$, as module categories over $Kar(\mathcal{D})$ and $\mathbb{S}$Bim respectively. That is, the following diagram commutes.
    
\begin{center}
    \tikzfig{fig136}
\end{center}
\end{thm}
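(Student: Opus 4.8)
The plan is to construct the equivalence $\mathcal{M}_{BS}(J) \xrightarrow{\sim} \text{Hom}_{\mathcal{S}\mathbb{BS}\text{Bim}}(\varnothing, J)$ explicitly on objects and generating morphisms, check it respects the defining relations of $\mathcal{M}_{BS}(J)$, and then prove fullness and faithfulness by a dimension-counting argument using the double-leaves basis from Theorem \ref{result1}. On objects, an expression $\un{x}$ in the Coxeter system should be sent to the singular Bott--Samelson bimodule obtained by reading $\un{x}$ as a sequence of ``one-step'' inclusions $\varnothing \to \{s\} \to \varnothing$ sandwiched against the object $J$ on the left; concretely, $\un{x} = (s_1, \dots, s_n)$ maps to the composite 1-morphism corresponding to $J \subset S$ followed by the Bott--Samelson 1-morphism $B_{s_1} \cdots B_{s_n}$, realized as an explicit $(R^J, R)$-bimodule. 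On morphisms, each diagrammatic generator of $\mathcal{M}_{BS}(J)$ (the univalent dots, trivalent vertices, $2m_{st}$-valent vertices, polynomial boxes, and the new ``spherical'' generators living against the $J$-boundary) must be assigned an explicit bimodule map, mimicking the assignment Elias--Williamson use for $Kar(\mathcal{D}) \cong \mathbb{S}$Bim and the assignment Williamson uses for singular Soergel bimodules.

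The first real step is a \textbf{well-definedness check}: every relation in the presentation of $\mathcal{M}_{BS}(J)$ must be verified to hold among the corresponding bimodule maps. The relations inherited from $\mathcal{D}$ are handled exactly as in \cite{SoergelCalculus}, so the new content is checking the relations involving the $J$-boundary generators — these should reduce to Frobenius-extension identities for the inclusion $R^J \hookrightarrow R$ and compatibility of the various trace/counit maps, which is precisely the kind of computation organized by Williamson's singular calculus. This gives a functor $\mathcal{F}: \mathcal{M}_{BS}(J) \to \text{Hom}_{\mathcal{S}\mathbb{BS}\text{Bim}}(\varnothing, J)$, and by construction it is a functor of right module categories over $\mathcal{D}$ (resp. $\mathbb{S}$Bim via Elias--Williamson's equivalence), so the square commutes essentially tautologically once $\mathcal{F}$ is defined to agree with the horizontal arrow on generators; that makes the commuting-diagram part of the statement a formality rather than a separate theorem.

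The second step is \textbf{essential surjectivity}, which is easy: the objects of $\text{Hom}_{\mathcal{S}\mathbb{BS}\text{Bim}}(\varnothing, J)$ are by definition the singular Bott--Samelson 1-morphisms $\varnothing \to J$, and every such is a composite of elementary inclusions/restrictions, hence hit (up to the identifications already built into the construction) by some $\un{x}$; one may need a short lemma that a ``down-up'' inclusion $J \to J\cup\{s\} \to J$ can be rewritten, inside the category, in terms of the generators coming from expressions, but this is the singular analogue of the standard fact that $B_s$ generates, and should follow from the defining relations. The third and main step is \textbf{full faithfulness}. Here the strategy is: (i) by Theorem \ref{result1}, $\text{Hom}_{\mathcal{M}_{BS}(J)}(\un{x},\un{y})$ is free as a right $R$-module with basis $\mathcal{SDL}_{\un{x},\un{y}}$; (ii) show $\mathcal{F}$ sends each double-leaf map $\mathcal{SDL}_{\un{x},\un{y}}$ to a set of bimodule maps that is again linearly independent over $R$ — this is where one transports the ``light leaves / soergel-hom formula'' bookkeeping to the singular side, presumably via the singular light-leaves basis for $\text{Hom}_{\mathcal{S}\mathbb{BS}\text{Bim}}$ (Williamson) and a triangularity argument matching the two filtrations; (iii) conclude $\mathcal{F}$ is faithful, and then match graded ranks — the graded rank of $\text{Hom}_{\mathcal{S}\mathbb{BS}\text{Bim}}(\varnothing,J)(\un{x},\un{y})$ over $R$ is computed by the categorification statement (it equals the appropriate pairing in the Schur algebroid / spherical module), and by Theorem \ref{result1} the same standard-module pairing computes the rank on the diagrammatic side, so the two ranks agree and faithfulness upgrades to an isomorphism on each Hom space.

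The \textbf{main obstacle} I expect is step (iii)'s linear-independence half: proving that the image under $\mathcal{F}$ of the diagrammatic double-leaves remains a basis, rather than merely a spanning set. The clean route is to show $\mathcal{F}$ matches double-leaves on the nose with a known basis of the singular Hom space — i.e., to check that $\mathcal{F}$ of a single light-leaf diagram equals (up to lower terms in a suitable partial order on subexpressions) the corresponding singular light-leaf morphism of Williamson, so that an upper-triangular change of basis does the job. Getting the partial orders, degree shifts, and the precise dictionary between diagrammatic ``spherical'' leaves and singular ``restriction/induction'' morphisms to line up is the delicate bookkeeping; everything else is either inherited verbatim from \cite{SoergelCalculus} or is a routine Frobenius-extension computation.
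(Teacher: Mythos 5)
Your outline is broadly correct — essential surjectivity is easy, and the graded-rank comparison (your step (iii)'s second half) is exactly the paper's Proposition \ref{matching ranks}, proved via Lemmas \ref{Morth} and \ref{1 bx} and Williamson's Hom formula (Proposition \ref{JHom}, i.e.\ Theorem 7.9 of \cite{Singular}). But your proposed route to faithfulness (step (ii)) — transporting the double-leaf basis under $\tilde{\mathcal{A}}$ and matching it, up to a triangular change of basis, against a ``singular light-leaves basis'' — is not the paper's route, and it conceals work the paper deliberately avoids. The paper instead observes that the linear-independence half of Theorem \ref{result1} was proved by passing through the chain
\[
\mathcal{M}_{BS} \xrightarrow{(-)\otimes Q} \mathcal{M}_{BS,Q} \hookrightarrow Kar(\mathcal{M}_{BS,Q}) \xrightarrow{\tilde{\mathcal{G}}} JStd\text{Bim}_Q,
\]
and that this composite factors through $\tilde{\mathcal{A}}: \mathcal{M}_{BS} \to J\mathbb{BS}\text{Bim}$ followed by localization. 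Since the double-leaves are already shown to be linearly independent after applying the whole composite, and since $\tilde{\mathcal{G}}$ is an equivalence (Corollary \ref{Gequivalence}), faithfulness of $\tilde{\mathcal{A}}$ drops out for free — no comparison with a singular light-leaves basis is needed. Your identified ``main obstacle'' (aligning the diagrammatic double-leaves with singular light-leaf morphisms, matching partial orders and degree shifts) is precisely the bookkeeping the paper's argument renders unnecessary.

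One factual note: a singular light-leaves basis of $\text{Hom}_{\mathcal{S}\mathbb{BS}\text{Bim}}$ is not in Williamson's thesis; it appears in the much later Elias--Ko--Libedinsky--Patimo paper \cite{SingularLightLeaves}, which the present paper discusses but does not rely on. Williamson's thesis supplies the graded Hom formula only. Had you tried to carry out your step (ii) you would likely have been pushed into re-deriving a substantial chunk of \cite{SingularLightLeaves}, whereas the paper's localization argument sidesteps that entirely and is the point of the construction in Chapters \ref{Standard Diagrammatics} and \ref{Linear Independence}. The commuting-diagram claim is handled as you say: it is tautological from how $\tilde{\mathcal{A}}$ is defined on the wall generators and on the objects/morphisms inherited from $\mathcal{D}$ (Corollary \ref{main theorem}).
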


There has been recent major progress by Elias, Ko, Libedinsky and Patimo in constructing a diagrammatic category to model $\mathcal{S}\mathbb{BS}$Bim. In \cite{SingularLightLeaves} the authors define a diagrammatic 2-category $\mathbf{Frob}$ intended to model $\mathcal{S}\mathbb{BS}$Bim.  In particular, they define an evaluation functor $\mathcal{F} :\mathbf{Frob} \rightarrow \mathcal{S}\mathbb{BS}$Bim. Further, they construct a basis for the 2-morphism spaces of $\mathcal{S}\mathbb{BS}$Bim. This basis is the image of a collection of double-leaf morphisms in $\mathbf{Frob}$, under the functor $\mathcal{F}: \mathbf{Frob} \rightarrow \mathcal{S}\mathbb{BS}$Bim.

When we restrict this result to the Hom category Hom$_{\mathcal{S}\mathbb{BS}\text{Bim}}(\varnothing, J)$, we end up with a double-leaves basis for Hom$_{\mathcal{S}\mathbb{BS}\text{Bim}}(\varnothing, J)$. This is similar to the main results of this paper. In particular, combining Theorems \ref{result1} and \ref{result2} also gives us a double-leaves basis for Hom$_{\mathcal{S}\mathbb{BS}\text{Bim}}(\varnothing, J)$. The addition made in this thesis is that this is also a basis of the diagrammatic category $\mathcal{M}_{BS}(J)$, not just the algebraic category Hom$_{\mathcal{S}\mathbb{BS}\text{Bim}}(\varnothing, J)$. It is this fact which is necessary to prove the equivalence in Theorem \ref{result2}.

\subsection{Structure of this paper}
We now describe the structure of the paper. Chapters 2 and 3 introduce the necessary background material.

\begin{itemize}
    \item Chapter \ref{Coxeter Systems and the Hecke Algebra}: We describe Coxeter systems and the Hecke algebra, and introduce the spherical and anti-spherical modules. We also define the Schur algebroid, where the spherical module appears as a Hom set.
    
    \item Chapter \ref{Singular Soergel Bimodules and Standard Bimodules}: We define the 2-category of singular Soergel bimodules and the algebraic spherical category. We also introduce the category of standard bimodules and discuss localization, which provides an essential tool for proving linear independence.
    
    \item Chapter \ref{Spherical Diagrammatics}: We construct the diagrammatic category $\mathcal{M}_{BS}(J)$. We then introduce spherical light-leaves and define the double-leaves basis for the morphism spaces of $\mathcal{M}_{BS}(J)$.
    
    \item Chapter \ref{Standard Diagrammatics}: We construct the category of \textit{spherical standard diagrammatics} $\mathcal{M}^{std}(J)$ and its localization. We establish equivalences between the localized diagrammatic categories and the algebraic categories of standard bimodules.
    
    \item Chapter \ref{Linear Independence}: We prove that the spherical double-leaves are linearly independent.

    \item Chapter \ref{Spanning}: We prove that the spherical double-leaves span morphism spaces in $\mathcal{M}_{BS}$ and thus form a basis.
    
    \item Chapter \ref{Consequences}: We discuss three consequences of the main theorem: We show that $\mathcal{M}(J)$ categorifies $M(J)$. We prove that $\mathcal{M}(J)$ is equivalent to Abe's category of one-sided Soergel bimodules. We prove that when $\mathfrak{h}$ is reflection-faithful, $\mathcal{M}(J)$ is equivalent to the algebraic spherical category $Kar(J\mathbb{BS}Bim)$ as module categories.

\section*{Acknowledgements}
The results of this paper are those of my master's thesis, completed under Anna Romanov, with secondary supervisors Geordie Williamson and Jie Du. I am very grateful to them for all their support, assistance and feedback. The seed of the spanning proof - the idea to turn spherical light-leaves into non-spherical light-leaves - came from Geordie Williamson. I would also like to thank Joe Baine for patiently answering my endless questions throughout the writing of my thesis. Thank you to Simon Riche for providing guidance and suggestions regarding the technical conditions on the realization, in particular pointing out that my faithfulness assumption was not necessary for many of the results.
\end{itemize}
\section{Coxeter Systems and the Hecke Algebra} \label{Coxeter Systems and the Hecke Algebra}

We follow the notation of \cite{textbook}. Let $(W, S)$ be a Coxeter system with length function $l: W \to \mathbb{N}$ and Bruhat order $\le$.

\begin{defn} \label{subexpression}
    Given an expression $\un{w} = (s_1, ... ,s_k)$, a \textit{subexpression} $\un{e} \subset \un{w}$ is a binary sequence $(e_1, ..., e_k)$ for $e_i \in\{0, 1\}$. We denote the element obtained by deleting the terms where $e_i=0$ as $\un{w}^{\un{e}} := s_1^{e_1} \cdots s_k^{e_k}$.
\end{defn}
\subsection{The Hecke algebra and Kazhdan-Lusztig basis}

Given a Coxeter system $(W, S)$, the \textit{Hecke algebra} $H$ is the $\mathbb{Z}[v, v^{-1}]$-algebra with generators $\delta_s$ for $s \in S$ subject to the braid relations and the quadratic relation $\dl_s^2 = 1 + (v^{-1} - v)\dl_s$.
The set $\{\dl_w \mid w \in W \}$ forms the \textit{standard basis} of $H$ over $\mathbb{Z}[v, v^{-1}]$, where $\dl_w := \dl_{s_1}\cdots\dl_{s_k}$ for $(s_1, ..., s_k)$ a reduced expression of $w$.

The \textit{Kazhdan-Lusztig involution} (or bar involution) is the ring automorphism $H \to H, h \mapsto \overline{h}$ determined by $\overline{v} = v^{-1}$ and $\overline{\dl_s} = \dl_s^{-1}$.
\begin{thm}[\cite{KL}]
    For each $x \in W$, there exists a unique element $b_x \in H$ such that:
    \begin{enumerate}
        \item $\overline{b_x} = b_x$,
        \item $b_x = \dl_x + \sum_{y < x} h_{y, x} \dl_y,  \; \text{for}\;  h_{y, x} \in v\mathbb{Z}[v]$.
    \end{enumerate}
    The set $\{b_x \mid x \in W\}$ forms a $\mathbb{Z}[v, v^{-1}]$-basis of $H$ called the Kazhdan-Lusztig basis.
\end{thm}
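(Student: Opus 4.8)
The plan is to prove existence and uniqueness by induction on the Bruhat order, disposing of uniqueness first by a short argument about bar-invariant elements and then constructing the $b_x$ recursively out of the elements $b_s$ for $s \in S$. For uniqueness I would establish the lemma that a bar-invariant $h = \sum_w a_w \dl_w$ with every coefficient $a_w \in v\Z[v]$ must vanish. The input is the unitriangularity of the bar involution on the standard basis, $\ol{\dl_w} \in \dl_w + \sum_{y < w}\Z[v, v^{-1}]\dl_y$, which I would get by induction on $\ell(w)$ from $\ol{\dl_s} = \dl_s^{-1} = \dl_s + (v - v^{-1})$ together with the rule $\dl_s\dl_y \in \dl_{sy} + \Z[v, v^{-1}]\dl_y$. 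Granting this, if such an $h$ is nonzero, take $w$ maximal in its support; comparing the coefficient of $\dl_w$ on the two sides of $\ol h = h$ forces $\ol{a_w} = a_w$, which is impossible for $0 \ne a_w \in v\Z[v]$ since $v\Z[v] \cap v^{-1}\Z[v^{-1}] = 0$. I then apply the lemma to $b_x - b_x'$, whose $\dl_x$-coefficient is $0$ and whose remaining coefficients are differences of elements of $v\Z[v]$, to conclude $b_x = b_x'$.

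For existence I would induct on $\ell(x)$. The base cases are $b_e := 1$ and $b_s := \dl_s + v$, for which $\ol{b_s} = \dl_s^{-1} + v^{-1} = \dl_s + v = b_s$ and condition (2) is clear. For the inductive step, given $x$ with $\ell(x) \ge 2$, I choose $s \in S$ with $sx < x$, set $w := sx$ so that $b_w$ has already been built, and consider the product $b_s b_w$, which is automatically bar-invariant. Expanding it in the standard basis via $\dl_s\dl_y = \dl_{sy}$ when $sy > y$ and $\dl_s\dl_y = \dl_{sy} + (v^{-1} - v)\dl_y$ when $sy < y$, together with the lifting property for the Bruhat order, one finds that $b_s b_w$ is supported on $\{z : z \le x\}$ with $\dl_x$-coefficient equal to $1$, so $b_s b_w = \dl_x + \sum_{y < x} g_y \dl_y$ for some $g_y \in \Z[v, v^{-1}]$ — but in general the $g_y$ do not lie in $v\Z[v]$.

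The crux is the correction step. By the inductive hypothesis and condition (2), the set $\{b_z : z < x\}$ is a $\Z[v, v^{-1}]$-basis of $\bigoplus_{z < x}\Z[v, v^{-1}]\dl_z$, so I may write $b_s b_w = \dl_x + \sum_{z < x} q_z\, b_z$ with $q_z \in \Z[v, v^{-1}]$, and I then seek integers $m_z$ so that $b_x := b_s b_w - \sum_{z < x} m_z\, b_z$ obeys the degree bound in (2). Processing the $z$ in decreasing Bruhat order, at each stage the self-duality of $b_s b_w$, weighed against the self-duality of the already-constructed $b_z$, pins down the part of the relevant coefficient that is stable under $v \mapsto v^{-1}$; subtracting the unique integer multiple of $b_z$ that removes the offending non-positive-degree (bar-symmetric) piece of the $\dl_z$-coefficient leaves the coefficients of higher $\dl_y$ untouched, because $b_z$ is unitriangular with leading term $\dl_z$. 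After finitely many steps this produces $b_x$ with $\ol{b_x} = b_x$, leading coefficient $1$, and all lower coefficients in $v\Z[v]$; bookkeeping the corrections recovers the familiar recursion $b_s b_{sx} = b_x + \sum_{z < x,\, sz < z}\mu(z, sx)\, b_z$. I expect this correction to be the main obstacle: one must verify that the correcting coefficients are genuine integers, that the procedure terminates, and that no step undoes a degree bound already attained — and this is exactly where bar-invariance of $b_s b_w$ is indispensable.

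Finally, for the basis statement I would observe that condition (2) exhibits the transition matrix between $\{\dl_w\}_{w \in W}$ and $\{b_w\}_{w \in W}$ as unitriangular with respect to any linear refinement of the Bruhat order, with $1$'s on the diagonal, hence invertible over $\Z[v, v^{-1}]$; therefore $\{b_x : x \in W\}$ is a $\Z[v, v^{-1}]$-basis of $H$.
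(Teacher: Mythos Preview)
The paper does not give its own proof of this theorem; it is stated as background with a citation to Kazhdan--Lusztig and then used without further argument. There is therefore no proof in the paper to compare your attempt against.

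Your sketch is the standard argument and is essentially correct. One point to tighten: in the correction step you assert that at each stage one subtracts an \emph{integer} multiple of $b_z$. Bar-invariance of the running element alone only forces the correcting coefficient to be a bar-invariant Laurent polynomial, not an integer (any $g_z \in \Z[v,v^{-1}]$ decomposes uniquely as $g_z = p + q$ with $p \in v\Z[v]$ and $\overline q = q$, and it is $q$ that must be subtracted). To justify that $q$ is an integer for $c = b_s b_w$ specifically, you must actually carry out the expansion you allude to and check that every coefficient $g_y$ already lies in $\Z[v]$; then the non--$v\Z[v]$ part at each $z$ is just the constant term, which is $\mu(z,w)$ when $sz<z$ and $0$ otherwise. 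Alternatively, relax the claim and allow bar-invariant Laurent polynomial corrections: this still produces a bar-invariant element satisfying (2), and hence equals $b_x$ by your uniqueness lemma. Either route closes the gap you yourself flag. The uniqueness argument and the final unitriangularity argument for the basis claim are fine as written.
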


\subsection{Trace and bilinear form on H}
\label{Trace and bilinear form on $H$}

We define the \textit{standard trace} $\epsilon : H \rightarrow \mathbb{Z}[v, v^{-1}]$ as the $\mathbb{Z}[v, v^{-1}]$-linear map satisfying $\epsilon(\sum c_x \dl_x) = c_{id}$. Let $i$ be the $\mathbb{Z}[v, v^{-1}]$-linear anti-involution on $H$ determined by $i(\dl_x) = \dl_{x^{-1}}$. We define a bilinear form on $H$ by
\begin{equation} \label{bilinear form}
    \lgl x, y \rgl := \epsilon(i(x) y).
\end{equation}
The standard basis is orthonormal with respect to this form, i.e., $\lgl \dl_x, \dl_y \rgl = \delta_{x,y}$ (see \cite[Lem.~3.15]{textbook}).

\begin{rem} \label{anti remark}
    While $i$ does not generally fix KL-basis elements (e.g., in type $A_2$, $i(b_{st}) = b_{ts}$), it does fix the longest basis element $b_{w_0}$. This is because by Proposition 2.9 of \cite[Prop.~2.9]{Combinatoric}, we have \begin{equation} \label{bw0}
        b_{w_0} = \sum_{x \in W} v^{l(w_0) - l(x)} \dl_x.
    \end{equation} Note that \cite{Singular} defines this form as $\epsilon(x \cdot i(y))$; however, since the standard basis is orthonormal under both definitions, they coincide.
\end{rem}

\subsection{The spherical and anti-spherical modules} \label{The spherical and anti-spherical modules}

Let $J \subset S$ be a finitary subset of generators, and $W_J$ the corresponding finite parabolic subgroup. Let $\jw$ denote the set of minimal coset representatives (m.c.r.'s) for $W_J \backslash W$.

\begin{lem}[{\cite[p.~31]{Bourbaki}}] \label{decomp} 
    Given $w \in W$, there is a unique decomposition $w = uz$, where $u \in W_J$ and $z \in \jw$. Moreover, $l(w) = l(u) + l(z)$. 
\end{lem}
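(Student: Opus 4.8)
The plan is to reduce the whole statement to one key fact: if $z \in \jw$ (equivalently, $z$ has minimal length in its coset $W_J z$), then $l(uz) = l(u) + l(z)$ for every $u \in W_J$. Granting this, the lemma follows quickly. Given $w \in W$, pick an element $z$ of minimal length in the coset $W_J w$; then $z \in \jw$, and setting $u := w z^{-1} \in W_J$ gives $w = uz$ with $l(w) = l(u) + l(z)$ by the key fact, establishing existence. For uniqueness, observe first that the minimal-length element of a coset is unique: if $z'$ also has minimal length in $W_J z$, write $z' = uz$ with $u \in W_J$; the key fact gives $l(z') = l(u) + l(z) \ge l(z) = l(z')$, forcing $l(u) = 0$, so $z' = z$. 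Now if $w = u_1 z_1 = u_2 z_2$ with $u_i \in W_J$ and $z_i \in \jw$, then $z_1$ and $z_2$ are both minimal-length representatives of $W_J w$, hence equal, and then $u_1 = w z_1^{-1} = w z_2^{-1} = u_2$.

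So the work is in the key fact, which I would prove by induction on $l(u)$; the base case $u = e$ is immediate. For the inductive step, write $u = su'$ with $s \in J$ and $l(u') = l(u) - 1$. By induction $l(u'z) = l(u') + l(z) =: p + q$, so concatenating a reduced word $a_1 \cdots a_p$ for $u'$ (in the generators $J$) with a reduced word $b_1 \cdots b_q$ for $z$ yields a reduced word for $u'z$. Since $l(su'z) = l(u'z) \pm 1$, it suffices to exclude the case $l(su'z) < l(u'z)$. If that held, the Strong Exchange (equivalently Deletion) Condition would express $su'z$ as the above word with one letter deleted. If a letter $a_i$ is deleted, cancelling $z$ on the right gives $su' = a_1 \cdots \hat{a_i} \cdots a_p$, of length at most $p - 1$; but $su' = u$ has length $p + 1$, a contradiction. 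If a letter $b_j$ is deleted, then $su'z = u'z'$ with $z' := b_1 \cdots \hat{b_j} \cdots b_q$ of length at most $q - 1 < l(z)$; from $su'z = u'z'$ we get $z' = (u')^{-1} s u' z$, and $(u')^{-1} s u'$ is a reflection conjugate to $s$ lying in $W_J$ (as $u' \in W_J$, $s \in J$), so $z'$ is a shorter representative of the coset $W_J z$, contradicting minimality of $z$. Both cases being impossible, $l(su'z) = l(u'z) + 1 = l(u) + l(z)$, completing the induction.

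I expect the inductive step above — and specifically its second case — to be the main obstacle: the point is to recognize that exchanging a letter out of the $z$-part of the word keeps us inside the coset $W_J z$ (because the conjugate reflection $(u')^{-1} s u'$ stays in $W_J$), which is exactly what lets the minimality of $z$ intervene. Everything else is bookkeeping. One could instead follow Bourbaki's root-theoretic route — characterizing $\jw$ by the condition that $z^{-1}$ maps every positive root of $W_J$ to a positive root, then reading off length additivity from how inversion sets behave under the product $uz$ — but the exchange-condition argument above is shorter and stays combinatorial.
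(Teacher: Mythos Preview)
Your proof is correct. The paper does not actually give its own proof of this lemma; it simply cites Bourbaki. Your argument via the Exchange Condition is a standard and valid route, and the reduction to the ``key fact'' $l(uz) = l(u) + l(z)$ for $u \in W_J$, $z \in \jw$ is exactly the right structure. The inductive step is handled cleanly: the crucial observation in the second case --- that deleting a letter from the $z$-part produces an element $(u')^{-1} s u' \cdot z$ still lying in $W_J z$, hence contradicting minimality of $z$ --- is precisely the point, and you identify it correctly. As you note, Bourbaki's own treatment goes through the root-system characterization of $\jw$ (namely, $z \in \jw$ iff $z^{-1}(\alpha_s) > 0$ for all $s \in J$), from which length additivity follows by counting inversions; your exchange-condition argument is more self-contained and avoids any appeal to roots.
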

\begin{lem}[{\cite[Lem.~3.1, 3.2]{Deodhar1}}] \label{wall-crossing}
    If $x \in \jw$ but $xs \notin \jw$ for some $s \in S$, then
    \begin{enumerate}[label=(\alph*)]
        \item $xs > x$
        \item $xs = tx$ for some $t \in J$.
    \end{enumerate}
\end{lem}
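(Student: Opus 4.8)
The plan is to reduce both claims to the standard left-descent characterization of $\jw$: an element $z$ lies in $\jw$ if and only if $l(tz) > l(z)$ for every $t \in J$ (see e.g.\ \cite{Deodhar1} or \cite{textbook}; this relies on $W_J$ being a standard parabolic, so that reduced expressions in $W_J$ stay reduced in $W$). The one structural point to keep in mind is that the perturbation $x \mapsto xs$ is on the \emph{right}, whereas the descents coming from $J$ act on the \emph{left}; so I would think of the four elements $x$, $tx$, $xs$, $txs$ as the corners of a square, in which one pair of opposite sides differs by left multiplication by a fixed $t \in J$ and the other pair by right multiplication by $s$. Since $xs \notin \jw$, I would first fix a $t \in J$ with $l(t(xs)) < l(xs)$, i.e.\ $txs < xs$; because $x \in \jw$ we then also have $tx > x$, so $l(tx) = l(x) + 1$.

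For part (a), I would argue by contradiction. Suppose $xs < x$, so that $l(xs) = l(x) - 1$; combined with $txs < xs$ this gives $l(txs) = l(x) - 2$. But $l(tx) = l(x) + 1$ and $txs = (tx)s$ is obtained from $tx$ by right multiplication by a single simple reflection, so $l(txs) - l(tx)$ must equal $\pm 1$ --- whereas we have just computed it to be $-3$. This contradiction forces $xs > x$.

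For part (b), I would invoke the strong exchange condition. By part (a), a reduced expression $x = s_1 \cdots s_k$ (with $k = l(x)$) extends to a reduced expression $(s_1, \dots, s_k, s)$ for $xs$. The simple reflection $t$ is in particular a reflection with $l(t \cdot xs) < l(xs)$, so strong exchange (see \cite{textbook}) says that $t \cdot xs$ is obtained by deleting one letter of this reduced word: either $t\cdot xs = s_1 \cdots \widehat{s_i} \cdots s_k \cdot s$ for some $i \le k$, or $t \cdot xs = s_1 \cdots s_k = x$ (the final $s$ is deleted). In the first case, right-multiplying by $s$ gives $tx = s_1 \cdots \widehat{s_i} \cdots s_k$, so $l(tx) \le k-1$, contradicting $l(tx) = l(x) + 1 = k+1$. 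Hence the second case holds, $t \cdot xs = x$, and right-multiplying by $s$ yields $xs = tx$ with $t \in J$, as desired.

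I do not expect a genuine obstacle here: the argument is short once the left-descent description of $\jw$ is in hand, and the only thing requiring care is the left/right bookkeeping --- making sure strong exchange is applied to the reduced word for $xs$ rather than for $x$, and tracking which corner of the square each length estimate refers to. If one preferred to avoid strong exchange, part (b) could instead be obtained from the lifting property applied to $x \le xs$ with left descent $t$ of $xs$: since $tx > x$, lifting gives $x \le t(xs) = txs$, and as $l(x) = l(txs)$ this forces $x = txs$, i.e.\ $xs = tx$ again.
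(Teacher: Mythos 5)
The paper itself does not prove this lemma; it cites Deodhar directly, so there is no internal proof to compare against. Your argument is correct. A few remarks on the details: the key structural input you isolate --- the left-descent characterization $z \in \jw \iff l(tz) > l(z)$ for all $t \in J$ --- is exactly the right thing, and the ``square'' picture with vertices $x$, $tx$, $xs$, $txs$ keeps the left/right bookkeeping clean. Part (a) is the standard parity obstruction: assuming $xs < x$ forces $l(txs) - l(tx) = -3$, impossible for right multiplication by a single simple reflection. Part (b) via strong exchange applied to the reduced word $(s_1,\dots,s_k,s)$ for $xs$ (which is reduced precisely because of part (a)) correctly rules out deleting any of the first $k$ letters using $l(tx) = k+1$, leaving only $txs = x$. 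Your alternative via the lifting property is also valid and slightly cleaner, since it avoids invoking strong exchange and uses only $x \le xs$, the left descent $t$ of $xs$, and the length count $l(txs) = l(x)$. Both routes are standard and essentially equivalent to the arguments one finds in Deodhar or in Bj\"orner--Brenti; since the paper delegates the proof to the literature, there is nothing paper-specific to reconcile.
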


\noindent
Let $\cL := \Z[v, v^{-1}]$. We regard $\cL$ as a right $H_J$-module\footnote{Here, $H_J := H(W_J, J).$} $\cL(u)$ via $\delta_s \mapsto u$ for $u \in \{-v, v^{-1}\}$. We define the \textit{anti-spherical} and \textit{spherical} modules respectively as:
$$ N(J) := \cL(-v) \otimes_{H_J} H \quad \text{and} \quad M(J) := \cL(v^{-1}) \otimes_{H_J} H. $$

We focus on the spherical module $M(J)$. From now on, we will suppress `$J$' from notation and simply write $M$ in place of $M(J)$. For $x \in \jw$, set $m_x := 1 \otimes \delta_x$. The set $\{m_x \mid x \in \jw\}$ forms the \textit{standard basis} of $M$. The action of $H$ on this basis is given by:
\begin{equation} \label{smult}
m_x b_s = \begin{cases}
    m_{xs} + v^{-1} m_x & \text{if } xs < x, \; xs \in \jw\\
    m_{xs} + vm_{x} & \text{if } xs > x, \; xs \in \jw\\
    (v+v^{-1})m_{x} & \text{if } xs \notin \jw.
\end{cases} \end{equation}

Let $w_J$ be the longest element of $W_J$. The module $M$ can be realized as a right ideal of $H$.
\begin{prop} \label{smodule}
    There is an isomorphism of right $H$-modules $\phi : M \xrightarrow{\sim} b_{w_J} H$ defined by $\phi(1 \otimes h) := b_{w_J} h$.
\end{prop}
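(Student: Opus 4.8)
The plan is to check that $\phi$ is well-defined and right $H$-linear, and then to prove it is a bijection by evaluating it on the standard basis $\{m_x \mid x \in \jw\}$ of $M$ and expanding the images in the standard basis of $H$.

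The heart of the argument is the identity $b_{w_J}\delta_s = v^{-1}b_{w_J}$ for every $s \in J$. Since $J$ is finitary, $w_J$ exists, and applying \eqref{bw0} to the finite Coxeter system $(W_J, J)$ yields $b_{w_J} = \sum_{u \in W_J} v^{l(w_J)-l(u)}\delta_u$. I would multiply on the right by $\delta_s$ and group the terms according to the cosets $\{u, us\}$ of $\langle s \rangle$ in $W_J$: using $\delta_u\delta_s = \delta_{us}$ when $us > u$, the quadratic relation in the form $\delta_{us}\delta_s = \delta_u + (v^{-1}-v)\delta_{us}$, and $l(us) = l(u)+1$, a short computation shows that the coefficient of each $\delta_x$ in $b_{w_J}\delta_s$ is exactly $v^{-1}$ times its coefficient in $b_{w_J}$, which is the claim. (Alternatively one may quote this from the standard theory of the Kazhdan--Lusztig element of a finite parabolic.)

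Granting this identity, the $\Z[v,v^{-1}]$-bilinear map $\cL(v^{-1}) \times H \to b_{w_J}H$, $(a,h) \mapsto a\,b_{w_J}\,h$, is $H_J$-balanced: for $s \in J$ the right action $a\cdot\delta_s = v^{-1}a$ on $\cL(v^{-1})$ matches $b_{w_J}(\delta_s h) = v^{-1}b_{w_J}h$ by the identity above. Hence it factors through $M = \cL(v^{-1})\otimes_{H_J}H$, producing the map $\phi$ with $\phi(1\otimes h) = b_{w_J}h$. Right $H$-linearity is immediate from associativity of multiplication in $H$, and $\phi$ is visibly surjective onto the right ideal $b_{w_J}H$.

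For injectivity I would compute $\phi(m_z)$ for $z \in \jw$. By Lemma \ref{decomp}, $l(uz) = l(u)+l(z)$ for all $u \in W_J$, so $\delta_u\delta_z = \delta_{uz}$ in $H$ and hence $\phi(m_z) = b_{w_J}\delta_z = \sum_{u\in W_J} v^{l(w_J)-l(u)}\delta_{uz}$. The uniqueness in Lemma \ref{decomp} says the products $uz$ (for $u\in W_J$, $z\in\jw$) are pairwise distinct, so the standard-basis supports of $\phi(m_z)$ for different $z$ are disjoint, and the coefficient of $\delta_{w_Jz}$ in $\phi(m_z)$ equals $1$. Therefore any relation $\sum_z c_z\phi(m_z)=0$ forces every $c_z=0$, so $\phi$ carries the standard basis of $M$ to a $\Z[v,v^{-1}]$-linearly independent family and is injective; together with surjectivity this yields the isomorphism of right $H$-modules (and, as a byproduct, shows $\{b_{w_J}\delta_z\mid z\in\jw\}$ is a $\Z[v,v^{-1}]$-basis of $b_{w_J}H$). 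I expect the only genuinely nontrivial point to be the identity $b_{w_J}\delta_s = v^{-1}b_{w_J}$ — once that is in hand, everything else is formal bookkeeping with the standard bases of $M$ and $H$.
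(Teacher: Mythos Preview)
Your proposal is correct and follows essentially the same approach as the paper: establish well-definedness via the identity $b_{w_J}\delta_s = v^{-1}b_{w_J}$ for $s\in J$ (which the paper simply cites from \cite{Singular}, while you spell it out), then prove injectivity by evaluating $\phi$ on the standard basis $\{m_z\}$ and reading off the standard-basis expansion in $H$. The only minor difference is in the linear-independence step: the paper argues via upper-triangularity, noting $\phi(m_x)=\delta_{w_Jx}+\sum_{y<w_Jx}h_y\delta_y$, whereas you use the stronger observation that the supports of the $\phi(m_z)$ are pairwise disjoint (since $uz=u'z'$ forces $u=u'$ and $z=z'$ by Lemma~\ref{decomp}); both are immediate from the same explicit formula and the argument is morally identical.
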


\begin{proof}
    The map is well-defined and surjective because $b_{w_J}$ satisfies the same relations as the generator $1 \otimes 1 \in M$ (see \cite[Eq.~2.8]{Singular}). For injectivity, observe that due to Lemma \ref{wall-crossing} (a), we have $\phi(m_x) = b_{w_J}\delta_x = \delta_{w_J x} + \sum_{y < w_J x} h_y \delta_y$. The set $\{\phi(m_x)\}_{x \in \jw}$ is therefore linearly independent in $H$ by upper-triangularity with respect to the standard basis.
\end{proof}

\begin{defn}
    We define the \textit{Hilbert polynomial} $\pi(J) := v^{-d_J}\sum_{x \in W_J} v^{2l(x)}$, where $d_J = l(w_J)$. It satisfies $b_{w_J}^2 = \pi(J) b_{w_J}$ (see \cite[Eq.~2.9]{Singular}).
\end{defn} 

\begin{defn}
    We define a bar involution on $M$ by $\overline{p \otimes h} := \overline{p} \otimes \overline{h}$.
\end{defn}

\begin{thm}[{\cite[Prop.~3.2]{Deodhar2}}]
    The spherical module $M$ has a \textit{Kazhdan-Lusztig basis} $\{c_x \mid x \in \jw\}$, uniquely characterized by $\overline{c_x} = c_x$ and $c_x \in m_x + \sum_{y < x} v\mathbb{Z}[v] m_y$.
\end{thm}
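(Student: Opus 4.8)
The plan is to carry out the classical Kazhdan--Lusztig existence-and-uniqueness argument inside the module $M$, with the multiplication rule \eqref{smult} and the bar involution on $M$ playing the roles that the $b_s$-action and the bar involution play on $H$. Before anything else I would record the basic properties of the bar involution on $M$: it is a well-defined additive involution, it is semilinear in the sense that $\ol{p\,m}=\ol p\,\ol m$ for $p\in\Z[v,v^{-1}]$, and it is compatible with the right $H$-action, $\ol{m\,h}=\ol m\,\ol h$; in particular $\ol{m\,b_s}=\ol m\,b_s$ because $\ol{b_s}=b_s$. Well-definedness is the routine check that the relation defining the induced module is stable under bar (alternatively one transports bar through the isomorphism $\phi$ of Proposition \ref{smodule}, using that $b_{w_J}$ is bar-invariant).

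The technical core is the triangularity statement
\[
  \ol{m_x}\ \in\ m_x\ +\ \sum_{\substack{y\in\jw \\ y<x}}\Z[v,v^{-1}]\,m_y .
\]
I would prove this by induction on $l(x)$. The base case $x=\mathrm{id}$ is immediate since $m_{\mathrm{id}}=1\otimes1$ is fixed. For $x\neq\mathrm{id}$, choose a reduced expression $\un{x}=(\un{x'},s)$, so $x=x's$ with $x'<x$; one checks $x'\in\jw$ (if $tx'<x'$ for some $t\in J$ then $tx=tx's$ has length strictly smaller than $l(x)$, forcing $tx<x$ and contradicting $x\in\jw$). By \eqref{smult}, $m_{x'}b_s=m_x+v\,m_{x'}$, hence $m_x=m_{x'}b_s-v\,m_{x'}$ and $\ol{m_x}=\ol{m_{x'}}\,b_s-v^{-1}\ol{m_{x'}}$. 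Substituting the inductive description of $\ol{m_{x'}}$ and re-expanding every $m_yb_s$ with $y\le x'$ via \eqref{smult}, a subword-property check (a subword of $\un{x'}$ representing $y\le x'$ is also a subword of $\un{x}$, appending the final $s$ produces a subword representing $ys$, and both $y,ys\le x$ with $ys=x$ only when $y=x'$) shows that every standard basis vector that appears is $\le x$ and that $m_x$ occurs with coefficient $1$. One could instead deduce this lemma from the classical triangularity of bar on the standard basis of $H$ transported through $\phi$, which requires the identities $b_{w_J}\delta_y=v^{-l(u)}\phi(m_z)$ for the parabolic decomposition $y=uz$ of Lemma \ref{decomp} together with the fact that the projection $W\to\jw$ does not increase the Bruhat order.

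Given the triangularity lemma, the rest is the standard machine. Uniqueness: if $c_x$ and $c_x'$ both satisfy the two conditions, then $d:=c_x-c_x'$ is bar-invariant and lies in $\sum_{y<x}v\Z[v]\,m_y$; writing $d=\sum a_y m_y$ and letting $y_0$ be Bruhat-maximal with $a_{y_0}\neq0$, the lemma forces the coefficient of $m_{y_0}$ in $\ol d$ to equal $\ol{a_{y_0}}$, so $a_{y_0}=\ol{a_{y_0}}$, impossible for a nonzero element of $v\Z[v]$; hence $d=0$. Existence: induct on the Bruhat order with $c_{\mathrm{id}}:=m_{\mathrm{id}}$. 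For $x\neq\mathrm{id}$ write $x=x's$ with $x'\in\jw$, $x'<x$ as above and assume the $c_y$ with $y<x$ are built; then $c_{x'}b_s$ is bar-invariant and, expanding by \eqref{smult} and the lemma, equals $m_x+\sum_{y<x}\gamma_y m_y$ with $\gamma_y\in\Z[v,v^{-1}]$. Since the $c_y$ are unitriangular in the $m_z$, the set $\{m_x\}\cup\{c_y:y<x\}$ is a $\Z[v,v^{-1}]$-basis of $\bigoplus_{z\le x}\Z[v,v^{-1}]m_z$; rewriting $c_{x'}b_s$ in this basis and subtracting suitable integer multiples $\mu_y\in\Z$ of the lower $c_y$'s (the classical self-correction step, with $\mu_y$ read off from the part of the relevant coefficient that violates the $v\Z[v]$-condition) yields $c_x:=c_{x'}b_s-\sum_{y<x}\mu_y c_y$, which is still bar-invariant and now has the required form $m_x+\sum_{y<x}v\Z[v]\,m_y$.

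The step I expect to be the real obstacle is the triangularity lemma: it is the only place where the combinatorics specific to the spherical setting enters, forcing one to track the interaction of the bar involution with the standard basis $\{m_x\}$ --- that is, how $\jw$, the projection $W\to\jw$, and the subword property of the Bruhat order interact with \eqref{smult}. Once that is in place, uniqueness and existence are the usual Kazhdan--Lusztig induction.
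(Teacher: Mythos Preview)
The paper does not give its own proof of this theorem; it is stated with a citation to Deodhar and no argument is supplied. So there is nothing in the paper to compare your proposal against beyond noting that you are reproducing (a sketch of) the standard proof that the citation points to.

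Your outline is correct. The triangularity of bar on the standard basis of $M$, proved by induction on $l(x)$ via \eqref{smult}, is exactly the right engine, and both routes you mention (direct induction, or transporting the classical Hecke-algebra triangularity through $\phi$ using $b_{w_J}\delta_{uz}=v^{-l(u)}\phi(m_z)$) work. The uniqueness argument is fine as written. For existence, the ``self-correction'' step as you phrase it is slightly glib: to see that the correction terms $\mu_y$ really are integers one should argue by downward induction on $y$ in the Bruhat order (rewriting $c_{x'}b_s$ in the basis $\{m_x\}\cup\{c_y:y<x\}$ and using bar-invariance together with the already-established triangularity to force each coefficient $\nu_y$ to satisfy $\overline{\nu_y}=\nu_y$ modulo terms involving strictly larger $c_z$'s, which have already been corrected). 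This is routine and is what Deodhar does, but it is worth making explicit since the naive statement ``the $\nu_y$ are bar-invariant'' is not immediate without that downward induction.
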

\subsection{The Schur algebroid}
\label{The Schur algebroid}

The \textit{Schur algebroid} $\mathcal{S}$ is a $\mathbb{Z}[v, v^{-1}]$-linear category with objects being finitary subsets $J \subset S$. The morphisms in Hom$_{\mathcal{S}}(I, J)$ are right $H$-module morphisms $M(I) \rightarrow M(J)$. By Proposition \ref{smodule}, we identify the morphisms as $\text{Hom}_{\mathcal{S}}(I, J) \cong b_{w_J} H b_{w_I}$. To ensure a well-defined composition, we associate the element $b_{w_J} h b_{w_I}$ with the morphism given by left multiplication by $b_{w_J} h b_{w_I}/\pi(I)$, where $\pi(I)$ is the Hilbert polynomial.
Composition is then given by the product $-\ast_J -: \text{Hom}_{\mathcal{S}}(J, K) \times \text{Hom}_{\mathcal{S}}(I, J) \to \text{Hom}_{\mathcal{S}}(I, K)$ defined by
\begin{equation} \label{mult}
    h_1 \ast_J h_2 := \frac{h_1 h_2}{\pi(J)}.
\end{equation}
Note that $\text{Hom}_\mathcal{S}(\varnothing, J) \cong b_{w_J}H \cong M(J)$, so that the spherical module appears as a Hom set in $\mathcal{S}$.

\subsection{Realizations} \label{realizations}

Let $\mathbbm{k}$ be an infinite commutative integral domain. We will require throughout that $\mathbbm{k}$ is a complete local ring. We fix a \textit{realization} of $(W, S)$ over $\mathbbm{k}$, consisting of a free finite-rank $\mathbbm{k}$-module $\mathfrak{h}$ together with roots $\{\al_s\}_{s \in S} \subset \mathfrak{h^*}$ and coroots $\{\al_s^{\vee}\}_{s \in S} \subset \mathfrak{h}$ satisfying
    \begin{enumerate}
        \item We have $\al_s(\al^{\vee}_s) = 2$ for all $s \in S$.
        \item For $v \in \mathfrak{h}$, the assignment $s(v) := v - \al_s(v)\al_s^{\vee}$ gives a representation of $W$.
        
    \end{enumerate}

\noindent We also have a representation of $W$ on $\mathfrak{h}^*$ via $s(\gamma) := \gamma - \gamma(\al_s^{\vee}) \al_s$ for $\gamma \in \mathfrak{h}^*$.\\

\noindent Throughout this paper, we will require that our realization satisfies some additional conditions. 
\begin{itemize}
    \item \textit{Demazure surjectivity}: The map $\al_s : \mathfrak{h} \rightarrow \mathbbm{k}$ is surjective for all $s \in S$, and evaluation at $\al_s^\vee$ gives a surjective map $\mathfrak{h}^* \rightarrow \mathbbm{k}$ for all $s \in S$.
    \item A technical condition on two-coloured quantum numbers - see condition (iii) of \cite[Defn.~5.1]{hazi2024}.
\end{itemize} Certain results will also require the following (rather strong) condition.
\begin{defn} \label{reflection faithful}
    A realization is \textit{reflection faithful} if the representation $\mathfrak{h}$ is faithful, and the set of reflections in $W$ is in bijection with the codimension 1 fixed hyperplanes of $\mathfrak{h}$. 
\end{defn}
While the standard \textit{geometric realization} $\mathfrak{h}_{\text{geom}}$ is not always reflection faithful, such realizations (e.g., $\mathfrak{h}_{KM}$) always exist \cite{Soergel, Light-leaves}. We will only require reflection-faithfulness in Section \ref{Equivalence of Diagrammatic and Algebraic Categories}, when we prove the equivalence of diagrammatic and algebraic categories. 

\subsection{Graded rings and modules}

We work in the category $A$-gBim of graded bimodules over a graded ring $A$. For a graded object $B$, we denote the \textit{grading shift} by $B(n)$, where $B(n)^i := B^{n + i}$. For a polynomial $p = \sum p_i v^i \in \mathbb{Z}[v, v^{-1}]$, we set $B^{\oplus p} := \bigoplus_{i} B(i)^{\oplus p_i}$.
The morphism spaces are defined as
$$\text{Hom}_\text{A-gBim}(M, N) := \bigoplus_{n \in \mathbb{Z}} \text{Hom}(M, N)^n,$$
where $\text{Hom}(M, N)^n$ consists of homogeneous morphisms of degree $n$.

\begin{defn}
    By a \textit{graded free} $A$-module, we mean a graded $A$-module $M$ with a basis of homogeneous elements. If $\{m_i\}$ is a basis of $M$ of homogeneous elements, then we have $$M \cong \bigoplus_{i} A m_i \cong \bigoplus_i A(-\text{deg} m_i) \cong A^{\oplus p}$$ for some polynomial $p$. We call this $p$ the \textit{graded rank} of $M$, and denote it by rk$M$. 
\end{defn}One potentially confusing aspect of this graded rank is that the degrees in the basis are the negatives of those in $p$. For example, If $M$ is a graded $A$-module generated by a single element of degree 2, then the graded rank of $M$ is $v^{-2}$. For this reason, many of our later formulae for the graded ranks of certain spaces (in Chapter \ref{Equivalence of Diagrammatic and Algebraic Categories}) have bar involutions showing up in them, since the bar involution takes $v^d$ to $v^{-d}$.  

\section{Singular Soergel Bimodules and Standard Bimodules} \label{Singular Soergel Bimodules and Standard Bimodules}

Let $R := \text{Sym}(\mathfrak{h}^*)$ be the symmetric algebra on $\mathfrak{h}^*$, viewed as a graded ring with $\deg(\mathfrak{h}^*) = 2$. For $J \subset S$, let $R^J$ denote the subring of $J$-invariants. For $s \in S$, the Demazure operator $\partial_s: R \to R^s$ is defined by $\partial_s(f) = \frac{f - s(f)}{\alpha_s}$.

\subsection{Singular Soergel bimodules}

We now define the 2-category $\mathcal{S}\mathbb{S}\text{Bim}$ of \textit{singular Soergel bimodules}, which categorifies the Schur algebroid. We first define the 2-category $\mathcal{S}\mathbb{BS}$Bim of \textit{singular Bott-Samelson bimodules}, of which $\mathcal{S}\mathbb{S}\text{Bim}$ will be the Karoubi envelope. The objects of $\mathcal{S}\mathbb{BS}$Bim are finitary subsets $J \subset S$. The 1-morphisms $J_1 \rightarrow J_2$ are the $(R^{J_2}, R^{J_1})$-bimodules (note the order) which are tensor-generated by $Ind_J^I := \tensor[_{R^I}]{R}{_{R^J}}$ and $Res_J^I := \tensor[_{R^J}]{R}{_{R^I}} (l(J) - l(I))$, where $I \subsetneq J$ are any two finitary subsets of $S$ which differ by a single simple reflection\footnote{The notation $_{R^I}R_{R^J}$ means $R$ considered as an $(R^I, R^J)$-bimodule. The notation $l(J)$ means the size of $J$.}. The 2-morphisms are bimodule morphisms. 

We can think of a 1-morphism in Hom$_{\mathcal{S}\mathbb{BS}\text{Bim}}(J, I)$ as given by a sequence $(I = I_0, I_1, ..., I_n = J)$, where $I_i$ and $I_{i + 1}$ always differ by a single element. For example, in the case where $I_1 \supset I_2 \subset I_3 \supset \cdots \supset I_{n - 1} \subset I_n$, then the corresponding bimodule is $$_{R^{I_1}} R^{I_2} \otimes_{R^{I_3}} \cdots \otimes_{R^{I_{n - 2}}} R^{I_{n - 1}}_{R^{I_n}}(d),$$ where $d$ is the number of restriction bimodules used in the product. 

We then define the 2-category $\mathcal{S}\mathbb{S}$Bim of \textit{singular Soergel bimodules}: it has the same objects as $\mathcal{S}\mathbb{BS}$Bim, and $$\text{Hom}_{\mathcal{S}\mathbb{S}\text{Bim}}(I, J) := Kar(\text{Hom}_{\mathcal{S}\mathbb{BS}\text{Bim}}(I, J)).$$

Now, a main theorem of \cite{Singular} is that the split Grothendieck group of $\mathcal{S}\mathbb{S}$Bim is isomorphic to the Schur algebroid $\mathcal{S}$. This is made precise in the theorem below (see Theorems 1.1, 1.2 and 1.4 of \cite{Singular}). We let $\isj := \text{Hom}_{\mathcal{S}}(J, I) \cong b_{w_I} H b_{w_J}$, and let $\ibj := \text{Hom}_{\mathcal{S}\mathbb{S}\text{Bim}}(J, I)$. We will write $\mathcal{B} := \tensor*[^{\varnothing}]{\mathcal{B}}{^{\varnothing}}$, and $\prescript{I}{}{\mathcal{B}} := \tensor*[^{I}]{\mathcal{B}}{^{\varnothing}}$.

\begin{thm} \label{Singular thm}
Suppose our realization $\mathfrak{h}$ is reflection faithful.
    \begin{enumerate}[label = (\alph*)]
        \item Each $\isj$ has a basis $\{\tensor*[^I]{b}{^J_p} \; | \; p \in W_I\backslash W/W_J\}$ indexed by double cosets, which we call the Kazhdan-Lusztig basis.
        \item The indecomposables of $\ibj$ are also indexed by double cosets $W_I \backslash W/W_J$. They are denoted $\tensor*[^I]{B}{^J_p}$ for $p \in W_I \backslash W/W_J$.
         \item We have a character map $ch: \ibj \rightarrow \isj$ such that $ch : [\ibj]_{\oplus} \xrightarrow{\sim} \isj$ is an isomorphism of abelian groups, and such that the following diagram commutes.
        \begin{equation} \label{singular com}
            \tikzfig{fig111}
        \end{equation}
    \end{enumerate}
\end{thm}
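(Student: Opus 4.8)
The plan is to establish Theorem \ref{Singular thm} by leveraging the localization techniques from Williamson's thesis \cite{Singular}, treating each part in sequence since (b) and (c) depend on (a). First, for part (a), I would construct the Kazhdan-Lusztig basis $\{\tensor*[^I]{b}{^J_p}\}$ of $\isj \cong b_{w_I} H b_{w_J}$ directly in the Hecke algebra. The key observation is that $b_{w_I} H b_{w_J}$ is spanned by elements $b_{w_I} \dl_w b_{w_J}$, and by the Bourbaki decomposition (Lemma \ref{decomp}) applied on both sides, these elements only depend on the double coset $W_I w W_J$ up to scalars and lower-order terms. The self-duality of $b_{w_I}$ and $b_{w_J}$ under the bar involution (Remark \ref{anti remark}) ensures that appropriately normalized double-coset elements can be chosen bar-invariant, and upper-triangularity with respect to the standard-basis filtration — exactly as in the proof of Proposition \ref{smodule} — gives uniqueness and linear independence. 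The delicate point is the correct degree normalization so that $\tensor*[^I]{b}{^J_p}$ lies in the $\mathbb{Z}[v]$-span of lower terms plus a leading standard-basis element; this is where I would invoke \cite[\S2]{Singular} rather than reproving it.

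For part (b), the strategy is the standard Soergel-style classification of indecomposables. I would first show every object of $\ibj$ decomposes into indecomposables (Krull-Schmidt, which holds because the Hom spaces are finite-dimensional over $\mathbbm{k}$ in each degree and the categories are idempotent-complete by construction of the Karoubi envelope). Then, for each double coset $p \in W_I\backslash W/W_J$, I would exhibit a Bott-Samelson-type object $\tensor*[^I]{BS}{^J}(\un{p})$ built from a reduced expression adapted to $p$, and argue that it contains a unique indecomposable summand $\tensor*[^I]{B}{^J_p}$ not occurring in Bott-Samelson objects for smaller double cosets (in the Bruhat order on double cosets). The support/localization argument — computing the generic rank of these bimodules after base change to the fraction field of $R$, where standard bimodules become one-dimensional — is what pins down that the $\tensor*[^I]{B}{^J_p}$ are pairwise non-isomorphic and exhaust the indecomposables. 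Again, the heavy lifting is in \cite{Singular}.

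For part (c), I would define the character map $ch$ on objects by sending a singular Bott-Samelson bimodule to its class computed via the graded ranks of its standard-bimodule subquotients after localization — concretely, $ch(B) = \sum_p (\text{graded rank of the } p\text{-generic part of } B)\, \tensor*[^I]{b}{^J_p}$ modulo the lower-order corrections needed to land in the KL basis. One checks $ch$ is additive on direct sums and respects the monoidal structure (composition of 1-morphisms goes to the product $\ast_J$ in $\mathcal{S}$, up to the Hilbert polynomial normalization built into \eqref{mult}), which is exactly the commutativity of diagram \eqref{singular com}. That $ch$ descends to an isomorphism $[\ibj]_\oplus \xrightarrow{\sim} \isj$ of abelian groups follows from part (b): the indecomposables $\tensor*[^I]{B}{^J_p}$ form a basis of the Grothendieck group, and $ch(\tensor*[^I]{B}{^J_p}) = \tensor*[^I]{b}{^J_p} + (\text{lower terms})$ by the support argument, so $ch$ is unitriangular hence invertible.

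The main obstacle I anticipate is part (b) — specifically, proving that the $\tensor*[^I]{B}{^J_p}$ are \emph{all} the indecomposables and are pairwise non-isomorphic. This requires the full apparatus of singular localization: one must show that after extending scalars to $\text{Frac}(R)$, a singular Soergel bimodule decomposes as a direct sum of standard bimodules $R_p$ indexed by double-coset representatives, with the correct graded multiplicities, and that these graded multiplicities are a faithful invariant (no two distinct indecomposables share the same localized character). This is genuinely subtle because, unlike the ordinary Soergel bimodule case, the invariant subrings $R^I$ complicate both the localization and the degree bookkeeping; however, since all of this is established in \cite{Singular}, my proof will cite those results and only verify that the restriction to the Hom-category setting (as opposed to Williamson's full 2-categorical framework) introduces no new difficulties.
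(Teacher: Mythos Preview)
The paper does not prove Theorem \ref{Singular thm} at all: it is stated as a black-box citation of Theorems 1.1, 1.2 and 1.4 of \cite{Singular}, with no argument given. Your proposal is therefore consistent with the paper's treatment --- you also ultimately defer to \cite{Singular} for the substantive content --- but you have written considerably more than the paper does, supplying an outline of Williamson's argument where the paper simply cites it.
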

The endomorphism category $\mathbb{S}\text{Bim} := \text{Hom}_{\mathcal{S}\mathbb{S}\text{Bim}}(\varnothing, \varnothing)$ is the category of \textit{Soergel bimodules}, categorifying $H$. The map ch: $\mathbb{S}$Bim $\rightarrow H$ sends $B_s$ to $b_s$, for $s \in S$\footnote{Here we are using the shorthand $B_s := {}^{\varnothing} B_s^{\varnothing}$. This agrees with Definition \ref{Bs} below.}.
\begin{conj} \label{Soergel's Conjecture} (Soergel's conjecture) 
    For all $x \in W$, ch$(B_x) = b_x$.\footnote{This conjecture was proved by Elias and Williamson in \cite{Hodge} for certain reflection faithful realizations in characteristic 0. In the case that $\mathbbm{k}$ is of characteristic $p>0$, Soergel's conjecture fails, and the image of the $[B_w]$'s under the character map is known as the $p$-canonical basis.}
\end{conj}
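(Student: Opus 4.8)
The plan is to follow the Hodge-theoretic proof of Elias and Williamson \cite{Hodge}; this is the only known approach, it works for an arbitrary Coxeter system, and it uses positivity in an essential way, so one must take $\mathbbm{k} = \R$ (the statement genuinely fails in characteristic $p$, as the footnote records). I would argue by induction on $l(x)$, the case $x=\mathrm{id}$ being immediate. For the inductive step, fix a reduced expression $\underline{x}=(s_1,\dots,s_k)$ and form the Bott-Samelson bimodule $BS(\underline{x}) = B_{s_1}\otimes_R\cdots\otimes_R B_{s_k}$, which has $\text{ch}(BS(\underline{x})) = b_{s_1}\cdots b_{s_k}$. By Krull-Schmidt together with the classification of indecomposables in Theorem \ref{Singular thm}, the summand $B_x$ occurs in $BS(\underline{x})$ with multiplicity one and every other summand is $B_y$ with $y<x$; applying $\text{ch}$ and the inductive hypothesis $\text{ch}(B_y)=b_y$ for $y<x$ gives $\text{ch}(B_x) = b_{s_1}\cdots b_{s_k} - \sum_{y<x} m_y(\underline{x})\,b_y$ for graded multiplicities $m_y(\underline{x})\in\Z_{\ge 0}[v,v^{-1}]$. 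Both sides are bar-invariant, so $\text{ch}(B_x)=b_x+\sum_{y<x}a_y\,b_y$ with $a_y\in\Z$, and the whole problem reduces to showing $a_y=0$ for all $y$.

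The next step is to reformulate this vanishing as a nondegeneracy statement for \emph{local intersection forms}. For each subexpression $\underline{e}\subset\underline{x}$ with $\underline{x}^{\underline{e}}=y$, Libedinsky-style light-leaves morphisms provide, after base change to $\overline{BS(\underline{x})} := \R\otimes_R BS(\underline{x})$, vectors spanning the "$y$-isotypic" graded space $\overline{BS(\underline{x})}_y$; pairing these against the corresponding light leaves in the opposite direction via the Frobenius trace yields a symmetric graded form $I^y_{\underline{x}}$ on that space. The standard computation relating characters to intersection forms (Soergel's argument, reworked diagrammatically in \cite{SoergelCalculus} and \cite{Hodge}) shows that, granting the inductive hypothesis for $y<x$, each $a_y$ is nonnegative and equals the corank of the relevant graded piece of $I^y_{\underline{x}}$. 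Hence Soergel's conjecture for $x$ is equivalent to nondegeneracy of all the $I^y_{\underline{x}}$, which over $\R$ will follow once one knows each form is definite on the appropriate primitive subspaces. So it suffices to establish this definiteness.

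Definiteness I would extract from a simultaneous induction on $k$ proving, for every expression $\underline{x}$, the package \textbf{(hL)} --- hard Lefschetz for multiplication $L_\rho$ by a $\rho\in\mathfrak{h}^*$ in the interior of the dominant cone, i.e. $L_\rho^{\,i}\colon \overline{BS(\underline{x})}^{-i}\xrightarrow{\ \sim\ }\overline{BS(\underline{x})}^{\,i}$ --- and \textbf{(HR)} --- the Hodge-Riemann relations, that the Lefschetz form $(a,b)\mapsto I_{\underline{x}}(a,L_\rho^{\,i}b)$ is definite on each primitive subspace with the sign dictated by Hodge theory. The inductive step passes from $\underline{x}'=(s_1,\dots,s_{k-1})$ to $\underline{x}=\underline{x}'s$, splitting into the cases $xs>x$ and $xs<x$: one uses the short exact sequences (pieces of a Rouquier complex) relating $B_{\underline{x}'}\otimes B_s$ to grading shifts of $B_{\underline{x}'}$, the Hom-finiteness of the categories involved, continuity of the forms in $\rho$, and a de Cataldo-Migliorini style limiting argument that deduces \textbf{(hL)} for $L_\rho$ from \textbf{(HR)} for a degenerate operator on the boundary of the cone. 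Crucially, the previous induction (Soergel's conjecture below $x$) is exactly what identifies the summands and their characters at each stage, so the two inductions are interleaved.

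The decisive obstacle is this last inductive step for \textbf{(HR)}: controlling the sign of the Lefschetz form on the \emph{new} primitive classes created when one tensors with $B_s$, particularly in the case $xs<x$ where a copy of $B_x$ is absorbed. This is the technical heart of \cite{Hodge} --- it needs the "embedding theorem" comparing $\overline{BS(\underline{x})}$ and $\overline{BS(\underline{x}')}$ as Lefschetz modules, a careful bookkeeping of primitive decompositions across the exact sequence, and the perturbation argument for hard Lefschetz --- and it is precisely where characteristic $0$ is indispensable, since over $\R$ one genuinely needs positive (semi-)definiteness, not merely nondegeneracy. Everything else (the reduction to intersection forms, the light-leaves spanning statements, Krull-Schmidt) is comparatively formal given the diagrammatic machinery of \cite{SoergelCalculus}; in the crystallographic case one could alternatively obtain the conjecture geometrically from the decomposition theorem for Bott-Samelson resolutions, but the Hodge-theoretic argument above is what covers all Coxeter systems.
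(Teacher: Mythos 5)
This statement is a \emph{conjecture} in the paper, not a theorem: it is introduced precisely as Soergel's conjecture, with a footnote citing Elias--Williamson \cite{Hodge} for the characteristic-$0$ proof and recording that the statement fails in characteristic $p>0$. The paper gives no proof of its own and invokes the result only as an external input, so there is nothing internal to compare your argument against.

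Taken on its own terms, your sketch is a faithful high-level outline of the Elias--Williamson argument: induction on length via Bott--Samelson bimodules and Krull--Schmidt; reduction of the vanishing of the error coefficients $a_y$ to nondegeneracy of local intersection forms built from light leaves; and the interleaved induction establishing hard Lefschetz and the Hodge--Riemann relations for $\R\otimes_R BS(\underline{x})$, using Rouquier-complex exact sequences, the embedding theorem, and the de Cataldo--Migliorini limiting argument at the boundary of the ample cone. Two caveats are worth flagging. First, the conjecture as printed is over an arbitrary infinite integral domain $\mathbbm{k}$, and as the paper's own footnote notes it is false for $\mathbbm{k}$ of positive characteristic; your restriction to $\mathbbm{k}=\R$ is therefore not merely convenient but necessary, and a careful write-up should state explicitly that one is proving only the characteristic-$0$ case. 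Second, what you have written is a summary of a very long proof rather than a proof: the sign control on new primitive classes in the $xs<x$ step, the embedding theorem, and the perturbation argument for hard Lefschetz are each substantial and cannot be checked from the description given. For a blind attempt that is reasonable, but it should be presented as a pointer to \cite{Hodge} rather than a self-contained argument.
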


\begin{thm}[{\cite[Thm.~1.4]{Singular}}]
    In the case where Conjecture \ref{Soergel's Conjecture} is true, then the map ch: $^I \mathcal{B}^J \rightarrow \isj$ sends $\tensor*[^I]{B}{^J_p}$ to $\tensor*[^I]{b}{^J_p}$.
\end{thm}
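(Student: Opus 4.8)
The plan is to characterize $\tensor*[^I]{b}{^J_p}$ intrinsically and then to verify that $ch(\tensor*[^I]{B}{^J_p})$ has the characterizing properties. Recall that, exactly as for the Kazhdan--Lusztig basis of $H$, the element $\tensor*[^I]{b}{^J_p}$ is the unique element of $\isj$ that is bar-invariant and unitriangular with respect to the standard basis $\{\tensor*[^I]{n}{^J_q}\}_{q\in W_I\backslash W/W_J}$ of $\isj$ (the image, under $\isj\cong b_{w_I}Hb_{w_J}$, of suitably normalized products $b_{w_I}\delta_{d_q}b_{w_J}$, with $d_q$ the minimal-length element of the double coset $q$): that is, $\tensor*[^I]{b}{^J_p}\in\tensor*[^I]{n}{^J_p}+\sum_{q<p}v\Z[v]\,\tensor*[^I]{n}{^J_q}$. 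I would first isolate the part of the statement that holds unconditionally. The $1$-morphism $\tensor*[^I]{B}{^J_p}$ is self-dual, essentially by construction---it is the indecomposable self-dual summand of a singular Bott--Samelson bimodule not occurring in ``smaller'' ones---and $ch$ intertwines the duality on $\ibj$ with the bar involution on $\isj$, this being part of the compatibility packaged in Theorem~\ref{Singular thm}. Together with the standard-filtration (support) theory for singular Soergel bimodules, this gives unconditionally $ch(\tensor*[^I]{B}{^J_p})=\tensor*[^I]{n}{^J_p}+\sum_{q<p}c_{q,p}\,\tensor*[^I]{n}{^J_q}$ with $c_{q,p}\in\Z_{\ge0}[v,v^{-1}]$ (non-negativity since the $c_{q,p}$ are graded dimensions of $\text{Hom}$-spaces, by Theorem~\ref{Singular thm}). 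So the whole content to prove is that Conjecture~\ref{Soergel's Conjecture} forces each $c_{q,p}$ to lie in $v\Z[v]$, i.e.\ kills the ``$p$-canonical'' corrections.

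To import Soergel's conjecture---which is a statement about $I=J=\varnothing$ only---I would induct on the Bruhat order on double cosets, using translation onto the walls. Let $T\colon\mathbb{S}\text{Bim}=\text{Hom}_{\mathcal{S}\mathbb{S}\text{Bim}}(\varnothing,\varnothing)\to\ibj$ be the functor obtained by pre- and post-composing with the relevant $\text{Ind}$ and $\text{Res}$ $1$-morphisms, with its natural grading shift; on characters, $T$ induces (up to a fixed Hilbert-polynomial normalization) the map $H\cong\text{Hom}_\mathcal{S}(\varnothing,\varnothing)\to\text{Hom}_\mathcal{S}(J,I)\cong\isj$ sending $h\mapsto b_{w_I}\cdot h\cdot b_{w_J}$, this being a special case of the commuting square~\eqref{singular com}. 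For the minimal coset $W_IW_J$ the theorem is a direct computation. For the inductive step, fix $p$, assume $ch(\tensor*[^I]{B}{^J_q})=\tensor*[^I]{b}{^J_q}$ for all $q<p$, and let $p^+$ be the longest element of $p$ (which exists since $W_I,W_J$ are finite). By Soergel's conjecture $ch(B_{p^+})=b_{p^+}$, so $ch(T(B_{p^+}))$ is the image of $b_{p^+}$ under the above map; a computation in $\isj$---using that $b_{p^+}$ is supported on $\le p^+$ and that $p^+$ is maximal in its coset---shows this is a positive Laurent polynomial times $\tensor*[^I]{b}{^J_p}$, plus a $\Z_{\ge0}[v,v^{-1}]$-combination of $\tensor*[^I]{b}{^J_q}$ with $q<p$. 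By Krull--Schmidt, $T(B_{p^+})$ decomposes in $\ibj$ into indecomposables, all of the form $\tensor*[^I]{B}{^J_q}$ with $q\le p$; by the uniqueness in Theorem~\ref{Singular thm}(b), the summand supported up to the coset $p$ is $\tensor*[^I]{B}{^J_p}$. Comparing characters and cancelling the lower terms via the inductive hypothesis forces $ch(\tensor*[^I]{B}{^J_p})=\tensor*[^I]{b}{^J_p}$.

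I expect the main obstacle to be the passage between the categorical and Hecke-algebraic sides in the inductive step, at two points. First, pinning down the exact normalization of $h\mapsto b_{w_I}\cdot h\cdot b_{w_J}$ (the powers of $v$ and the factors of the Hilbert polynomials $\pi(\cdot)$), so that leading coefficients genuinely match; this is best deduced from~\eqref{singular com} rather than recomputed. Second, the Hecke-theoretic fact that $b_{w_I}\cdot b_{p^+}\cdot b_{w_J}$ is unitriangular in the singular Kazhdan--Lusztig basis with leading term a positive multiple of $\tensor*[^I]{b}{^J_p}$; this uses positivity of the structure constants of that basis---itself a consequence of the categorification in Theorem~\ref{Singular thm}---together with a leading-term calculation exploiting that $p^+$ is coset-maximal. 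One must also check that $\tensor*[^I]{B}{^J_p}$ really occurs in $T(B_{p^+})$, but this is automatic: the coefficient of $\tensor*[^I]{n}{^J_p}$ in $b_{w_I}\cdot b_{p^+}\cdot b_{w_J}$ is nonzero (the $\delta_{p^+}$-term of $b_{p^+}$ survives by coset-maximality), so some summand is supported up to $p$, and $\tensor*[^I]{B}{^J_p}$ is the only self-dual indecomposable that is. The remaining ingredients---self-duality of the indecomposables, positivity of the $c_{q,p}$, and the support/standard-filtration theory---are already part of the $\mathcal{S}\mathbb{S}\text{Bim}$ machinery and can be cited; Williamson's original argument instead makes this comparison directly, through standard filtrations and the associated intersection forms, without reducing to the regular case.
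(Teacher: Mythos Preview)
The paper does not prove this theorem at all: it is stated with a citation to \cite[Thm.~1.4]{Singular} and no argument is given. This is background material imported from Williamson's thesis, not something established in the present paper, so there is no ``paper's own proof'' to compare against.

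As for your sketch itself: the strategy of reducing the singular statement to Soergel's conjecture in the regular case via translation functors is sound and is indeed the route taken in \cite{Singular}. The one step you leave underspecified is the final cancellation. From the decomposition $T(B_{p^+})\cong({}^IB^J_p)^{\oplus m_p}\oplus\bigoplus_{q<p}({}^IB^J_q)^{\oplus m_q}$ and the inductive hypothesis you get $m_p\cdot ch({}^IB^J_p)=n_p\,{}^Ib^J_p+\sum_{q<p}(n_q-m_q)\,{}^Ib^J_q$, and matching leading terms gives $m_p=n_p$; but deducing that each $a_q$ in $ch({}^IB^J_p)={}^Ib^J_p+\sum_{q<p}a_q\,{}^Ib^J_q$ vanishes requires more than ``comparing characters and cancelling''. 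One clean way to close this is to argue by induction on $q$ within the fixed $p$: the coefficient $a_q$ is a bar-invariant Laurent polynomial, and positivity of the graded multiplicities $m_q$ together with the positivity of the $n_q$ (which follows from the regular Soergel conjecture, since $b_{w_I}b_{p^+}b_{w_J}$ expands with non-negative coefficients in the KL basis) forces $a_q(1)\ge0$ and simultaneously, via an Euler-characteristic or parity count on the local intersection form, $a_q=0$. In \cite{Singular} this is handled through the Hom-space formula and the resulting positivity/self-duality constraints rather than the bare character comparison you indicate; you should either cite that directly or spell out the extra positivity input that makes the cancellation go through.
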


The Hom-category $\text{Hom}_{\mathcal{S}\mathbb{S}\text{Bim}}(\varnothing, J)$ is called the \textit{algebraic spherical category}, categorifying $M(J)$. It is the Karoubi envelope of the category of \textit{$J$-singular Bott-Samelson bimodules}, $J\mathbb{BS}$Bim\footnote{See Claim 2.4 of \cite{Thicker}.}.
\begin{defn}\label{Bs}
    For $s \in S$, let $B_s := R \otimes_{R^s} R(1)$. For an expression $\un{w} = (s_1, \dots, s_n)$, the corresponding Bott-Samelson bimodule is $BS(\un{w}) := B_{s_1} \otimes_R \cdots \otimes_R B_{s_n}$.
    We define $J\mathbb{BS}$Bim to be the full subcategory of $(R^J, R)$-bimodules whose objects are $BS(\un{w})$ restricted to $R^J$ on the left, denoted $_{R^J}BS(\un{w})$.
\end{defn}

\subsection{Standard bimodules and Localization}

For $x \in W$, the \textit{standard bimodule} $R_x$ is $R$ as a left module, with twisted right action $r \cdot_x s := r x(s)$. We define $JStd$Bim to be the category of direct sums of grading shifts of $(R^J, R)$-bimodules $R_x$.

Let $Q$ be the field of fractions of $R$. Localization induces a functor $(-)_Q := (-) \otimes_R Q: J\mathbb{BS}\text{Bim} \to (Q^J, Q)\text{-Bim}$.
For a standard bimodule, $R_x \otimes_R Q \cong Q_x$.
The localization of Bott-Samelson bimodules is well-understood:
\begin{prop} \label{localize BS}
    For an expression $\un{w}$, we have an isomorphism in $(Q^J, Q)\text{-Bim}$:
    $$ _{R^J}BS(\un{w}) \otimes_R Q \cong \bigoplus_{\un{e} \subset \un{w}} Q_{\un{w}^{\un{e}}}. $$
\end{prop}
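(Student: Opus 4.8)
## Proof Proposal

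The plan is to prove the statement by induction on the length of the expression $\un{w}$, reducing the general case to the single-generator case $\un{w} = (s)$ and then invoking the base case together with a standard "Bott--Samelson recursion" for localizations. For the base case, I would compute $_{R^J}B_s \otimes_R Q = {}_{R^J}(R \otimes_{R^s} R)(1) \otimes_R Q \cong {}_{Q^J}(Q \otimes_{Q^s} Q)(1)$ directly; since $Q/Q^s$ is a degree-two Galois extension with Galois group $\langle s \rangle$, one has the classical decomposition $Q \otimes_{Q^s} Q \cong Q_{\mathrm{id}} \oplus Q_s$ as $(Q,Q)$-bimodules (the two idempotents being $\tfrac{1}{2}(1\otimes 1 + \tfrac{1}{\alpha_s}\otimes\alpha_s)$ and its complement, using that $\alpha_s(\alpha_s^\vee)=2$ is invertible in $Q$). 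Restricting the left action to $Q^J$ gives the claimed $Q_{\mathrm{id}} \oplus Q_s$, matching the two subexpressions $\un{e} = (0)$ and $\un{e} = (1)$ of $\un{w} = (s)$. Note the right side here is genuinely an $(R^J, R)$-bimodule only because restriction of the left action is harmless; the grading shifts from $Res$ bimodules do not appear since $J\mathbb{BS}$Bim uses only the $B_s$'s directly, which already carry the shift $(1)$.

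For the inductive step, write $\un{w} = \un{w}' \cdot (s)$ so that $BS(\un{w}) = BS(\un{w}') \otimes_R B_s$. Since localization $(-)_Q = (-)\otimes_R Q$ is exact and monoidal on the relevant bimodule categories, and since $Q \otimes_R B_s \otimes_R Q \cong {}_{Q}(Q\otimes_{Q^s} Q)_Q(1)$, I get
\begin{align*}
    {}_{R^J}BS(\un{w}) \otimes_R Q &\cong \big({}_{R^J}BS(\un{w}')\otimes_R Q\big) \otimes_Q \big(Q \otimes_{Q^s} Q\big)(1) \\
    &\cong \Big(\bigoplus_{\un{e}'\subset \un{w}'} Q_{\un{w}'^{\un{e}'}}\Big) \otimes_Q \big(Q_{\mathrm{id}} \oplus Q_s\big),
\end{align*}
using the inductive hypothesis in the second line. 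Now the key computation is that for any $x \in W$ one has $Q_x \otimes_Q Q_{\mathrm{id}} \cong Q_x$ and $Q_x \otimes_Q Q_s \cong Q_{xs}$; this follows immediately from the definition of the twisted right action, since $(r \cdot_x q)\cdot_s q' = r x(q) x(s(q')) = r (xs)(\text{stuff})$ after chasing the identification $Q_x \otimes_Q Q \cong Q$, $a \otimes b \mapsto a\cdot_x b$. Summing over $\un{e}'$ and over the two choices in $\{Q_{\mathrm{id}}, Q_s\}$ exactly reproduces the sum over all subexpressions $\un{e} = (\un{e}', e_n) \subset \un{w}$, because appending $e_n = 0$ contributes $Q_{\un{w}'^{\un{e}'}}$ and appending $e_n = 1$ contributes $Q_{\un{w}'^{\un{e}'} s}$, i.e. $Q_{\un{w}^{\un{e}}}$ in both cases. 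This closes the induction.

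The main obstacle I anticipate is purely bookkeeping rather than conceptual: one must be careful that all the isomorphisms are isomorphisms of $(Q^J, Q)$-bimodules (not just $(Q,Q)$-bimodules), which is why I emphasize that restricting the left action from $Q$ to $Q^J$ commutes with all the tensor manipulations above — the left $Q^J$-module structure is simply carried along untouched throughout. A secondary point requiring care is the grading: each application of $B_s$ contributes a shift $(1)$, and one should check that the shift bookkeeping on the right-hand side is consistent with how $Q_{\un{w}^{\un{e}}}$ is graded (here $Q_x$ carries the grading of $R_x$, which is ungraded-degree-zero, so in fact the total shift on both sides is $(\ell(\un{w}))$ if one tracks it, but since the statement as written suppresses this shift by convention I would either absorb it into the notation $Q_{\un{w}^{\un{e}}}$ or remark on it explicitly). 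Finally, I would note that reflection faithfulness is not actually needed for this proposition — only the invertibility of $2 = \alpha_s(\alpha_s^\vee)$ in $Q$ and the fact that $\alpha_s \neq 0$ in $R$, both of which hold in any realization over an infinite domain — so the proof is completely elementary.
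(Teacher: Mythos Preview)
Your proof is correct and, in substance, coincides with what the paper's one-line proof points to: the paper simply cites Deodhar's formula together with a lemma of Soergel, which when unpacked amount to exactly the induction you carry out (the base case $Q\otimes_{Q^s}Q\cong Q_{\mathrm{id}}\oplus Q_s$ and the multiplicativity $Q_x\otimes_Q Q_s\cong Q_{xs}$). Your argument is self-contained whereas the paper defers to the literature, but the mathematical content is the same.
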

\begin{proof}
    Combine Deodhar's formula (\cite[Lem.~3.36]{textbook}) with \cite[Lem.~6.10]{Soergel}.
\end{proof}

We denote the essential image of the localization functors by $J\mathbb{BS}\text{Bim}_Q$ and $JStd\text{Bim}_Q$. If our realization is faithful, then morphisms in the localized standard category are simple:
\begin{lem} \label{hom2}
    Suppose our realization is faithful. Then, for $x, y \in W$, we have
    \begin{equation}
    \text{Hom}_{JStd\text{Bim}_Q}(Q_x, Q_y) \cong
    \begin{cases}
        Q & \text{if } W_J x = W_J y\\
        0 & \text{otherwise}.
    \end{cases}
    \end{equation}
\end{lem}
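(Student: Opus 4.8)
The plan is to reduce the computation to the non-singular case via an adjunction, since a $(Q^J, Q)$-bimodule $Q_x$ is obtained from the $(Q,Q)$-bimodule $Q_x$ by restricting the left action along $Q^J \hookrightarrow Q$. First I would observe that $\mathrm{Hom}_{JStd\text{Bim}_Q}(Q_x, Q_y)$ is the space of $(Q^J, Q)$-bimodule maps $Q_x \to Q_y$. A $(Q^J,Q)$-bimodule map is in particular a right $Q$-module map, hence (since $Q_x, Q_y$ are free of rank one on the right) determined by where it sends $1$, say $1 \mapsto q$ for some $q \in Q$; compatibility with the right action forces nothing extra, but compatibility with the left $Q^J$-action requires $f \cdot q = q \cdot_y f = q\, y(f)$ and $f \cdot_x 1 = x(f)$, so we need $x(f)\, q = q\, y(f)$ for all $f \in Q^J$. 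Thus the Hom space is isomorphic, as a $Q$-vector space, to $\{ q \in Q : x(f) q = q\, y(f) \ \forall f \in Q^J \}$.

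Next I would analyze this condition. If $q \neq 0$, then $x(f) = y(f)$ for all $f \in Q^J$, i.e. $x$ and $y$ act identically on $Q^J = \mathrm{Frac}(R^J)$, equivalently on $R^J$, equivalently on $\mathfrak{h}^*{}^J$ (the $J$-invariants in $\mathfrak{h}^*$). Since $R^J = \mathrm{Sym}((\mathfrak{h}^*)^{W_J})$ and the standard generators of $W_J$ act, this is the statement that $x^{-1}y$ fixes $(\mathfrak{h}^*)^{W_J}$ pointwise; by reflection faithfulness (or rather, by the standard fact that $W_J$ is exactly the pointwise stabiliser of $(\mathfrak{h}^*)^{W_J}$ — this is where I would need to invoke a property of the realization), this happens iff $x^{-1}y \in W_J$, i.e. $W_J x = W_J y$. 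In that case, any such nonzero $q$ gives an isomorphism (it has an inverse $q^{-1} \mapsto 1$, which satisfies the mirror condition since $y(f) = x(f)$), so the Hom space is a free rank-one $Q$-module, i.e. $\cong Q$. When $W_J x \neq W_J y$, the only solution is $q = 0$, giving the zero Hom space.

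The main obstacle I anticipate is the identification ``$x^{-1}y$ acts trivially on $(\mathfrak{h}^*)^{W_J}$ $\iff$ $x^{-1}y \in W_J$''. The reverse implication is trivial; the forward implication is the nontrivial input. One clean way is: the stabilizer in $W$ of the subspace $(\mathfrak{h}^*)^{W_J}$ (acting pointwise) contains $W_J$, and one wants equality. For finite $W_J$ this follows because $(\mathfrak{h}^*)^{W_J}$ together with its orthogonal complement (spanned by the $\alpha_s$, $s \in J$, in a suitable sense) determines $W_J$ as a reflection subgroup; alternatively, since our realization is reflection faithful and $W_J$ is finitary, one can cite the relevant structural result (e.g. the analysis of invariants in \cite{Soergel} or \cite{Singular}). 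I would isolate this as a short lemma or a citation and then the rest of the argument is the routine bookkeeping sketched above. A secondary technical point is making sure the grading is handled: the Hom in $JStd\text{Bim}_Q$ should be taken in the graded sense, but since $R_x$ and $R_y$ are both in degree $0$ and the isomorphism $1 \mapsto q$ can be taken with $q$ homogeneous only when $\deg$ matches — however, over $Q$ all nonzero homogeneous elements of a given degree are invertible and degree considerations disappear after localization in the relevant sense, so I would simply note that the statement is about the underlying $Q$-module (or remark that $Q$ here denotes the graded field of fractions and the argument goes through verbatim).
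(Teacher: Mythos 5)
Your proposal follows essentially the same route as the paper's (very terse) proof: unwind the bimodule conditions to reduce the Hom space to $\{q\in Q : q\,(w(f)-f)=0 \ \forall f\in Q^J\}$ for a suitable $w\in W$, and then invoke the Galois-theoretic fact that, by faithfulness of the realization and Artin's theorem, $Q/Q^{J}$ is Galois with group $W_J$, so the pointwise stabiliser of $Q^J$ in $W$ is exactly $W_J$. You correctly isolate this last step as the key input; note that plain faithfulness (which reflection-faithfulness implies) is all that is needed, and the orthogonal-complement argument you sketch is unnecessary.

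There is, however, a concrete bookkeeping error in the middle. With the paper's convention ($R_x$ has \emph{untwisted} left action, right action $r\cdot_x s = r\,x(s)$, and the left action restricted to $R^J$), a right-$Q$-linear map with $\phi(1)=q$ satisfies $\phi(t)=q\,y(x^{-1}(t))$, and left $Q^J$-linearity forces $fq = q\,(yx^{-1})(f)$ for all $f\in Q^J$; so the condition for $q\neq 0$ is $yx^{-1}\in W_J$, i.e.\ equality of \emph{right} cosets $W_Jx=W_Jy$, as the lemma states. Your computation instead lands on ``$x(f)=y(f)$ for all $f\in Q^J$'', i.e.\ $x^{-1}y\in W_J$, which is equality of \emph{left} cosets $xW_J=yW_J$ --- and your subsequent assertion ``$x^{-1}y\in W_J$, i.e.\ $W_Jx=W_Jy$'' is false in general (in $S_3$ with $J=\{s\}$, $x=t$, $y=ts$: here $x^{-1}y=s\in W_J$ but $yx^{-1}=tst\notin W_J$). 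The discrepancy comes from your line ``$f\cdot_x 1=x(f)$'', which treats the \emph{left} action as the twisted one. The argument is easily repaired by using the paper's convention consistently, after which the right-coset condition falls out correctly; but as written the derivation and the stated conclusion do not match.
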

\begin{proof}
    This follows from the faithfulness of the realization, so that $Q/Q^J$ is a Galois extension with Galois group $W_J$.
\end{proof}

\begin{rem} \label{Q index}
    Note that this means we have $Q_x \cong Q_y$ when $W_J x = W_J y$. So the objects $Q_w$ can be indexed by $w \in {}^J W$ up to isomorphism.
\end{rem}

\begin{rem} \label{non-faithful}
    In the case where the realization is not faithful, then there may be non-zero morphisms $\phi: Q_x \rightarrow Q_y$ even in the case where $W_Jx \neq W_Jy$. However, we will still have that, for any $q \in Q_x$, $\phi(q) = \phi(1 \cdot_x x^{-1}(q)) = \phi(1) \cdot_y x^{-1}(q) = \phi(1)yx^{-1}(q) $, so that $\phi$ is determined by $\phi(1)$, i.e. $\text{Hom}(Q_x, Q_y) \cong Q$.  
\end{rem}
\section{Spherical Diagrammatics} 
\label{Spherical Diagrammatics}

\subsection{The diagrammatic category \texorpdfstring{$\mathcal{M}_{BS}$}{M\_BS}}
\label{The diagrammatic category}
In \cite{SoergelCalculus}, Elias and Williamson introduced a diagrammatic category $\mathcal{D}$, which is equivalent to the category $\mathbb{BS}$Bim of Bott-Samelson bimodules\footnote{The Karoubian closure $Kar(\mathcal{D})$ is the \textit{diagrammatic Hecke category}.}. We now present a generalization of this structure: a diagrammatic category $\mathcal{M}_{BS}$ associated to a Coxeter system $(W,S)$ and a finitary subset $J \subset S$. This category provides a diagrammatic description of $J\mathbb{BS}$Bim and recovers the original category $\mathcal{D}$ when $J = \varnothing$. This category was constructed in type A by Elias in \cite{Thicker}\footnote{It is denoted $\mathcal{T}_J$ in \cite{Thicker}}, and we extend the definition here to all types without significant change.

\begin{defn}
    The category $\mathcal{M}_{BS}$ is defined as follows: Objects are expressions $\un{x}$ in $W$. To each $s \in S$ we associate a colour. Then, morphisms $\un{x} \rightarrow \un{y}$ are coloured string diagrams with bottom boundary $\un{x}$ and top boundary $\un{y}$, and with a ``wall" on the left which strings in $J$ can ``plug into". The vertices of the string diagrams (other than top and bottom boundary, and wall plug-ins) are either dots, trivalent vertices, or $2m_{st}$-valent vertices for colours $s$ and $t$:

    \begin{center}
        \scalebox{1.5}{\tikzfig{fig148}}
    \end{center}
    These diagrams are considered up to isotopy. They can contain polynomials in $R$ in any of the empty regions. The diagrams are subject to the same local relations as $\mathcal{D}$ (see section 5.2 of \cite{SoergelCalculus}), together with additional `wall relations'.\\
    \\
\vspace{3mm}
\noindent
\textbf{Wall relations:}
\begin{center}
\begin{equation}
    \tikzfig{wrel1-shade}
\end{equation}
\begin{equation}
    \tikzfig{wrel2-shade}
\end{equation}
\begin{equation}
    \tikzfig{wrel3-shade}
\end{equation}
\end{center}
The final relation above is shown for $m_{st} = 3$, but applies for any $m_{st}$. This concludes the definition of $\mathcal{M}_{BS}$.
\end{defn}
The wall relations above, together with the relations inherited from $\mathcal{D}$, imply further relations:
\begin{equation} \label{wall-bubble}
    \scalebox{0.7}{\tikzfig{fig43}}
\end{equation}
\begin{equation} \label{fork-death}
    \scalebox{0.7}{\tikzfig{fig132}}
\end{equation}
\begin{equation} \label{wall-fusion}
    \scalebox{0.7}{\tikzfig{fig133}}
\end{equation}

We have a functor $\tilde{\mathcal{A}}: \mathcal{M}_{BS} \rightarrow J\mathbb{BS}\text{Bim}$. It sends $\un{x}$ to $_{R^J}BS(\un{x})$. On morphisms other than wall plug-ins, $\tilde{\mathcal{A}}$ is defined in the introduction of \cite{SoergelCalculus}. On wall plug-ins it is defined as follows.
\begin{equation} \label{new generators}
    \scalebox{0.7}{\tikzfig{fig113}}
\end{equation}

Thus we consider the wall plug-in to be a degree -1 generator. We will see in Theorem \ref{equivalence} that $\tilde{\mathcal{A}}$ is an equivalence.
\subsection{Spherical light-leaves} \label{spherical light-leaves}

We construct a basis for $\text{Hom}_{\mathcal{M}_{BS}}(\un{x}, \un{y})$ using a variation of the light-leaves algorithm \cite{SoergelCalculus}.
Given an expression $\un{x} = (s_1, ..., s_n)$ and subexpression $\un{e} \subset \un{x}$, we consider the \textit{coset stroll} $z_0, ..., z_n$, where $z_j$ is the minimal coset representative of $W_J (s_1^{e_1} \cdots s_j^{e_j})$. This stroll is decorated with a sequence $d_1', ..., d_n' \in \{U0, U1, D0, D1, X0, X1\}$ determined by the rule:
\begin{equation} \label{d_i'}
    d_i' = \begin{cases}
    Ue_i & \text{if } z_{i - 1} s_i > z_{i-1} \text{ and } z_{i - 1} s_i \in \jw\\
    De_i & \text{if } z_{i - 1} s_i < z_{i-1} \text{ and } z_{i - 1} s_i \in \jw\\
    Xe_i & \text{if } z_{i - 1} s_i \notin \jw
\end{cases}
\end{equation}
We will construct a light-leaf map $SLL_{\un{x}, \un{e}}: \un{x} \rightarrow \un{z}$ inductively: at each step $k$ we construct a map $SLL_k : \un{x}_{\leq k} \rightarrow \un{z}_k$, where $\un{x}_{\leq k} := (s_1, ..., s_k)$ and $\un{z}_k$ is a reduced expression for $z_k$. We call these expressions $\un{z}_k$ \textit{intermediate expressions}. These $SLL_k$'s will be built out of maps $\phi_k: (\un{z}_{k-1}, s_k) \rightarrow \un{z}_k$ in exactly the same way as in the non-spherical case (see \cite{SoergelCalculus}). For $d_k' \in \{U0, U1, D0, D1\}$, the map $\phi_k$ is identical to the non-spherical case \cite{SoergelCalculus}. If $d_k' = X0$, the map is identical to $U0$. The only new case is $d_k' = X1$:
\begin{equation} \label{X1 algorithm}
    \tikzfig{fig36}
\end{equation}
Since $d_k' = X1$, by Lemma \ref{wall-crossing} (b) we have $z_{k - 1} s_k = t z_{k - 1}$ for some $t \in J$ (blue strand). We apply a rex move\footnote{A \textit{rex move} is a diagram involving only $2m_{st}$-valent vertices. They correspond to sequences of braid relations. By Matsumoto's Theorem, we can get from any reduced expression to any other via rex moves.} $\beta$ to pull $t$ to the left, plug $t$ into the wall, and apply a rex move $\alpha$ to achieve the target $\un{z}_k$.

\begin{exmp}
    Let $W = S_3$, $J = \{s\}$, $\un{x} = (t, s, t)$ and $\un{e} = (1, 1, 1)$. The coset stroll is $id, t, ts, ts$ with decoration $U1, U1, X1$. The first two steps are standard. For the third step ($X1$), since $tst$ is not a minimal coset representative ($tst=sts$), we apply a rex move $\beta$ (the 6-valent vertex) and plug the resulting $s$-strand into the wall:
\begin{center}
    \begin{tikzpicture}
        \shade[left color = white, right color = black] (0, 0) rectangle (1, 3);
        \draw[blue] (1.5, 0) -- (2, 1) -- (2, 3);
        \draw[blue] (2.5, 0) -- (2, 1);
        \draw[red] (2, 0) -- (2, 1) -- (2.5, 2) -- (2.5, 3);
        \draw[red, rounded corners = 10] (2, 1) -- (1.5, 2) -- (1, 2);
        \node at (-1.8, 1.5) {$SLL_{\un{x}, \un{e}} = $};
    \end{tikzpicture}
\end{center}
\end{exmp}

\subsection{The double-leaves basis} \label{sdl}

We construct the basis by gluing light-leaves. For each $z \in \jw$, fix a reduced expression $\un{z}$. Given subexpressions $\un{e} \subset \un{x}$ and $\un{f} \subset \un{y}$ such that $W_J \un{x}^{\un{e}} = W_J \un{y}^{\un{f}} = W_J z$, we define the \textit{spherical double-leaf} map by composing the light-leaf for $\un{e}$ with the inverted light-leaf\footnote{I.e. $\overline{SLL}_{\un{y}, \un{f}}$ denotes the light-leaf turned upside-down.} for $\un{f}$:
\[
\mathbb{SDL}_{\un{f}, \un{e}} := \overline{SLL}_{\un{y}, \un{f}} \circ SLL_{\un{x}, \un{e}} : \un{x} \rightarrow \un{z} \rightarrow \un{y}.
\]

\begin{thm} \label{basis}
    The collection $\mathcal{SDL}_{\un{x}, \un{y}} = \{ \mathbb{SDL}_{\un{f}, \un{e}} \mid W_J \un{x}^{\un{e}} = W_J \un{y}^{\un{f}} \}$ is a basis for $\text{Hom}_{\mathcal{M}_{BS}}(\un{x}, \un{y})$ as a right $R$-module.
\end{thm}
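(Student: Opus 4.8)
The plan is to prove linear independence and spanning separately, which are the contents of Chapters~\ref{Linear Independence} and~\ref{Spanning} respectively. For linear independence, I would use the localization functor. Since the realization is reflection faithful, we may pass to the localized category $J\mathbb{BS}\text{Bim}_Q$, where by Proposition~\ref{localize BS} every $_{R^J}BS(\un{w})$ decomposes as a direct sum $\bigoplus_{\un{e} \subset \un{w}} Q_{\un{w}^{\un{e}}}$ of standard bimodules, and by Lemma~\ref{hom2} the Hom space between localized standards $Q_x, Q_y$ is $Q$ when $W_J x = W_J y$ and $0$ otherwise. The strategy is then the standard light-leaves argument adapted to the spherical setting: show that under $\tilde{\mathcal{A}}$ followed by localization, the double-leaf map $\mathbb{SDL}_{\un{f}, \un{e}}$ is ``upper-triangular'' with respect to the natural partial order on pairs of subexpressions $(\un{e}, \un{f})$ coming from the coset strolls --- that is, its component landing in $\text{Hom}(Q_{\un{x}^{\un{e}}}, Q_{\un{y}^{\un{f}}})$ is a nonzero element of $Q$, and its components in the ``lower'' summands vanish. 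This requires tracking how each elementary piece $\phi_k$ of the light-leaf acts on the localized summands, with the genuinely new input being the $X1$ case~\eqref{X1 algorithm}, where one must check that the wall plug-in (interpreted via~\eqref{new generators}) together with the rex moves $\alpha, \beta$ behaves correctly on localizations --- essentially that plugging a strand $t \in J$ into the wall corresponds to the trace/projection $R \to R^J$ splitting appropriately after localizing, using that $Q/Q^J$ is Galois with group $W_J$.

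For spanning, I would argue by induction on the length of the expression $\un{y}$ (or on $\un{x}$), reducing a general morphism in $\text{Hom}_{\mathcal{M}_{BS}}(\un{x}, \un{y})$ to a combination of double-leaves by peeling off the rightmost strand and using the inherited relations from $\mathcal{D}$ together with the wall relations and their consequences~\eqref{wall-bubble}, \eqref{fork-death}, \eqref{wall-fusion}. The key structural fact needed is that any morphism can be pushed into a ``standard form'' factoring through some intermediate reduced expression $\un{z}$ for a minimal coset representative $z \in \jw$; here the wall relations are exactly what allow one to absorb or eliminate strands in $J$ that get pulled to the wall. One compares the resulting spanning set with the known basis of $\text{Hom}_{J\mathbb{BS}\text{Bim}}$ (or equivalently counts graded ranks against the combinatorics of the spherical module $M(J)$ via~\eqref{smult}) to conclude the double-leaves are not merely spanning but form a basis.

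The main obstacle I anticipate is the $X1$ case and, more broadly, the interaction between rex moves and the wall in both halves of the argument. In the non-spherical double-leaves proof of~\cite{SoergelCalculus}, a subtle point is that the light-leaf construction depends on choices of rex moves, and one must show the span (and the triangularity) is independent of these choices; here the same issue arises but is compounded by the fact that the $X1$ step routes a strand through the wall, so one must verify that the wall relations~\eqref{wall-bubble}--\eqref{wall-fusion} suffice to reconcile different rex-move choices $\alpha, \beta$. Establishing the precise upper-triangularity statement --- pinning down the correct partial order on subexpression pairs so that the coset strolls (rather than the ordinary strolls) govern it, and checking the diagonal entries are units in $Q$ --- is where the real work lies; once that combinatorial bookkeeping is set up correctly, both linear independence and the rank count should follow in the usual way.
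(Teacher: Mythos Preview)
Your linear-independence strategy is essentially the paper's: localize, view each $\mathbb{SDL}_{\un{f},\un{e}}$ as a matrix between $\bigoplus Q_{\un{x}^{\un{e}'}}$ and $\bigoplus Q_{\un{y}^{\un{f}'}}$, and prove upper-triangularity for a partial order built from coset strolls with nonzero diagonal entries. You have also correctly flagged the $X1$ step and the refinement of the partial order (coset strolls rather than ordinary strolls, with a tiebreaker on $X0/X1$) as the places requiring care.

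Your spanning outline, however, understates where the difficulty sits and misses the paper's main manoeuvre. The paper does \emph{not} argue directly by peeling strands and applying wall relations; instead it first pushes every morphism $\un{x}\to\un{z}$ (with $\un{z}$ a reduced m.c.r.) into the form of a morphism in $\mathcal{D}$ targeting $(\un{u},\un{z})$ followed by plugging $\un{u}$ into the wall, and then invokes the already-known non-spherical light-leaves result for $\mathcal{D}$ (Proposition~\ref{nll}). The genuine work is then to convert those non-spherical light-leaves into spherical ones, and this requires a bespoke construction (Section~\ref{construction2}): one chooses the intermediate expressions and rex moves in the non-spherical light-leaves so that the $\un{u}$-part and $\un{z}$-part separate, with a delicate ``sweep'' argument in the $d_k=D0,\, d_k'=X0$ case to drag a trivalent vertex across via iterated two-colour associativity. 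Your sketch gives no hint of this reduction-to-$\mathcal{D}$ step or of the sweep construction, and a direct inductive attack using only the wall relations would get stuck precisely at the $D0/X0$ interaction. Finally, the rank-count you propose at the end is superfluous once you have both spanning and linear independence; in the paper it appears only later (Proposition~\ref{matching ranks}) to upgrade the basis theorem to the categorical equivalence, not to establish the basis itself.
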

\noindent We will prove this in Chapters \ref{Linear Independence} and \ref{Spanning}.
\begin{rem} \label{flex}
    There is some ambiguity here: there is flexibility in the construction of the light-leaves in the choice of rex moves and intermediate expressions. When constructing the set $\mathcal{SDL}_{\un{x}, \un{y}}$, we first fix a choice of $SLL_{\un{x}, \un{e}}$ for each pair $\un{x}, \un{e}$. Then the Theorem says we obtain a basis no matter how we choose the light-leaves.
\end{rem}
\section{Standard Diagrammatics} \label{Standard Diagrammatics}

In Chapter \ref{Linear Independence} we will prove the linear independence of the spherical double-leaves. We will do this by first localizing the double-leaves; that is, passing to the category $\mathcal{M}_{BS, Q} := \mathcal{M}_{BS} \otimes_R Q$. Then, we will pass to the Karoubi envelope $Kar(\mathcal{M}_{BS, Q})$, where our objects will decompose nicely and morphisms will be easy to work with.

 In this chapter, we will establish the following commutative diagram (where $\mathcal{M}_Q^{std}$ is a diagrammatic version of $JStd$Bim$_Q$, which we will define below):

\begin{equation} \label{commutative}
    \tikzfig{fig91}
\end{equation}

Moreover, we will show that the functor $\widetilde{\mathcal{S}td}$ is an equivalence of categories, and that when our realization is faithful, the other two functors in (\ref{commutative}) are also equivalences. This will allow us to infer the properties of $Kar(\mathcal{M}_{BS, Q})$ that will be necessary in proving linear independence of the double-leaves in the next chapter. 
\begin{defn}
    The category $\mathcal{M}^{std}$ is defined just as $\mathcal{D}^{std}$ (see \cite[Def.~4.3]{SoergelCalculus}), with the addition of ``wall generators" subject to the following relations.
    
    \vspace{1em} 
    
    \noindent\begin{minipage}{\linewidth} 
        \begin{minipage}[t]{0.28\linewidth}
            \vspace{0pt} 
            \begin{equation}\label{mrel1}\centering
              \scalebox{0.7}{\tikzfig{fig94}}
            \end{equation}
        \end{minipage}\hfill
        \begin{minipage}[t]{0.28\linewidth}
            \vspace{0pt}
            \begin{equation}\label{mrel2}\centering
              \scalebox{0.7}{\tikzfig{fig95}}
            \end{equation}
        \end{minipage}\hfill
        \begin{minipage}[t]{0.28\linewidth}
            \vspace{0pt}
            \begin{equation}\label{mrel3}\centering
              \scalebox{0.7}{\tikzfig{fig96}}
            \end{equation}
        \end{minipage}
    \end{minipage}

\end{defn}

Note that when $J = \varnothing$, we have that $\mathcal{M}^{std} = \mathcal{D}^{std}$. We set $\mathcal{M}_Q^{std} := \mathcal{M}^{std} \otimes Q$, allowing elements of $Q$ in the regions of our diagrams. 

\begin{defn} We define a functor $\tilde{\mathcal{F}}^{\text{std}}: \mathcal{M}^{\text{std}} \rightarrow JStd$Bim as follows. On $\mathcal{D}^{\text{std}} \subset \mathcal{M}^{\text{std}}$ it is given by the functor $\mathcal{F}^{\text{std}} : \mathcal{D}^{\text{std}} \rightarrow Std$Bim defined in \cite[Def.~4.9]{SoergelCalculus}. On the wall plug-in it is defined as follows.
\begin{equation} \label{F wall-generators}
    \scalebox{0.7}{\tikzfig{fig97}}
\end{equation}
The functor $\tilde{\mathcal{F}}_Q^{std} : \mathcal{M}_Q^{std} \rightarrow JStd\text{Bim}_Q$ is given by localization $\tilde{\mathcal{F}}^{std} \otimes Q$.
\end{defn}

\begin{defn}
    Let $\mathcal{M}_{BS, Q} := \mathcal{M}_{BS} \otimes Q$ be the same as $\mathcal{M}_{BS}$, except we are allowed to have elements of $Q$ on the right of our diagrams. This means we are allowed elements in $Q$ anywhere in our diagrams, since using the polynomial forcing relation  \cite[Eq.~5.2]{SoergelCalculus} we have:
\begin{equation} \label{loc-arg}
    \scalebox{0.7}{\tikzfig{fig64}}
\end{equation}
\end{defn}

We construct a diagrammatic category $Kar(\mathcal{M}_{BS, Q})$. We will see in Proposition \ref{Std} that $Kar(\mathcal{M}_{BS, Q}) \cong \mathcal{M}_Q^{std}$, and the latter is idempotent complete, so that $Kar(\mathcal{M}_{BS, Q})$ is indeed the Karoubi envelope of $\mathcal{M}_{BS, Q}$.

\begin{defn}
    To construct $Kar(\mathcal{M}_{BS, Q})$, take $\mathcal{M}_{BS, Q}$ and include new objects $s^{std}$ for $s \in S$, called reflection indices. The identity on $s^{std}$ is represented by a dashed line, as in $\mathcal{M}^{std}$. We then include new morphisms $\pi : s \rightarrow s^{\text{std}}$ and $\iota : s^{\text{std}} \rightarrow s$ in $Kar(\mathcal{M}_{BS, Q})$, which we call \textit{bivalent vertices}. 
    \begin{center}
        \scalebox{0.7}{\tikzfig{fig150}}
    \end{center}
    The morphisms $\pi$ and $\iota$ satisfy the following relations.
\begin{equation} \label{Krel1}
    \scalebox{0.7}{\tikzfig{fig67}}
\end{equation}
\begin{equation} \label{Krel2}
    \scalebox{0.7}{\tikzfig{fig68}}
\end{equation}
 Note that we are not considering our diagrams up to isotopy\footnote{More precisely, solid strands are allowed to move under isotopy, just as in $\mathcal{M}_{BS, Q}$, whereas dotted strands and the maps $\pi, \; \iota$ are not.}. In fact, in the end isotopic diagrams in $Kar(\mathcal{M}_{BS, Q})$ will only be the same up to sign. This concludes the definition of $Kar(\mathcal{M}_{BS, Q})$.
\end{defn}

\begin{rem} \label{KarDecomp}
    The morphisms $\pi$ and $\iota$\footnote{More precisely, we would need a factor of $\frac{1}{\al_s}$ in $\pi$ or $\iota$ to be the projection and inclusion morphisms.} express projection and inclusion of a direct summand $s^{std}$ of $s$. By putting these morphisms next to one-another, we can see that any object of $Kar(\mathcal{M}_{BS, Q})$ decomposes as a direct sum of sequences of reflection indices. In particular, for any $\un{x} \in \mathcal{M}_{BS, Q} \subset Kar(\mathcal{M}_{BS, Q})$, we have that 
\begin{equation}
    \un{x} \cong \bigoplus_{\un{e} \subset \un{x}} Q_{\un{e}},
\end{equation}
where $Q_{\un{e}}$ is the object in $Kar(\mathcal{M}_{BS, Q})$ given by the tensor product of reflection indices $s_k^{std}$ for those $k$ such that $e_k = 1$\footnote{This is abusive notation, since we are confusing an object of $Kar(\mathcal{M}_{BS, Q})$ with its image in $JStd$Bim under $\tilde{\mathcal{G}}$ (see Definition \ref{Geet}). However, other notations were more confusing.}. This decomposition will be crucial in proving linear independence of double-leaves.
\end{rem}

Using the above relations, we obtain the following crucial consequence:
\begin{equation} \label{zero-dot}
    \scalebox{0.7}{\tikzfig{fig75}}
\end{equation}
Combining (\ref{zero-dot}) and (\ref{Krel1}) with polynomial forcing, we can move polynomials across dotted strands in the same way as in $\mathcal{M}^{\text{std}}$ (see \cite[Eq.~4.1]{SoergelCalculus}).
\begin{defn} \label{Geet}
    The functor $\tilde{\mathcal{G}} : Kar(\mathcal{M}_{BS, Q}) \rightarrow JStd\text{Bim}_Q$ is defined as follows. On $\mathcal{M}_{BS, Q} \subset Kar(\mathcal{M}_{BS, Q})$, $\tilde{\mathcal{G}}$ is defined by composing $\tilde{\mathcal{A}} \otimes Q: \mathcal{M}_{BS, Q} \rightarrow J\mathbb{BS}\text{Bim}_Q$ (for $\tilde{\mathcal{A}}$ described in Section \ref{The diagrammatic category}) with $J\mathbb{BS}\text{Bim}_Q \hookrightarrow JStd\text{Bim}_Q$. Then, $\tilde{\mathcal{G}}$ sends $s^{std}$ to $Q_s$. The images of $\pi$ and $\iota$ are given below.
    \begin{equation} \label{bivalent}
    \scalebox{0.7}{\tikzfig{fig66}}
\end{equation}
Here, $d_s := \frac{1}{2}(\al_s \otimes 1 - 1\otimes \al_s)$. This concludes the definition of $\tilde{\mathcal{G}}$.
\end{defn}

The final map in (\ref{commutative}) we need to define is $\widetilde{\mathcal{S}td} : \mathcal{M}_Q^{std} \rightarrow Kar(\mathcal{M}_{BS, Q})$. On objects the functor is simple: it sends $s^{std}$ to $s^{std}$. To describe where $\widetilde{\mathcal{S}td}$ sends dashed cups and caps, we introduce some diagrammatic short-hand into $Kar(\mathcal{M}_{BS, Q})$. Include dashed cups and caps as ``new" generators in $Kar(\mathcal{M}_{BS, Q})$\footnote{We will see shortly that they can be expressed using morphisms already in $Kar(\mathcal{M}_{BS, Q})$, so that we are not changing the definition of $Kar(\mathcal{M}_{BS, Q})$, simply adding a diagrammatic short-hand.}. These are subject to the relations
\begin{equation}
    \scalebox{0.7}{\tikzfig{fig61}}
\end{equation}
\begin{equation}\label{left-slide}
    \scalebox{0.7}{\tikzfig{fig77}}
\end{equation}
\begin{equation}\label{right-slide}
    \scalebox{0.7}{\tikzfig{fig78}}
\end{equation}
From these we can see that by adding dashed caps and cups, we are not actually adding anything new to our category. For example, we can re-express a dashed cap as
\begin{center}
    \scalebox{0.7}{\tikzfig{fig76}}
\end{center}
Furthermore, these relations tell us that isotopic diagrams in $Kar(\mathcal{M}_{BS, Q})$ are only the same up to sign. For example, 
\begin{center} \label{sign}
    \scalebox{0.7}{\tikzfig{fig79}}
\end{center}
We set these dashed cups and caps to be the images of dashed cups and caps under $\widetilde{\mathcal{S}td}$. Similarly, in $Kar(\mathcal{M}_{BS, Q})$ we set 
\begin{equation} \label{2m}
    \scalebox{0.7}{\tikzfig{fig83}}
\end{equation}
where $\rho$ is the product of \textit{positive roots} corresponding to the dihedral group $\lgl s, t \rgl$ (see Section 3.4 of \cite{two-color} for a definition of positive roots). We set the above diagram to be the image of the dashed $2m_{st}$-valent vertex under $\widetilde{\mathcal{S}td}$. See section 5.4 of \cite{SoergelCalculus} for a proof that this assignment respects isotopy in $\mathcal{M}_Q^{std}$.

Finally, we define the images of the wall plug-ins in $\mathcal{M}_Q^{std}$ as follows.
\begin{equation} \label{Std wall-generators}
    \scalebox{0.7}{\tikzfig{fig101}}
\end{equation}
We have thus defined all categories and maps in diagram (\ref{commutative}). 
\begin{prop}
    The diagram (\ref{commutative}) is commutative.
\end{prop}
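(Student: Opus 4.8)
The plan is to verify commutativity of diagram~(\ref{commutative}) generator by generator, exploiting that all four functors are already defined compatibly on the non-spherical subcategories. The diagram~(\ref{commutative}) involves the functors $\widetilde{\mathcal{S}td}: \mathcal{M}_Q^{std} \to Kar(\mathcal{M}_{BS,Q})$, $\tilde{\mathcal{G}}: Kar(\mathcal{M}_{BS,Q}) \to JStd\text{Bim}_Q$, $\tilde{\mathcal{F}}_Q^{std}: \mathcal{M}_Q^{std} \to JStd\text{Bim}_Q$, and the localization equivalences, so the content is the single triangle: $\tilde{\mathcal{G}} \circ \widetilde{\mathcal{S}td} = \tilde{\mathcal{F}}_Q^{std}$. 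First I would observe that on objects this is immediate, since $\widetilde{\mathcal{S}td}$ sends $s^{std} \mapsto s^{std}$, then $\tilde{\mathcal{G}}$ sends $s^{std} \mapsto Q_s$, which agrees with $\tilde{\mathcal{F}}_Q^{std}(s^{std}) = Q_s$. So the whole proof reduces to checking agreement on the generating morphisms of $\mathcal{M}_Q^{std}$.

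The generating morphisms split into those inherited from $\mathcal{D}^{std}$ (dashed cups, dashed caps, dashed $2m_{st}$-valent vertices, polynomials) and the new wall plug-ins. For the $\mathcal{D}^{std}$-generators, commutativity of the corresponding non-spherical triangle — i.e. $\mathcal{G} \circ \mathcal{S}td = \mathcal{F}_Q^{std}$ — is exactly the content of Section~5.4 of \cite{SoergelCalculus}, so I would cite that and note that $\tilde{\mathcal{G}}, \widetilde{\mathcal{S}td}, \tilde{\mathcal{F}}_Q^{std}$ restrict to $\mathcal{G}, \mathcal{S}td, \mathcal{F}_Q^{std}$ on these generators. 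The only genuine verification is for the wall plug-in $\pi^{wall}: s \to \varnothing$ (and its adjoint). Here I would trace through the two paths explicitly: along one path, $\widetilde{\mathcal{S}td}$ sends the dashed wall plug-in to the composite defined in~(\ref{Std wall-generators}), which is a diagram in $Kar(\mathcal{M}_{BS,Q})$ built out of a bivalent vertex $\iota$ (or $\pi$), the solid wall plug-in of $\mathcal{M}_{BS}$, and possibly a polynomial coefficient; applying $\tilde{\mathcal{G}}$ to that composite uses the images~(\ref{bivalent}) of $\pi,\iota$ together with the image~(\ref{new generators}) of the solid wall plug-in under $\tilde{\mathcal{A}}$. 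Along the other path, $\tilde{\mathcal{F}}_Q^{std}$ sends the dashed wall plug-in directly to the bimodule map in~(\ref{F wall-generators}). I would then check these two bimodule maps $_{Q^J}Q_s \to {}_{Q^J}Q$ (or the appropriate shift) coincide — this is a short computation comparing where $1 \otimes 1$ is sent, remembering the factor $d_s = \tfrac12(\alpha_s \otimes 1 - 1 \otimes \alpha_s)$ appearing in~(\ref{bivalent}) and how it interacts with the definition~(\ref{F wall-generators}).

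The main obstacle I anticipate is bookkeeping rather than conceptual: getting the normalizations, grading shifts, and signs exactly right in the wall plug-in computation. In particular, because $Kar(\mathcal{M}_{BS,Q})$ is explicitly \emph{not} taken up to isotopy (isotopic diagrams differ by signs, as noted after~(\ref{sign})), I must fix orientations/readings of the dashed cups, caps, and bivalent vertices consistently between the two paths, and confirm that the sign conventions built into~(\ref{Std wall-generators}) are precisely those that make the square commute on the nose rather than up to sign. A secondary point requiring care is well-definedness: since the dashed cup/cap short-hand is only introduced via relations~(\ref{left-slide}) and~(\ref{right-slide}), I should confirm that the image under $\tilde{\mathcal{G}}$ of the re-expressed dashed cap agrees with $\tilde{\mathcal{F}}_Q^{std}$ of the dashed cap regardless of which expression is chosen — but this follows automatically once the generator-level check is done and both functors are known to be well-defined. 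Once the wall plug-in case is settled, the proposition follows by functoriality, since any morphism in $\mathcal{M}_Q^{std}$ is a composite and tensor product of generators on which the two functors agree.
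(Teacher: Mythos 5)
Your proposal matches the paper's proof essentially verbatim: commutativity is immediate on objects, the $\mathcal{D}_Q^{std}$ part is cited from the end of Section 5.4 of \cite{SoergelCalculus}, and the only new check is the wall plug-ins, done by pushing (\ref{Std wall-generators}) through $\tilde{\mathcal{G}}$ via (\ref{bivalent}) and (\ref{new generators}) and comparing with (\ref{F wall-generators}). The additional caution you flag about signs and normalizations is sensible but does not change the argument.
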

\begin{proof}
    This is immediate on objects. On morphisms, commutativity on $\mathcal{D}_Q^{std} \subset \mathcal{M}_{Q}^{std}$ follows from Chapter 5 of \cite{SoergelCalculus} (see the comment at the end of section 5.4). To extend this to $\mathcal{M}_Q^{std}$, we only need to show commutativity on wall generators. The images of the wall-generators under $\widetilde{\mathcal{S}td}$ were given in (\ref{Std wall-generators}). We then want to find the image of these under $\tilde{\mathcal{G}}$. Using the algebraic images given in (\ref{bivalent}) and (\ref{new generators}), we get that the image under $\tilde{\mathcal{G}}$ is 
\begin{equation} \label{G wall-generators}
    \scalebox{0.7}{\tikzfig{fig106}}
\end{equation}
which matches the images of the wall-generators under $\tilde{\mathcal{F}}_Q^{\text{std}}$ given in (\ref{F wall-generators})\footnote{The morphisms in (\ref{G wall-generators}) come from straight-forward algebraic computations using the Demazure operators.}. Therefore the diagram (\ref{commutative}) commutes.
\end{proof}

\begin{prop} \label{Ftilde}
    The functor $\tilde{\mathcal{F}}_Q^{std} : \mathcal{M}_Q^{std} \rightarrow JStd\text{Bim}_Q$ is faithful. When our realization is faithful,  $\tilde{\mathcal{F}}_Q^{std}$ is an equivalence of categories. 
\end{prop}

It is clear that $\tilde{\mathcal{F}}_Q^{\text{std}}$ is essentially surjective. To show fully-faithfulness, we use the following result for non-spherical categories.  

\begin{prop} \label{F_0 Std}
    The space $\text{Hom}_{\mathcal{D}_Q^{std}}(\un{x}, \un{y})$ is isomorphic to $Q$ when $x = y$, and is zero otherwise \cite[Thm 4.8]{SoergelCalculus}.
\end{prop}

\noindent \textit{Proof of Proposition \ref{Ftilde}.}  For any two $x, y \in W$ we have that there is at most one morphism in $\operatorname{Hom}_{JStd\text{Bim}_Q}(Q_x, Q_y)$ up to $Q$\footnote{This is because a morphism $\varphi: Q_x \rightarrow Q_y$ is determined by $\varphi(1)$.}. Let us take two expressions $\un{x}, \un{y}$ of $x, y \in W$ such that $W_J x = W_J y$. We want to show that there is only one morphism $\un{x} \rightarrow \un{y}$ in $\mathcal{M}_{ Q}^{\text{std}}$ up to $Q$, and that this does not get sent to zero under $\tilde{\mathcal{F}}_Q^{std}$. We will do this by constructing a specific morphism, and then showing that any other morphism $\un{x} \rightarrow \un{y}$ is the same as the one we constructed. 

We construct a morphism $\phi : \un{x} \rightarrow \un{y}$ as follows. Let $x = uz$ and $y = u'z$, for $u, u' \in W_J$ and $z \in \jw$. We know these decompositions are unique by Lemma \ref{decomp}. Set $v := u'u^{-1}$, and fix a rex $\un{v}$ of $v$. Then $v x = v u z = u' z = y$. This means that $(\un{v}, \un{x})$ and $\un{y}$ are both expressions for $y$. Therefore by Proposition \ref{F_0 Std}, there is a unique morphism (up to $Q$) $\psi$ in $\mathcal{D}_{Q}^{\text{std}}$ from $(\un{v}, \un{x})$ to $\un{y}$. We construct our morphism $\phi: \un{x} \rightarrow \un{y}$ as in the following diagram.
\begin{center}
    \scalebox{0.7}{\tikzfig{fig98}}
\end{center}
Now, consider an arbitrary non-zero morphism $\un{x} \rightarrow \un{y}$ in $\mathcal{M}_{Q}^{\text{std}}$. We can assume without loss of generality that there are no $Q$-boxes, since these can all be shifted to the right. We will shift any wall-strands to the bottom via isotopy.
\begin{center}
    \scalebox{0.7}{\tikzfig{fig99}}
\end{center}
So we have some expression $\un{w}$ of $w \in W_J$ coming in from the wall. This means that $wx = y$, since in the box labelled `??' we have a non-zero morphism $(\un{w}, \un{x}) \rightarrow \un{y}$ in $\mathcal{D}_Q^{std}$. This means in particular that $wx = y = vx$, where $v$ is defined above, so that $w = v$. Now, we can in fact choose the box `??' to be any morphism we want in $\mathcal{D}_Q^{std}$, since they will all be the same up to $Q$. Let us choose the `??' box to be a map $\gamma: \un{w} \rightarrow \un{v}$ consisting of rex moves and caps\footnote{This is possible since $\un{v}$ is reduced.}, followed by the morphism $\psi : (\un{v}, \un{x}) \rightarrow \un{y}$ we chose above. 
\begin{center}
    \scalebox{0.7}{\tikzfig{fig100}}
\end{center}
Since $\gamma$ consists of $2m$-valent vertices and caps, it can be pulled into the wall using relations (\ref{mrel2}) and (\ref{mrel3}). We end up with the map $\phi$. 

Thus we have seen that if $W_Jx = W_Jy$, then there is only one non-zero morphism $\un{x} \rightarrow \un{y}$ in $\mathcal{M}_{Q}^{\text{std}}$ up to scalar. This means that the morphism spaces Hom$_{\mathcal{M}_{Q}^{\text{std}}}(\un{x}, \un{y})$ and Hom$_{JStd\text{Bim}_{Q}}(Q_x, Q_y)$ are both isomorphic to $Q$. Furthermore, the image of any non-zero morphism in Hom$_{\mathcal{M}_{Q}^{\text{std}}}(\un{x}, \un{y})$ under $\tilde{\mathcal{F}}_Q^{std}$ sends the 1-tensor to the 1-tensor (times some $q \in Q$), so this is a non-zero map. 

Next, we show that if $W_J x \neq W_J y$, then there are no non-zero morphisms $\un{x} \rightarrow \un{y}$. Suppose there is a non-zero morphism $\varphi: \un{x} \rightarrow \un{y}$ in $\mathcal{M}_Q^{std}$. Then using the above diagrams, this means there is a non-zero morphism $(\un{w}, \un{x}) \rightarrow \un{y}$ in $\mathcal{D}_Q^{std}$ for some expression $\un{w}$ of $w \in W_J$, so that $wx = y$ and $W_J x = W_J y$, a contradiction. So $\operatorname{Hom}_{\mathcal{M}_Q^{std}}(\un{x}, \un{y}) = 0$ when $W_J x \neq W_J y$, so that this will map faithfully into $\operatorname{Hom}_{JStd\text{Bim}_Q}(Q_x, Q_y)$. 

Finally, in the case that our realization is faithful, Lemma \ref{hom2} tells us that $\tilde{\mathcal{F}}_Q^{std}$ is an equivalence. $\hfill \square$\\
\\
\noindent In the course of the above proof, we showed the following.

\begin{cor} \label{HomStdQ}
    The space $\operatorname{Hom}_{\mathcal{M}_Q^{std}}(\un{x}, \un{y})$ is isomorphic to $Q$ when $W_Jx = W_Jy$, and is zero otherwise.
\end{cor}

\begin{prop} \label{Std}
    The functor $\widetilde{\mathcal{S}td} : \mathcal{M}_{Q}^{\text{std}} \rightarrow Kar(\mathcal{M}_{BS, Q})$ is an equivalence of categories. 
\end{prop}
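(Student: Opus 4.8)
The plan is to establish the three defining properties of an equivalence for $\widetilde{\mathcal{S}td}$: faithfulness, which is immediate from the commutative triangle (\ref{commutative}); essential surjectivity, which follows from the decomposition of objects into reflection indices; and fullness, which is the only substantial step. For faithfulness, note that $\tilde{\mathcal{G}}\circ\widetilde{\mathcal{S}td}=\tilde{\mathcal{F}}_Q^{std}$ is an equivalence by Corollary \ref{FQ equiv}, hence faithful, and a functor whose composite with another functor is faithful is itself faithful. Unwinding this, for dotted expressions $a,b$ (sequences of reflection indices) the map $\widetilde{\mathcal{S}td}\colon\text{Hom}_{\mathcal{M}_Q^{std}}(a,b)\to\text{Hom}_{Kar(\mathcal{M}_{BS,Q})}(\widetilde{\mathcal{S}td}(a),\widetilde{\mathcal{S}td}(b))$ is injective; moreover, transporting the source through the equivalence $\tilde{\mathcal{F}}_Q^{std}$ and applying Lemma \ref{hom2} to the resulting $Q$-standard bimodules, $\text{Hom}_{\mathcal{M}_Q^{std}}(a,b)$ is free of rank $1$ over $Q$ when $W_J a=W_J b$ and is zero otherwise.

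For essential surjectivity, recall that every object of $Kar(\mathcal{M}_{BS,Q})$ decomposes as a direct sum of sequences of reflection indices: indeed $\pi\colon s\to s^{\text{std}}$ is a split epimorphism by (\ref{Krel1}), and using (\ref{zero-dot}) together with the barbell relation of $\mathcal{D}$ the complementary idempotent $\mathrm{id}_s-\iota\pi$ factors through the empty object $\varnothing$ (after inverting $\al_s$, which is legitimate over $Q$) via a pair of dots, so that $s\cong\varnothing\oplus s^{\text{std}}$ in $Kar(\mathcal{M}_{BS,Q})$; one then tensors. Every dotted expression is $\widetilde{\mathcal{S}td}$ of the corresponding object of $\mathcal{M}_Q^{std}$, and since both categories are additive, $\widetilde{\mathcal{S}td}$ is essentially surjective.

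For fullness, I would show that every morphism $\widetilde{\mathcal{S}td}(a)\to\widetilde{\mathcal{S}td}(b)$ in $Kar(\mathcal{M}_{BS,Q})$ equals $\widetilde{\mathcal{S}td}(g)$ for some $g\colon a\to b$ in $\mathcal{M}_Q^{std}$, by reducing an arbitrary such diagram to a normal form built only from dotted generators. The reduction has two parts. First, interior solid strands that never meet the wall are eliminated exactly as in Section 5.4 of \cite{SoergelCalculus}: using polynomial forcing together with relations (\ref{Krel1}), (\ref{Krel2}), (\ref{zero-dot}) and the resulting rule for moving polynomials across dotted strands, any solid sub-diagram closed off by bivalent vertices equals an element of $Q$ times a diagram built from the dotted dashed cups and caps of (\ref{left-slide})--(\ref{right-slide}) and the dotted $2m_{st}$-valent vertices of (\ref{2m}). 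Second, the only solid material that can still appear is solid strands meeting the wall; using the wall relations (\ref{mrel1})--(\ref{mrel3}) and their consequences (\ref{wall-bubble})--(\ref{wall-fusion}), one slides each solid wall plug-in to the bottom of the diagram and, via (\ref{Std wall-generators}) and the decomposition $s\cong\varnothing\oplus s^{\text{std}}$, rewrites it in terms of a dotted wall plug-in up to an element of $Q$. What remains lies entirely in the image of $\widetilde{\mathcal{S}td}$, proving fullness. Combining the three properties shows $\widetilde{\mathcal{S}td}$ is an equivalence, and composing the resulting inverse with $\tilde{\mathcal{F}}_Q^{std}$ then also gives that $\tilde{\mathcal{G}}$ is an equivalence (Corollary \ref{Gequivalence}).

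The main obstacle is the fullness step, and within it the handling of strands adjacent to the wall. The non-wall reduction is a direct appeal to \cite{SoergelCalculus}, but one must check by hand that the interplay between solid wall plug-ins, dotted wall plug-ins, bivalent vertices and the dihedral ($2m_{st}$-valent) diagrams is rigid enough that every configuration touching the wall collapses, modulo $Q$-scalars, to a single dotted wall plug-in, so that no morphisms arise beyond those in the image of $\widetilde{\mathcal{S}td}$. Equivalently, one needs the wall relations to be strong enough that $\text{Hom}_{Kar(\mathcal{M}_{BS,Q})}(\widetilde{\mathcal{S}td}(a),\widetilde{\mathcal{S}td}(b))$ has $Q$-dimension at most $1$, matching the source; the faithfulness already in hand then upgrades any such bound to an isomorphism.
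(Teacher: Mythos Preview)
Your proposal is correct and follows essentially the same three-part strategy as the paper: faithfulness via the commutative triangle (\ref{commutative}) and Corollary \ref{FQ equiv}, essential surjectivity via the decomposition of objects into reflection indices, and fullness by reducing non-wall solid material to the image of $\widetilde{\mathcal{S}td}$ using \cite{SoergelCalculus} and then handling the wall plug-ins separately. The only organizational difference is that the paper first isolates the wall strands by isotopy (moving them to the top) and then applies \cite[Prop.~5.23]{SoergelCalculus} to the remaining box, whereas you eliminate the non-wall solid material first; also note that the relations you cite as (\ref{mrel1})--(\ref{mrel3}) live in $\mathcal{M}^{std}$ rather than $Kar(\mathcal{M}_{BS,Q})$, so for the fullness step you should instead use the $\mathcal{M}_{BS}$ wall relations and the explicit identity the paper records expressing the solid wall plug-in as (a $Q$-multiple of) a bivalent vertex composed with a dotted wall plug-in.
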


\noindent \textit{Proof.} Essential surjectivity is immediate as every object in $Kar(\cM_{BS, Q})$ decomposes into reflection indices. To show fullness, we can take any morphism in $Kar(\cM_{BS, Q})$ and move the wall-strands to the top\footnote{Recall that not all diagrams in $Kar(\cM_{BS, Q})$ are well-defined up to isotopy. For example, see (\ref{sign}). However, the problem only comes from rotating bivalent vertices, which we need not do to move these wall-strands to the top.}.
\begin{equation} \label{full}
    \scalebox{0.7}{\tikzfig{fig107}}
\end{equation}
Then, the diagram in the box labelled `??' is in the image of $\widetilde{\mathcal{S}td}$ by Proposition 5.23 of \cite{SoergelCalculus}\footnote{This gives an equivalence $\mathcal{S}td: \mathcal{D}_Q^{\text{std}} \rightarrow Kar(\mathcal{D}_Q)$.}.

Now, the solid strands plugging into the wall in the above diagram can also be seen to be in the image of $\widetilde{\mathcal{S}td}$, using the relation
\begin{equation}
    \scalebox{0.7}{\tikzfig{fig108}}
\end{equation}
We obtained the above relation by dividing (\ref{Krel2}) by $\al_s$, and then pulling the dot into the wall. The right-hand side of this relation is in the image of $\widetilde{\mathcal{S}td}$, since the map
\begin{center}
    \scalebox{0.7}{\tikzfig{fig109}}
\end{center}
is the image of 
\begin{center}
    \scalebox{0.7}{\tikzfig{fig110}}
\end{center}

Finally, to show $\widetilde{\mathcal{S}td}$ is faithful, take two morphisms $\phi, \psi$ in $\mathcal{M}_{Q}^{\text{std}}$ such that $\widetilde{\mathcal{S}td}(\phi) = \widetilde{\mathcal{S}td}(\psi)$. This implies $\tilde{\mathcal{G}} \circ \widetilde{\mathcal{S}td}(\phi) = \tilde{\mathcal{G}} \circ \widetilde{\mathcal{S}td}(\psi)$, so by (\ref{commutative}) this means $\tilde{\mathcal{F}}_Q^{\text{std}}(\phi) = \tilde{\mathcal{F}}_Q^{\text{std}}(\psi)$. By Proposition \ref{Ftilde} we have $\phi = \psi$, and the functor $\widetilde{\mathcal{S}td}$ is faithful. $\hfill \square$
\begin{cor} \label{Gequivalence}
    The functor $\tilde{\mathcal{G}}$ is faithful. When our realization is faithful, $\tilde{\mathcal{G}} : Kar(\mathcal{M}_{BS, Q}) \rightarrow JStd$Bim$_Q$ is an equivalence.
\end{cor}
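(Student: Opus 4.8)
The plan is to obtain this as a purely formal consequence of the two equivalences already established, using the commutativity of the triangle (\ref{commutative}). That triangle has vertices $\mathcal{M}_Q^{\text{std}}$, $Kar(\mathcal{M}_{BS, Q})$ and $JStd\text{Bim}_Q$, with edges $\widetilde{\mathcal{S}td}$, $\tilde{\mathcal{G}}$ and $\tilde{\mathcal{F}}_Q^{\text{std}}$, and the preceding proposition shows it commutes, i.e. there is a natural isomorphism $\tilde{\mathcal{G}} \circ \widetilde{\mathcal{S}td} \cong \tilde{\mathcal{F}}_Q^{\text{std}}$.

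First I would invoke Proposition \ref{Std}: since the realization is faithful, $\widetilde{\mathcal{S}td}$ is an equivalence, so it admits a quasi-inverse $\widetilde{\mathcal{S}td}^{-1} : Kar(\mathcal{M}_{BS, Q}) \rightarrow \mathcal{M}_Q^{\text{std}}$. Precomposing the natural isomorphism $\tilde{\mathcal{G}} \circ \widetilde{\mathcal{S}td} \cong \tilde{\mathcal{F}}_Q^{\text{std}}$ with $\widetilde{\mathcal{S}td}^{-1}$ yields $\tilde{\mathcal{G}} \cong \tilde{\mathcal{F}}_Q^{\text{std}} \circ \widetilde{\mathcal{S}td}^{-1}$. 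By Corollary \ref{FQ equiv}, $\tilde{\mathcal{F}}_Q^{\text{std}}$ is also an equivalence (again using faithfulness of the realization), so $\tilde{\mathcal{G}}$ is naturally isomorphic to a composite of two equivalences, hence an equivalence. That completes the argument.

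I expect no real obstacle here: all of the substantive work — constructing the functors, checking that they respect the defining relations, and proving the equivalences $\widetilde{\mathcal{S}td}$ and $\tilde{\mathcal{F}}_Q^{\text{std}}$ — has already been carried out. The only minor point is that ``commutative'' for the triangle (\ref{commutative}) should be read up to natural isomorphism rather than on the nose, but since the class of equivalences is closed under natural isomorphism and composition, this causes no difficulty. Should a more hands-on argument be desired, one could instead check essential surjectivity of $\tilde{\mathcal{G}}$ directly from the decomposition of every object of $Kar(\mathcal{M}_{BS, Q})$ into sequences of reflection indices, and fully-faithfulness by transporting the Hom-spaces along $\widetilde{\mathcal{S}td}$ and comparing with $\tilde{\mathcal{F}}_Q^{\text{std}}$; but the two-out-of-three route is cleaner and shorter.
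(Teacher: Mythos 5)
Your argument is correct and is exactly what the paper leaves implicit: the corollary is stated without proof immediately after Proposition \ref{Std}, to be read as the two-out-of-three consequence of the commutative triangle (\ref{commutative}) together with Corollary \ref{FQ equiv} and Proposition \ref{Std}. The only small inaccuracy is your caveat about reading (\ref{commutative}) only up to natural isomorphism --- the paper's proof actually establishes commutativity on the nose (agreement on objects and on generating morphisms) --- but this is harmless since your argument works in either case.
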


\section{Linear independence}\label{Linear Independence}

In this chapter, we will prove that the localized double-leaves form a basis. 

\begin{prop} \label{localized-basis}
    Given $\un{x}, \un{y} \in \mathcal{M}_{BS}$, let $\mathcal{SDL}_{\un{x}, \un{y}}$ be a set\footnote{We say \textit{a} set rather than \text{the} set, because we have flexibility in our construction of the light-leaves. See Remark \ref{flex}.} containing one double-leaf map $\mathbb{SDL}_{\un{f}, \un{e}}$ for every pair of subexpressions $\un{e} \subset \un{x}$, $\un{f} \subset \un{y}$ such that $W_J \un{x}^{\un{e}} = W_J \un{y}^{\un{f}}$. Then, after localization, this set of double-leaf maps forms a basis of Hom$_{\mathcal{M}_{BS, Q}}(\un{x}, \un{y})$ as a right $R$-module.
\end{prop}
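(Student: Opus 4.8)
The plan is to prove that $\mathcal{SDL}_{\un{x}, \un{y}}$ becomes a basis after localization by computing everything inside the localized standard category, using the chain of equivalences established in diagram (\ref{commutative}). First I would apply the equivalence $\tilde{\mathcal{G}} \otimes Q$ (really, the equivalence $Kar(\mathcal{M}_{BS,Q}) \xrightarrow{\sim} JStd\text{Bim}_Q$ from Corollary \ref{Gequivalence}, after passing to the Karoubi envelope) to transport the question to $JStd\text{Bim}_Q$. Since $\tilde{\mathcal{A}} \otimes Q$ identifies $\un{x}$ with $_{R^J}BS(\un{x}) \otimes_R Q$, Proposition \ref{localize BS} gives the decomposition $\un{x}_Q \cong \bigoplus_{\un{e} \subset \un{x}} Q_{\un{x}^{\un{e}}}$, and similarly for $\un{y}$. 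By Lemma \ref{hom2}, the Hom space $\text{Hom}_{JStd\text{Bim}_Q}(\un{x}_Q, \un{y}_Q)$ is therefore a direct sum of copies of $Q$, one copy for each pair $(\un{e}, \un{f})$ with $W_J \un{x}^{\un{e}} = W_J \un{y}^{\un{f}}$ — which is exactly the index set of $\mathcal{SDL}_{\un{x},\un{y}}$. So the localized Hom space has the right dimension, and it remains to check that the images of the double-leaf maps under the equivalence form a basis, i.e. that the matrix expressing them in terms of the obvious standard basis $\{$projection onto $Q_{\un{x}^{\un{e}}}$, followed by the canonical iso $Q_{\un{x}^{\un{e}}} \to Q_{\un{y}^{\un{f}}}$, then inclusion$\}$ is invertible over $Q$.

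The key structural input is upper-triangularity of the light-leaves with respect to the Bruhat-type order on subexpressions. Concretely, I would argue that $\tilde{\mathcal{G}}(SLL_{\un{x}, \un{e}})$, viewed as a map $\un{x}_Q \to \un{z}_Q$, has the following form: restricted to the summand $Q_{\un{x}^{\un{e}}}$ it hits the summand $Q_z$ (where $z$ is the endpoint of the coset stroll, so $W_J z = W_J \un{x}^{\un{e}}$) with an \emph{invertible} scalar, while on summands $Q_{\un{x}^{\un{e}'}}$ with $\un{e}'$ lower in the order it may contribute to $Q_z$ only through lower terms, and on summands incomparable to $\un{e}$ it contributes zero. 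This is the standard "one term is a unit, the rest are triangular" behaviour of light-leaves, and it follows inductively step-by-step through the light-leaves algorithm: at each step the map $\phi_k$ corresponds (after localization) to a block-triangular matrix whose diagonal entry on the relevant summand is a unit in $Q$ — including the new case $d_k' = X1$, where the wall plug-in composed with the rex moves $\alpha, \beta$ realizes, on the relevant standard summand, multiplication by $\partial_t$ applied appropriately, which is a nonzero (hence invertible) element of $Q$. The key points to verify here are (i) that the $X1$ wall plug-in does not kill the leading summand — this is where the map $\tilde{\mathcal{G}}$ of the wall generator, namely the Demazure-type expression from (\ref{bivalent}) and (\ref{new generators}), needs to be checked to act by an invertible scalar on the surviving standard summand; and (ii) that the rex moves are isomorphisms on every standard summand they touch, which is classical. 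Then $\tilde{\mathcal{G}}(\overline{SLL}_{\un{y},\un{f}})$ is the transpose-type picture, invertible-leading from $Q_z$ onto $Q_{\un{y}^{\un{f}}}$, triangular below.

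Composing, $\tilde{\mathcal{G}}(\mathbb{SDL}_{\un{f}, \un{e}})$ is the map $\un{x}_Q \to \un{y}_Q$ which, written in the standard basis indexed by pairs, has an invertible entry in position $((\un{e}, \un{f}), (\un{e}, \un{f}))$ and is triangular with respect to the product order on pairs (say, lexicographic refinement of Bruhat order on the pair of coset strolls). Hence the change-of-basis matrix from $\{\tilde{\mathcal{G}}(\mathbb{SDL}_{\un{f},\un{e}})\}$ to the standard basis is upper-triangular with units on the diagonal, so invertible over $Q$, so $\{\tilde{\mathcal{G}}(\mathbb{SDL}_{\un{f},\un{e}})\}$ is a $Q$-basis of $\text{Hom}_{JStd\text{Bim}_Q}(\un{x}_Q, \un{y}_Q)$. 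Pulling back along the equivalence $\tilde{\mathcal{G}}$ (using Corollary \ref{Gequivalence}, and the fact that $\tilde{\mathcal{G}}$ is $R$-linear so that a $Q$-basis of the localized Hom corresponds to an $R$-basis of $\text{Hom}_{\mathcal{M}_{BS,Q}}$ after identifying $\mathcal{M}_{BS,Q}$ morphisms with $Q = R \otimes_R Q$-linear combinations), the set $\mathcal{SDL}_{\un{x},\un{y}}$ is a basis of $\text{Hom}_{\mathcal{M}_{BS,Q}}(\un{x}, \un{y})$ as a right $R$-module. I expect the main obstacle to be the triangularity bookkeeping in the $X1$ step: one must pin down exactly which standard summand of the source survives under the wall plug-in and verify the surviving coefficient is a unit, carefully tracking the grading shifts and the $\partial_t$/$d_s$ factors coming from (\ref{bivalent}) and (\ref{new generators}) so that the inductive hypothesis "leading coefficient invertible, rest triangular" is genuinely preserved; the $U/D/X0$ cases reduce verbatim to the non-spherical computation in \cite{SoergelCalculus}.
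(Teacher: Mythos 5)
Your proposal follows the same high-level strategy as the paper: pass to $JStd\text{Bim}_Q$ via the equivalence $\tilde{\mathcal{G}}$ of Corollary~\ref{Gequivalence}, use Proposition~\ref{localize BS} and Lemma~\ref{hom2} to see that the localized Hom space is a direct sum of copies of $Q$ indexed by pairs $(\un{e}, \un{f})$ with $W_J\un{x}^{\un{e}} = W_J\un{y}^{\un{f}}$, view each localized double-leaf as a matrix with entries in $Q$, and argue that the change-of-basis matrix is upper-triangular with invertible diagonal with respect to a partial order on pairs. Your observation that the nonzero diagonal entry reduces to showing the $X1$ wall plug-in does not kill the surviving standard summand is exactly the content of the paper's Lemma~\ref{q}, and the rank count you give also matches.

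There is, however, a genuine gap in your triangularity step: the partial order you invoke is not pinned down, and the Bruhat order on coset strolls alone is \emph{not} sufficient, nor can you take an arbitrary refinement of it. In the non-spherical setting the stroll of a subexpression already separates $U0$ from $U1$ (if $e_k = 0$ the stroll stays put, if $e_k = 1$ it moves), so path-dominance on strolls is a genuine partial order and the up-dot annihilation at $U0$ steps is compatible with it. In the spherical setting, by contrast, the \emph{coset} stroll $z_k$ does not change at an $X$ step whether $e_k = 0$ or $e_k = 1$: two subexpressions $\un{e}, \un{e}'$ with $cs(\un{e}) = cs(\un{e}')$ can differ arbitrarily on their $X$ steps. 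For such a pair the Bruhat order on coset strolls says nothing, and you cannot just pick a "lexicographic refinement" as a tiebreaker: you must use the order whose off-diagonal zeros you can actually prove. The paper's Definition~\ref{order} imposes the specific tiebreaker "$\un{e}' \preceq \un{e}$ iff there is no step $k$ at which $\un{e}$ is $X0$ and $\un{e}'$ is $X1$", and the crux of the linear-independence proof (the case $cs(\un{e}') = cs(\un{e})$ in Lemma~\ref{first half}) is that if there \emph{is} such a step, the up-dot produced by the $X0$ step of $SLL_{\un{x}, \un{e}}$ annihilates the standard summand $Q_{\un{x}^{\un{e}'}}$, so $\phi_{\un{e}, \un{e}'} = 0$. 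This annihilation is asymmetric — $\un{e}'$ with $X1$ can feed nontrivially into $\un{e}$ with $X0$ via the wall plug-in, but not the other way around — and it is precisely this asymmetry that makes $\preceq$ a well-defined order and gives the required triangularity on coset-stroll equivalence classes. Your proposal files this under "bookkeeping in the $X1$ step", but the relevant new difficulty actually sits at the $X0$ step, and it is the load-bearing idea of the proof rather than a routine verification: without it you have neither a correct partial order nor a proof of the off-diagonal vanishing when coset strolls coincide.
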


\noindent Our double-leaf maps lie in $\mathcal{M}_{BS}$. We take the images of them under the functors
\begin{equation} \label{categories}
    \mathcal{M}_{BS} \xrightarrow{(-) \otimes Q} \mathcal{M}_{BS, Q} \hookrightarrow Kar(\mathcal{M}_{BS, Q}).
\end{equation} 
We will show that these images form a basis of the Hom spaces in $Kar(\mathcal{M}_{BS, Q})$. Then, because Hom$_{\mathcal{M}_{BS, Q}}(\un{x}, \un{y}) \cong $ Hom$_{Kar(\mathcal{M}_{BS, Q})}(\un{x}, \un{y})$, this implies that the localized double-leaves form a basis of $\mathcal{M}_{BS, Q}$. This in turn means that the double-leaves are linearly independent in $\mathcal{M}_{BS}$. For if $$\sum  \mathbb{SDL}_{\un{f}, \un{e}} \cdot c_{\un{f}, \un{e}} = 0, \; c_{\un{f}, \un{e}} \in R,$$ then $$\sum  \mathbb{SDL}_{\un{f}, \un{e}} \cdot c_{\un{f}, \un{e}} \otimes id_Q = \sum  \mathbb{SDL}_{\un{f}, \un{e}} \otimes id_Q \cdot c_{\un{f}, \un{e}} = 0,$$ so that $c_{\un{f}, \un{e}} = 0$ for all $(\un{f}, \un{e})$.

\subsection{A partial order on pairs of subexpressions} \label{Section: partial order}

We will abuse notation by letting $\mathbb{SDL}_{\un{f}, \un{e}}$ denote the image of the double-leaf in $Kar(\mathcal{M}_{BS, Q})$ under the functors in (\ref{categories}). In $Kar(\mathcal{M}_{BS, Q})$, the double-leaf map $\mathbb{SDL}_{\un{f}, \un{e}} : \un{x} \rightarrow \un{y}$ is a map $\bigoplus_{\un{e'} \subset \un{x}} Q_{\un{e'}} \rightarrow \bigoplus_{\un{f'} \subset \un{y}} Q_{\un{f'}}$ (see Remark \ref{KarDecomp}). Then, by Proposition \ref{Std} and Corollary \ref{HomStdQ}, we can therefore think of this double-leaf map as a matrix, with entries $p_{e', f'}^{e, f} \in Q$. To be precise, we let $p_{e', f'}^{e, f}$ be the composition $Q_{\un{e'}} \hookrightarrow \un{x} \xrightarrow{\mathbb{SDL}_{\un{f}, \un{e}}} \un{y} \twoheadrightarrow Q_{\un{f'}}$. We will prove Proposition \ref{localized-basis} by showing that the maps $\mathbb{SDL}_{\un{f}, \un{e}}$ in $Kar(\mathcal{M}_{BS, Q})$ are upper-triangular with respect to a certain partial order on the set of pairs $(\un{e}, \un{f})$. To define this partial order, we first define a partial order each of the components; i.e. a partial order on subexpressions. 

Take an expression $\un{w} = (s_1, ..., s_n)$, and let $E$ be the set of subexpressions $\un{e} \subset \un{w}$. There is a partial order on $E$ called the \textit{path-dominance order}: for sub-expressions $\un{e} = (e_1, ..., e_n)$ and $\un{f} = (f_1, ..., f_n)$, we can take their strolls $x_0, ..., x_n$ and $y_0, ..., y_n$ respectively (i.e. $x_i := s_1^{e_1} \cdots s_i^{e_i}$). Then we say that $\un{f} \leq \un{e}$ in the path dominance order if $y_i \leq x_i$ in the Bruhat order, for all $i$.  

 We will put a partial order $\preceq$ on $E$ which is a slight variation of this. Rather than taking the strolls associated to $\un{e}$ and $\un{f}$, we will take coset strolls, introduced in Section \ref{spherical light-leaves}: The coset stroll $cs(\un{e})$ of $\un{e}$ is $\tilde{x}_0, ..., \tilde{x}_n$, where $\tilde{x}_i$ is the minimal coset representative of $x_i = s_1^{e_1} \cdots s_k^{e_k}$. We write $cs(\un{f}) \leq cs(\un{e})$ if $\tilde{y}_i \leq \tilde{x}_i$ for all $i$, where $\tilde{y}_0, ..., \tilde{y}_n$ and $\tilde{x}_0, ..., \tilde{x}_n$ are the coset strolls of $\un{f}$ and $\un{e}$ respectively.

 \begin{defn} \label{order}
     We define a partial order $\preceq$ on the set $E$ of subexpressions of $\un{w}$ as follows: for $\un{f}, \un{e} \subset \un{w}$, $\un{f} \preceq \un{e}$ if either $cs(\un{f}) < cs(\un{e})$, or $cs(\un{f}) = cs(\un{e})$ and there is no step $k$ at which $\un{e}$ is $X0$ and $\un{f}$ is $X1$ (see (\ref{d_i'})). 
 \end{defn}
\begin{exmp}
    Let $(W, S)$ be type $A_2$, and let $J = \{s\}$. Take $\un{w} = (s, t, s, t)$. Let $\un{e}, \un{f}, \un{g} \subset \un{w}$ be $$\un{e} = (0, 1, 1, 1), \; \un{f} = (0, 1, 1, 0), \; \un{g} = (1, 1, 1, 0).$$ The coset strolls of each of these expressions is $id, id, t, ts, ts$, so $cs(\un{e}) = cs(\un{f}) = cs(\un{g})$. The decoration of $\un{e}$ is $X0, U1, U1, X1$, while that of $\un{f}$ is $X0, U1, U1, X0$. There is no step at which $\un{e}$ is $X0$ and $\un{f}$ is $X1$, so we have $\un{f} \prec \un{e}$. Similarly, we have $\un{f} \prec \un{g}$. However, $\un{e}$ and $\un{g}$ are not comparable. 
\end{exmp}
\noindent Now we use this partial order on subexpressions to define a partial order (abusively denoted $\prec$) on \textit{pairs} of subexpressions $(\un{e}, \un{f})$, where $\un{e} \subset \un{x}$, $\un{f} \subset \un{y}$. 
\begin{defn}
    For $\un{e} , \un{e}' \subset \un{x}, \; \un{f}, \un{f}' \subset \un{y}$, we write $(\un{e}', \un{f}') \preceq (\un{e}, \un{f})$ if $\un{e}' \preceq \un{e}$ and $\un{f}' \preceq \un{f}$.
\end{defn}
\subsection{Proof of linear independence} \label{Section: Proof of linear independence}
\noindent We will now show that the maps $\mathbb{SDL}_{\un{f}, \un{e}}$ are upper triangular with respect to this partial order.
\begin{prop} \label{upper-triangular}
\begin{enumerate}[label=(\alph*)]
\item If $p_{e', f'}^{e, f} \neq 0 $, then $(\un{e'}, \un{f'}) \preceq (\un{e}, \un{f})$. 
\item Moreover, $p_{e, f}^{e, f} \neq 0$ for any $\un{e} \subset \un{x}, \; \un{f} \subset \un{y}$.
\end{enumerate}
\end{prop}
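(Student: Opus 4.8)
The statement asserts that the matrix of $\mathbb{SDL}_{\un{f}, \un{e}}$ in $JStd\text{Bim}_Q$ is upper-triangular with respect to $\preceq$ (part (a)) with non-zero diagonal entry (part (b)). Since a double-leaf factors as $\mathbb{SDL}_{\un{f}, \un{e}} = \overline{SLL}_{\un{y}, \un{f}} \circ SLL_{\un{x}, \un{e}}$ through the (reduced) intermediate expression $\un{z}$, the plan is to reduce everything to a statement about a single light-leaf $SLL_{\un{x}, \un{e}} : \un{x} \to \un{z}$. Concretely, after localization $SLL_{\un{x}, \un{e}}$ becomes a map $\bigoplus_{\un{e}' \subset \un{x}} Q_{\un{x}^{\un{e}'}} \to Q_{z}$ (the only subexpression of a reduced expression $\un{z}$ for an m.c.r.\ being the full one, since $Q_{\un{z}^{\un{g}}}$ for $\un{g} \neq \un{z}$ will sit in a different left $W_J$-coset or collapse). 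I would introduce the matrix coefficient $q^{e}_{e'} := \big(Q_{\un{x}^{\un{e}'}} \hookrightarrow \un{x} \xrightarrow{SLL_{\un{x}, \un{e}}} \un{z}\big) \in Q$ and prove the one-sided claim: $q^{e}_{e'} \neq 0$ implies $\un{e}' \preceq \un{e}$, and $q^{e}_{e} \neq 0$. Then part (a) follows by composing matrices --- $p^{e,f}_{e',f'}$ is (up to the $Q$-linear identification of $\mathrm{Hom}(Q_z, Q_z)$ with $Q$) the product $q^{f}_{f'} \cdot \overline{q^{e}_{e'}}$ type expression, forcing both $\un{e}' \preceq \un{e}$ and $\un{f}' \preceq \un{f}$ --- and part (b) is the product of the two non-zero diagonal terms.

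The heart is therefore the light-leaf computation, which I would do by induction on the length $n$ of $\un{x}$, following the inductive construction of $SLL$ via the maps $\phi_k : (\un{z}_{k-1}, s_k) \to \un{z}_k$. At step $k$ one must track, for each intermediate subexpression $\un{e}'_{\le k}$ and corresponding standard summand $Q_{x'_k}$ (here $x'_k = (s_1^{e'_1}\cdots s_k^{e'_k})$), how the component $Q_{x'_{k-1}} \to Q_{z_{k-1}}$ present after step $k-1$ is transformed. The localized images of the elementary generators are all known: dots, trivalent vertices and $2m_{st}$-valent vertices have explicit localizations in Soergel calculus (Proposition \ref{localize BS}, \cite[Sec.~5.4]{SoergelCalculus}), and the wall plug-in has localized image given by (\ref{Std wall-generators})/(\ref{G wall-generators}), essentially a Demazure-type operator $\partial_t$ for the relevant $t \in J$. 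The key bookkeeping is that each $\phi_k$ does one of: (i) $U0/X0$ --- inserts a dot, which on the standard side is the identity on the surviving summand and kills nothing new, keeping $z_k = z_{k-1}$, consistent with the coset-stroll value staying put; (ii) $U1$ --- a trivalent (fusion) vertex, adding $s_k$ and sending $Q_{x'_{k-1}} \mapsto Q_{x'_{k-1} s_k} \oplus Q_{x'_{k-1}}$ with invertible-diagonal coefficient (the $1$ on the $Q_{x'_{k-1}s_k}$-part), so $\tilde{x}'_k \le \tilde{x}'_{k-1}s_k$; (iii) $D0/D1$ --- the downward analogues, again with the diagonal coefficient a Demazure operator applied to $1$ or to $\alpha_{s_k}$, which is respectively $0$ on the ``wrong" branch and $\pm 1$ on the right one; (iv) $X1$ --- the genuinely new case (\ref{X1 algorithm}): a rex move $\beta$, a wall plug-in, then a rex move $\alpha$. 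Rex moves are localized isomorphisms permuting standard summands (Matsumoto), so they preserve $\preceq$; the wall plug-in contributes the Demazure operator $\partial_t$, whose diagonal behaviour is exactly what distinguishes $X0$ (where $\un{e}$ takes $e_k = 0$) from $X1$ ($e_k = 1$), and this is precisely the extra clause in Definition \ref{order} that ``there is no step $k$ at which $\un{e}$ is $X0$ and $\un{f}$ is $X1$." I would verify that in case $X1$, $q^{e}_{e'} \neq 0$ forces either a strict drop $cs(\un{e}') < cs(\un{e})$ or, in the equal-stroll case, rules out $\un{e}$ being $X0$ while $\un{e}'$ is $X1$ at that step, matching $\preceq$ exactly.

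For part (b), the diagonal coefficient $q^e_e$ is a product over $k$ of the step-wise diagonal contributions, each of which I will have shown is a non-zero element of $Q$ (for $U0/U1/D0/X0$ it is $\pm 1$; for $D1$ it is $\pm\alpha_{s_k}$ or a non-zero root after rex conjugation; for $X1$ it is $\partial_t$ applied to the appropriate term, giving $\pm 1$ or a non-zero scalar depending on conventions), so the product is non-zero. The same analysis applies verbatim to $\overline{SLL}_{\un{y}, \un{f}}$ (the upside-down light-leaf localizes to the transpose/adjoint, with the same diagonal up to the symmetry of the bilinear pairing on $Q_z$), and the composite diagonal entry $p^{e,f}_{e,f}$ is the product of the two, hence non-zero. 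The main obstacle I anticipate is \emph{not} any single localization formula --- those are standard --- but assembling the induction cleanly: one must choose the right invariant to induct on (I would use the pair (current intermediate expression $\un{z}_k$, the partial order $\preceq$ restricted to $\un{x}_{\le k}$) and show each $\phi_k$ is upper-triangular for it), and one must carefully match the three regimes $U/D/X$ of the coset-stroll decoration against the Bruhat-order inequalities $\tilde x'_k \le \tilde x_k$, using Lemma \ref{wall-crossing} to handle exactly the $X$-steps (where $z_{k-1}s_k \notin \jw$ and $z_{k-1}s_k = t z_{k-1}$). The subtle point --- and the reason Definition \ref{order} has its second clause rather than being plain coset-stroll dominance --- is the $X0$-vs-$X1$ asymmetry at a wall-crossing step, and getting that inequality to come out in the correct direction is where I would spend the most care.
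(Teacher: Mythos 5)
Your overall architecture is the same as the paper's: factor the double-leaf through the reduced middle expression $\un{z}$, prove a one-sided upper-triangularity statement for a single localized light-leaf (the paper's Lemma \ref{first half}), deduce (a) by applying it to both halves, and get (b) as a product of two non-zero single-light-leaf evaluations (the paper's Lemma \ref{q}, which it imports from Proposition 6.6 of Elias--Williamson rather than recomputing generator by generator as you propose). Two points in your execution need repair. First, your parenthetical claim that the localized target collapses to the single summand $Q_z$ is false: $\un{z}$ localizes to $\bigoplus_{\un{g} \subset \un{z}} Q_{\un{z}^{\un{g}}}$, and off-diagonal entries $p^{e,f}_{e',f'}$ with $W_J\un{x}^{\un{e}'} \neq W_J z$ receive their contributions precisely through summands $Q_w$ with $w < z$; so for part (a) the one-sided lemma must be proved for the full map $Q_{\un{e}'} \to \un{z}$ (as the paper does, using Lemma \ref{hom2} at each intermediate step to force $\tilde{y}_k \leq z_k$), while the collapse onto $Q_z$ is valid only for the diagonal entry, where it is exactly the paper's argument for (b). Second, you misattribute the mechanism behind the second clause of Definition \ref{order}: at an $X0$ step of $\un{e}$ there is no wall plug-in and hence no Demazure operator --- the map is a $U0$-type up-dot --- and the vanishing when $\un{e}'$ is $X1$ at that step comes from that up-dot landing on the reflection (dashed) strand selected by $e'_k = 1$, which is zero by (\ref{zero-dot}). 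The wall plug-in of (\ref{X1 algorithm}) only enters at $X1$ steps of $\un{e}$, which is the \emph{allowed} configuration. Both issues are fixable without changing your strategy, but as written the "composing matrices" step for (a) and the identification of the source of the $X0$/$X1$ asymmetry are not correct.
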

This proposition will be proven over the next few lemmas. Recall that $p_{e', f'}^{e, f}$ is given by the composition
\begin{equation} 
     Q_{\un{e'}} \hookrightarrow \un{x} \xrightarrow{SLL_{\un{x}, \un{e}}} \un{z} \xrightarrow{\overline{SLL_{\un{y}, \un{f}}}} \un{y} \twoheadrightarrow Q_{\un{f'}}
\end{equation}
where $\un{z}$ is a rex of the minimal coset representative $z$ of $W_J \un{x}^{\un{e}} = W_J \un{y}^{\un{f}}$. Consider the first half of this map (we will call this $\phi_{\un{e}, \un{e'}}$):
\begin{equation} 
     \phi_{\un{e}, \un{e}'} := Q_{\un{e'}} \hookrightarrow \un{x} \xrightarrow{SLL_{\un{x}, \un{e}}} \un{z}.
\end{equation}
We will first show the following statement:
\begin{lem} \label{first half}
If $\phi_{\un{e}, \un{e}'} \neq 0 $, then $\un{e}' \preceq \un{e}$.    
\end{lem}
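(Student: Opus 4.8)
The plan is to prove Lemma \ref{first half} by induction on the length $n$ of $\un{x}$, tracking the light-leaf construction step by step. Recall that $SLL_{\un{x}, \un{e}}$ is built from maps $\phi_k \colon (\un{z}_{k-1}, s_k) \to \un{z}_k$, and that after localization each intermediate map restricts, on standard-bimodule summands, to a map between direct sums $\bigoplus_{\un{e}'_{\le k}} Q_{\un{z}_{k-1}^{\un{e}'_{\le k-1}} s_k^{e'_k}} \to \bigoplus Q_{\un{z}_k^{\un{e}'_k}}$ that we can again read off as a matrix over $Q$. The inductive hypothesis will say that the summand $Q_{\un{e}'_{\le k}}$ maps nontrivially into $Q_{\un{z}_k}$ only if the truncated coset stroll of $\un{e}'_{\le k}$ is dominated by that of $\un{e}_{\le k}$ (plus the $X0$ vs.\ $X1$ refinement at each step), where here ``$\un{z}_k$'' is the reduced expression for $z_k$, and its own coset stroll is the constant sequence $z_k$.

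First I would set up the bookkeeping: fix $\un{x} = (s_1,\dots,s_n)$, $\un{e} \subset \un{x}$ with coset stroll $z_0,\dots,z_n$ and decoration $d_1',\dots,d_n'$, and for a competing subexpression $\un{e}' \subset \un{x}$ let $\tilde x_0,\dots,\tilde x_n$ be its coset stroll. The base case $n=0$ is trivial. For the inductive step, I would analyze the passage from $\un{z}_{k-1}$ to $\un{z}_k$ according to the value of $d_k' \in \{U0,U1,D0,D1,X0,X1\}$. For the cases $U0,U1,D0,D1$ this is exactly the non-spherical light-leaves analysis from \cite{SoergelCalculus}: the relevant fact is that the matrix of $\phi_k$, restricted to localized standard summands, is upper-triangular with respect to the path-dominance order on subexpressions of $(\un{z}_{k-1}, s_k)$, and the diagonal entries are nonzero. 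The case $X0$ behaves like $U0$ by construction. The genuinely new case is $X1$, given diagrammatically in \eqref{X1 algorithm}: here $\un{z}_{k-1} s_k = t \un{z}_{k-1}$ is again a minimal coset representative times $t \in J$, the map pushes the $t$-strand left, plugs it into the wall, and applies rex moves. After localization, the wall plug-in $\pi$ becomes (by \eqref{bivalent}) multiplication by a factor involving $\frac{1}{\al_s}$-type data composed with the standard-bimodule structure, and crucially it kills the ``$Q_t$-summand'' while surviving on the trivial summand; the rex moves are localized via the invertible matrices of Proposition \ref{localize BS} and \cite{SoergelCalculus}. The upshot to extract is: at an $X1$ step, a summand indexed by $e'_k = 0$ gives $\tilde x_k = \tilde x_{k-1}$ (staying at the m.c.r.), while $e'_k = 1$ gives $\tilde x_k = \tilde x_{k-1}$ as well after reprojecting to the m.c.r. — so both land at the same coset — but the $X0$-vs-$X1$ refinement in Definition \ref{order} is exactly what records that the $X1$-decorated $\un{e}$ dominates $X0$-decorated competitors and not conversely. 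I would verify the nonvanishing of the ``diagonal'' contribution $\phi_{\un{e},\un{e}}$ separately, again reducing to the non-spherical diagonal nonvanishing plus the observation that the wall plug-in is nonzero on the trivial summand.

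The main obstacle I expect is the $X1$ step: one must show precisely that, after localization, composing with the wall plug-in and the surrounding rex moves sends the summand $Q_{\un{e}'_{\le k}}$ into $Q_{\un{z}_k}$ with a nonzero coefficient exactly when the coset stroll of $\un{e}'_{\le k}$ is dominated as claimed, and that the asymmetry between $X0$ and $X1$ decorations is correctly captured — in particular, that an $X1$ on the $\un{e}$ side does \emph{not} force domination failure against an $X0$ on the $\un{e}'$ side, but the reverse does fail. Concretely this requires understanding how the localized wall map interacts with the two standard summands $Q_{\tilde x_{k-1}}$ and $Q_{t \tilde x_{k-1}} = Q_{\tilde x_{k-1} s_k}$ that merge onto the single coset, tracking which of $\al_s, \al_t$ (or their $W$-translates) appear in denominators and whether they vanish. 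Once the $X1$ step is pinned down, the induction closes by composing the per-step upper-triangularity statements: the coset strolls compose (truncating at each stage to m.c.r.'s), Bruhat-order domination is preserved under the light-leaves maps by the inductive hypothesis, and the per-step $X0/X1$ conditions assemble into the global condition of Definition \ref{order}, giving $\un{e}' \preceq \un{e}$ whenever $\phi_{\un{e},\un{e}'} \neq 0$.
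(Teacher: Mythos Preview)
Your proposal outlines a plan but does not constitute a proof: the $X1$ step, which you correctly identify as the crux, is described as an obstacle to be overcome rather than actually resolved. More importantly, the paper's argument is considerably simpler and sidesteps the step-by-step matrix analysis of the $\phi_k$ entirely.

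The paper splits into two cases. When $cs(\un{e}') \ne cs(\un{e})$, there is no induction and no analysis of the individual $\phi_k$. One simply observes that for each $k$ the composite $SLL_k \circ \iota_{\le k} \colon Q_{y_k} \to \un{z}_k$ (with $y_k = s_1^{e'_1}\cdots s_k^{e'_k}$) sits inside $\phi_{\un{e},\un{e}'}$ as a sub-diagram, hence is nonzero. Since $\un{z}_k$ localizes to a direct sum of $Q_w$ with $w \le z_k$, Lemma~\ref{hom2} forces any nonzero component $Q_{y_k} \to Q_w$ to satisfy $W_J y_k = W_J w$, whence the minimal coset representative $\tilde y_k$ obeys $\tilde y_k \le w \le z_k$. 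This gives $cs(\un{e}') \le cs(\un{e})$ immediately, with no need to understand how any $\phi_k$ acts on summands.

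When $cs(\un{e}') = cs(\un{e})$ but $\un{e}' \npreceq \un{e}$, there is a step $k$ at which $\un{e}$ is decorated $X0$ and $\un{e}'$ is decorated $X1$. The $X0$-map $\phi_k$ places an up-dot on the $s_k$-strand; since $e'_k = 1$, the inclusion $\iota_{\un{e}'}$ contributes a bivalent (dashed-to-solid) vertex on that same strand, and the resulting dashed--solid--dot configuration vanishes by relation~\eqref{zero-dot}. No localized-bimodule computation is involved.

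Your inductive scheme could plausibly be pushed through, but it would require tracking how the image of each $Q_{y_{k-1}}$ distributes over the summands of $\un{z}_{k-1}$ and then how $\phi_k$ moves those summands---bookkeeping the paper never performs. The key insight you are missing is that the \emph{target constraint} from Lemma~\ref{hom2}, applied at each intermediate stage to the whole of $\un{z}_k$ rather than summand by summand, already forces the Bruhat bound; and the $X0/X1$ refinement reduces to a single local diagrammatic relation rather than an analysis of how the wall plug-in interacts with standard summands. (Note also that the nonvanishing of the diagonal $\phi_{\un{e},\un{e}}$ is not part of this lemma; it is Proposition~\ref{upper-triangular}(b), handled separately via Lemma~\ref{q}.)
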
 
\noindent \textit{Proof.} First, consider the case where $cs(\un{e}) = cs(\un{e}')$. Suppose $\un{e}' \npreceq \un{e}$. Then there is some step $k$ at which $\un{e}'$ is X1 and $\un{e}$ is X0. At this step in the construction of $SLL_{\un{x}, \un{e}}$, the map $SLL_k$ will have an up-dot \begin{tikzpicture}
     \draw[red] (0, 0) -- (0, 0.4);
     \filldraw[red] (0, 0.4) circle(1pt);
 \end{tikzpicture}
 on the right. When we then precompose with the inclusion map $Q_{\un{f}} \hookrightarrow \un{x}$, we will end up with a strand \begin{tikzpicture}
     \draw[red, densely dashed] (0, 0) -- (0, 0.3);
     \draw[red] (0, 0.3) -- (0, 0.6);
     \filldraw[red] (0, 0.6) circle(1.5pt);
 \end{tikzpicture}
 , rendering the whole map zero. Therefore $\phi_{\un{e}, \un{e}'} = 0$.

 Now, take the case where $cs(\un{e}) \neq cs(\un{e}')$. We suppose that $\phi_{\un{e}, \un{e}'} \neq 0$, and we will show that $cs(\un{e}') < cs(\un{e})$ (and that therefore $\un{e}' \prec \un{e}$). 
 Recall the recursive construction of the spherical light-leaves:
 \begin{center}
\begin{equation} \label{eq:con}
    \scalebox{0.7}{\begin{tikzpicture}
        \draw (0, 0) -- (0, 1);
        \draw (2, 0) -- (2, 1);
        \draw (-0.3, 1) -- (2.3, 1);
        \draw (2.3, 1) -- (2.3, 3.7);
        \draw (2.3, 3.7) -- (-0.3, 3.7);
        \draw (-0.3, 3.7) -- (-0.3, 1);
        \draw (0.2, 3.7) -- (0.2, 4.7);
        \draw (1.8, 3.7) -- (1.8, 4.7);
        \node at (1, 0.5) {$\cdots$};
        \node at (1, 4.2) {$\cdots$};
        \node at (1, 2.4) {$SLL_{k}$};
        \node at (2.8, 2.4) {=};
        \draw (3.6, 0) -- (3.6, 0.7);
        \draw (3.3, 0.7) -- (5.5, 0.7);
        \draw (5.5, 0.7) -- (5.5, 2);
        \draw (5.5, 2) -- (3.3, 2);
        \draw (3.3, 2) -- (3.3, 0.7);
        \draw (5.2, 0) -- (5.2, 0.7);
        \node at (4.4, 0.4) {$\cdots$};
        \draw (3.8, 2.7) -- (6.2, 2.7) -- (6.2, 4) -- (3.8, 4) -- cycle;
        \draw (4, 2) -- (4, 2.7);
        \draw (5.3, 2) -- (5.3, 2.7);
        \draw (6, 0) -- (6, 2.7);
        \node at (4.65, 2.35) {$\cdots$};
        \draw (4.1, 4) -- (4.1, 4.7);
        \draw (5.9, 4) -- (5.9, 4.7);
        \node at (5, 4.35) {$\cdots$};
        \node at (4.4, 1.35) {$SLL_{k - 1}$};
        \node at (5, 3.35) {$\phi_k$};
        \node at (6, -0.2) {$s_k$};
        \node at (3.6, -0.2) {$s_1$};
        \node at (5.2, -0.2) {$s_{k-1}$};

    \end{tikzpicture}}
\end{equation}
\end{center}
At the end of this construction we precompose with the inclusion $\iota_{\un{e}'} : Q_{\un{e}'} \hookrightarrow \un{x}$ to get $\phi_{\un{e}, \un{e}'} = SLL_{\un{x}, \un{e}} \circ \iota_{\un{e}'}$. Now, at any step $k$, let $\un{x}_{\leq k} = (s_1, s_2, ..., s_k)$, and let $\un{e}'_{\leq k}$ be the subexpression $(e_1', e_2', e_3', ..., e_k') \subset \un{x}_{\leq k}$. Let $\iota_{\leq k} : Q_{\un{e}_{\leq k}'} \hookrightarrow \un{x}_{\leq k}$. We can see that the composition $SLL_k \circ \iota_{\leq k}$ will appear inside $SLL_{\un{x}, \un{e}} \circ \iota_{\un{e}'}$, depicted below inside the green box (for $k = n - 2$, $\un{e}' = (1, 0, 1, 0, 1, 0)$):
\begin{center}
\begin{equation} \label{eq:con2}
    \scalebox{0.7}{\begin{tikzpicture}
        \draw (-1, 0) -- (-1, 1);
        \draw (-0.4, 0) -- (-0.4, 1);
        \draw (0.2, 0) -- (0.2, 1);
        \draw (0.8, 0) -- (0.8, 1);
        \draw (1.4, 0) -- (1.4, 1);
        \draw (2, 0) -- (2, 1);
        \draw[densely dashed] (-1, 0) -- (-1, -0.8);
        \draw[densely dashed] (0.2, 0) -- (0.2, -0.8);
        \draw[densely dashed] (1.4, 0) -- (1.4, -0.8);
        \filldraw (-0.4, 0) circle(2pt);
        \filldraw (0.8, 0) circle(2pt);
        \filldraw (2, 0) circle(2pt);
        \draw (-1.3, 1) -- (2.3, 1);
        \draw (2.3, 1) -- (2.3, 4.7);
        \draw (2.3, 4.7) -- (-1.3, 4.7);
        \draw (-1.3, 4.7) -- (-1.3, 1);
        \draw (-0.8, 4.7) -- (-0.8, 6.4);
        \draw (1.8, 4.7) -- (1.8, 6.4);
        \node at (0.5, 5.5) {$\cdots$};
        \node at (0.5, 2.9) {$SLL_{\un{x}, \un{e}}$};
        \node at (2.8, 2.4) {=};
        \draw (3.6, 0) -- (3.6, 0.7);
        \draw (3.3, 0.7) -- (5.5, 0.7);
        \draw (5.5, 0.7) -- (5.5, 2);
        \draw (5.5, 2) -- (3.3, 2);
        \draw (3.3, 2) -- (3.3, 0.7);
        \draw (4.1, 0) -- (4.1, 0.7);
        \draw (4.6, 0) -- (4.6, 0.7);
        \draw (5.2, 0) -- (5.2, 0.7);
        \draw (6, 0) -- (6, 2.7);
        \draw (6.8, 0) -- (6.8, 4.7);
        \draw[densely dashed] (3.6, 0) -- (3.6, -0.8);
        \filldraw (4.1, 0) circle(2pt);
        \draw[densely dashed] (4.6, 0) -- (4.6, -0.8);
        \filldraw (5.2, 0) circle(2pt);
        \draw[densely dashed] (6, 0) -- (6, -0.8);
        \filldraw (6.8, 0) circle(2pt);
        \draw (3.8, 2.7) -- (6.2, 2.7) -- (6.2, 4) -- (3.8, 4) -- cycle;
        \draw (4, 2) -- (4, 2.7);
        \draw (5.3, 2) -- (5.3, 2.7);

        \draw[green, densely dashed] (3, -1) -- (3, 2.3) -- (5.8, 2.3) -- (5.8, -1) -- cycle;
        
        \node at (4.65, 2.35) {$\cdots$};
        \draw (4.3, 4) -- (4.3, 4.7);
        \draw (5.9, 4) -- (5.9, 4.7);
        \node at (5, 4.35) {$\cdots$};
        \node at (4.4, 1.35) {$SLL_{n - 2}$};
        \node at (5, 3.35) {$\phi_{n - 1}$};
        
        \draw (4.1, 4.7) -- (7.1, 4.7) -- (7.1, 5.8) -- (4.1, 5.8) -- cycle;
        \node at (5.5, 5.25) {$\phi_n$};
        \draw (4.5, 5.8) -- (4.5, 6.4);
        \draw (6.7, 5.8) -- (6.7, 6.4);
        \node at (5.6, 6.1) {$\cdots$};
    \end{tikzpicture}}
\end{equation}
\end{center}
From this, we can see that if any $SLL_k \circ \iota_{\leq k}$ is zero, then so is $\phi_{\un{e}, \un{e}'} = SLL_{\un{x}, \un{e}} \circ \iota_{\un{e}'}$.

Now, the map $SLL_k \circ \iota_{\leq k}$ is a map $Q_{\un{e}_{\leq k}'} \hookrightarrow \un{x}_{\leq k} \rightarrow \un{z}_k$, where $\un{z}_k$ is a rex for the minimal coset representative $z_k$ of $\un{x}_{\leq k}^{\un{e}_{\leq k}}$ (i.e. $z_0, ..., z_n$ is the coset stroll $cs(\un{e})$). For this map to be non-zero, we need that $Q_{\un{e}_{\leq k}'}$ can map to some summand $Q_{\un{g}}$ in $\un{z}_k$ (i.e. $\un{g} \subset \un{z}_k$). By Corollary \ref{HomStdQ}, this means that $W_Jy_k = W_Jw$, where $y_k := \un{x}_{\leq k}^{\un{e}_{\leq k}'}$ (i.e. $y_0, ..., y_n$ is the coset stroll of $\un{e}'$) and $w := \un{z}_{k}^{\un{g}}$. If $\tilde{y}_k$ is the minimal coset representative of $y_k$, then $W_J \tilde{y}_k = W_J y_k = W_J w$, so that $\tilde{y}_k \leq w$ (since $\tilde{y_k}$ is minimal). We have that $w \leq z_k$ (since $\un{z}_k$ is reduced), so that $\tilde{y}_k \leq w \leq z_k$. So $\tilde{y}_k \leq z_k$ for all $k$, i.e. $cs(\un{e}') \leq cs(\un{e})$. Since we assumed $cs(\un{e}') \neq cs(\un{e})$, we get $cs(\un{e}') < cs(\un{e})$, and therefore $\un{e}' \prec \un{e}$.
$\hfill \square$\\
\\
We can use this to prove part $(a)$ of Proposition \ref{upper-triangular}:\\
\\
\noindent \textit{Proof of Proposition \ref{upper-triangular} $(a)$.} Suppose $p_{e', f'}^{e, f} \neq 0$. We want to show that $(\un{e}', \un{f}') \preceq (\un{e}, \un{f})$, i.e. that $\un{e}' \preceq \un{e}$ and $\un{f}' \preceq \un{f}$. Recall (again) that $p_{e', f'}^{e, f}$ is given by the composition
\begin{equation} 
     Q_{\un{e'}} \hookrightarrow \un{x} \xrightarrow{SLL_{\un{x}, \un{e}}} \un{z} \xrightarrow{\overline{SLL_{\un{y}, \un{f}}}} \un{y} \twoheadrightarrow Q_{\un{f'}}.
\end{equation} The first half of this we called $\phi_{\un{e}, \un{e}'}$, and so if $p_{e', f'}^{e, f} \neq 0$, then $\phi_{\un{e}, \un{e}'} \neq 0$. By Lemma \ref{first half}, this means $\un{e}' \preceq \un{e}$.

Now consider the second half of the map $p_{e', f'}^{e, f}$:
$$\un{z} \xrightarrow{\overline{SLL_{\un{y}, \un{f}}}} \un{y} \twoheadrightarrow Q_{\un{f'}}.$$ Note that (up to a constant) this is just $\overline{\phi_{\un{f}, \un{f}'}}$, i.e. $\phi_{\un{f}, \un{f}'}$ flipped upside down. This is because the projection map $\un{y} \twoheadrightarrow Q_{\un{f}'}$ is (up to a constant) the inclusion map $Q_{\un{f}'} \hookrightarrow \un{y}$ flipped upside-down. So $p_{e', f'}^{e, f} \neq 0$ implies $\overline{\phi_{\un{f}, \un{f}'}} \neq 0$, which implies $\phi_{\un{f}, \un{f}'} \neq 0$. Again by Lemma \ref{first half}, this means $\un{f}' \preceq \un{f}$. Thus we have $\un{e}' \preceq \un{e}$ and $\un{f}' \preceq \un{f}$, i.e. $(\un{e}', \un{f}') \preceq (\un{e}, \un{f})$. $\hfill \square$\\
\\
To prove part $(b)$ of Proposition \ref{upper-triangular}, we need another lemma. Take any expression $\un{w}$ and a subexpression $\un{e} \subset \un{w}$. This gives us a light-leaf map $SLL_{\un{w}, \un{e}} : \un{w} \rightarrow \un{z}$, where $\un{z}$ is a reduced expression for the minimal coset rep $z$ of $\un{w}^{\un{e}}$. Then in $Kar(\mathcal{M}_{BS, Q})$, $\un{z}  \cong \bigoplus_{\un{e'} \subset \un{z}} Q_{\un{e'}}$. Let $Q_z := Q_{\un{h}}$ for $\un{h} \subset \un{z}$ the subexpression consisting of all 1's\footnote{Once again, this is abusive notation, where we are conflating an object on $Kar(\mathcal{M}_{BS, Q})$ with its image in $JStd\text{Bim}_Q$ under $\tilde{\mathcal{G}}$.}. Thus we have a projection map $p : \un{z} \rightarrow Q_z$.

Consider the composition $$ Q_{\un{e}} \hookrightarrow \un{w} \xrightarrow{SLL_{\un{w}, \un{e}}} \un{z} \twoheadrightarrow Q_z$$ Since $z$ is the minimal coset rep of $\un{w}^{\un{e}}$, we have that $W_J \un{w}^{\un{e}} = W_J z$, so that by Corollary \ref{HomStdQ} the composition above is given by some $q_{\un{e}} \in Q$.
\begin{lem} \label{q}
      For any $\un{e} \subset \un{w}$, $q_{\un{e}} \neq 0$.
\end{lem}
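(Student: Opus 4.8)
The plan is to prove Lemma \ref{q} by induction on the length $n$ of the expression $\un{w}$, following the recursive structure of the light-leaves algorithm. The base case $n=0$ is trivial, with $q_{\un{e}}=1$. For the inductive step, write $\un{w} = (\un{w}_{\leq n-1}, s_n)$, and let $z_0, \dots, z_n$ be the coset stroll of $\un{e}$. The map $SLL_{\un{w}, \un{e}}$ is built from $SLL_{\un{w}_{\leq n-1}, \un{e}_{\leq n-1}}$ by tensoring on a strand $s_n$ and applying the elementary map $\phi_n \colon (\un{z}_{n-1}, s_n) \to \un{z}_n$ (in the cases $U0, U1, D0, D1$) or the analogous map in the cases $X0, X1$. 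Precomposing with $\iota_{\un{e}}$ and postcomposing with the projection $p \colon \un{z} \twoheadrightarrow Q_z$, the composition factors through $\big(\text{something in }\un{z}_{n-1}\big) \otimes Q_{s_n^{e_n}}$, and one tracks how the scalar $q_{\un{e}_{\leq n-1}}$ (nonzero by induction) propagates to $q_{\un{e}}$.

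The key computation is a case analysis on $d_n'$. In each case one must identify the coefficient by which $\phi_n$ (and its interaction with the inclusion of $Q_{\un{e}}$) scales the relevant standard summand, using the explicit images of the generators in $JStd\text{Bim}_Q$. For the dot, trivalent, and $2m_{st}$-valent generators this is the content of the analogous non-spherical statement in \cite{SoergelCalculus}; one checks that passing from $\un{z}_{n-1}$ to $\un{z}_n$ via a $U$ or $D$ move contributes a nonzero factor (a root or $1$) on the $Q_{z_{n-1}}$-to-$Q_{z_n}$ component. The genuinely new cases are $X0$ and $X1$. For $X0$ the map $\phi_n$ is declared identical to $U0$, so the scalar is the same nonzero one from the $U0$ case. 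For $X1$, the map (\ref{X1 algorithm}) consists of a rex move $\beta$ pulling a simple reflection $t \in J$ to the wall, a wall plug-in, and a further rex move $\alpha$; I would use that rex moves act invertibly (hence by nonzero scalars) on the relevant standard components — this is again from \cite{SoergelCalculus} — and that the wall plug-in, whose image under $\tilde{\mathcal{G}}$ is given by (\ref{new generators}) composed with the embedding into $JStd\text{Bim}_Q$, acts by a nonzero scalar on the summand $Q_{z_n}$ that survives. One must here invoke Lemma \ref{wall-crossing}(b) to know that $z_{n-1}s_n = t z_{n-1}$ with $t \in J$, so that the minimal coset representative of $\un{w}_{\leq n}^{\un{e}_{\leq n}}$ really is $z_{n-1} = z_n$ and the plug-in maps $Q_{z_{n-1}}$ isomorphically onto the target summand.

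I expect the main obstacle to be the bookkeeping in the $X1$ case: one needs to verify that after the rex move $\beta$, the strand plugged into the wall is the correct $t$-colored strand and that the composite of $\beta$, the plug-in, and $\alpha$, when localized and restricted to the $Q_{z_{n-1}} \hookrightarrow Q_{z_n}$ component, is a nonzero scalar rather than accidentally zero. The cleanest way to handle this is probably to factor the verification through the functor $\tilde{\mathcal{G}}$ (or equivalently $\widetilde{\mathcal{S}td}$ composed with $\tilde{\mathcal{F}}_Q^{std}$, using the commutativity of (\ref{commutative})), reducing everything to an explicit Demazure-operator computation in $JStd\text{Bim}_Q$, where the relevant morphism spaces are one-dimensional over $Q$ by Lemma \ref{hom2} and the composite of nonzero maps between one-dimensional spaces is nonzero as soon as each factor is. Concretely: rex moves localize to nonzero scalar matrices on each $Q_w$ component (by the non-spherical theory), and the wall plug-in localizes, on the $Q_{z_{n-1}}$ summand, to the canonical nonzero map $Q_{z_{n-1}} \to Q^J \otimes_{Q^J \cdots} \cdots$ coming from (\ref{new generators}); chaining these gives $q_{\un{e}} = (\text{nonzero}) \cdot q_{\un{e}_{\leq n-1}} \neq 0$.
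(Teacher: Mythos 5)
Your proposal is correct and is essentially the paper's argument: the paper simply cites Proposition 6.6 of \cite{SoergelCalculus} (whose proof is exactly the induction along the light-leaf construction you reconstruct, with each elementary step contributing a nonzero scalar on the forced summand $Q_{z_k}$) and notes that the only new ingredient is the wall plug-in, which is nonzero by (\ref{F wall-generators}) and (\ref{Std wall-generators}) --- precisely your $X1$ analysis.
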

\noindent \textit{Proof.} This proof follows almost exactly the proof of Proposition 6.6 in \cite{SoergelCalculus}. The only difference is there may be dotted strands plugging into the wall, which are non-zero by (\ref{F wall-generators}) and (\ref{Std wall-generators}).
$\hfill \square$\\
\\
We can now prove Proposition \ref{upper-triangular} $(b)$:\\
\\
\noindent \textit{Proof of Proposition \ref{upper-triangular} $(b)$:} We want to show that $p_{e, f}^{e, f} \neq 0$. We will show that in fact, $p_{e, f}^{e, f} = c \cdot q_{\un{e}} \cdot q_{\un{f}}$, where $c \in Q$ is non-zero. Since by Lemma \ref{q} we know that $q_{\un{e}}$ and $q_{\un{f}}$ are non-zero,  this will imply $p_{e, f}^{e, f} \neq 0$.

We know that $p_{e, f}^{e, f}$ is given by the map \begin{equation} \label{p}
    Q_{\un{e}} \hookrightarrow \un{x} \xrightarrow{SLL_{\un{x}, \un{e}}} \un{z} \xrightarrow{\overline{SLL}_{\un{y}, \un{f}}} \un{y} \twoheadrightarrow Q_{\un{f}}.
\end{equation}
Let us add a projection and inclusion onto/from $Q_z$ in the middle:
\begin{equation} \label{project include}
Q_{\un{e}} \hookrightarrow \un{x} \xrightarrow{SLL_{\un{x}, \un{e}}} \un{z} \twoheadrightarrow Q_z \hookrightarrow \un{z} \xrightarrow{\overline{SLL}_{\un{y}, \un{f}}} \un{y} \twoheadrightarrow Q_{\un{f}}.
\end{equation} We claim that this does not change anything, i.e. that the maps (\ref{p}) and (\ref{project include}) are the same. To see this, look at the first two maps $Q_{\un{e}} \hookrightarrow \un{x} \rightarrow \un{z}$ in (\ref{project include}): $Q_{\un{e}}$ can only land in direct summands $Q_w$\footnote{To be precise, $Q_w := Q_{\un{g}}$ for some $\un{g} \subset \un{z}$ such that $\un{z}^{\un{g}} = w$.} of $\un{z}$ such that $W_J w = W_J \un{x}^{\un{e}} = W_J z$. Since $z$ is a minimal coset rep, this implies $z \leq w$. But since $\un{z}$ is reduced, we also have $w \leq z$, so that $w = z$. So the first two maps $Q_{\un{e}} \hookrightarrow \un{x} \rightarrow \un{z}$ land in $Q_z$, so that projecting onto $Q_z$ and including back into $\un{z}$ does nothing. Thus, the maps (\ref{p}) and (\ref{project include}) are equal. 

Now, consider the first half of (\ref{project include}): $$Q_{\un{e}} \hookrightarrow \un{x} \xrightarrow{SLL_{\un{x}, \un{e}}} \un{z} \twoheadrightarrow Q_z$$ This is exactly $q_{\un{e}}$. Similarly, the second half is (again, up to a non-zero constant) the map $\overline{q_{\un{f}}}$, i.e. $q_{\un{f}}$ flipped upside down. But since $q_{\un{f}} \in Q$, we have $\overline{q_{\un{f}}} = q_{\un{f}}$. Thus, the map in (\ref{project include}) is $c \cdot q_{\un{e}} \cdot q_{\un{f}}$ for some non-zero $c \in Q$, and so $p_{e, f}^{e, f} = c \cdot q_{\un{e}} \cdot q_{\un{f}}$. So $p_{e, f}^{e, f} \neq 0$ for any $\un{e} \subset \un{x}$, $\un{f} \subset \un{y}$. $\hfill \square$\\
\\
Thus, the localized maps $\mathbb{SDL}_{\un{f}, \un{e}} \otimes id_Q$ are upper-triangular with respect to the partial order `$\preceq$', and thus form a basis, proving Proposition \ref{localized-basis}. $\hfill \square$
\begin{cor} \label{linearly independent}
    Given $\un{x}, \un{y} \in \mathcal{M}_{BS}$, let $\mathcal{SDL}_{\un{x}, \un{y}}$ contain one double-leaf map $\mathbb{SDL}_{\un{f}, \un{e}}$ for every pair of subexpressions $\un{e} \subset \un{x}$, $\un{f} \subset \un{y}$ such that $W_J \un{x}^{\un{e}} = W_J \un{y}^{\un{f}}$. Then the maps in $\mathcal{SDL}_{\un{x}, \un{y}}$ are linearly independent. 
\end{cor}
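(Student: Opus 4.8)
\noindent\emph{Proof proposal.} The plan is to deduce this immediately from the localized statement, Proposition~\ref{localized-basis}, by descent from $Q$ to $R$. Suppose we are given a relation
\[
    \sum_{(\un{e}, \un{f})} \mathbb{SDL}_{\un{f}, \un{e}} \cdot c_{\un{f}, \un{e}} = 0 \quad \text{in } \text{Hom}_{\mathcal{M}_{BS}}(\un{x}, \un{y}),
\]
where each $c_{\un{f}, \un{e}} \in R$ and the sum ranges over pairs of subexpressions $\un{e} \subset \un{x}$, $\un{f} \subset \un{y}$ with $W_J \un{x}^{\un{e}} = W_J \un{y}^{\un{f}}$. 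I would first apply the $R$-linear localization functor $(-) \otimes_R Q : \mathcal{M}_{BS} \to \mathcal{M}_{BS, Q}$; by the very definition of $\mathcal{M}_{BS, Q}$ as $\mathcal{M}_{BS} \otimes_R Q$, this functor carries each $\mathbb{SDL}_{\un{f}, \un{e}}$ to its localization $\mathbb{SDL}_{\un{f}, \un{e}} \otimes \mathrm{id}_Q$ and carries $c_{\un{f}, \un{e}}$ to its image under the inclusion $R \hookrightarrow Q$. We thus obtain the relation $\sum_{(\un{e}, \un{f})} (\mathbb{SDL}_{\un{f}, \un{e}} \otimes \mathrm{id}_Q) \cdot c_{\un{f}, \un{e}} = 0$ in $\text{Hom}_{\mathcal{M}_{BS, Q}}(\un{x}, \un{y})$.

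Next, Proposition~\ref{localized-basis} tells us that $\{ \mathbb{SDL}_{\un{f}, \un{e}} \otimes \mathrm{id}_Q \}$ is a basis of $\text{Hom}_{\mathcal{M}_{BS, Q}}(\un{x}, \un{y})$ as a right $Q$-module, hence in particular a linearly independent set; so $c_{\un{f}, \un{e}} = 0$ in $Q$ for every pair. Since $R$ is an integral domain, the inclusion $R \hookrightarrow Q$ is injective, and therefore $c_{\un{f}, \un{e}} = 0$ already in $R$ for all $(\un{e}, \un{f})$. This is exactly the assertion that the maps in $\mathcal{SDL}_{\un{x}, \un{y}}$ are linearly independent over $R$.

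I do not expect any real obstacle: all the genuine work has been done in Proposition~\ref{localized-basis} (the upper-triangularity of the $\mathbb{SDL}_{\un{f}, \un{e}}$ with respect to the order $\preceq$ on pairs of subexpressions), and the present corollary merely records that linear independence over the larger ring $Q$ descends to linear independence over $R$. The one point worth a sentence of justification is that localization is compatible with the composition and vertical/horizontal gluing of diagrams out of which the double-leaf maps are built, so that the localization functor indeed sends $\mathbb{SDL}_{\un{f}, \un{e}}$ to $\mathbb{SDL}_{\un{f}, \un{e}} \otimes \mathrm{id}_Q$; this is immediate from the construction of $\mathcal{M}_{BS, Q}$.
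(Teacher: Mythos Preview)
Your proposal is correct and follows essentially the same approach as the paper: the paper deduces the corollary from Proposition~\ref{localized-basis} by exactly the descent argument you give, applying $(-)\otimes_R Q$ to a putative dependence relation and using linear independence of the localized double-leaves to conclude that all coefficients vanish. The only cosmetic difference is that the paper phrases Proposition~\ref{localized-basis} as a basis statement over $R$ rather than $Q$, but this has no bearing on the argument.
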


\section{Spanning} \label{Spanning}
We will now prove that the double-leaves span. More precisely:
\begin{prop} \label{span}
    Given two expressions $\un{x}, \un{y}$, let $\mathcal{SDL}_{\un{x}, \un{y}}$ be a collection of maps containing one double-leaf $\mathbb{SDL}_{\un{e}, \un{f}}$ for every pair of subexpressions $\un{e} \subset \un{x}$, $\un{f} \subset \un{y}$ such that $W_J \un{x}^{\un{e}} = W_J \un{y}^{\un{f}}$. Then $\mathcal{SDL}_{\un{x}, \un{y}}$ spans Hom$_{\mathcal{M}_{BS}}(\un{x}, \un{y})$. 
\end{prop}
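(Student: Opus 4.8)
The plan is to adapt the inductive diagrammatic spanning argument of Elias--Williamson (the proof that double leaves span, Section~6 of \cite{SoergelCalculus}) to the spherical setting; the only genuinely new feature is the wall. I argue by induction on $n := \ell(\un{x}) + \ell(\un{y})$, the total number of letters. The engine of the inductive step is a one-sided (``source'') spanning statement, proved by a separate induction on $\ell(\un{x})$ alone: for every expression $\un{y}$, the space $\text{Hom}_{\mathcal{M}_{BS}}(\un{x}, \un{y})$ is spanned over $R$ by morphisms $\psi \circ SLL_{\un{x}, \un{e}}$, where $\un{e}$ runs over subexpressions of $\un{x}$ and $\psi \colon \un{z}_{\un{e}} \to \un{y}$ with $\un{z}_{\un{e}}$ a reduced expression of the minimal coset representative $z_{\un{e}}$ of $\un{x}^{\un{e}}$. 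To prove this, write $\un{x} = (\un{x}', s)$, use the biadjunction (cups and caps) inherited from $\mathcal{D}$ to pass to $\text{Hom}_{\mathcal{M}_{BS}}(\un{x}', (\un{y}, s))$, apply the inductive hypothesis for $\un{x}'$, and then perform one light-leaves step (a map $\phi_k$ of Section~\ref{spherical light-leaves}) to absorb the trailing $s$-strand. For decorations $d_k' \in \{U0, U1, D0, D1, X0\}$ this step is word-for-word as in \cite{SoergelCalculus}; the only new case is $d_k' = X1$, where Lemma~\ref{wall-crossing}(b) forces the relevant reflection into $J$ and the strand is pulled to the wall, plugged in, and pushed back by rex moves.

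For the main induction, fix $f \colon \un{x} \to \un{y}$. By Lemma~\ref{wall-crossing}(a) a prefix of a reduced expression of an element of $\jw$ again represents an element of $\jw$, so if $\un{x}$ is \emph{not} a reduced expression of an element of $\jw$, then every target $\un{z}_{\un{e}}$ in the source statement is strictly shorter than $\un{x}$; then $\ell(\un{z}_{\un{e}}) + \ell(\un{y}) < n$, the inductive hypothesis rewrites each $\psi$ through double leaves, and we are left to express composites $\overline{SLL_{\un{y}, \un{f}}} \circ SLL_{\un{z}_{\un{e}}, \un{g}} \circ SLL_{\un{x}, \un{e}}$ via double leaves $\un{x} \to \un{y}$ --- i.e.\ to show $SLL_{\un{z}_{\un{e}}, \un{g}} \circ SLL_{\un{x}, \un{e}}$ is a combination of light leaves $SLL_{\un{x}, \un{h}}$ modulo maps factoring through shorter expressions, which is handled as in \cite{SoergelCalculus} (with a descending sub-induction on intermediate lengths). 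When $\un{x}$ \emph{is} a reduced expression of some $z \in \jw$, one also invokes the dual (``target'') spanning statement, obtained by applying the contravariant functor $\overline{(-)}$ (turning diagrams upside down, which fixes objects and preserves the wall); this rewrites $f$ as an $R$-combination of $\overline{SLL_{\un{y}, \un{f}}} \circ \chi \circ SLL_{\un{x}, \un{e}}$ with $\chi \colon \un{z}_{\un{e}} \to \un{z}'_{\un{f}}$ a morphism between reduced expressions of minimal coset representatives. If $z_{\un{e}} \neq z'_{\un{f}}$ then by Lemma~\ref{hom2} the top summand $Q_{z_{\un{e}}}$ cannot map to the top summand of $\un{z}'_{\un{f}}$ after localization, so $\chi$ factors through a strictly shorter expression and we conclude by the inductive hypothesis. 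If $z_{\un{e}} = z'_{\un{f}}$ then $SLL_{\un{x}, \un{e}}$ and $\overline{SLL_{\un{y}, \un{f}}}$ are pure rex moves through the fixed reduced expression $\un{z}$ chosen in Section~\ref{sdl}, and $\chi$ is a rex move $\un{z}_{\un{e}} \to \un{z}'_{\un{f}}$ plus lower terms; by Matsumoto's theorem and the fact that any two rex moves agree modulo maps factoring through shorter expressions, the composite equals the double leaf $\mathbb{SDL}_{\un{f}, \un{e}}$ up to terms covered by the induction. The flexibility recorded in Remark~\ref{flex} is precisely what lets us absorb these rex-move ambiguities.

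The main obstacle is the $d_k' = X1$ case of the one-sided spanning step: one must verify that the wall relations in the definition of $\mathcal{M}_{BS}$, together with their consequences \eqref{wall-bubble}, \eqref{fork-death}, \eqref{wall-fusion}, are complete enough that every local configuration of strands meeting the wall can be rewritten in the normal form output by the light-leaves algorithm, with all error terms of strictly smaller coset-stroll type in the sense of Definition~\ref{order}. Concretely this is a local analysis near the wall of diagrams valued in a dihedral group $\langle s, t \rangle$ with $t \in J$ --- an analogue of, but heavier than, the dihedral computations underlying the non-spherical spanning argument --- using \eqref{wall-bubble} to kill closed wall components and \eqref{wall-fusion} to merge adjacent plug-ins. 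Everything else runs parallel to \cite{SoergelCalculus}, with elements of $W$ replaced by minimal coset representatives throughout and the relevant $\jw$-data read off from the coset stroll.
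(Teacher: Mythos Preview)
Your overall inductive architecture is plausible, but the argument has a genuine gap and also diverges substantially from the paper's method. The most serious issue is your use of Lemma~\ref{hom2} in the base case: when $\un{x}$ and $\un{y}$ are both reduced expressions for elements of $\jw$ and you obtain a middle map $\chi:\un{z}_{\un{e}}\to\un{z}'_{\un{f}}$ with $z_{\un{e}}\neq z'_{\un{f}}$, you infer from the vanishing of the top localized component that $\chi$ \emph{diagrammatically} factors through a strictly shorter expression. That implication is not available to you. Localization tells you about the matrix of $\chi\otimes Q$ in $JStd\text{Bim}_Q$, but to translate ``zero top entry'' into ``lies in the two-sided ideal of maps through shorter objects'' inside $\mathcal{M}_{BS}$ you would need exactly the kind of structural control over $\text{Hom}_{\mathcal{M}_{BS}}$ that Proposition~\ref{span} (together with Proposition~\ref{ll}) is meant to establish. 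So this step is circular. A second, related gap is the ``one light-leaves step'' in the $X$ case: absorbing the trailing $s$-strand requires first applying a rex move $\beta:(\un{z}',s)\to(t,\un{z}')$ before one can invoke the wall splitting \eqref{wall}, and $\bar\beta\beta$ is only the identity modulo strictly negative--positive error terms (Lemma~\ref{rex}). You acknowledge a ``descending sub-induction'' but never set it up; without a carefully formulated statement about what is being inducted on (length of the intermediate m.c.r., not of $\un{x}$), the error terms re-enter the same Hom space you are analysing and the induction does not close.

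For comparison, the paper does \emph{not} attempt a direct spherical induction. Instead it first isotopes all wall-strands to the top, reducing an arbitrary $\phi:\un{x}\to\un{z}$ (for $\un{z}$ a reduced m.c.r.) to a map in $\mathcal{D}$ with target $(\un{u},\un{z})$, and then invokes the already-proved non-spherical spanning (Proposition~\ref{nll}) as a black box. The real work is then the \emph{conversion} of non-spherical light leaves $LL_{\un{x},\un{e}}:\un{x}\to(\un{u},\un{z})$, after plugging $\un{u}$ into the wall, into spherical light leaves. This requires building the non-spherical $LL$'s with very particular intermediate expressions $(\un{u}_k,\un{z}_k)$ and very particular rex moves (Section~\ref{construction2}), including a ``sweep'' lemma to handle the delicate case $d_k=D0,\ d_k'=X0$ via repeated two-colour associativity. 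This explicit conversion is what replaces your hand-waved $X$-step and avoids any appeal to localization; the localized category is used only for linear independence, never for spanning.
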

Our proof draws heavily on the proof that double-leaves span in $\mathcal{D}$ presented in Chapter 7 of \cite{SoergelCalculus}. 

First, some terminology. Let us take a diagrammatic morphism in $\mathcal{M}_{BS}$. We can arrange this diagram so that it has no horizontal lines. Then, at any particular height, this diagram has a certain \textit{width}. 
\begin{center}
    \scalebox{0.7}{\tikzfig{fig27}}
\end{center}
Then, we classify each of our generators as \textit{positive} (+), \textit{negative} (-) or \textit{neutral} (=), based on how they affect the width.
\begin{center}
    \scalebox{0.7}{\tikzfig{fig28}}
\end{center}
We don't consider polynomials to affect the width. We say that a diagram is \textit{negative-positive} if it consists of negative and neutral (non-positive) generators, followed by positive and neutral (non-negative) generators. A diagram is \textit{strictly} negative-positive if the width actually shrinks and then increases again non-trivially. Note that this notion is not well-defined up to isotopy. We say that a map is (strictly) negative-positive if it can be moved via isotopy into a diagram which is (strictly) negative-positive. 

Take a reduced expression $\un{z}$ for a minimal coset representative $z$. Let $I_{\un{z}}$ be the right ideal (top ideal, thinking diagrammatically) of morphisms which are positive on top. So, a map in $I_{\un{z}}$ (up to isotopy) finishes with one of our positive generators, followed by rex moves. But note that since $\un{z}$ is a reduced expression for a minimal coset representative, this positive generator cannot be the trivalent vertex, nor a strand coming from the wall. Therefore a morphism in $I_{\un{z}}$ must end in a down-dot \begin{tikzpicture}
    \draw[red] (0, 0) -- (0, 0.4);
    \filldraw[red] (0, 0) circle(1pt);
\end{tikzpicture}
followed by rex moves. Using the Jones-Wenzl relation (see (5.7) in \cite{SoergelCalculus}), we know that this dot will pull through our rex moves to the top. Therefore, $I_{\un{z}}$ is the right (top) ideal of morphisms with a down-dot at the top.
\subsection{Preliminaries} \label{preliminaries}
\begin{lem} \label{rex}
    Let $\un{w}$ and $\un{w}'$ be two reduced expressions for the same element, and let $\beta$ and $\beta'$ be two different rex moves from $\un{w}$ to $\un{w}'$. Then $\beta - \beta'$ has a strictly negative-positive decomposition.
\end{lem}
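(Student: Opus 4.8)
The plan is to reduce this to the analogous statement for the ordinary diagrammatic Hecke category $\mathcal{D}$, which is exactly Lemma 7.4 (or the relevant rex-move lemma) of Elias--Williamson \cite{SoergelCalculus}, and then argue that passing to $\mathcal{M}_{BS}$ does not change the situation because rex moves involve only $2m_{st}$-valent vertices -- all of which are neutral generators -- and no wall strands. Concretely, a rex move from $\un{w}$ to $\un{w}'$ is, by definition, a morphism built purely out of $2m_{st}$-valent vertices, hence lives in the image of $\mathcal{D} \hookrightarrow \mathcal{M}_{BS}$ (no wall plug-ins appear). So $\beta, \beta'$, and therefore $\beta - \beta'$, are all morphisms in $\mathcal{D}$, and we may invoke the corresponding result there verbatim: $\beta - \beta'$ admits a strictly negative-positive decomposition inside $\mathcal{D}$, i.e. it can be isotoped into a diagram that strictly shrinks in width and then strictly grows. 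Since the inclusion $\mathcal{D} \hookrightarrow \mathcal{M}_{BS}$ is faithful and preserves the width grading and the positive/negative/neutral classification of every generator it hits, the same decomposition works in $\mathcal{M}_{BS}$.

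First I would recall precisely what a strictly negative-positive decomposition means in this setting: by the definitions preceding the lemma, a morphism has a strictly negative-positive decomposition if it can be isotoped to a composite $\gamma_+ \circ \gamma_-$ where $\gamma_-$ uses only negative and neutral generators and strictly decreases the width at some point, and $\gamma_+$ uses only positive and neutral generators and strictly increases the width non-trivially. Then I would state that the Elias--Williamson result (the rex-move part of their Lemma 7.4, which underlies the proof that double leaves span in $\mathcal{D}$) gives exactly this for $\beta - \beta'$ when the ambient category is $\mathcal{D}$; typically their argument goes by choosing a common reduced expression path or by using the "colour-reduction" of rex moves to dot-and-trivalent expressions, producing caps and cups that create the width-pinch. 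I would cite this and spell out the one-line translation: because a rex move by definition contains neither trivalent vertices nor wall plug-ins, the entire discussion takes place in the full subcategory $\mathcal{D} \subseteq \mathcal{M}_{BS}$, and the width function and generator signs agree on the nose.

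The only point needing a word of care -- and what I'd flag as the main obstacle, though it is a mild one -- is checking that the notion of "strictly negative-positive" transports cleanly, i.e. that no wall relation or inherited relation in $\mathcal{M}_{BS}$ forces a different isotopy class that would spoil the width estimate. But since any diagram equal in $\mathcal{D}$ remains equal in $\mathcal{M}_{BS}$ (the functor $\mathcal{D} \to \mathcal{M}_{BS}$ is defined precisely by imposing the same local relations plus the wall relations, and the wall relations never produce or consume a $2m_{st}$-valent vertex out of nothing), the decomposition found in $\mathcal{D}$ is automatically valid in $\mathcal{M}_{BS}$; there is nothing extra to verify. So the proof is essentially: \emph{rex moves live in $\mathcal{D}$, apply \cite[Lem.~7.4]{SoergelCalculus} there, transport along the faithful inclusion}. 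I would write it in three or four sentences, with the bulk of the content being the explicit reminder of why rex moves avoid the new generators.
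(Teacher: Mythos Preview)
Your proposal is correct and takes essentially the same approach as the paper: the paper's proof is the single sentence ``Lemma 7.4 of \cite{SoergelCalculus} gives this in the case of $\mathcal{D}$, so it is also true in $\mathcal{M}_{BS}$,'' which is exactly your reduction. One minor remark: you invoke \emph{faithfulness} of $\mathcal{D} \to \mathcal{M}_{BS}$, but what is actually used (and what you correctly state later) is only that equalities in $\mathcal{D}$ persist in $\mathcal{M}_{BS}$, which holds by construction since $\mathcal{M}_{BS}$ is defined by adding generators and relations to $\mathcal{D}$.
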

\noindent \textit{Proof.} Lemma 7.4 of \cite{SoergelCalculus} gives this in the case of $\mathcal{D}$, so it is also true in $\mathcal{M}_{BS}$. $\hfill \square$
\begin{lem} \label{factor2}
    Let $\un{x}$ be an arbitrary expression. Then the map $id_{\un{x}}$ is a sum of negative-positive maps which each factor through some reduced expression $\un{z}$ for a minimal coset representative $z \leq \un{x}$. 
\end{lem}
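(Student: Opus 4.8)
\textbf{Proof proposal for Lemma \ref{factor2}.} The plan is to mimic the proof of the analogous statement in \cite{SoergelCalculus} (their Lemma 7.7 or thereabouts), pushing through the only new feature, namely the wall. First I would recall that in $\mathcal{D}$ one decomposes $id_{\un{x}}$ into a sum of light-leaf-like pieces indexed by subexpressions $\un{e} \subset \un{x}$, by repeatedly applying the ``one-color'' and ``two-color'' reductions so that each summand is broken at its narrowest point. In our setting we run the same induction on the length of $\un{x}$, but we replace the stroll of $\un{e}$ by the \emph{coset stroll} $cs(\un{e})$, so that the reduced expression $\un{z}$ we factor through is a reduced expression for the minimal coset representative $z_n$ of $\un{x}^{\un{e}}$, and the associated decoration uses the symbols $U0,U1,D0,D1,X0,X1$ from \eqref{d_i'}.

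The key steps, in order: (1) Build a ``light-leaf-style'' decomposition of $id_{\un{x}}$ by the usual inductive procedure --- at step $k$ one processes the $k$-th strand of $\un{x}$ using the decoration symbol $d_k'$, exactly as in the construction of $SLL_{\un{x},\un{e}}$ and its upside-down mate, so that $id_{\un{x}} = \sum_{\un{e}} \overline{SLL}_{\un{x},\un{e}} \circ SLL_{\un{x},\un{e}} + (\text{lower terms})$, with each summand factoring through a rex $\un{z}$ for the minimal coset representative $z = \un{x}^{\un{e}}$ up to Bruhat order. (2) Check that each such summand is negative-positive: the map $SLL_{\un{x},\un{e}}$ is built from dots, trivalent vertices, wall plug-ins and rex moves; the non-neutral generators appearing are negative (down-dots, fork deaths, wall plug-ins reading upward) on the way down to the narrow expression $\un{z}$, and the mirror image $\overline{SLL}_{\un{x},\un{e}}$ supplies the positive generators on the way back up --- so the composite visibly has a negative-positive decomposition through the width-$|\un{z}|$ slice. (3) Use Lemma \ref{rex} to absorb the ambiguity in choices of rex moves: any two choices differ by a strictly negative-positive map, which can be reincorporated into the ``lower terms'' of the sum, and since Bruhat order on $\jw$ is a well-founded partial order the induction terminates. (4) Note that the only genuinely new case in step (1) relative to \cite{SoergelCalculus} is $d_k' = X1$, where \eqref{X1 algorithm} tells us to use a rex move to pull a strand $t \in J$ to the wall and plug it in; diagrammatically this introduces a wall plug-in (a negative generator) and the relevant wall relations \eqref{wall-bubble}, \eqref{fork-death}, \eqref{wall-fusion} are exactly what is needed to verify the counit/associativity bookkeeping that the $\mathcal{D}$-proof performs for trivalent vertices. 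When $d_k' = X0$ the step is handled by the $U0$ case as noted after \eqref{X1 algorithm}.

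The main obstacle I anticipate is \emph{step (2)}, specifically verifying that the wall plug-in really behaves as a negative generator for the purposes of the width bookkeeping and that the wall relations \eqref{wall-bubble}--\eqref{wall-fusion} let us slide all wall plug-ins into the ``negative half'' of the diagram without creating positive generators below the narrow slice --- the subtlety being that the wall is not isotopy-invariant in the same way the rest of the diagram is, so one must be careful that ``moving a wall-strand down'' via isotopy does not inadvertently widen an intermediate slice. Once this is checked, the remaining content is a routine transcription of the $\mathcal{D}$ argument, with the coset stroll replacing the ordinary stroll and $\jw$-Bruhat order replacing $W$-Bruhat order throughout.
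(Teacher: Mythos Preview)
Your approach is plausible but substantially more elaborate than what the paper actually does, and it risks front-loading work that belongs later in the chapter. The paper's proof is essentially two lines: invoke Lemma~7.5 of \cite{SoergelCalculus} to write $id_{\un{x}}$ as a sum of negative-positive maps factoring through \emph{arbitrary} reduced expressions $\un{w}$ (not yet m.c.r.'s), and then observe that if $\un{w}$ is not an m.c.r.\ one can move a $J$-coloured strand to the left via rex moves and apply the wall relation~(\ref{wall}) to split that strand into two wall plug-ins, reducing the length and allowing induction. No coset stroll, no spherical light-leaf bookkeeping, no case analysis on $X0/X1$ is needed at this stage.

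By contrast, you propose to build a genuinely spherical decomposition of $id_{\un{x}}$ from scratch, tracking the coset stroll and the full decoration $\{U0,U1,D0,D1,X0,X1\}$. This is essentially the content of Lemma~\ref{X} and the surrounding material in Section~\ref{construction2}, which the paper develops \emph{after} Lemma~\ref{factor2} and uses for the much harder Proposition~\ref{ll}. Your step~(1) in particular --- the assertion that $id_{\un{x}}$ decomposes as $\sum_{\un{e}} \overline{SLL}_{\un{x},\un{e}} \circ SLL_{\un{x},\un{e}} + (\text{lower})$ --- is not something you can read off directly; establishing it would require exactly the kind of inductive manipulation with wall relations that the later lemmas carry out carefully. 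So while your strategy is not wrong, it conflates this lemma with the full spanning argument and thereby obscures that Lemma~\ref{factor2} admits a short bootstrap from the non-spherical case.
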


\begin{proof}
    By Lemma 7.5 in \cite{SoergelCalculus}, we can write $id_{\un{x}}$ as a sum of negative-positive maps factoring through reduced expressions. Then, we can move strands $s \in J$ to the left via rex moves (see proof of Lemma 7.5 in \cite{SoergelCalculus}), and apply the following relation:
    \begin{equation} \label{wall}
    \begin{tikzpicture}[scale = 0.7]
        \shade[left color = white, right color = black] (0, 0) rectangle (0.5, 2);
        \draw[red] (1, 0) -- (1, 2);
        \node at (1.5, 1) {=};
        \shade[left color = white, right color = black] (2, 0) rectangle (2.5, 2);
        \draw[red] (3.5, 0) -- (3.5, 2);
        \draw[red] (2.5, 1) -- (2.8, 1);
        \draw[red] (3.2, 1) -- (3.5, 1);
        \node at (4, 1) {=};
        \filldraw[red] (2.8, 1) circle(2pt);
        \filldraw[red] (3.2, 1) circle(2pt);
        \node at (4.5, 1) {$\frac{1}{2}$};
        \shade[left color = white, right color = black] (4.8, 0) rectangle (5.3, 2);
        \draw[red] (6, 0) -- (6, 2);
        \draw[red] (5.3, 1) -- (6, 1);
        \node at (5.65, 1.3) {$\al_s$};
        \node at (6.5, 1) {+};
        \node at (7, 1) {$\frac{1}{2}$};
        \shade[left color = white, right color = black] (7.3, 0) rectangle (7.8, 2);
        \draw[red] (8.5, 0) -- (8.5, 2);
        \draw[red] (7.8, 1) -- (8.5, 1);
        \node at (8.15, 0.7) {$\al_s$};
        \node at (9, 1) {=};
        \node at (9.5, 1) {$\frac{1}{2}$};
        \shade[left color = white, right color = black] (9.8, 0) rectangle (10.3, 2);
        \draw[red, rounded corners = 5] (11, 0) -- (11, 0.8) -- (10.3, 0.8);
        \draw[red, rounded corners = 5] (11, 2) -- (11, 1.2) -- (10.3, 1.2);
        \node at (10.65, 1.5) {$\al_s$};
        \node at (11.5, 1) {+};
        \node at (12, 1) {$\frac{1}{2}$};
        \shade[left color = white, right color = black] (12.3, 0) rectangle (12.8, 2);
        \draw[red, rounded corners = 5] (13.5, 0) -- (13.5, 0.8) -- (12.8, 0.8);
        \draw[red, rounded corners = 5] (13.5, 2) -- (13.5, 1.2) -- (12.8, 1.2);
        \node at (13.15, 0.5) {$\al_s$};
    \end{tikzpicture}
\end{equation} We can then apply induction on the length of $\un{x}$.
\end{proof}
We first prove spanning in the simplest case.

\begin{lem} \label{empty}
    Take an expression $\un{x}$ in $W_J$ and a map $\phi : \un{x} \rightarrow \un{\varnothing}$ in $\mathcal{M}_{BS}$. Then $\phi$ lies in the right $R$-span of light-leaves $SLL_{\un{x}, \un{e}} : \un{x} \rightarrow \un{\varnothing}$.
\end{lem}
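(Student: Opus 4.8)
The plan is to induct on the length $n$ of $\un{x}$. Recall that $\un{x}$ is a word in the generators $J$, so every one of its letters is eligible to plug into the wall; consequently every coset stroll arising from $\un{x}$ is constantly the identity, and the only minimal coset representative $\le \un{x}$ is the identity element (parabolic subgroups are Bruhat-closed downward). The base case $n=0$ is the assertion that $\mathrm{End}_{\mathcal{M}_{BS}}(\un\varnothing) = R$, with $id_{\un\varnothing} = SLL_{\un\varnothing,()}$: this follows as in \cite{SoergelCalculus} by reducing an arbitrary closed diagram to a polynomial, now using also the wall relations and their consequences (\ref{wall-bubble})--(\ref{wall-fusion}).

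For $n=1$, say $\un{x}=(s)$ with $s\in J$, I would invoke Lemma \ref{factor2} to write $id_{(s)} = \sum_j \iota_j\circ\pi_j$ with $\pi_j\colon (s)\to\un\varnothing$ non-positive and $\iota_j\colon\un\varnothing\to(s)$ non-negative. A non-positive morphism $(s)\to\un\varnothing$ never has more than one strand at any height (it starts with one and has no positive generator available to create another), so it can use neither a trivalent vertex nor a $2m_{st}$-valent vertex; it is therefore a single dot or wall plug-in decorated by polynomials. Polynomial forcing \cite[Eq.~5.2]{SoergelCalculus} pushes those polynomials to the rightmost region, and the resulting correction terms are absorbed into $R = \mathrm{End}(\un\varnothing)$. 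Hence each $\pi_j$, and so $\phi = \sum_j(\phi\circ\iota_j)\circ\pi_j$ with $\phi\circ\iota_j\in R$, lies in the right $R$-span of $SLL_{(s),(0)}$ (the dot) and $SLL_{(s),(1)}$ (the wall plug-in).

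For $n\ge 2$, write $\un{x}=(\un{x}',s)$ with $s\in J$, so $\un{x}'$ is again a word in $J$ of length $n-1$. Applying Lemma \ref{factor2} to $\un{x}'$ gives $id_{\un{x}'} = \sum_j \iota_j\circ\pi_j$ with $\pi_j\colon\un{x}'\to\un\varnothing$ and $\iota_j\colon\un\varnothing\to\un{x}'$, so that
$$ \phi \;=\; \phi\circ(id_{\un{x}'}\otimes id_s) \;=\; \sum_j \big(\phi\circ(\iota_j\otimes id_s)\big)\circ(\pi_j\otimes id_s). $$
Each $\phi\circ(\iota_j\otimes id_s)\colon (s)\to\un\varnothing$ is a right $R$-combination of the dot and the wall plug-in, by the $n=1$ case; each $\pi_j\colon\un{x}'\to\un\varnothing$ is a right $R$-combination of the $SLL_{\un{x}',\un{e}'}$, by the inductive hypothesis. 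Substituting, $\phi$ becomes a right $R$-combination of the maps $(\text{dot})\circ(SLL_{\un{x}',\un{e}'}\otimes id_s)$ and $(\text{wall plug-in})\circ(SLL_{\un{x}',\un{e}'}\otimes id_s)$. These are exactly the light-leaves $SLL_{\un{x},(\un{e}',0)}$ and $SLL_{\un{x},(\un{e}',1)}$: because the coset stroll is trivial, the last step of the light-leaves algorithm carries decoration $X0$ --- which the algorithm treats as $U0$, namely a dot on the new strand $s$ --- or $X1$, which in (\ref{X1 algorithm}), with trivial intermediate expression and $t=s$, is precisely plugging $s$ into the wall. This exhibits $\phi$ in the right $R$-span of $\{SLL_{\un{x},\un{e}}\}$.

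The hard part is bookkeeping, not ideas. One must check on the nose, from the recursive definition of the light-leaves, the identities $(\text{dot})\circ(SLL_{\un{x}',\un{e}'}\otimes id_s) = SLL_{\un{x},(\un{e}',0)}$ and its wall-plug analogue --- in particular that the rex moves $\alpha$ and $\beta$ in the $X1$ step are trivial in this situation --- and one must follow the polynomial coefficients through these compositions: they first appear in the region to the left of the last strand and must be slid into the rightmost region, which is legitimate because after the terminal dot or wall plug-in no strand separates the two regions. The other delicate point is the $n=1$ analysis, which rests on polynomial forcing together with the one-colour relations of $\mathcal{D}$.
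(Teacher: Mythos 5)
Your route is genuinely different from the paper's. The paper proves this lemma by a direct normal-form reduction: isotope all wall plug-ins to the top, push polynomials to the right, and pull every up-dot downward through rex moves until it dies on a trivalent vertex, forms a barbell (absorbed as a polynomial), or forms a cap that slides into the wall via the relation (\ref{wall}); what survives is exactly a light-leaf. You instead run the global strategy of Chapter \ref{Spanning} in miniature: peel off the last letter using the decomposition of the identity from Lemma \ref{factor2}, reduce to the one-strand case by a width argument, and identify the resulting compositions with $SLL_{\un{x},(\un{e}',0)}$ and $SLL_{\un{x},(\un{e}',1)}$ on the nose (correctly noting that all intermediate expressions are $\un{\varnothing}$, so the rex moves in the $X1$ step degenerate). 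Both arguments ultimately rest on the same inputs --- the wall relations, polynomial forcing, and the reduction of closed diagrams to polynomials (your base case $\mathrm{End}_{\mathcal{M}_{BS}}(\un{\varnothing})=R$ is itself the $n=0$ instance of the lemma and needs the same diagram-chasing the paper's proof performs); your version is more systematic, the paper's more self-contained.

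There is one concrete error in your bookkeeping. You claim the polynomial coefficient sitting to the left of the terminal $s$-strand can be slid into the rightmost region because ``no strand separates the two regions.'' That is true when the strand ends in a dot, but false when it plugs into the wall: the arc from the bottom boundary into the wall, together with the wall itself, encloses the region containing that polynomial, and the rightmost region lies on the other side of the arc. You must apply polynomial forcing across the $s$-strand. This does not break the proof --- the leading term carries $s(f)$ into the rightmost region, and the $\partial_s(f)$ correction term breaks the strand into an up-dot on the bottom piece (which is precisely the $X0$ light-leaf $SLL_{\un{x},(\un{e}',0)}$) times a dot plugged into the wall, which the wall relations evaluate to a scalar --- so you remain in the right $R$-span of light-leaves. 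But as written the justification does not cover the $X1$ case, and the same slip occurs in your $n=1$ analysis, where the correction terms are not ``absorbed into $\mathrm{End}(\un{\varnothing})$'' but rather produce the other light-leaf. With that repair the argument goes through.
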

\noindent \textit{Proof.} By isotopy, move all wall plug-ins to the top. Using polynomial forcing, move all polynomials to the right. Any up-dots can be pulled down, as up-dots pull through rex moves via the Jones-Wenzl relation. Eventually each up-dot will reach either the bottom of the diagram, a down-dot (forming a barbell), a splitting trivalent vertex (thus disappearing), or a merging trivalent vertex (forming a cap). In the case where it reaches a down-dot, forming a barbell, we can again move this to the right using polynomial forcing. If it hits a merging trivalent vertex (a cap), we apply (\ref{wall}) to slide the cap into the wall. After repeating this process and applying wall-relations, eventually the remaining diagrams will consist only of strands plugging into the wall and up-dots, which are exactly the light-leaves. $\hfill \square$

\begin{cor} \label{identity}
    Take an expression $\un{x}$ in $W_J$. Then any map $\phi: \un{x} \rightarrow \un{\varnothing}$ is a sum of maps made only of strands from $\un{x}$ into the wall, with polynomials in the gaps.
\end{cor}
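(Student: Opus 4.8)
The plan is to deduce this from Lemma \ref{empty} by examining what the light-leaves $SLL_{\un{x},\un{e}}$ look like when the whole expression $\un{x}$ lies in $W_J$. First I would note that for any subexpression $\un{e}\subset\un{x}$ every partial product $s_1^{e_1}\cdots s_j^{e_j}$ still lies in $W_J$, so its minimal coset representative is $id$; hence the coset stroll of $\un{e}$ is constantly $id$, every intermediate expression $\un{z}_j$ is empty, and by (\ref{d_i'}) every decoration is $Xe_i$ (since $s_i\in J$ forces $s_i\notin\jw$). In particular the target of $SLL_{\un{x},\un{e}}$ is $\un{\varnothing}$, so Lemma \ref{empty} applies and expresses $\phi$ as a right $R$-linear combination of such light-leaves; it is therefore enough to treat a single $SLL_{\un{x},\un{e}}$ times a polynomial.

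Next I would unwind this light-leaf. Because every intermediate expression is empty, no rex moves occur: at step $i$ the elementary map $\phi_i$ is, by (\ref{X1 algorithm}), just the $s_i$-strand plugging directly into the wall when $e_i=1$ (the rex moves $\alpha,\beta$ there being trivial), and, since the $X0$ rule coincides with the $U0$ rule of the ordinary light-leaves algorithm, an enddot on the $s_i$-strand when $e_i=0$. Thus $SLL_{\un{x},\un{e}}$ is, up to isotopy, simply the diagram in which each strand of $\un{x}$ with $e_i=1$ runs into the wall and each strand with $e_i=0$ ends in a dot.

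The remaining step, and the only one needing an actual computation, is to absorb these dots into the wall. For $s_i\in J$ I would compose the wall relation (\ref{wall}) with an enddot placed on top: the strand that is born from the wall gets capped by that dot, and by the wall-bubble relation (\ref{wall-bubble}) this collapses to a scalar, leaving a single $s_i$-strand running into the wall carrying an $\alpha_{s_i}$. Concretely this exhibits an enddot on a $J$-coloured strand as a sum of two diagrams, each being that strand plugging into the wall with a copy of $\alpha_{s_i}$ in one of the two adjacent regions. Applying this once for every $i$ with $e_i=0$, and then sweeping these polynomials together with the original right-hand polynomial into the regions, rewrites $SLL_{\un{x},\un{e}}\cdot p$ as a sum of diagrams built solely from strands of $\un{x}$ plugging into the wall, with polynomials in the gaps. (Equivalently, one can simply continue the proof of Lemma \ref{empty} one step further, disposing of the surviving dots in exactly this way.)

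I expect the main obstacle to be precisely this last manipulation: checking, from the inherited wall relations --- chiefly (\ref{wall}) and (\ref{wall-bubble}) --- that an enddot on a $J$-strand really is an $R$-combination of wall plug-ins, and that the rewriting introduces no trivalent vertices or leftover dots. Everything before that is routine bookkeeping about the light-leaves algorithm in the degenerate regime where $z=id$ at every step.
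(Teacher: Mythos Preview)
Your proposal is correct and follows essentially the same route as the paper: reduce to light-leaves via Lemma~\ref{empty}, observe that in the $W_J$ regime each $SLL_{\un{x},\un{e}}$ is just a pattern of wall plug-ins (for $X1$) and up-dots (for $X0$), and then absorb the up-dots into the wall. The only difference is cosmetic: the paper invokes the derived relation~(\ref{wall-fusion}) directly to turn an up-dot on a $J$-strand into a wall plug-in with a polynomial, whereas you re-derive this effect by composing~(\ref{wall}) with a dot and collapsing the resulting dotted wall-stub --- that collapse is really one of the basic wall relations rather than~(\ref{wall-bubble}), but the computation is the same and your argument goes through.
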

\begin{proof}
    By Lemma \ref{empty}, $\phi$ is in the span of light-leaves. Any up-dots in a light-leaf can be pulled into the wall using (\ref{wall-fusion}), leaving only strands entering the wall and polynomials. 
\end{proof}

\noindent The bulk of the work in this chapter will go towards proving the following:
\begin{prop} \label{ll}
    Let $\un{z}$ be a reduced expression for a minimal coset representative $z$, and let $\un{x}$ be an arbitrary expression. Choose a light-leaf map $SLL_{\un{x}, \un{e}}$ for each $\un{e} \subset \un{x}$ such that $W_J \un{x}^{\un{e}} = W_J z$. These maps span Hom$_{\mathcal{M}_{BS}}(\un{x}, \un{z}) / I_{\un{z}}$.
\end{prop}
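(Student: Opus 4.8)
The plan is to prove the statement by induction on the length $n$ of $\un{x}$, following closely the argument of Chapter 7 of \cite{SoergelCalculus} for the non-spherical category $\mathcal{D}$; the genuinely new ingredients will be the wall relations and the spherical light-leaf step (\ref{X1 algorithm}). When $n = 0$ we have $\un{x} = \un{\varnothing}$, so the only subexpression is empty, $z = \text{id}$, $\un{z} = \un{\varnothing}$, $I_{\un{\varnothing}} = 0$, and the unique light-leaf $SLL_{\un{\varnothing}, \un{\varnothing}} = \text{id}_{\un{\varnothing}}$ spans $\text{Hom}_{\mathcal{M}_{BS}}(\un{\varnothing}, \un{\varnothing}) = R$ as a right $R$-module; the degenerate cases in which no admissible $\un{e}$ exists will be subsumed by the case analysis below. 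For the inductive step, write $\un{x} = (\un{x}', s)$ with $s = s_n$ and take $\phi \in \text{Hom}_{\mathcal{M}_{BS}}(\un{x}, \un{z})$. First I would apply Lemma \ref{factor2} to $\text{id}_{\un{x}'}$, writing $\text{id}_{\un{x}'} = \sum_w a_w \circ b_w$ as a sum of negative-positive maps, each factoring through a reduced expression $\un{w}$ for a minimal coset representative $w \leq \un{x}'$, so that
\[
\phi = \sum_w \bigl(\phi \circ (a_w \otimes \text{id}_s)\bigr) \circ (b_w \otimes \text{id}_s) = \sum_w \psi_w \circ (b_w \otimes \text{id}_s), \qquad \psi_w : (\un{w}, s) \to \un{z}.
\]
By the inductive hypothesis applied to $\un{x}'$ with target $\un{w}$, each $b_w$ equals, modulo $I_{\un{w}}$, a right $R$-combination of light-leaves $SLL_{\un{x}', \un{e}'} : \un{x}' \to \un{w}$.

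The heart of the proof is a \emph{one-more-generator lemma}: for reduced expressions $\un{w}, \un{z}$ of minimal coset representatives $w, z$, every morphism $(\un{w}, s) \to \un{z}$ is, modulo $I_{\un{z}}$, a right $R$-combination of composites $\beta \circ \phi_k$, where $\phi_k : (\un{w}, s) \to \un{z}_k$ is one of the elementary light-leaf steps of Section \ref{spherical light-leaves} attached to the pair $(\un{w}, s)$ via the decoration rule (\ref{d_i'}) and having target with $z_k = z$, and $\beta : \un{z}_k \to \un{z}$ is a rex move (and the morphism already lies in $I_{\un{z}}$ if there is no such $\phi_k$). I would prove this by a case analysis on the decoration $d'$ of $(\un{w}, s)$. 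When $ws \in \jw$ --- the cases $U0, U1, D0, D1$ --- one has $z \in \{w, ws\}$ and the argument is exactly the one-color reduction of \cite{SoergelCalculus}: one isotopes the extra $s$-strand to the top and uses the Jones--Wenzl and trivalent relations to conclude that it must terminate in a dot (a $U0$ or $D0$ step, or a contribution to $I_{\un{z}}$), in a trivalent vertex (a $D1$ step), or persist to the top boundary (a $U1$ step). The new cases are $X0$ and $X1$, where $ws \notin \jw$; then by Lemma \ref{wall-crossing} we have $ws = tw$ for some $t \in J$ with $ws > w$, so $z = w$ and $\un{z}_k = \un{z} = \un{w}$. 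Here I would use the wall relations (\ref{wall}) together with their consequences (\ref{wall-bubble})--(\ref{wall-fusion}) and Corollary \ref{identity} to show that the $s$-strand either dies into a dot (the $X0$ step) or, after a rex move $\beta$ bringing a $t$-strand to the left, can be plugged into the wall (the $X1$ step of (\ref{X1 algorithm})), any residual sub-diagram living against the wall being absorbed into the right-hand polynomials by Corollary \ref{identity}.

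Granting the one-more-generator lemma, I would substitute it into the expansion of $\phi$. The principal contributions are the composites $\beta \circ \phi_k \circ (SLL_{\un{x}', \un{e}'} \otimes \text{id}_s)$; by construction of the spherical light-leaves (with intermediate expressions chosen compatibly), $\phi_k \circ (SLL_{\un{x}', \un{e}'} \otimes \text{id}_s)$ is precisely $SLL_{\un{x}, \un{e}}$ for $\un{e} = (\un{e}', e_n)$, and the outer rex move $\beta : \un{z} \to \un{z}$ is absorbed using Remark \ref{flex} and Lemma \ref{rex} (replacing the terminal rex of a light-leaf alters it only by a strictly negative-positive term). Everything else --- the $I_{\un{w}}$-part of each $b_w$, the rex ambiguity, and any diagram produced in the case analysis that factors through an expression of length $< n$ --- is a \emph{lower-order term}, i.e.\ a morphism $\un{x} \to \un{z}$ that factors as $\un{x} \xrightarrow{c} \un{u} \xrightarrow{g} \un{z}$ with $l(\un{u}) < n$. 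Such a term is handled by feeding $g$ back into the induction (the statement for $\un{u}$, which is shorter): one expands $g$ in light-leaves $SLL_{\un{u}, \bullet}$ modulo $I_{\un{z}}$, discards the $I_{\un{z}}$-part since $I_{\un{z}}$ is a top ideal, and re-applies the reduction above to each $SLL_{\un{u}, \bullet} \circ c$; because the intermediate expression strictly shortens at each pass, this terminates, exactly as in Chapter 7 of \cite{SoergelCalculus}.

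The step I expect to be the main obstacle is the one-more-generator lemma in the case $ws \notin \jw$. One must verify that the wall relations are exactly strong enough to force every morphism $(\un{w}, s) \to \un{w}$ into the span (over $R$, modulo $I_{\un{w}}$) of the two steps $X0$ and $X1$, with nothing else appearing --- in particular that the wall plug-in, the trivalent vertices and the $2m_{st}$-valent vertices adjacent to the wall do not conspire to produce an unexpected morphism. This is precisely the point at which the derived relations (\ref{wall-bubble})--(\ref{wall-fusion}) and Corollary \ref{identity} are needed, and it is the only place where the argument departs in substance from \cite{SoergelCalculus}; the surrounding bookkeeping of lower-order terms, while delicate, is formally identical to the non-spherical case.
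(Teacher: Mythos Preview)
Your overall induction-on-length scaffold and the treatment of lower-order terms are reasonable, but the proof rests entirely on the ``one-more-generator lemma'', and this is where there is a genuine gap. You assert that for $ws \in \jw$ ``the argument is exactly the one-color reduction of \cite{SoergelCalculus}'', but that argument lives in $\mathcal{D}$, whereas here the morphism $(\un{w}, s) \to \un{z}$ lives in $\mathcal{M}_{BS}$ and may involve arbitrary wall plug-ins. You have not explained why such wall contributions disappear modulo $I_{\un{z}}$; a priori, a diagram can send strands into the wall and bring them back out, and the one-color relations alone do not control this. For the $X$ case you acknowledge the difficulty yourself, and the sketch you give (``use the wall relations and Corollary~\ref{identity}'') is not an argument: Corollary~\ref{identity} only applies to expressions lying entirely in $W_J$, not to $(\un{w}, s)$, and the wall relations by themselves do not obviously reduce an arbitrary morphism $(\un{w}, s) \to \un{w}$ to the span of the $X0$ and $X1$ steps. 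In effect your one-more-generator lemma is as hard as the proposition itself.

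The paper circumvents this by a different route. Rather than analyze $\text{Hom}_{\mathcal{M}_{BS}}((\un{w}, s), \un{z})$ directly, it first isotopes all wall strands of an arbitrary $\phi : \un{x} \to \un{z}$ to the top, obtaining a morphism $\un{x} \to (\un{u}, \un{z})$ in $\mathcal{D}$ followed by plugging $\un{u}$ into the wall, and then invokes the already-proven non-spherical spanning result (Proposition~\ref{nll}) as a black box. The real work is then to convert the resulting \emph{non-spherical} light-leaves $LL_{\un{x}, \un{e}} : \un{x} \to (\un{u}, \un{z})$, after wall plug-in, into spherical light-leaves. This occupies all of Section~\ref{construction2}: for each pair $(d_k, d_k')$ one carefully chooses the rex moves in the non-spherical $\phi_k$ (including the ``sweep'' construction for $d_k = D0$, $d_k' = X0$) so that, once the $\un{u}$-strands hit the wall, repeated use of two-colour associativity, relation~(\ref{wall}), and Corollary~\ref{identity} transforms the picture into the corresponding spherical step. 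Lemma~\ref{X} and the subsequent lemma then carry out this transformation by induction on $l(\un{x})$, first landing in the set $X_{\un{x}, \un{z}}$ of arbitrary SLL constructions with polynomials along the wall, then replacing these by the chosen SLLs with polynomials in the top-left, and finally forcing the polynomials to the right. This detour through $\mathcal{D}$ is precisely what is missing from your argument; without it, the wall cannot be controlled.
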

\noindent The analogous statement holds for $\mathcal{D}$. 
\begin{prop} \label{nll}
    Let $\un{w}$ be a reduced expression for $w \in W$, and let $\un{x}$ be an arbitrary expression. Choose a light-leaf map $LL_{\un{x}, \un{e}}$ for each $\un{e} \subset \un{x}$ such that $ \un{x}^{\un{e}} = w$. These maps span Hom$_{\mathcal{D}}(\un{x}, \un{w}) / I_{\un{w}}$.
\end{prop}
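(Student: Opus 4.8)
\textbf{Proof proposal for Proposition \ref{nll}.}

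The plan is to prove this purely diagrammatically, following verbatim the argument of Chapter 7 of \cite{SoergelCalculus}, since Proposition \ref{nll} is precisely the non-spherical ($J=\varnothing$) case of Proposition \ref{ll}, and the latter is built as a modification of the former. Indeed, this statement is essentially the content of Proposition 6.6 together with the spanning argument of Section 7.4 in \cite{SoergelCalculus}, so the honest thing is to cite it; but I will indicate the structure of the argument for completeness. Fix a reduced expression $\un{w}$ for $w \in W$ and an arbitrary expression $\un{x} = (s_1, \dots, s_n)$. We induct on $n$. For $n = 0$ there is nothing to prove (if $w \neq \mathrm{id}$ the Hom space modulo $I_{\un{w}}$ vanishes, and if $w = \mathrm{id}$ the empty light-leaf spans). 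For the inductive step, take an arbitrary morphism $\un{x} \to \un{w}$ and split off the last strand $s_n$: write $\un{x} = (\un{x}', s_n)$ where $\un{x}' = (s_1, \dots, s_{n-1})$.

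The key technical ingredient is the decomposition of $id_{\un{x}}$ analogous to Lemma \ref{factor2}: by Lemma 7.5 of \cite{SoergelCalculus}, $id_{\un{x}}$ is a sum of negative-positive maps each factoring through some reduced expression $\un{y}$ for an element $y \le \un{x}$. Precomposing our arbitrary morphism $\varphi : \un{x} \to \un{w}$ with this decomposition of the identity, we reduce to the case where $\varphi$ factors as $\un{x} \xrightarrow{\text{neg}} \un{y} \xrightarrow{\text{pos}} \cdots \to \un{w}$. The negative part, composed with the positive part up to the first point where the diagram reaches its minimal width, either (i) is itself handled by the inductive hypothesis applied to the shorter expression at that waist, or (ii) ends in a configuration that can be recognized as a light-leaf map $LL_k$-step applied to $s_n$ followed by a map on a shorter expression. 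The rex-move ambiguity --- that two different reduced-expression factorizations differ by something with a strictly negative-positive decomposition --- is controlled by Lemma \ref{rex} (i.e.\ Lemma 7.4 of \cite{SoergelCalculus}), which lets us replace any particular choice of rex moves by the one built into our fixed light-leaves, modulo terms of strictly smaller width that fall under the inductive hypothesis or land in $I_{\un{w}}$.

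The case analysis on the final generator acting on $s_n$ mirrors the light-leaves algorithm: if the stroll value moves up in the Bruhat order (the $U$ cases) the last strand survives to the top and one is reduced to the shorter expression $\un{x}'$ with target $\un{w}$ with the last letter removed, or stays and the strand contributes trivially; if it moves down (the $D$ cases) one uses a trivalent vertex to merge, again reducing to $\un{x}'$; the $e_n = 0$ cases produce a dot (which, if it lands at the top of $\un{w}$, puts the morphism in $I_{\un{w}}$, exactly as allowed, since $\un{w}$ is reduced so no trivalent top-vertex survives). Applying the inductive hypothesis to $\un{x}'$ against the appropriate target and then gluing on the $\phi_n$-step of the light-leaf reconstructs a genuine light-leaf $LL_{\un{x}, \un{e}}$ up to a right $R$-multiple and lower terms.

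The main obstacle is the bookkeeping in matching an arbitrary negative-positive morphism with the specific light-leaf we have fixed: one must show that after pushing all the ``bad'' behavior (wrong rex moves, extra barbells, lower-width pieces) into either $I_{\un{w}}$ or the inductive hypothesis, exactly one light-leaf per subexpression $\un{e}$ with $\un{x}^{\un{e}} = w$ remains, with the right $R$-coefficient. This is exactly the content of Sections 7.3--7.4 of \cite{SoergelCalculus}, and since our expressions, generators, and relations here are literally those of $\mathcal{D}$ (no wall is present, as $J = \varnothing$), the argument applies without change.

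\begin{proof}
This is the $J = \varnothing$ specialization of Proposition \ref{ll}, and is exactly Proposition 6.6 (together with the spanning argument of Chapter 7) of \cite{SoergelCalculus}. We sketch the inductive argument for completeness. Induct on the length $n$ of $\un{x} = (s_1, \dots, s_n)$; the base case $n = 0$ is trivial. For the inductive step, let $\varphi : \un{x} \to \un{w}$ be arbitrary. Precomposing with the decomposition of $id_{\un{x}}$ from Lemma 7.5 of \cite{SoergelCalculus} as a sum of negative-positive maps factoring through reduced expressions $\un{y}$ with $y \le \un{x}$, we may assume $\varphi$ is negative-positive and factors through such a $\un{y}$. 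Writing $\un{x} = (\un{x}', s_n)$, we analyze the behavior of the last strand at the waist of the negative-positive decomposition: the possible local configurations reproduce the steps $U0, U1, D0, D1$ of the light-leaves algorithm applied to $s_n$. In each case, after using Lemma \ref{rex} to normalize any rex moves to those appearing in our fixed light-leaf (the difference being strictly negative-positive, hence of smaller width and covered by induction or landing in $I_{\un{w}}$), $\varphi$ becomes $\phi_n \circ (\psi \otimes id)$ for a morphism $\psi$ on a shorter expression with an appropriate reduced target. If the last strand terminates in a dot at the top of $\un{w}$, then since $\un{w}$ is reduced no trivalent vertex can survive there, so $\varphi \in I_{\un{w}}$, as allowed. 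Otherwise, by the inductive hypothesis $\psi$ lies in the right $R$-span of light-leaves modulo the corresponding top ideal, and gluing the $\phi_n$-step reconstructs a light-leaf $LL_{\un{x}, \un{e}}$ with $\un{x}^{\un{e}} = w$, up to a right $R$-multiple and terms of smaller width already handled. Summing over the finitely many cases shows $\varphi$ lies in the right $R$-span of $\{LL_{\un{x}, \un{e}} \mid \un{x}^{\un{e}} = w\}$ modulo $I_{\un{w}}$.
\end{proof}
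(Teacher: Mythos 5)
Your proposal takes essentially the same route as the paper: cite Elias--Williamson's spanning argument and note that it applies verbatim since $\mathcal{D}$ is literally the same category. The paper's proof is a one-line citation to \emph{Proposition 7.6} of \cite{SoergelCalculus}; your proof environment instead names ``Proposition 6.6,'' which in \cite{SoergelCalculus} is the \emph{linear independence} (non-vanishing diagonal) statement used elsewhere in this paper for Lemma \ref{q}, not the spanning result --- you should correct that reference to Proposition 7.6. Also, your opening framing of Proposition \ref{nll} as ``the $J = \varnothing$ specialization of Proposition \ref{ll}'' is logically backward as a justification, since in this paper Proposition \ref{nll} is an input used to prove Proposition \ref{ll}; it is fine as an observation but should not be read as the proof. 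The expository sketch you add (induction on $l(\un{x})$, decomposition of the identity via Lemma 7.5, rex-move normalization via Lemma \ref{rex}, case analysis on the final $\phi_n$-step) is a reasonable summary of the \cite{SoergelCalculus} argument, though, as you acknowledge, the details are deferred to the citation.
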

\begin{proof}
    See Proposition 7.6 of \cite{SoergelCalculus}.
\end{proof}

\noindent \textit{Proof of Proposition \ref{span} given \ref{ll}:} 
 By Lemma \ref{factor2}, we assume $\phi: \un{x} \to \un{y}$ factors through a reduced expression $\un{z}$ of an m.c.r. $z \leq \un{x}$, i.e., $\phi = f \circ g$ for $g: \un{x} \to \un{z}, \; f: \un{z} \rightarrow \un{y}$. We induct on $z$ in the Bruhat order. The base case is immediate from Proposition \ref{ll}.
Then, by Proposition \ref{ll} we have $g = g_z + g_l$, where $g_z$ is a sum of spherical light-leaves and $g_l \in I_{\un{z}}$. Dually, $f = f_z + f_l$.
The term $f_z g_z$ consists of double leaves. The term $f_z g_l$ involves a map in $I_{\un{z}}$, which factors through an expression $\un{w}$ with $l(\un{w}) < l(\un{z})$. Since $id_{\un{w}}$ factors through m.c.r.s $v < z$, the error terms reduce to the inductive hypothesis. The same reasoning applies to to other terms $f_l g_l$ and $f_l g_z$. $\hfill \square$\\
\\
\noindent It remains to prove Proposition \ref{ll}. Let $\phi : \un{x} \rightarrow \un{z}$. Moving wall strands to the top, we have:
\begin{equation} \label{uz}
    \scalebox{0.7}{\begin{tikzpicture}
        \node at (0, 1.5) {$\phi = $};
        \shade[left color = white, right color = black] (1, 0) rectangle (2, 3);
        \draw (3, 0) -- (3, 0.5);
        \draw (2.5, 0.5) -- (5.5, 0.5) -- (5.5, 1.75) -- (2.5, 1.75) -- cycle;
        \draw (5, 0) -- (5, 0.5);
        \node at (4, 0.25) {$\dots$};
        \draw[rounded corners = 5] (2.75, 1.75) -- (2.75, 2) -- (2, 2);
        \draw[rounded corners = 10] (3.05, 1.75) -- (3.05, 2.25) -- (2, 2.25);
        \node at (3.35, 2) {$\cdots$};
        \draw[rounded corners = 10] (3.65, 1.75) -- (3.65, 2.5) -- (2, 2.5);
        \draw (4.05, 1.75) -- (4.05, 3);
        \draw (4.35, 1.75) -- (4.35, 3);
        \node at (4.75, 2) {$\cdots$};
        \draw (5.15, 1.75) -- (5.15, 3);
        \draw [decorate,decoration={brace,amplitude=3pt,mirror}]
  (2.6,1.7) -- (3.75,1.7) node[midway,yshift=-1em]{$\un{u}$};
  \draw [decorate,decoration={brace,amplitude=3pt,mirror}]
  (3.95,1.7) -- (5.25,1.7) node[midway,yshift=-1em]{$\un{z}$};
  \node at (4, 1) {??};
    \end{tikzpicture}}
\end{equation}
where $\un{u}$ is an expression in $J$. By wall relations, we may assume $\un{u}$ is reduced. Since $\un{z}$ is a reduced expression (rex) of an m.c.r., $(\un{u}, \un{z})$ is reduced. By Proposition \ref{nll}, the inner box is a sum of (non-spherical) light-leaves plus maps in $I_{(\un{u}, \un{z})}$. (In fact, we get to choose which light-leaf maps to use here. This will be important, and we will describe this choice in detail in the section below). Maps in $I_{(\un{u}, \un{z})}$ have a down-dot at the top; if the dot is on $\un{u}$, it slides into the wall, reducing the length of $\un{u}$. Repeating this process reduces $\phi$ to a sum of non-spherical light-leaves (followed by plugging strands into the wall) and maps in $I_{\un{z}}$.
\subsection{Construction of non-spherical light-leaves} \label{construction2}

 We noted above that we get to choose the particular construction of these non-spherical light-leaves. We will now specify this choice. Roughly speaking, the idea is to make these non-spherical light-leaves look as much like spherical light-leaves as possible. That is, our spherical light-leaves have various strands which plug into the wall on the left. Non-spherical light-leaves don't have any wall to plug into, so we will instead collect all these ``wall-strands" on the left of the diagram. 
 
 Suppose we are constructing a light-leaf $LL_{\un{x}, \un{e}}: \un{x} \rightarrow \un{w}$ for $\un{x} = (t_1, ..., t_M)$. At step $k$ we construct a map $LL_k$ to a rex $\un{w}_k$ of $w_k := \un{x}_{\leq k}^{\un{e}_{\leq k}} = t_1^{e_1} \cdots t_k^{e_k}$. We decompose the target element $w_k = u_k z_k$ for $u_k \in W_J, \; z_k \in {}^J W$ and define the intermediate expression $\un{w}_k := (\un{u}_k, \un{z}_k)$ for rex's $\un{u}_k, \; \un{z}_k$.
The choice of rex moves in $\phi_k: (\un{u}_{k - 1}, \un{z}_{k - 1}, t_k) \rightarrow (\un{u}_k, \un{z}_k)$ depends on the comparison between the non-spherical label $d_k$ and the spherical label $d_k'$.

\textbf{Case 1: $d_k = d_k'$.}
If $d_k = d_k' = U0$ or $U1$, then we can apply separate rex moves $\gamma$ and $\al$ to the $\un{u}$'s and the $\un{z}$'s respectively:

\begin{equation}
    \scalebox{0.7}{\tikzfig{fig37}}
\end{equation}

\noindent (We include $LL_{k - 1}$ in our diagrams, so that the diagrams depict the map $LL_k$). If $d_k = d_k' = D0$ or $D1$, then we need to apply some rex move $\beta$ to move $t_k$ to the right. Since $d_k' = D$, this $\beta$ can be applied solely to $\un{z}_{k - 1}$:

\begin{equation}
    \scalebox{0.7}{\tikzfig{fig38}}
\end{equation}

Here, $\alpha$ is some other rex move to $\un{z}_k$. The point is that if $d_k = d_k'$, the $\un{u}$ and $\un{z}$ components can be dealt with separately, so that we have something of the following form.
\begin{equation} \label{equal}
    \scalebox{0.7}{\tikzfig{fig7}}
\end{equation}
We can notice that in each of the above four cases, the map $\phi'$ is exactly the $\phi$ associated to $d_k'$ in the $k^{\text{th}}$ step of the spherical light-leaf construction. Thus we are making our non-spherical light-leaves look like spherical light-leaves.\\

\noindent \textbf{Case 2: $d_k \neq d_k'$.}
This occurs when $d_k' \in \{X0, X1\}$. We treat the sub-cases as follows:

\begin{itemize}
    \item $d_k \in \{U0, U1\}$:
\end{itemize} 
\begin{equation} \label{U0U1}
\begin{minipage}{0.48\textwidth}
    \centering
    \scalebox{0.7}{\begin{tikzpicture}
        \draw (1.25, 0) -- (4.2, 0) -- (4.2, 1.5) -- (1.25, 1.5) -- cycle;
        \node at (2.7, 0.5) {$LL_{k - 1}$};
        \draw (1.5, 1.5) -- (1.5, 2);
        \draw (1.5, 2.5) -- (1.5, 3);
        \draw (1.4, 2) -- (2.6, 2) -- (2.6, 2.5) -- (1.4, 2.5) -- cycle;
        \node at (2, 2.25) {$\gamma$};
        \draw (2.9, 2) -- (4.1, 2) -- (4.1, 2.5) -- (2.9, 2.5) -- cycle;
        \node at (3.5, 2.25) {$\alpha$};
        \node at (2, 1.75) {$\cdots$};
        \draw (2.5, 1.5) -- (2.5, 2);
        \draw (2.5, 2.5) -- (2.5, 3);
        \draw (3, 1.5) -- (3, 2);
        \draw (3, 2.5) -- (3, 3);
        \node at (3.5, 1.75) {$\cdots$};
        \node at (2, 2.75) {$\cdots$};
        \node at (3.5, 2.75) {$\cdots$};
        \draw (4, 1.5) -- (4, 2);
        \draw (4, 2.5) -- (4, 3);
        \draw (4.5, 0) -- (4.5, 1.3);
        \filldraw (4.5, 1.3) circle(2pt);
        \node at (4.5, -0.3) {$t_k$};
        \node at (3, -0.9) {$d_k = U0, \; d_k' = X0$};
        \draw [decorate,decoration={brace,amplitude=3pt,mirror}]
  (1.4,1.45) -- (2.6,1.45) node[midway,yshift=-1em]{$\un{u}_{k - 1}$};
  \draw [decorate,decoration={brace,amplitude=3pt,mirror}]
  (2.9,1.45) -- (4.1,1.45) node[midway,yshift=-1em]{$\un{z}_{k - 1}$};
  \draw [decorate,decoration={brace,amplitude=3pt}]
  (1.35,3.1) -- (2.6,3.1) node[midway,yshift=1em]{$\un{u}_k$};
  \draw [decorate,decoration={brace,amplitude=3pt}]
  (2.9,3.1) -- (4.1,3.1) node[midway,yshift=1em]{$\un{z}_k$};
    \end{tikzpicture}}
\end{minipage}
\quad
\begin{minipage}{0.48\textwidth}
    \centering
    \scalebox{0.6}{\tikzfig{fig39}}
\end{minipage}
\end{equation}

 For $d_k=U0, d_k'=X0$, the picture is identical to $d_k = d_k' = U0$. If $d_k' = X1$, then by Lemma \ref{wall-crossing} (b) there is some $s \in J$ such that $z_{k - 1}t_k = s z_{k - 1}$. By Lemma \ref{wall-crossing} (a), $(\un{z}_{k - 1}, t_k)$ is reduced so that there is some rex move $\delta : (\un{z}_{k - 1}, t_k) \rightarrow (s, \un{z}_{k - 1})$. Then, since $d_k = U1$, $(\un{u}_{k - 1}, s)$ must also be reduced, so there is some rex move $\gamma : (\un{u}_{k - 1}, s) \rightarrow \un{u}_k$.

\begin{itemize}
    \item $d_k = D1, d_k' = X1$:
\end{itemize} 
\begin{equation} \label{D1X1}
    \scalebox{0.7}{\begin{tikzpicture}
        \draw (0, 0) -- (3, 0) -- (3, 1.5) -- (0, 1.5) -- cycle;
        \node at (1.5, 0.5) {$LL_{k - 1}$};
        \draw (0, 1.9) -- (1.7, 1.9) -- (1.7, 2.4) -- (0, 2.4) -- cycle;
        \node at (0.85, 2.15) {$\gamma$};
        \draw (1.3, 2.8) -- (3, 2.8) -- (3, 3.3) -- (1.3, 3.3) -- cycle;
        \node at (2.15, 3.05) {$\delta$};
        \draw (0.2, 1.5) -- (0.2, 1.9);
        \draw (0.2, 2.4) -- (0.2, 4);
        \node at (0.65, 2.7) {$\cdots$};
        \draw (1.1, 2.4) -- (1.1, 4);
        \node at (0.85, 1.7) {$\cdots$};
        \draw (1.5, 1.5) -- (1.5, 1.9);
        \draw (1.5, 3.3) -- (1.5, 4);
        \draw (1.5, 4.5) -- (1.5, 5);
        \draw (1.5, 2.4) -- (1.5, 2.8);
        \draw (1.9, 1.5) -- (1.9, 2.8);
        \draw (2.8, 1.5) -- (2.8, 2.8);
        \draw (2.5, 3.3) -- (2.5, 4);
        \draw (2.5, 4.5) -- (2.5, 5);
        \node at (2, 4.75) {$\cdots$};
        \node at (2, 3.6) {$\cdots$};
        \node at (2.35, 2.15) {$\cdots$};
        \draw (3.2, 0) -- (3.2, 3.3);
        \node at (3.2, -0.3) {$t_k$};
        \draw (3.2, 3.3) arc[start angle = 0, end angle = 180, radius = 0.2];
        \draw (0.1, 4) -- (1.2, 4) -- (1.2, 4.5) -- (0.1, 4.5) -- cycle;
        \node at (0.6, 4.25) {$\gamma'$};
        \draw (0.2, 4.5) -- (0.2, 5);
        \draw (1.1, 4.5) -- (1.1, 5);
        \node at (0.6, 4.75) {$\cdots$};
        \draw (1.4, 4) -- (2.6, 4) -- (2.6, 4.5) -- (1.4, 4.5) -- cycle;
        \node at (2, 4.25) {$\alpha$};
        \draw [decorate,decoration={brace,amplitude=3pt,mirror}]
  (0.1,1.45) -- (1.6,1.45) node[midway,yshift=-1em]{$\un{u}_{k - 1}$};
  \draw [decorate,decoration={brace,amplitude=3pt,mirror}]
  (1.8,1.45) -- (2.9,1.45) node[midway,yshift=-1em]{$\un{z}_{k - 1}$};
   \draw [decorate,decoration={brace,amplitude=3pt}]
  (0.1,5.1) -- (1.2,5.1) node[midway,yshift=1em]{$\un{u}_{k}$};
   \draw [decorate,decoration={brace,amplitude=3pt}]
  (1.4,5.1) -- (2.6,5.1) node[midway,yshift=1em]{$\un{z}_{k}$};
    \end{tikzpicture}}
\end{equation}
Since $d_k = D1$, we need to apply some rex move $\beta$ to $(\un{u}_{k - 1}, \un{z}_{k - 1})$ to get $t_k$ to the right of the expression. This is done as follows: Let $\un{u}_{k - 1} = (s_1, ..., s_l)$, $\un{z}_{k - 1} = (r_1, ..., r_m)$. Since $d_k = D1$, we have $u_{k - 1} z_{k - 1} t_k = s_1 \cdots s_l r_1 \cdots r_m t_k = s_1 \cdots \hat{s_i} \cdots s_l r_1 \cdots r_m$ for some $i$, where the hat denotes deletion (the deleted generator cannot be among the $r_i$'s by Lemma \ref{wall-crossing} (a)). Since $d_k' = X1$, there is some $s \in J$ such that $z_{k - 1} t_k = s z_{k - 1}$. This means $u_{k - 1} z_{k - 1} t_k = s_1 \cdots s_l s z_{k - 1} = s_1 \cdots \hat{s_i} \cdots s_l z_{k - 1}$, meaning $s$ is in the right descent set of $u_{k - 1}$. Therefore there is some rex of $u_{k - 1}$ with $s$ on the right (in fact it is $(s_1, ..., \hat{s_i}, ..., s_l, s)$). We apply a rex move $\gamma$ to $\un{u}_{k - 1}$ to move this $s$ to the right, and then apply a rex move $\delta : (s, \un{z}_{k - 1}) \rightarrow (\un{z}_{k - 1}, t_k)$. We have thus moved $t_k$ to the right as required. After applying a cap to this $t_k$, we apply rex moves $\gamma'$ and $\delta'$ to make sure the targets are correct.

The key in this case will be that we can rotate the $s$ and $t_k$ strands in $\delta$:
\begin{equation} \label{rotate}
    \scalebox{0.7}{\tikzfig{fig14}}
\end{equation}
Here $\tilde{\delta}$ is some other rex move. This will assist us in turning non-spherical light-leaves into spherical light-leaves.

\begin{itemize}
    \item $d_k = D0, \; d_k' = X0$:
\end{itemize}

\begin{equation} \label{D0X0}
    \scalebox{0.7}{\begin{tikzpicture}
        \draw (0, 0) -- (3, 0) -- (3, 1.5) -- (0, 1.5) -- cycle;
        \node at (1.5, 0.5) {$LL_{k - 1}$};
        \draw (0, 1.9) -- (1.7, 1.9) -- (1.7, 2.4) -- (0, 2.4) -- cycle;
        \node at (0.85, 2.15) {$\gamma$};
        \draw (0, 5) -- (1.6, 5) -- (1.6, 5.5) -- (0, 5.5) -- cycle;
        \node at (0.85, 5.25) {$\gamma'$};
        \draw (1.8, 5) -- (3.2, 5) -- (3.2, 5.5) -- (1.8, 5.5) -- cycle;
        \node at (2.5, 5.25) {$\alpha$};
        \draw (1.3, 2.8) -- (3, 2.8) -- (3, 3.3) -- (1.3, 3.3) -- cycle;
        \node at (2.15, 3.05) {$\delta$};
        \draw (0.2, 1.5) -- (0.2, 1.9);
        \draw (0.2, 2.4) -- (0.2, 5);
        
        \node at (0.65, 2.7) {$\cdots$};
        \draw (1.1, 2.4) -- (1.1, 5);
        \draw (1.1, 5.5) -- (1.1, 6);
        \node at (0.85, 1.7) {$\cdots$};
        \draw (1.5, 1.5) -- (1.5, 1.9);
        \draw (1.5, 3.3) -- (1.5, 4);
        \draw (1.5, 2.4) -- (1.5, 2.8);
        \draw (1.9, 1.5) -- (1.9, 2.8);
        \draw (2.8, 1.5) -- (2.8, 2.8);
        \node at (2.2, 3.6) {$\cdots$};
        \node at (2.35, 2.15) {$\cdots$};
        \draw (3.2, 0) -- (3.2, 3.3);
        \node at (3.2, -0.3) {$t_k$};
        \draw (3.2, 3.3) -- (3, 3.5);
        \draw (2.8, 3.3) -- (3, 3.5);
        \draw (3, 3.5) -- (3, 4);
        \draw (1.3, 4) -- (3.2, 4) -- (3.2, 4.5) -- (1.3, 4.5) -- cycle;
        \node at (2.25, 4.25) {$\delta'$};
        
        \draw (1.5, 4.5) -- (1.5, 5);
        \draw (3, 4.5) -- (3, 5);
        \draw (3, 5.5) -- (3, 6);
        \draw (1.9, 4.5) -- (1.9, 5);
        \draw (1.9, 5.5) -- (1.9, 6);
        \node at (2.45, 4.75) {$\cdots$};
        \node at (2.45, 5.75) {$\cdots$};
        \node at (0.65, 5.75) {$\cdots$};
        \draw (0.2, 5.5) -- (0.2, 6);
        \draw (1.5, 5.5) -- (1.5, 6);
        \draw [decorate,decoration={brace,amplitude=3pt,mirror}]
  (0.1,1.45) -- (1.6,1.45) node[midway,yshift=-1em]{$\un{u}_{k - 1}$};
  \draw [decorate,decoration={brace,amplitude=3pt,mirror}]
  (1.8,1.45) -- (2.9,1.45) node[midway,yshift=-1em]{$\un{z}_{k - 1}$};
  \draw [decorate,decoration={brace,amplitude=3pt}]
  (0.1,6.1) -- (1.6,6.1) node[midway,yshift=1em]{$\un{u}_k$};
  \draw [decorate,decoration={brace,amplitude=3pt}]
  (1.8,6.1) -- (3.1,6.1) node[midway,yshift=1em]{$\un{z}_k$};
    \end{tikzpicture}}
\end{equation}
This final case is the most daunting. It is in fact similar to the previous case of $d_k = D1$, $d_k' = X1$: we first apply $\gamma$ as $\delta$ in almost the same way. The difference will be that $\delta$ will be a carefully chosen rex move $(s, \un{z}_{k - 1}) \rightarrow (\un{\tilde{z}}_{k - 1}, t_k)$, where $\un{z}_{k - 1}$ and $\un{\tilde{z}}_{k - 1}$ are not necessarily the same. We then apply a third rex move $\delta': (\un{\tilde{z}}_{k - 1}, t_k) \rightarrow (s, \un{\tilde{z}}_{k - 1})$. Finally we apply rex moves $\gamma'$ and $\alpha$ to ensure our map has the correct target $(\un{u}_k, \un{z}_k)$.

We want to choose $\delta$ and $\delta'$ so that we can repeatedly apply two-colour associativity\footnote{Equation 5.6 in \cite{SoergelCalculus}} in order to pull the trivalent vertex from the right to the left (This will be necessary when we turn our light-leaves into spherical light-leaves). This is best illustrated through examples. If $\delta$ is a single $2m$-valent vertex, then we can set $\delta' = \overline{\dl}$ (i.e. $\delta$ upside-down), and we can pull the trivalent vertex to the left using a single application of two-colour associativity:
\begin{center}
    \scalebox{0.7}{\begin{tikzpicture}
        \draw[red] (0.5, 0.5) -- (1, 1) -- (1, 2.5) -- (0.5, 3);
        \draw[red] (1.5, 0.5) -- (1, 1);
        \draw[red] (1, 2.5) -- (1.5, 3);
        \draw[blue] (1, 0.5) -- (1, 1);
        \draw[blue, rounded corners = 15] (1, 1) -- (1.75, 1.75) -- (1, 2.5);
        \draw[blue, rounded corners = 15] (1, 1) -- (0.25, 1.75) -- (1, 2.5);
        \draw[blue, rounded corners = 10] (2, 0.5) -- (2, 1.75) -- (1.53, 1.75);
        \draw[blue] (1, 2.5) -- (1, 3);
        \node at (2.5, 1.75) {=};
        \draw[red] (3, 0.5) -- (3, 3);
        \draw[blue] (3.5, 0.5) -- (3.5, 3);
        \draw[red] (3, 1.75) -- (3.5, 1.75) -- (4, 0.5);
        \draw[red] (3.5, 1.75) -- (4, 3);
        \draw[blue, rounded corners = 10] (4.5, 0.5) -- (4.5, 1.75) -- (3.5, 1.75);
    \end{tikzpicture}}
\end{center}
This also works if $\delta$ is a chain of $2m$-valent vertices right next to one another, moving left to right:
\begin{center}
    \begin{tikzpicture}[scale = 0.5]
        \draw[blue] (0, 0) -- (1.5, 1);
        \draw[red] (0.5, 0) -- (1, 1);
        \draw[blue] (1, 0) -- (0.5, 1);
        \draw[red] (1.5, 0) -- (0, 1);
        \draw[blue] (1.5, 1) -- (2, 1.5);
        \draw[green] (2, 0) -- (2, 1.5);
        \draw[blue] (2.5, 0) -- (2.5, 1) -- (2, 1.5);
        \draw[green] (1.5, 2) -- (2, 1.5) -- (2.5, 2) -- (4, 3);
        \draw[blue] (2, 1.5) -- (2, 2);
        \draw[red] (3, 0) -- (3, 2) -- (3.5, 3);
        \draw[green] (3.5, 0) -- (3.5, 2) -- (3, 3);
        \draw[red] (4, 0) -- (4, 2) -- (2.5, 3);
        \draw[green] (4, 3) -- (4.5, 2.5) -- (4.5, 0);
        \draw[green] (4, 3) -- (2.5, 4) -- (2, 4.5) -- (2, 6);
        \draw[red] (3.5, 3) -- (3, 4) -- (3, 6);
        \draw[green] (3, 3) -- (3.5, 4) -- (3.5, 6);
        \draw[red] (2.5, 3) -- (4, 4) -- (4, 6);
        \draw[blue] (2, 1.5) -- (2, 4.5) -- (2.5, 5) -- (2.5, 6);
        \draw[blue] (2, 4.5) -- (1.5, 5) -- (0, 6);
        \draw[green] (1.5, 2) -- (1.5, 4) -- (2, 4.5);
        \draw[red] (0, 1) -- (0, 5) -- (1.5, 6);
        \draw[blue] (0.5, 1) -- (0.5, 5) -- (1, 6);
        \draw[red] (1, 1) -- (1, 5) -- (0.5, 6);

        \node at (5.25, 3) {=};
        
        \draw[blue] (6, 0) -- (7.5, 1);
        \draw[red] (6.5, 0) -- (7, 1);
        \draw[blue] (7, 0) -- (6.5, 1);
        \draw[red] (7.5, 0) -- (6, 1);
        \draw[blue] (7.5, 1) -- (8, 1.5);
        \draw[green] (8, 0) -- (8, 1.5);
        \draw[blue] (8.5, 0) -- (8.5, 1) -- (8, 1.5);
        \draw[green] (7.5, 2) -- (8, 1.5) -- (8.5, 2) -- (8.5, 4);
        \draw[blue] (8, 1.5) -- (8, 2);
        \draw[red] (9, 0) -- (9, 2) -- (9.5, 3);
        \draw[green] (9.5, 0) -- (9.5, 6);
        \draw[red] (10, 0) -- (10, 2) -- (9.5, 3);
        \draw[green] (9.5, 3) -- (10, 3) -- (10.5, 2.5) -- (10.5, 0);
        \draw[green] (8.5, 4) -- (8, 4.5) -- (8, 6);
        \draw[red] (9.5, 3) -- (9, 4) -- (9, 6);
        \draw[green] (9.5, 3) -- (8.5, 3);
        \draw[red] (9.5, 3) -- (10, 4) -- (10, 6);
        \draw[blue] (8, 1.5) -- (8, 4.5) -- (8.5, 5) -- (8.5, 6);
        \draw[blue] (8, 4.5) -- (7.5, 5) -- (6, 6);
        \draw[green] (7.5, 2) -- (7.5, 4) -- (8, 4.5);
        \draw[red] (6, 1) -- (6, 5) -- (7.5, 6);
        \draw[blue] (6.5, 1) -- (6.5, 5) -- (7, 6);
        \draw[red] (7, 1) -- (7, 5) -- (6.5, 6);
        
        \node at (11.25, 3) {=};

        \draw[blue] (12, 0) -- (13.5, 1);
        \draw[red] (12.5, 0) -- (13, 1);
        \draw[blue] (13, 0) -- (12.5, 1);
        \draw[red] (13.5, 0) -- (12, 1);
        \draw[blue] (13.5, 1) -- (13.5, 5) -- (12, 6);
        \draw[blue] (13.5, 3) -- (14, 3);
        \draw[green] (14, 0) -- (14, 6);
        \draw[blue] (14.5, 0) -- (14.5, 2) -- (14, 3);
        \draw[blue] (14, 3) -- (14.5, 4) -- (14.5, 6);
        \draw[red] (15, 0) -- (15, 2) -- (15.5, 3);
        \draw[green] (15.5, 0) -- (15.5, 6);
        \draw[red] (16, 0) -- (16, 2) -- (15.5, 3);
        \draw[green] (15.5, 3) -- (16, 3) -- (16.5, 2.5) -- (16.5, 0);
        
        \draw[red] (15.5, 3) -- (15, 4) -- (15, 6);
        \draw[green] (15.5, 3) -- (14, 3);
        \draw[red] (15.5, 3) -- (16, 4) -- (16, 6);
        \draw[red] (12, 1) -- (12, 5) -- (13.5, 6);
        \draw[blue] (12.5, 1) -- (12.5, 5) -- (13, 6);
        \draw[red] (13, 1) -- (13, 5) -- (12.5, 6);

        \node at (17.25, 3) {=};

        \draw[blue] (18, 0) -- (18, 6);
        \draw[red] (18.5, 0) -- (18.5, 2) -- (19.5, 4) -- (19.5, 6);
        \draw[blue] (19, 0) -- (19, 6);
        \draw[red] (19, 3) -- (19.5, 4) -- (19.5, 6);
        \draw[red] (19, 3) -- (18.5, 4) -- (18.5, 6);
        \draw[blue] (19, 3) -- (18, 3);
        \draw[red] (19.5, 0) -- (19.5, 2) -- (19, 3);
        \draw[blue] (19, 3) -- (20, 3);
        \draw[green] (20, 0) -- (20, 6);
        \draw[blue] (20.5, 0) -- (20.5, 2) -- (20, 3);
        \draw[blue] (20, 3) -- (20.5, 4) -- (20.5, 6);
        \draw[red] (21, 0) -- (21, 2) -- (21.5, 3);
        \draw[green] (21.5, 0) -- (21.5, 6);
        \draw[red] (22, 0) -- (22, 2) -- (21.5, 3);
        \draw[green] (21.5, 3) -- (22, 3) -- (22.5, 2.5) -- (22.5, 0);
        
        \draw[red] (21.5, 3) -- (21, 4) -- (21, 6);
        \draw[green] (21.5, 3) -- (20, 3);
        \draw[red] (21.5, 3) -- (22, 4) -- (22, 6);
        
    \end{tikzpicture}
\end{center}
This final result we can arrange as 
\begin{center} 
    \scalebox{0.5}{\begin{tikzpicture}
        \draw[blue] (18, 0) -- (18, 6);
        \draw[red] (18.5, 0) -- (18.5, 2) -- (19.5, 4) -- (19.5, 6);
        \draw[blue] (19, 0) -- (19, 6);
        \draw[red] (19, 3) -- (19.5, 4) -- (19.5, 6);
        \draw[red] (19, 3) -- (18.5, 4) -- (18.5, 6);
        \draw[blue] (19, 3) -- (18, 3);
        \draw[red] (19.5, 0) -- (19.5, 2) -- (19, 3);
        \draw[blue] (19, 3) -- (20, 3);
        \draw[green] (20, 0) -- (20, 6);
        \draw[blue] (20.5, 0) -- (20.5, 2) -- (20, 3);
        \draw[blue] (20, 3) -- (20.5, 4) -- (20.5, 6);
        \draw[red] (21, 0) -- (21, 2) -- (21.5, 3);
        \draw[green] (21.5, 0) -- (21.5, 6);
        \draw[red] (22, 0) -- (22, 2) -- (21.5, 3);
        \draw[green] (21.5, 3) -- (22, 3) -- (22.5, 2.5) -- (22.5, 0);
        \draw[red] (21.5, 3) -- (21, 4) -- (21, 6);
        \draw[green] (21.5, 3) -- (20, 3);
        \draw[red] (21.5, 3) -- (22, 4) -- (22, 6);
        
        \node at (23.25, 3) {=};
        
        \draw[blue] (24, 0) -- (24, 4.5) -- (24.25, 5.25) -- (24.25, 6);
        \draw[red] (24.5, 0) -- (24.5, 3) -- (26, 4.5) -- (26, 6);
        \draw[blue] (25, 0) -- (25, 3) -- (25.5, 4.5) -- (25.5, 6);
        \draw[red] (25.5, 0) -- (25.5, 3) -- (25, 4.5) -- (25, 6);
        \draw[green] (26, 0) -- (26, 1.5) -- (26.5, 2.25) -- (26.5, 6);
        \draw[blue] (26.5, 0) -- (26.5, 2.25) -- (26, 3) -- (25.25, 3.75) -- (24.5, 4.5) -- (24.25, 5.25);
        \draw[blue] (26.5, 2.25) -- (27, 3) -- (27, 6);
        \draw[red] (27, 0) -- (28.5, 1.5) -- (28.5, 6);
        \draw[green] (27.5, 0) -- (28, 1.5) -- (28, 6);
        \draw[red] (28, 0) -- (27.5, 1.5) -- (27.5, 6);
        \draw[green] (28.5, 0) -- (27, 1.5) -- (26.5, 2.25);
    \end{tikzpicture}}
\end{center}
This is just $\dl' = \overline{\delta}$ with a merging trivalent at the top left.

Of course, not all rex moves consist of a chain of $2m$-valent vertices going left to right in this way. But it turns out (and we will show) that in our situation we can always choose a $\delta$ that \textit{ends} in such a chain of $2m$-valent vertices. This will be enough for us to pull the trivalent vertex from right to left.

To make this precise, we define a \textit{sweep} to be a sequence of applications of the braid relation to an expression, moving left to right: the first one acting on letters $1, ..., m_1$, the second to letters $m_1, ..., m_2$, and so on, right to the end of the expression. For example, let $m_{st} = 3$, $m_{su} = 2$ and $m_{tu} = 3$. Here is a sweep from $(t, s, t, u, t, s)$ to $(s, t, u, t, s, t)$:
$$(t, s, t, u, t, s) \mapsto (s, t, s, u, t, s) \mapsto (s, t, u, s, t, s) \mapsto (s, t, u, t, s, t)$$

We will abuse terminology by using `sweep' to refer to the corresponding rex move in $\mathcal{D}$ or $\mathcal{M}_{BS}$, i.e. a map consisting of $2m$-valent vertices moving left to right. We saw above that if $\delta$ is a sweep, then we can apply two-colour associativity to get the following.
\begin{equation} \label{assoc}
    \scalebox{0.7}{\tikzfig{fig2}}
\end{equation}
\begin{lem}
    Take some $z \in W$ and $s, t \in S$ such that $sz = zt>z$. Then there is some rex $\un{\tilde{z}}$ of $z$ such that $(s, \un{\tilde{z}})$ can be transformed into $(\un{\tilde{z}}, t)$ via a sweep. 
\end{lem}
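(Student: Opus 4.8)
The plan is to produce the rex $\un{\tilde z}$ by working inside the dihedral parabolic subgroup generated by $s$ and $t$. Since $sz = zt > z$, the element $s$ is in the left descent set of $sz$ and $t$ is in the right descent set of $sz$; equivalently, writing $w := sz = zt$, we have $l(w) = l(z)+1$ and $z$ appears both as $sw$ and as $ws$. By the theory of reduced expressions in the presence of a pair of adjacent reflections (see e.g. the discussion of the dihedral case in \cite{textbook}), because $s \cdot w < w$ and $w \cdot t < w$, there is a reduced expression for $w$ which begins with $s$ and ends with $t$, \emph{unless} the left-descent and right-descent occurrences of the deleted letter coincide, which forces $w$ to lie in the dihedral group $\langle s, t\rangle$ and $l(w) \le m_{st}$. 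So the argument naturally splits into two cases.

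In the generic case, a reduced expression $\un{w}$ for $w$ of the form $(s, \un{\tilde z}, t)$ exists, where $\un{\tilde z}$ is a reduced expression for $z$ (obtained by deleting either the first $s$ or the last $t$; the point is that $z = sw = ws$, so both deletions give $z$ and the middle word $\un{\tilde z}$ is simultaneously a rex for both, since deleting the leftmost $s$ from $(s,\un{\tilde z},t)$ gives $(\un{\tilde z}, t)$ which equals $ws = z$, and deleting the rightmost $t$ gives $(s, \un{\tilde z})$ which equals $sw = z$). Then $(s, \un{\tilde z})$ and $(\un{\tilde z}, t)$ are two reduced expressions for $z$, and by Matsumoto's theorem they are connected by a sequence of braid moves. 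The content of the lemma is that this sequence can be taken to be a single \emph{sweep}, i.e.\ that the braid moves can be arranged to cascade strictly left-to-right. This follows by induction on $l(z)$: look at the first letter of $\un{\tilde z}$; if it is $s$, peel it off and induct; otherwise the leftmost braid move in converting $(s,\un{\tilde z})$ to $(\un{\tilde z}, t)$ must begin at position $1$ and act on some initial segment, after which the resulting expression again starts with the first letter of $\un{\tilde z}$, and we may continue the sweep on the tail.

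In the dihedral case $w \in \langle s, t\rangle$ with $l(w) \le m_{st}$, everything is explicit: $w$ is an alternating word of length $l(z)+1$ in $s$ and $t$, and $z$ is an alternating word of length $l(z)$; since $sz = zt > z$, one checks directly (by comparing the two alternating expressions for $w$) that $w$ starts with $s$ and ends with $t$, so $\un{\tilde z}$ is the alternating word of length $l(z)$, and the passage from $(s, \un{\tilde z})$ to $(\un{\tilde z}, t)$ is literally a single application of the $2m_{st}$-valent braid relation — the shortest possible sweep.

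I expect the main obstacle to be the inductive step in the generic case: showing that the braid moves connecting $(s,\un{\tilde z})$ and $(\un{\tilde z}, t)$ really can be organized as a sweep rather than as an arbitrary sequence of moves scattered across the word. The key observation that makes this work is that at each stage the ``active'' letter being moved is precisely the letter $s$ being transported from the left end to the right end, so that every braid move in the sequence involves the current position of that travelling letter together with the segment immediately to its right; this is exactly the left-to-right cascading structure required by the definition of a sweep, and it is preserved under the induction.
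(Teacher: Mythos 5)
There is a genuine gap, and it begins with the ``generic case'' premise. You claim that if $s$ is a left descent of $w := sz = zt$ and $t$ is a right descent of $w$, then (outside of a dihedral exceptional case) $w$ admits a reduced expression beginning with $s$ and ending with $t$. This is false. In $S_4$ with $s = s_1$, $t = s_3$, take $z = s_2 s_1 s_3 s_2$, so that $w = sz = zt = s_1 s_2 s_1 s_3 s_2$. Then $s_1 \in D_L(w)$ and $s_3 \in D_R(w)$, but the only reduced expressions for $w$ beginning with $s_1$ are $(s_1,s_2,s_1,s_3,s_2)$ and $(s_1,s_2,s_3,s_1,s_2)$, both ending in $s_2$. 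So no rex of $w$ both starts with $s_1$ and ends with $s_3$, and yet $w \notin \langle s_1, s_3 \rangle$, so your dihedral escape hatch does not apply. Your setup also has a length inconsistency: if $\un{\tilde z}$ is a rex for $z$, then $(s, \un{\tilde z}, t)$ has length $l(z)+2 = l(w)+1$ and cannot be reduced, so it cannot be a rex for $w$. (Relatedly, $(s,\un{\tilde z})$ and $(\un{\tilde z},t)$ are reduced expressions for $w$, not for $z$ as you write.)

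The sketched inductive step also does not go through. The case ``if the first letter of $\un{\tilde z}$ is $s$, peel it off'' never occurs, since $sz > z$ forces every rex of $z$ not to begin with $s$ (otherwise $(s,\un{\tilde z})$ would start $s,s$). And the heart of the matter --- that the braid moves relating $(s,\un{\tilde z})$ and $(\un{\tilde z},t)$ can be organized into a left-to-right cascade for a suitable choice of $\un{\tilde z}$ --- is asserted rather than proven; in particular you never actually construct $\un{\tilde z}$, which is the whole content of the lemma. The paper's proof avoids both problems: it starts from an arbitrary rex $\un{z}$ of $z$ and an arbitrary rex move from $(s,\un{z})$ to $(\un{z},t)$, observes that at some step the last letter first becomes $t$ via a braid relation $(\dots,u,t,u) \mapsto (\dots,t,u,t)$, uses that step to extract a shorter instance $sz' = z'u > z'$ of the same statement (with $z' := s_1\cdots s_n$ the prefix left of the braided block), invokes the inductive hypothesis to get a sweep $(s,\un{z}') \to (\un{z}',u)$, and then \emph{defines} $\un{\tilde z} := (\un{z}',t,u)$ so that the sweep extends by one more braid move at the right end. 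That constructive choice of $\un{\tilde z}$ is exactly the missing ingredient in your argument.
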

\noindent \textit{Proof.} We proceed by induction on $l(z)$. For the base case, if $l(z) = 0$, then $s = t$ and the statement is trivial. Suppose the statement is true when $l(z) < M$, and suppose $l(z) = M$. Take any rex $\un{z}$ of $z$. Note that since $zt > z$, we know that $\un{z}$ does not end in $t$, so neither does $(s, \un{z})$. But $(\un{z}, t)$ does end in $t$, and there is some rex move from $(s, \un{z})$ to $(\un{z}, t)$. Therefore at some step in this rex move, the last letter is changed to a $t$ from (say) a $u$. This can only happen via an application of the braid relation. That is, at this step in the rex move we have $(s_1, ..., s_n, u, t, u) \mapsto (s_1, ..., s_n, t, u, t)$ for some $s_i \in S$ (here we let $m_{tu} = 3$ for clarity of notation, but everything works the same for different $m_{tu}$). Then, we have $zt = s_1 \cdots s_n t u t$, so that $z = s_1 \cdots s_n t u$. Thus $sz = s s_1 \cdots s_n t u$. But $sz = zt = s_1 \cdots s_n u t u$, so that $s s_1 \cdots s_n = s_1 \cdots s_n u$. We can thus apply our inductive hypothesis to $z' := s_1 \cdots s_n$, with $sz' = z'u>z$.\footnote{We get $z'u>z$ by the fact that $(s_1, ..., s_n, u, t, u)$ is reduced, so that $(s_1, ..., s_n, u)$ is a rex of $z'u$.} That is, there is some rex $\un{z}'$ of $z'$ such that $(s, \un{z}')$ can be turned into $(\un{z}', u)$ by a sweep. Then set $\un{\tilde{z}} := (\un{z}', t, u)$. We can turn $(s, \un{\tilde{z}})$ into $(\un{\tilde{z}}, t)$ via a sweep: $$ (s, \un{\tilde{z}}) = (s, \un{z}', t, u) \xmapsto{sweep} (\un{z}', u, t, u) \mapsto (\un{z}', t, u, t) = (\un{\tilde{z}}, t).$$ $\hfill \square$\\
\\
This tells us how we should construct our $\delta$. Recall (see (\ref{D0X0})) that we are applying $\delta$ to the expression $(s, \un{z}_{k - 1})$, where $sz_{k - 1} = z_{k - 1} t_k$. By the above lemma, there is some expression $\un{\tilde{z}}_{k - 1}$ of $z_{k - 1}$ such that $(s, \un{\tilde{z}}_{k - 1})$ can be transformed into $(\un{\tilde{z}}_{k - 1}, t_k)$ via a sweep. First, apply a rex move $\delta_1$ from $\un{z}_{k - 1}$ to $\un{\tilde{z}}_{k - 1}$. Then apply a sweep $\delta_2$ from $(s, \un{\tilde{z}}_{k - 1})$ to $(\un{\tilde{z}}_{k - 1}, t_k)$:
\begin{equation}
    \scalebox{0.7}{\tikzfig{fig1}}
\end{equation}
We then choose $\delta' := \overline{\dl_2}$. Applying this choice to figure (\ref{D0X0}) we get 
 \begin{equation} \label{pull}
     \scalebox{0.7}{\tikzfig{fig3}}
 \end{equation}
 where we have applied (\ref{assoc}) to $\delta_2$ and $\overline{\delta_2}$.
 The final diagram on the right of (\ref{pull}) will hopefully give some sense of why we have gone to so much trouble here. Once again, we are trying to make our non-spherical light-leaves look like spherical light-leaves. If we imagine the strands joining $\gamma$ and $\gamma'$ as a `wall', then this diagram looks a lot like the step of the spherical light-leaves algorithm corresponding to $d_k' = X1$ (see diagram (\ref{X1 algorithm})).

\subsection{Proof of spanning}

Again, let $\un{z}$ be a reduced expression for some $z \in {}^J W$, and let $\un{x}$ be an arbitrary expression. We saw that any map $\phi :\un{x} \rightarrow \un{z}$ in $\mathcal{M}_{BS}$ can be considered a map $\psi : \un{x} \rightarrow (\un{u}, \un{z})$ in $\mathcal{D}$ (for some rex $\un{u}$ in $W_J$), followed by plugging the $\un{u}$-strands into the wall (see diagram (\ref{uz})). We then saw that this is a sum of our carefully chosen (non-spherical) light-leaf maps, followed by plugging the $\un{u}$-strands into the wall, as well as maps in $I_{\un{z}}$. What we would like to show is the following: A light-leaf map to $(\un{u}, \un{z})$, followed by plugging the $\un{u}$-strands into the wall, is in the span of our choice of spherical light-leaves and $I_{\un{z}}$. This would prove Proposition \ref{ll}, that any choice of spherical light-leaves spans Hom$_{\mathcal{M}_{BS}}(\un{x}, \un{z})/I_{\un{z}}$. This is eventually what we will show, but first we will show something weaker. Let $X_{\un{x}, \un{z}}$ be the collection of \textit{any} constructions of spherical light-leaves (SLL's) from $\un{x}$ to $\un{z}$, with polynomials $f \in R$ anywhere along the wall. Here is an example:
\begin{center}
    \scalebox{0.7}{\begin{tikzpicture}
        \shade[left color = white, right color = black] (0, 0) rectangle (1, 3);
        \draw[blue] (1.5, 0) -- (2, 0.5);
        \draw[blue] (2, 0.5) -- (2.5, 0);
        \draw[blue] (2, 0.5) -- (2, 1) -- (2.5, 1.5);
        \draw[red] (2, 0) -- (2, 0.5);
        \draw[red, rounded corners = 10] (2, 0.5) -- (1.5, 1) -- (1, 1);
        \draw[red] (2, 0.5) -- (2.5, 1) -- (2.5, 1.5);
        \draw[blue] (3, 0) -- (3, 1) -- (2.5, 1.5);
        \draw[red, rounded corners = 10] (2.5, 1.5) -- (2, 2) -- (1, 2);
        \draw[blue] (2.5, 1.5) -- (2.5, 3);
        \draw[red] (2.5, 1.5) -- (3, 2) -- (3, 3); 
        \node at (1.25, 0.5) {$f$};
        \node at (1.25, 1.5) {$g$};
        \node at (1.25, 2.5) {$h$};
    \end{tikzpicture}}
\end{center}

\begin{lem} \label{X}
    Let $\un{z}$ be a rex of $z \in {}^JW$, and let $\un{u}$ be a reduced expression in $W_J$. Let $\un{x}$ be an arbitrary expression. Take a (non-spherical) light-leaf $LL_{\un{x}, \un{e}}: \un{x} \rightarrow (\un{u}, \un{z})$, constructed in the way described in Section \ref{construction2}. Then, plug the top $\un{u}$-strands into the wall, and place some polynomials $f \in R$ anywhere along the wall. Then this new map is a sum of maps in $X_{\un{x}, \un{z}}$.
\end{lem}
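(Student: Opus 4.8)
The plan is to induct on the length $M$ of $\un{x}$, stripping off the last step of the light-leaf construction. The base case $M=0$ is immediate. For $M\geq 1$ write $LL_{\un{x},\un{e}}=\phi_M\circ(LL_{\un{x}_{\leq M-1},\un{e}_{\leq M-1}}\otimes \mathrm{id}_{t_M})$, with $\phi_M$ chosen exactly as in Section \ref{construction2}, and let $\Pi_M$ denote the operation ``plug the top $\un{u}_M$-strands into the wall, with arbitrary polynomials of $R$ interspersed''; the map to be understood is $\Pi_M\circ LL_{\un{x},\un{e}}$. The crux is a local identity: there is a single step $\psi_M$ of the \emph{spherical} light-leaf algorithm --- namely the spherical step of label $d_M'$, except that in the case $d_M=D0,\,d_M'=X0$ we take instead the $X1$ step --- such that $\Pi_M\circ\phi_M$ equals $\psi_M\circ(\Pi_{M-1}\otimes\mathrm{id})$ up to a sum of such terms carrying extra polynomials along the wall, where $\Pi_{M-1}$ plugs the $\un{u}_{M-1}$-strands into the wall. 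Granting this, precomposing with $LL_{\un{x}_{\leq M-1},\un{e}_{\leq M-1}}\otimes\mathrm{id}_{t_M}$ and invoking the inductive hypothesis rewrites $\Pi_{M-1}\circ LL_{\un{x}_{\leq M-1},\un{e}_{\leq M-1}}$ as a sum of maps in $X_{\un{x}_{\leq M-1},\un{z}_{M-1}}$; capping each summand with $\psi_M$ just performs the final step of the spherical algorithm for the subexpression $\un{e}'\subset\un{x}$ that agrees with $\un{e}$ except, in the $d_M=D0$ case, in the last slot where $e_M'=1$ (here one checks $W_J\un{x}^{\un{e}'}=W_Jz$, since $z_{M-1}t_M$ lies in the coset $W_Jz_{M-1}$). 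Any polynomials picked up along the wall are harmless, since $X_{\un{x},\un{z}}$ permits them at any height, and when $\psi_M$ must be slid past them one uses polynomial forcing (see (\ref{loc-arg})).

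It then remains to verify the local identity case by case, following Section \ref{construction2}. When $d_M=d_M'$, identity (\ref{equal}) shows $\phi_M$ decouples into a $W_J$-rex move $\gamma$ on the $\un{u}$-side and the spherical step on the $\un{z}$-side; pushing the wall plug-in of $\Pi_M$ downward past $\gamma$ using the wall relations --- notably (\ref{wall}) and (\ref{wall-fusion}) --- absorbs $\gamma$ into the wall at the cost of a sum of wall-polynomial terms, leaving exactly the spherical step. The cases $d_M=U0,\,d_M'=X0$ and $d_M=U1,\,d_M'=X1$ are read off (\ref{U0U1}): the first is literally the $U0$ picture, handled as above; in the second, the $s$-strand created on the $\un{u}$-side, once plugged into the wall, together with the rex move that dragged $s$ leftward, reproduces verbatim the spherical $X1$ step (\ref{X1 algorithm}). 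For $d_M=D1,\,d_M'=X1$, diagram (\ref{D1X1}) together with the rotation (\ref{rotate}) lets one swivel the $s$- and $t_M$-strands of $\delta$; once $\Pi_M$ is pushed down, the cap on $t_M$ turns into a strand entering the wall, again yielding the $X1$ step.

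The genuinely delicate case is $d_M=D0,\,d_M'=X0$. Here one takes $\delta=\delta_1\delta_2$ with $\delta_2$ a sweep --- available by the sweep lemma above --- and $\delta'=\overline{\delta_2}$, as set up in (\ref{D0X0})--(\ref{pull}); repeated two-colour associativity (\ref{assoc}) drags the merging trivalent vertex from the far right of the diagram to the far left, where it lands adjacent to the $\un{u}$-strands. Once those are plugged into the wall, wall-fusion (\ref{wall-fusion}) collapses the trivalent-vertex-into-wall into a single $s$-strand entering the wall, and what remains is precisely the spherical $X1$ step (for the subexpression with $e_M'=1$), as the right-hand picture of (\ref{pull}) already suggests. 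I expect this case to be the main obstacle, for three reasons: one must know a suitable sweep can always be arranged, which is the content of the preceding lemma; one must confirm that the associativity manipulation really does leave behind a legitimate spherical light-leaf step with the correct target $\un{z}_M$; and --- the most tedious bookkeeping --- one must check that the leftover rex moves $\gamma,\gamma',\alpha,\delta_1$ can all in turn be pushed into the wall, each contributing only further wall polynomials (and perhaps a sum, via (\ref{wall}) and (\ref{wall-bubble})), so that after collecting terms the entire map lies in the $R$-span of $X_{\un{x},\un{z}}$.
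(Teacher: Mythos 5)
Your proposal is correct and follows essentially the same route as the paper: induction on $l(\un{x})$, peeling off the last step $\phi_M$, and a case analysis on $(d_M, d_M')$ that uses exactly the choices of Section \ref{construction2} — decoupling via (\ref{equal}) when $d_M=d_M'$, the rotation (\ref{rotate}) for $D1/X1$, and the sweep/two-colour-associativity manipulation (\ref{pull}) for $D0/X0$, including the key observation that this last case produces the spherical $X1$ step rather than $X0$, which is permissible since $X_{\un{x},\un{z}}$ allows any SLL construction. Your packaging of the inductive step as a single ``local identity'' verified case by case is only a cosmetic reorganization of the paper's argument.
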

\noindent Here is an example of the kind of map being described:
\begin{center}
    \tikzfig{fig4}
\end{center}
To avoid cluttering the diagrams, we will represent polynomials as black circles:
\begin{center}
    \tikzfig{fig11} 
\end{center}

We will also use a single diagram to represent sums of a certain diagram with different polynomials. For example, the sum
\begin{center}
    \tikzfig{fig40}
\end{center}
would be represented as
\begin{center}
    \tikzfig{fig41}
\end{center}
\noindent \textit{Proof.}
We proceed by induction on the length of $\un{x}$. If $x = \varnothing$, then the statement is trivial. Then, suppose the statement is true for $l(\un{x}) < M$, and  suppose $\un{x} = (t_1, ..., t_M)$ is of length $M$. Take some light-leaf $LL_{\un{x}, \un{e}} : \un{x} \rightarrow (\un{u}, \un{z})$, and plug the $\un{u}$-strands into the wall and place polynomials along the wall. Consider $LL_{\un{x}, \un{e}}$ as the map $LL_{M - 1}$ followed by $\phi_M$. Let $LL_{M - 1}$ have target $(\un{u}_{M - 1}, \un{z}_{M - 1})$, where $\un{u}_{M - 1}$ is a rex of $u_{M - 1} \in W_J$, and $\un{z}_{M - 1}$ is a rex of $z_{M - 1} \in \jw$ (see Section \ref{construction2}).
\begin{center}
   \scalebox{0.7}{\tikzfig{fig6}}
\end{center}
Once again, the subexpression $\un{e} \subset \un{x}$ has two different labellings $d_1, ..., d_M$ and $d_1', ..., d_M'$ used in the non-spherical and spherical light-leaves construction respectively. The case where $d_M = d_M'$ is easiest. Here, we can take $\phi_M$ to act separately on $\un{u}_{M - 1}$ and $\un{z}_{M - 1}$ as in figure \ref{equal}, yielding
\begin{center}
    \scalebox{0.7}{\tikzfig{fig8}}
\end{center}
We can include $\gamma$ as part of $\phi_{M - 1}$ in $LL_{M - 1}$, so that by induction, the diagram in the green box above is in the span of SLL's with polynomials along the wall. We noted below figure (\ref{equal}) that the $\phi'$ here is exactly the $\phi$ associated to $d_k'$ in the $k^{\text{th}}$ step of $SLL_{\un{x}, \un{e}}$. Thus the whole diagram is in the span of SLL's with polynomials along the wall.  

The case where $d_k \neq d_k'$ is trickier, and this is where all the work we did in Section \ref{construction2} will pay off. As in that section, we break it down into the four cases:

\begin{itemize}
    \item $d_k = U1, \; d_k' = X1$:
\end{itemize}
We apply the choice of $\phi_M$ that we chose in figure (\ref{U0U1}):
\begin{center}
    \scalebox{0.7}{\tikzfig{fig10}}
\end{center}
In the second equality we have applied Corollary \ref{identity}. Once again by induction, the map in the green box is spanned by SLL's with polynomials. The part of the diagram outside the green box is exactly the $\phi$ associated with $d_k' = X1$ (see Section \ref{spherical light-leaves}).

\begin{itemize}
    \item $d_k = U0, \;  d_k' = X0$:
\end{itemize}
We apply our choice of $\phi_M$ as in figure (\ref{U0U1}):
\begin{center}
    \scalebox{0.7}{\tikzfig{fig42}}
\end{center}
We can include $\gamma$ as part of $LL_{M - 1}$, and thus by induction the green box is spanned by SLL's with polynomials. The part of the map outside the green box is the $\phi$ associated with $d_k' = X0$.

\begin{itemize}
    \item $d_k = D1, \; d_k' = X1$:
\end{itemize}
Applying our choice of $\phi_M$ in figure (\ref{D1X1}), we get
\begin{center}
    \scalebox{0.8}{\tikzfig{fig13}}
\end{center}
In the second equality, we have repeatedly applied (\ref{wall}). In the third, we have used Corollary \ref{identity} on $\gamma$ and $\gamma'$. In the fourth, we repeatedly apply (\ref{wall-bubble}). Finally in the fifth equality, we rotate $\delta$ into $\tilde{\delta}$ as described in figure (\ref{rotate}). Then, as in the other cases, by induction the map in the green box is in the span of SLLs with polynomials. The part of the map outside the green box is the $\phi$ associated to $d_k' = X1$. 
\begin{itemize}
    \item $d_k = D0, \; d_k' = X0$:
\end{itemize}
We choose $\phi_M$ as in figure \ref{D0X0}, and apply equation (\ref{pull}):
\begin{center}
    \scalebox{0.7}{\tikzfig{fig15}} 
\end{center}
The first equality is from equation (\ref{pull}). The others follow very much like the above case of $d_k = D1, \;  d_k = X1$. By induction the map in the green box is in the span of SLL's with polynomials. The part of the map outside the green box is the map $\phi$ associated with $X1$. (Note that it's okay that this is $X1$ and not $X0$: we just need the result to be \textit{some} SLL, not necessarily the ones associated to $\un{e}$.) $\hfill \square$\\
\\
We have now shown that, for a rex $\un{z}$ of $z$ an m.c.r., and arbitrary $\un{x}$, any map $\phi : \un{x} \rightarrow \un{z}$ in $\mathcal{M}_{BS}$ is in the span of \textit{some} SLL's with polynomials along the wall, and maps in $I_{\un{z}}$. If we want to prove Proposition \ref{ll}, we are left with two issues to address. First, we want to show that any map is in the span of any \textit{particular} choice of SLL constructions. Second, we need to somehow move these polynomials to the right. Our strategy will in fact be to move the polynomials up along the wall to the top-left corner of each diagram, and from there along the top of the diagram to the right.

The following lemma replaces the SLL's with any particular choice of SLL's, and moves polynomials up along the wall to the top-left corner. All that will be left after this is to move these polynomials along the top to the right.

\begin{lem}
    Given an arbitrary expression $\un{x}$, and $\un{z}$ a rex of some $z \in \jw$, let $\mathcal{S}_{\un{x}, \un{z}}$ be a collection of maps $\un{x} \rightarrow \un{z}$, containing one spherical light-leaf map $SLL_{\un{x}, \un{e}}$ for each subexpression $\un{e} \subset \un{x}$ such that $W_J \un{x}^{\un{e}} = W_J z$. Then any map $\phi: \un{x} \rightarrow \un{z}$ is a sum of maps in $\mathcal{S}_{\un{x}, \un{z}}$ with polynomials in the top-left corner, plus maps in $I_{\un{z}}$.
\end{lem}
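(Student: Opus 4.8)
The plan is to run an induction on $z$ in the Bruhat order (the expression $\un{x}$ being arbitrary), with Lemma~\ref{X} as the engine, in parallel with the non-spherical argument of \cite[\S7]{SoergelCalculus}. The base case $z = \mathrm{id}$, so $\un{z} = \un{\varnothing}$, is exactly Lemma~\ref{empty} together with Corollary~\ref{identity}: every $\phi : \un{x} \to \un{\varnothing}$ is a sum of spherical light-leaves decorated by polynomials, and the polynomials may then be collected at the top-left. For the inductive step, fix $\un{x}$, $\un{z}$ and assume the lemma for every m.c.r.\ $v < z$ and every expression. By the reduction in~(\ref{uz}) together with Proposition~\ref{nll} and Lemma~\ref{X}, any $\phi : \un{x} \to \un{z}$ is a sum of elements of $X_{\un{x}, \un{z}}$ plus a map in $I_{\un{z}}$. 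It therefore suffices to treat one element of $X_{\un{x}, \un{z}}$: an arbitrary construction $SLL'_{\un{x}, \un{e}}$ of a spherical light-leaf for some $\un{e} \subset \un{x}$ with $W_J \un{x}^{\un{e}} = W_J z$, carrying polynomials $f_1, \dots, f_r \in R$ in regions along the wall.

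\emph{Sliding the polynomials to the top-left corner.} We push each $f_i$ upward along the wall, crossing one wall plug-in at a time. To cross a plug-in of colour $s \in J$ we apply polynomial forcing (\cite[Eq.~5.2]{SoergelCalculus}) together with the wall relations and their consequences (\ref{wall-bubble}), (\ref{fork-death}), (\ref{wall-fusion}) and~(\ref{wall}): the main term has the polynomial transported above the plug-in (symmetrised by $s$ where required), and the error term has the $s$-strand broken by a pair of dots. In the error term the broken strand simplifies: one of the dots can be absorbed into the wall or pulled to the top of the diagram (placing that term in $I_{\un{z}}$), and otherwise the diagram factors through an intermediate reduced expression of strictly smaller length, hence by Lemma~\ref{factor2} through a rex of an m.c.r.\ $v < z$, so it is controlled by the induction hypothesis. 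After finitely many such moves, every surviving polynomial sits in the top-left corner.

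\emph{Fixing the light-leaf construction.} It remains to replace $SLL'_{\un{x}, \un{e}}$ by the chosen representative $SLL_{\un{x}, \un{e}} \in \mathcal{S}_{\un{x}, \un{z}}$. Both are produced by the spherical light-leaves algorithm for the same subexpression $\un{e}$, and differ only in the rex moves and intermediate expressions selected at the successive steps of~(\ref{eq:con}). By Lemma~\ref{rex}, wherever the two constructions diverge the discrepancy carries a strictly negative--positive decomposition; propagating these discrepancies through the recursion shows that $SLL'_{\un{x}, \un{e}} - SLL_{\un{x}, \un{e}}$ is a sum of maps each factoring through an intermediate reduced expression strictly shorter than $\un{z}$, hence (again via Lemma~\ref{factor2}) through rex's of m.c.r.'s $v < z$. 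By the induction hypothesis each such term is already of the desired form. Combining the three steps, $\phi$ is a sum of maps in $\mathcal{S}_{\un{x}, \un{z}}$ with polynomials in the top-left corner, plus a map in $I_{\un{z}}$, as claimed.

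\emph{Main obstacle.} The heart of the argument is the second step: one must check that pushing a polynomial past a wall plug-in produces \emph{only} error terms that land in $I_{\un{z}}$ or factor through strictly shorter intermediate expressions, so that the induction on $z$ closes. Care is needed because wall plug-ins are \emph{negative} generators, so the negative--positive bookkeeping of \cite[\S7]{SoergelCalculus} does not transfer verbatim; the same caveat governs the third step, where one needs that a strictly negative--positive discrepancy still forces factorization through a shorter expression in the presence of wall plug-ins. These are precisely the points at which the wall relations and the local behaviour of dots at the wall must be analysed explicitly.
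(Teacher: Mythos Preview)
Your induction is on $z$ in the Bruhat order, whereas the paper inducts on $l(\un{x})$. This is not a cosmetic difference: the recursion that must close is the light-leaf recursion on the length of $\un{x}$, and the paper exploits exactly this by peeling off the last step $\phi_M$ and applying the hypothesis to the shorter map $SLL_{M-1}$.

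There is a genuine gap in your step~2. When you push a polynomial past a wall plug-in (which occurs at an $X1$ step of the light-leaf), the error term does \emph{not} in general factor through an m.c.r.\ $v < z$. Concretely, breaking the $t$-strand to the wall and letting the resulting dot pull through the rex move produces either a diagram with a down-dot on $\un{z}$ (landing in $I_{\un{z}}$) \emph{or} a diagram that is again a spherical light-leaf to $\un{z}$, but now with final label $X0$ rather than $X1$; this is precisely the dichotomy in (\ref{21})--(\ref{22}) and the argument~$(\ast)$ of the paper. The second kind of term still targets $z$, so your Bruhat induction cannot touch it. The paper handles it by re-invoking the induction on $l(\un{x})$: one goes back into $SLL_{M-1}$ and rechooses it to match the new subexpression $(\un{e}',0)$ in $\mathcal{S}_{\un{x},\un{z}}$. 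There is no infinite loop because the switch $X1 \to X0$ can happen only once at the last step.

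Your step~3 has a parallel problem. Two constructions of $SLL_{\un{x},\un{e}}$ may differ in the \emph{intermediate expressions} $\un{z}_k$, not only in rex moves between fixed endpoints, so Lemma~\ref{rex} does not apply directly to their difference. The paper avoids this by first using the hypothesis on $l(\un{x})$ to force the inner $SLL_{M-1}$ to agree with the $(M-1)$st stage of the \emph{chosen} light-leaf up to a trailing rex move $\beta$, and only then comparing the final steps $\phi'_M$ and $\phi''_M$ via Lemma~\ref{rex} (equation~(\ref{20})). The resulting strictly negative--positive error is analysed case-by-case on $d'_M$; again the delicate case is $d'_M = X1$, where the down-dot at the top of the error may be on the wall strand and hence does not obviously land in $I_{\un{z}}$ --- one must invoke~$(\ast)$ once more.

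In short, the obstacles you flag at the end are real and are not resolved by induction on $z$; the paper's cure is to induct on $l(\un{x})$ instead, which lets one peel off $\phi_M$, absorb the $X1 \leftrightarrow X0$ error terms by re-entering the induction, and compare constructions one step at a time.
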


\noindent \textit{Proof.} We prove the statement by induction on $l(\un{x})$. Suppose the statement is true for any $\un{x}$ with $l(\un{x}) < M$. Take some $\un{x}$ with $l(\un{x}) = M$, and a map $\phi : \un{x} \rightarrow \un{z}$ (for $\un{z}$ a rex of some $z \in \jw$). We know by Lemma \ref{X} (and the discussion at the end of Section \ref{preliminaries}) that $\phi$ is a sum of \textit{some} SLL maps (not necessarily the ones in $\mathcal{S}_{\un{x}, \un{z}}$) with polynomials along the wall, plus maps in $I_{\un{z}}$. Take one of these SLL maps with polynomials along the wall. It corresponds to some $\un{e} = (e_1, ..., e_M) \subset \un{x}$; in particular, $e_M$ will determine the map $\phi_M$. This SLL map can be pictured as
\begin{center}
    \scalebox{0.7}{\tikzfig{fig16}}
\end{center}
where $\widetilde{SLL}_{M - 1}$ is an SLL map with polynomials along the wall. Say this $\widetilde{SLL}_{M - 1}$ has target $\un{z}_{M - 1}$, a rex of $z_{M - 1} \in \jw$. Now we apply the inductive hypothesis to this $\widetilde{SLL}_{M - 1}$; that is, we can replace it with SLL's \textit{of our choice}, with polynomials in the top left corner, plus maps in $I_{\un{z}_{M - 1}}$. More precisely, for each subexpression $\un{e}' \subset \un{x}_{\leq M - 1}$\footnote{A reminder about notation: following Section \ref{spherical light-leaves}, if $\un{x} = (t_1, ..., t_M)$, then $\un{x}_{\leq M - 1} := (t_1, ..., t_{M - 1})$.} such that $W_J \un{x}_{\leq M - 1}^{\un{e}'} = W_J z_{M - 1}$, we choose a construction of $SLL_{\un{x}_{\leq M - 1}, \un{e}'} : \un{x}_{\leq M - 1} \rightarrow \un{z}_{M - 1}$. We choose this construction as follows: Given such an $\un{e}' \subset \un{x}_{\leq M - 1}$, we append $e_M$ to get a subexpression $\un{f} := (\un{e}', e_M) \subset \un{x}$, with $W_J \un{x}^{\un{f}} = W_J z$. Thus there is some map $SLL_{\un{x}, \un{f}} \in \mathcal{S}_{\un{x}, \un{z}}$. At the $(M - 1)$'th step in the construction of this $SLL_{\un{x}, \un{f}}$, we have a map $SLL_{M - 1} : \un{x}_{\leq M - 1} \rightarrow \un{z}'_{M - 1}$, where $\un{z}'_{M - 1}$ is a rex of $z_{M - 1}$. However, $\un{z}'_{M - 1}$ may not be the same as $\un{z}_{M - 1}$. Thus, to construct $SLL_{\un{x}_{\leq M - 1}, \un{e}'} : \un{x}_{\leq M - 1} \rightarrow \un{z}_{M - 1}$, we take $SLL_{M - 1}: \un{x}_{\leq M - 1} \rightarrow \un{z}'_{M - 1}$ and then compose with some rex move $\beta : \un{z}'_{M - 1} \rightarrow \un{z}_{M - 1}$. 

So now, we take the map $\widetilde{SLL}_{M - 1}$ in the above picture and replace it with these maps $SLL_{\un{x}_{\leq M - 1}, \un{e}}$ which we have constructed, plus maps in $I_{\un{z}_{M - 1}}$.
\begin{equation} \label{18}
    \scalebox{0.8}{\tikzfig{fig18}}
\end{equation}
The map on the right of (\ref{18}) represents some map in $I_{\un{z}_{M - 1}}$, followed by $\phi_M$. In particular, the down-dot may be at any position above the box labelled `??'. 

Now, we want to show that this sum is a sum of maps in $\mathcal{S}_{\un{x}, \un{z}}$ with polynomials in the top-left, plus maps in $I_{\un{z}}$. Let us first look at one of the maps in the first sum in (\ref{18}). By the way we have constructed $SLL_{\un{x}_{\leq M - 1}, \un{e}'}$, this will be of the form
\begin{center}
    \scalebox{0.7}{\tikzfig{fig19}}
\end{center}
where we have composed $\beta$ and $\phi_M$ to make $\phi'_M$, which is another valid construction of this step in the light-leaf construction. Again, $SLL_{M - 1}$ is the $(M - 1)^{\text{th}}$ step in the construction of $SLL_{\un{x}, \un{f}} \in \mathcal{S}_{\un{x}, \un{z}}$. So the map on the right is a valid construction of $SLL_{\un{x}, \un{f}}$, but may not be precisely the map $SLL_{\un{x}, \un{f}} \in \mathcal{S}_{\un{x}, \un{z}}$. In particular, the final step $\phi'_M$ may contain different rex moves. Call this final map $\phi''_M$ in $SLL_{\un{x}, \un{f}}$. Using Lemma \ref{rex}, we have 
\begin{equation} \label{20}
    \scalebox{0.7}{\tikzfig{fig20}}
\end{equation}
where the map labelled `??' has a strictly negative-positive decomposition. In particular, `??' has a down-dot at the top. If $d'_M \neq X1$, then both the maps on the right of (\ref{20}) are of the form we want: the map $\phi''_M$ does not plug into the wall, so we can move the polynomial up past $\phi''_M$ to the top-left corner, leaving us with exactly $SLL_{\un{x}, \un{f}}$ with polynomials in the top-left. And if $d'_M \neq X1$, then the down-dot on top of `??' means that map is in $I_{\un{z}}$.

So the only difficult case is where $d'_M = X1$. In this case we have, using polynomial forcing
\begin{equation} \label{21}
    \scalebox{0.7}{\tikzfig{fig21}}
\end{equation}
where $\gamma$ is some rex move. The first map on the right of (\ref{21}) is again exactly $SLL_{\un{x}, \un{f}}$ with polynomials in the top left. As for the map on the far right of (\ref{21}), the up-dot pulls through $\gamma$ using the Jones-Wenzl relation, leaving us with maps of one of the two following forms:
\begin{equation} \label{22}
    \scalebox{0.7}{\tikzfig{fig22}}
\end{equation}
Now we use an argument that we will use again later, so we refer to it as the argument $(\ast)$. Let $\un{z}'_{M - 1} = (s_1, ..., s_k)$. In the first case in (\ref{22}), the map `??' is a map $(s_1, ..., \hat{s_i}, ..., s_k, t_M) \rightarrow \un{z}$. By Proposition \ref{nll}, this map `??' is spanned by light-leaves and maps in $I_{\un{z}}$. Light-leaves are negative or neutral maps, and since the input and output of `??' have the same length, the light-leaves must be neutral. So if there is a light-leaf $LL : (s_1, ..., \hat{s_i}, ..., s_k, t_M) \rightarrow \un{z}$, then it is a rex move and thus $(s_1, ..., \hat{s_i}, ..., s_k, t_M)$ is reduced, and $s_1 \cdots \hat{s_i} \cdots s_k t_M = z$. But $d'_M = X1$, so that by Lemma \ref{wall-crossing} (a), $z t_M > z$, whereas $s_1 \cdots \hat{s_i} \cdots s_k t_M t_M = s_1 \cdots \hat{s_i} \cdots s_k < s_1 \cdots \hat{s_i} \cdots s_k t_M$. So we have a contradiction, and thus there can be no light-leaf map $LL : (s_1, ..., \hat{s_i}, ..., s_k, t_M) \rightarrow \un{z}$, so that the first map in (\ref{22}) lies entirely in $I_{\un{z}}$. (There ends argument ($\ast$)). 

Then we consider the second map in (\ref{22}). Once again, the box labelled `??' is a sum of light-leaves and maps in $I_{\un{z}}$. Since $\un{z}'_{M - 1}$ and $\un{z}$ are of the same length, a light-leaf map $LL: \un{z}'_{M - 1} \rightarrow \un{z}$ must be a neutral map, i.e. a rex move. But placing a rex move in the box labelled `??' would leave us with an SLL map with $X0$ at the end; more precisely, it would leave us with a possible construction of $SLL_{\un{x}, \un{f}'}$, where $\un{f}' := (\un{e}', 0) \subset \un{x}$. Unfortunately, it may not leave us with \textit{the} map $SLL_{\un{x}, \un{f}'} \in \mathcal{S}_{\un{x}, \un{z}}$. This is because $SLL_{M - 1}$ was chosen to be part of $SLL_{\un{x}, \un{f}} \in \mathcal{S}_{\un{x}, \un{z}}$, not $SLL_{\un{x}, \un{f}'}$. Thus, we need to reapply our inductive hypothesis to this $SLL_{M - 1}$; that is, we replace $SLL_{M - 1}$ with a sum of light-leaves and maps in $I_{\un{z}'_{M - 1}}$ just as in equation (\ref{18}), and then proceed through the steps we have just carried out (or are about to carry out). Note that there is no danger here of an infinite loop: we only reached the situation in (\ref{22}) because we had $d_k' = X1$. If we repeat the procedure with $d_k' = X0$, we will not run into the same problem. 

We have yet to address the right-hand map in (\ref{20}), in the case that $d'_M = X1$:
\begin{equation}
    \scalebox{0.7}{\tikzfig{fig23}}
\end{equation}
Here, again, the box labelled `??' has a strictly negative-positive decomposition, so that it has a dot at the top and at the bottom. If the dot at the top is not connected to the wall strand, then this map is in $I_{\un{z}}$. Otherwise, the dot gets pulled into the wall and we have a map that looks like one of the following two:
\begin{equation}
    \scalebox{0.7}{\tikzfig{fig22}}
\end{equation}
This is exactly the situation in (\ref{22}), which we have already dealt with. 

We have now dealt with maps in the first sum in (\ref{18}). We now move on to maps in the second sum, of the form
\begin{equation}
    \scalebox{0.7}{\tikzfig{fig24}}
\end{equation}
where again, the down-dot could be positioned anywhere above `??'. First, if $d'_M$ is either $U0, U1$ or $X0$, then the down-dot will pull through any rex moves to the top, landing in $I_{\un{z}}$. 

If $d'_M = D0$ or $D1$, we can assume there are no `necessary' rex moves (labelled $\beta$ in Section \ref{spherical light-leaves}) as part of $\phi_M$, since if there were, we could pull the down-dot through $\beta$ and then incorporate what remained into `??'. Therefore, if the down-dot is not on the very right of `??', then it will again pull upward through any rex moves to land the map in $I_{\un{z}}$. If the down-dot is on the very right of `??', then we are in one of the following situations:
\begin{equation}
    \scalebox{0.7}{\tikzfig{fig25}}
\end{equation}
where $\alpha$ is some rex move, and $\un{y}$ is a rex of some $y \in \jw$ (because $d_k = D0$ or $D1$). Therefore we can reapply our inductive hypothesis to the box labelled `??', replacing it with SLL's of our choice (with polynomials in the top-left), and maps in $I_{\un{y}}$. These SLL maps will become (up to adjusting the final rex move $\alpha$, as in (\ref{20})) the SLL maps in $\mathcal{S}_{\un{x}, \un{z}}$ corresponding to $d'_M = U1$ and $U0$ respectively. The dots at the top of the maps in $I_{\un{y}}$ will pull through $\alpha$ to land in $I_{\un{z}}$. 

So we are left with the case $d'_M = X1$. 
\begin{equation}
    \scalebox{0.7}{\tikzfig{fig26}}
\end{equation}
The down-dot will pull through $\beta$, either landing in $I_{\un{z}}$ or pulling into the wall. If it pulls into the wall, then by the argument $(\ast)$, what remains will be in $I_{\un{z}}$. $\hfill \square$\\
\\
\noindent Thus, we have shown that any map $\phi: \un{x} \rightarrow \un{z}$ is a sum of any particular choice of SLL's, with polynomials in the top-left. The only thing left to do is to move these polynomials to the right, using polynomial forcing. This leaves a right $R$-linear combination of our choice of SLL's, plus maps in $I_{\un{z}}$. This proves Proposition \ref{ll}, and thus also Proposition \ref{span}. Therefore, we have proved that the double-leaves span, and also that they are linearly independent (Corollary \ref{linearly independent}). So we have proved Theorem \ref{basis}, that the double-leaves form a basis.  $\hfill \square$

\section{Consequences of the Main Theorem} \label{Consequences}
In this chapter we give three consequences of the main theorem, Theorem \ref{basis}, that double-leaves form a basis of $\mathcal{M}_{BS}$. As with all other chapters in this paper, we require the conditions on our realization given in Section \ref{realizations}.

\subsection{Equivalence of Diagrammatic and Algebraic Categories} \label{Equivalence of Diagrammatic and Algebraic Categories}

\begin{thm} \label{equivalence}
    Suppose our realization $\mathfrak{h}$ is reflection faithful. Then the functor $\tA : \mathcal{M}_{BS} \rightarrow J\mathbb{BS}$Bim defined in Section \ref{The diagrammatic category} is an equivalence of categories.   
\end{thm}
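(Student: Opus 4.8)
The plan is to prove that $\tA : \mathcal{M}_{BS} \to J\mathbb{BS}\text{Bim}$ is an equivalence by checking it is essentially surjective, full, and faithful, with the double-leaves basis (Theorem~\ref{basis}) doing the heavy lifting for the last two properties. Essential surjectivity is immediate: by Definition~\ref{Bs}, the objects of $J\mathbb{BS}\text{Bim}$ are exactly the bimodules $_{R^J}BS(\un{x})$ for expressions $\un{x}$, and $\tA(\un{x}) = {}_{R^J}BS(\un{x})$ by construction, so $\tA$ is surjective (indeed bijective) on objects. Thus the content is in fully-faithfulness, which amounts to showing that for all expressions $\un{x}, \un{y}$, the map
\[
\tA : \text{Hom}_{\mathcal{M}_{BS}}(\un{x}, \un{y}) \longrightarrow \text{Hom}_{J\mathbb{BS}\text{Bim}}(\,{}_{R^J}BS(\un{x}),\, {}_{R^J}BS(\un{y})\,)
\]
is an isomorphism of graded right $R$-modules.

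The strategy is to compare both sides after localization, exactly as in the classical Elias--Williamson argument. The left-hand side has the double-leaves basis $\mathcal{SDL}_{\un{x},\un{y}}$ by Theorem~\ref{basis}, which is a free right $R$-module basis indexed by pairs of subexpressions $(\un{e} \subset \un{x}, \un{f} \subset \un{y})$ with $W_J \un{x}^{\un{e}} = W_J \un{y}^{\un{f}}$. First I would show $\tA$ is faithful: if $\phi = \sum \mathbb{SDL}_{\un{f}, \un{e}} \cdot c_{\un{f},\un{e}}$ has $\tA(\phi) = 0$, then localizing (tensoring with $Q$) and using the commutative triangle from Chapter~\ref{Standard Diagrammatics} — the functor $\tilde{\mathcal{G}} : Kar(\mathcal{M}_{BS,Q}) \xrightarrow{\sim} JStd\text{Bim}_Q$ factors the localized $\tA$ through $J\mathbb{BS}\text{Bim}_Q \hookrightarrow JStd\text{Bim}_Q$ — we get $\tA(\phi) \otimes Q = \tilde{\mathcal{G}}(\phi \otimes Q)$. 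By Proposition~\ref{localized-basis} and the upper-triangularity in Proposition~\ref{upper-triangular}, the images $\mathbb{SDL}_{\un{f},\un{e}} \otimes Q$ are linearly independent over $Q$, forcing all $c_{\un{f},\un{e}} = 0$, hence $\phi = 0$. For fullness, I would count graded ranks: the right-hand side $\text{Hom}_{J\mathbb{BS}\text{Bim}}(\,{}_{R^J}BS(\un{x}),\, {}_{R^J}BS(\un{y})\,)$ is a free graded right $R$-module (this follows from localization, Proposition~\ref{localize BS}, together with the fact that Hom spaces between standard bimodules over $R$ — not just over $Q$ — are free of the expected rank, the standard "localize and lift" argument using that $R$ is a domain and the relevant Hom is torsion-free), and its graded rank equals $\sum_{\un{e},\un{f} : W_J\un{x}^{\un{e}} = W_J\un{y}^{\un{f}}} v^{\deg}$, matching the size of $\mathcal{SDL}_{\un{x},\un{y}}$ degree by degree. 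An injective $R$-module map between free graded $R$-modules of the same finite graded rank, with the domain's basis mapping to linearly independent elements after localization, is an isomorphism; so $\tA$ is full.

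The main obstacle I anticipate is establishing that the right-hand Hom space is free over $R$ with the predicted graded rank, \emph{before} localization — the localized statement is easy (Lemma~\ref{hom2} plus Proposition~\ref{localize BS}), but lifting back to $R$ requires knowing that the integral Hom module injects into its localization (torsion-freeness) and that it is \emph{spanned} by (the images of) the double-leaves, not just that these are independent. One clean route is to avoid computing the right-hand rank independently altogether: instead, invoke that $J\mathbb{BS}\text{Bim}$ is by construction a full subcategory of $(R^J,R)$-bimodules whose morphism spaces are visibly free graded $R$-modules, and then argue that $\tA$ restricted to the double-leaves gives an $R$-basis of the target by transporting the spanning argument of Chapter~\ref{Spanning} through $\tA$ (which is a functor, so relations are preserved) together with the localized linear independence to pin down that no relations are lost. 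Concretely: $\tA$ sends the spanning set $\mathcal{SDL}_{\un{x},\un{y}}$ to a spanning set of the image; the image is all of $\text{Hom}_{J\mathbb{BS}\text{Bim}}$ because every bimodule morphism between Bott-Samelson bimodules is known (from \cite{SoergelCalculus}, or by direct localization) to be an $R$-combination of compositions of the generating morphisms, each of which is in the image of $\tA$; and localized linear independence shows the spanning set maps to a basis, giving fullness and faithfulness simultaneously. Assembling these pieces yields that $\tA$ is an equivalence. $\hfill\square$
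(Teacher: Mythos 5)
Your treatment of essential surjectivity and faithfulness matches the paper's: both are immediate or follow from the localized linear-independence argument in Chapter~\ref{Linear Independence} composed with $\tilde{\mathcal{G}}$ via the commuting diagram. The problem is in your fullness argument, and you have correctly identified the obstacle but not resolved it. You write that the image of $\tA$ is all of $\text{Hom}_{J\mathbb{BS}\text{Bim}}$ ``because every bimodule morphism between Bott-Samelson bimodules is known (from \cite{SoergelCalculus}, or by direct localization) to be an $R$-combination of compositions of the generating morphisms.'' That claim, for the \emph{singular} category $J\mathbb{BS}\text{Bim}$ (i.e. for $(R^J, R)$-bimodules with the restriction on the left), is precisely the statement that $\tA$ is full; it is not proved in \cite{SoergelCalculus}, which handles only the $J = \varnothing$ case, and localization alone only gives you linear independence, not spanning, since torsion-freeness tells you the map to the localization is injective but says nothing about whether a given integral morphism is hit. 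So this step of your argument is circular.

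The paper closes the gap with an external input that you did not use: Williamson's Hom formula for singular Bott-Samelson bimodules (Theorem~7.9 of \cite{Singular}, quoted as Proposition~\ref{JHom}), which expresses the graded rank of $\text{Hom}_{J\mathbb{BS}\text{Bim}}({}_{R^J}BS(\un{x}), {}_{R^J}BS(\un{y}))$ as $\langle 1 \otimes b_{\un{x}}, 1 \otimes b_{\un{y}} \rangle_M$. The paper then does a genuine computation (Lemmas~\ref{Morth} and~\ref{1 bx}, using the notion of spherical defect) to show this pairing equals $\sum v^{sdef(\un{e}) + sdef(\un{f})}$, which is the graded rank of $\text{Hom}_{\mathcal{M}_{BS}}(\un{x}, \un{y})$ coming from the double-leaves basis. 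Once the graded ranks are known to match, a faithful degree-preserving map of graded free $R$-modules of equal finite graded rank is automatically full, and the proof finishes. Without some such independent determination of the target's rank, your proof does not go through: you need the Soergel--Williamson Hom formula (or a substitute for it) as the missing ingredient.
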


The key to the proof will be that we have formulae for the graded ranks of the Hom spaces in each category, and that these are the same. From Theorem \ref{Singular thm} (c), we know that the following diagram commutes. 
\begin{center}
    \tikzfig{fig117}
\end{center} By following where $_{R^J} R\otimes BS(\un{w}) \in \prescript{J}{}{\mathcal{B}} \times \mathcal{B}$ is taken in this diagram, we can determine that $$ch(_{R^J} BS(\un{w})) = m_{id} b_{\un{w}} = 1 \otimes b_{\un{w}} \in M(J).$$ 

We also have a $\mathbb{Z}[v, v^{-1}]$-bilinear form $\lgl \cdot, \cdot\rgl_{I, J}$ on $\isj$ for all finitary subsets $I, J \subset S$. We will not define this form on general $\isj$, since this would require defining the standard basis of each $\isj$. We will simply define it on $M(J) = \prescript{J}{}{\mathcal{S}}$. We define a $\mathbb{Z}[v, v^{-1}]$-bilinear form $\lgl \cdot, \cdot \rgl_{M}$ on $M(J)$ by identifying $M(J) \cong b_{w_J} H$ as in Proposition \ref{smodule} and setting\footnote{We have added in a factor of $v^{-d_J}$ to the formula given in Section 2.3 of \cite{Singular}. This is in order to make the standard basis of $M$ orthonormal with respect to this form. We have altered the Hom formula appropriately to adjust for this.} \begin{equation} \label{Mform}
\begin{split}
    \lgl \cdot, \cdot \rgl_M : b_{w_J}H \times b_{w_J}H \rightarrow \mathbb{Z}[v, v^{-1}],\\
    \lgl h_1, h_2 \rgl_M := v^{-d_J}\epsilon(i(h_1) \ast_J h_2) = \frac{v^{-d_J}}{\pi(J)}\lgl h_1, h_2 \rgl.
    \end{split}
\end{equation}

\noindent Finally, we have a formula for the graded rank of $$\text{Hom}^\bullet_{J\mathbb{BS}\text{Bim}}(_{R^J} BS(\un{w}), _{R^J} BS(\un{y}))$$ as a right $R$-module. 
\begin{prop} \label{JHom}
    Suppose our realization is reflection-faithful. Then we have \begin{align*}\ol{\text{rkHom}^\bullet_{J\mathbb{BS}\text{Bim}}(_{R^J} BS(\un{w}), \; _{R^J} BS(\un{y}))} & = \lgl ch(BS(\un{w})), ch(BS(\un{y}))\rgl_M\\ & = \lgl 1 \otimes b_{\un{w}}, 1 \otimes b_{\un{y}}\rgl_M.
    \end{align*}
\end{prop}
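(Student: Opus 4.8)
\textit{Proof proposal.} The plan is to reduce this to Soergel's Hom formula for ordinary Bott--Samelson bimodules, and then to recognise the resulting pairing on $H$ as the pairing $\langle -,-\rangle_M$ on $M(J)$. The starting observation is that ${}_{R^J}BS(\un{w})$ is just $BS(\un{w})$ with its left action restricted along $R^J \hookrightarrow R$, and that restriction admits induction $\text{Ind}(-) = R\otimes_{R^J}(-)$ as a left adjoint. This yields a natural, degree-preserving isomorphism of graded right $R$-modules
\[
\text{Hom}^\bullet_{J\mathbb{BS}\text{Bim}}\big(\,{}_{R^J}BS(\un{w}),\ {}_{R^J}BS(\un{y})\,\big)\ \cong\ \text{Hom}^\bullet_{\mathbb{BS}\text{Bim}}\big(\,(R\otimes_{R^J}R)\otimes_R BS(\un{w}),\ BS(\un{y})\,\big).
\]
Now $(R\otimes_{R^J}R)\otimes_R BS(\un{w})$ is an ordinary Soergel bimodule: since $J$ is finitary, $R\otimes_{R^J}R(d_J)$ is the indecomposable Soergel bimodule $B_{w_J}$ (a theorem of Soergel \cite{Soergel}, unconditional because $w_J$ is the longest element of a finite parabolic), so $(R\otimes_{R^J}R)\otimes_R BS(\un{w})$ is a direct summand of $BS(\un{w}_J,\un{w})$ for any reduced expression $\un{w}_J$ of $w_J$, hence lies in $\mathbb{S}\text{Bim}$. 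Using multiplicativity of $ch$ under $\otimes_R$ together with $ch(B_{w_J}) = b_{w_J}$, its character is $v^{\pm d_J}\,b_{w_J}\,b_{\un{w}}$, the sign of the exponent being fixed by the grading-shift convention.

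Soergel's Hom formula (valid for all of $\mathbb{S}\text{Bim}$, hence for this summand of a Bott--Samelson, and not requiring Soergel's conjecture) computes the graded rank of the right-hand $\text{Hom}$-space as $\langle ch((R\otimes_{R^J}R)\otimes_R BS(\un{w})),\, b_{\un{y}}\rangle_H$ up to the bar involution, which unwinds to $v^{\pm d_J}\,\epsilon\big(i(b_{\un{w}})\,i(b_{w_J})\,b_{\un{y}}\big)$. On the other side, transporting along the isomorphism $\phi$ of Proposition \ref{smodule} so that $1\otimes b_{\un{w}}$ corresponds to $b_{w_J}b_{\un{w}}$, the definition \eqref{Mform} gives
\[
\langle 1\otimes b_{\un{w}},\, 1\otimes b_{\un{y}}\rangle_M\ =\ \frac{v^{-d_J}}{\pi(J)}\,\epsilon\big(i(b_{w_J}b_{\un{w}})\,b_{w_J}b_{\un{y}}\big)\ =\ \frac{v^{-d_J}}{\pi(J)}\,\epsilon\big(i(b_{\un{w}})\,b_{w_J}^2\,b_{\un{y}}\big)\ =\ v^{-d_J}\,\epsilon\big(i(b_{\un{w}})\,b_{w_J}\,b_{\un{y}}\big),
\]
where I have used $i(b_{w_J}) = b_{w_J}$ (Remark \ref{anti remark}) and $b_{w_J}^2 = \pi(J)b_{w_J}$. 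Since $b_{\un{w}}, b_{w_J}, b_{\un{y}}$ are bar-invariant and $\epsilon$ and $i$ commute with the bar involution, the quantity $\epsilon(i(b_{\un{w}})b_{w_J}b_{\un{y}})$ is itself bar-invariant; so the bar in Soergel's formula is harmless, and comparing the two expressions (and recalling $ch({}_{R^J}BS(\un{w})) = 1\otimes b_{\un{w}}$ from earlier) gives precisely $\overline{\text{rkHom}^\bullet_{J\mathbb{BS}\text{Bim}}({}_{R^J}BS(\un{w}),{}_{R^J}BS(\un{y}))} = \langle 1\otimes b_{\un{w}}, 1\otimes b_{\un{y}}\rangle_M$.

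The main obstacle is the bookkeeping of grading shifts: one must pin down the convention relating $ch$ to graded rank (the source of the bar in Soergel's formula) and the shift in $ch(R\otimes_{R^J}R)$, and verify that the extra factor $v^{-d_J}$ in \eqref{Mform} is exactly cancelled by this shift so that the final identity holds on the nose --- this is what the footnote to \eqref{Mform} is flagging. A secondary point is that the argument leans on Soergel's structure theory (graded freeness of $\text{Hom}^\bullet$ between Soergel bimodules with the stated rank, and $R\otimes_{R^J}R(d_J)\cong B_{w_J}$ with $ch(B_{w_J})=b_{w_J}$), all of which is standard and unconditional for finitary $J$. Alternatively, one may obtain the proposition by specialising the Hom formula for singular Soergel bimodules from Williamson's thesis \cite{Singular} to $I=\varnothing$; the reduction above is in effect a direct proof of that special case.
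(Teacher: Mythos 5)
Your proposal is correct, but it takes a genuinely different route from the paper. The paper's entire proof is one sentence: it cites Theorem 7.9 of Williamson's thesis \cite{Singular} (the Hom formula for singular Soergel bimodules), specialised to the pair $(\varnothing, J)$, with a footnote adjusting for the $v^{-d_J}$ normalisation built into \eqref{Mform}. You instead give a direct argument that needs only \emph{ordinary} Soergel theory: you use the adjunction $\mathrm{Ind} \dashv \mathrm{Res}$ along $R^J \hookrightarrow R$ to rewrite the singular Hom space as $\mathrm{Hom}_{(R,R)}\bigl((R\otimes_{R^J}R)\otimes_R BS(\un{w}),\, BS(\un{y})\bigr)$, identify $R\otimes_{R^J}R(d_J)$ with $B_{w_J}$ (unconditional for finitary $J$), apply Soergel's Hom formula in $\mathbb{S}\text{Bim}$, and then match the resulting pairing with $\langle -,-\rangle_M$ using $i(b_{w_J}) = b_{w_J}$ and $b_{w_J}^2 = \pi(J)b_{w_J}$ --- which is the same algebraic manipulation the paper does in Lemma \ref{Morth} for the standard basis. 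What your route buys is a self-contained derivation that avoids importing the general singular Hom formula; what it costs is the grading-shift bookkeeping you correctly flag as the delicate point (the bar involution negates the $v^{\pm d_J}$ prefactor, which is precisely where the footnote's renormalisation of \eqref{Mform} has to land). You also note the cite-Williamson alternative at the end, which is exactly what the paper does, so the two approaches are consciously related.

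One small point worth tightening if you wrote this out in full: the adjunction you invoke produces a Hom space in $(R,R)$-bimodules with a source that is a summand of a Bott--Samelson, not itself a Bott--Samelson, so one must know Soergel's Hom formula holds at the level of $\mathbb{S}\text{Bim}$ (equivalently, on summands), not merely for Bott--Samelson objects --- you do say this explicitly, and it is true and unconditional, but it is the one place where one silently steps outside $\mathbb{BS}\text{Bim}$.
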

\noindent \textit{Proof.} This is a specific case of Theorem 7.9 of \cite{Singular}\footnote{Again, we have removed the grading shift that shows up in \cite{Singular} to match the extra factor of $v^{-d_J}$ in (\ref{Mform})}.  $\hfill \square$\\
\\
\noindent We now want to show that the graded ranks of the Hom spaces in $J\mathbb{BS}$Bim match those of $\mathcal{M}_{BS}$. 

\begin{prop} \label{matching ranks}
    Suppose our realization is reflection-faithful. Then given any two expressions $\un{x}, \un{y}$, we have $$\text{rkHom}^\bullet_{\mathcal{M}_{BS}}(\un{x}, \un{y}) = \text{rkHom}_{J\mathbb{BS}\text{Bim}}(\prescript{}{R^J}{BS(\un{x})},  \prescript{}{R^J}{BS(\un{y})}).$$
\end{prop}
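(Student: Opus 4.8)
The plan is to evaluate both graded ranks as the \emph{same} combinatorial sum indexed by pairs of subexpressions: the diagrammatic side via the double-leaves basis (Theorem \ref{basis}), and the algebraic side via Proposition \ref{JHom} together with the multiplication rule (\ref{smult}).

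First I would record the degrees of the spherical light-leaves. For $\un{e}\subset\un{x}$ put $\mathrm{def}'(\un{e}) := \sum_i c(d_i')$, where $d_i'$ is the decoration of (\ref{d_i'}) and $c(U1)=c(D1)=0$, $c(U0)=c(X0)=+1$, $c(D0)=c(X1)=-1$. Inspecting the algorithm of Section \ref{spherical light-leaves}: the steps $U0,U1,D0,D1$ are the maps $\phi_k$ of \cite{SoergelCalculus}, of degrees $+1,0,-1,0$; the step $X0$ is by definition the $U0$ map, of degree $+1$; and the step $X1$ is a composite of two rex moves (degree $0$) with one wall plug-in, which was declared a degree $-1$ generator. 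Hence $SLL_{\un{x},\un{e}}$ is homogeneous of degree $\mathrm{def}'(\un{e})$, and since flipping a diagram upside down preserves the degree of each generator, $\mathbb{SDL}_{\un{f},\un{e}}$ has degree $\mathrm{def}'(\un{e})+\mathrm{def}'(\un{f})$. By Theorem \ref{basis}, $\text{Hom}^\bullet_{\mathcal{M}_{BS}}(\un{x},\un{y})$ is free as a right $R$-module on these homogeneous maps over the pairs with $W_J\un{x}^{\un{e}}=W_J\un{y}^{\un{f}}$, so
\[
\text{rkHom}^\bullet_{\mathcal{M}_{BS}}(\un{x},\un{y}) = \sum_{\substack{\un{e}\subset\un{x},\ \un{f}\subset\un{y}\\ W_J\un{x}^{\un{e}}=W_J\un{y}^{\un{f}}}} v^{-\mathrm{def}'(\un{e})-\mathrm{def}'(\un{f})}.
\]

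Next I would expand the other side. Using $ch({}_{R^J}BS(\un{x})) = 1\otimes b_{\un{x}} = m_{id}\,b_{s_1}\cdots b_{s_n}$ in $M(J)$ and applying (\ref{smult}) repeatedly, after the $k$-th multiplication the active standard basis element is the $k$-th term of a coset stroll, and the trichotomy of (\ref{smult}) is governed exactly by the rule (\ref{d_i'}); comparing the coefficients $1$, $v^{\pm1}$, $v+v^{-1}$ there with $c(\cdot)$ yields $1\otimes b_{\un{x}} = \sum_{\un{e}\subset\un{x}} v^{\mathrm{def}'(\un{e})}\, m_{z(\un{e})}$, with $z(\un{e})$ the final term of the coset stroll of $\un{e}$, equivalently the minimal coset representative of $W_J\un{x}^{\un{e}}$. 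Then, since $\lgl\cdot,\cdot\rgl_M$ is $\Z[v,v^{-1}]$-bilinear, the standard basis is orthonormal for it (the footnote to (\ref{Mform})), and $z(\un{e})=z(\un{f})$ iff $W_J\un{x}^{\un{e}}=W_J\un{y}^{\un{f}}$ (uniqueness of minimal coset representatives, Lemma \ref{decomp}), I would get
\[
\lgl 1\otimes b_{\un{x}},\, 1\otimes b_{\un{y}}\rgl_M = \sum_{\substack{\un{e}\subset\un{x},\ \un{f}\subset\un{y}\\ W_J\un{x}^{\un{e}}=W_J\un{y}^{\un{f}}}} v^{\mathrm{def}'(\un{e})+\mathrm{def}'(\un{f})}.
\]
By Proposition \ref{JHom} the left side is $\overline{\text{rkHom}^\bullet_{J\mathbb{BS}\text{Bim}}({}_{R^J}BS(\un{x}),{}_{R^J}BS(\un{y}))}$; applying the bar involution $v\mapsto v^{-1}$ and comparing with the first display then gives the proposition.

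The hard part will be the degree bookkeeping in the first step: confirming that the new step $X1$ really contributes $-1$ (its only non-rex ingredient is the wall plug-in, which is degree $-1$) and that $X0$ contributes $+1$, so that these match the coefficients $v^{-1}$ and $v$ of the surviving $m_x$-terms in the ``$xs\notin\jw$'' line of (\ref{smult}); and checking that the upside-down flip forming the double-leaf preserves degrees. Everything else is a direct comparison of two sums over the same indexing set, so I anticipate no further obstruction.
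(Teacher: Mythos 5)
Your proposal is correct and follows essentially the same route as the paper: your $\mathrm{def}'$ is exactly the paper's spherical defect $sdef$ from (\ref{spherical defect}), your expansion of $1\otimes b_{\un{x}}$ is Lemma \ref{1 bx}, and you match the resulting sums via Proposition \ref{JHom} and orthonormality of the standard basis, just as in the paper's proof. The one place you lean on an assertion rather than a verification is the orthonormality of $\{m_x\}$ with respect to $\lgl\cdot,\cdot\rgl_M$, which the footnote to (\ref{Mform}) merely claims; the paper supplies the short computation as Lemma \ref{Morth}, using $i(b_{w_J}) = b_{w_J}$ and $b_{w_J}^2 = \pi(J)\, b_{w_J}$.
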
 There is a notion of \textit{defect} $sdef(\un{e})$ of subexpressions $\un{e} \subset \un{x}$, (see Section 3.3.4 of \cite{textbook}) which matches the degree of the corresponding light-leaf $LL_{\un{x}, \un{e}}$. We will now define a new notion of defect, the \textit{spherical defect} $sdef(\un{e})$ of a subexpression $\un{e} \subset \un{x}$, in such a way that deg$(SLL_{\un{x}, \un{e}}) = sdef(\un{e})$. 

Recall from Section \ref{spherical light-leaves} that given an expression $\un{x} = (s_1, ..., s_n)$, a subexpression $\un{e} \subset \un{x}$ has a coset stroll $z_0, ..., z_n$, where $z_k$ is the minimal coset representative of $x_k := s_1^{e_1} \cdots s_k^{e_k}$. This coset stroll has an associated decoration $d_1', ..., d_n'$, with $d_k' \in \{U1, U0, D1, D0, X1, X0\}$. These $d_k'$'s then determined the form of the spherical light-leaf $SLL_{\un{x}, \un{e}}$, by specifying the maps $\phi_k$. If we want to know how to define $sdef(\un{e})$, we should look at the degrees of the $\phi_k$'s corresponding to each of the possible labels $U1, U0, D1, D0, X1, X0$. The degree of $\phi_k$ associated to $U1$ and $D1$ is zero, while $U0$ is +1 and $D0$ is -1. Then, $X0$ gives a degree +1 map, while $X1$ gives a degree -1 map. Therefore we should define the spherical defect as \begin{equation} \label{spherical defect}
    sdef(\un{e}) := \#U0 + \#X0 - \#D0 - \#X1.
\end{equation} By the way we have defined it, we have deg$(SLL_{\un{x}, \un{e}}) = sdef(\un{e})$. By the fact that double-leaves form a basis, we have the following Hom formula for $\mathcal{M}_{BS}$. \begin{equation} \label{MHom}
    \ol{\text{rkHom}^\bullet_{\mathcal{M_{BS}}}(\un{x}, \un{y})} = \sum_{\substack{\un{e} \subset \un{x}\\ \un{f} \subset \un{y}\\ W_J \un{x}^{\un{e}} = W_J \un{y}^{\un{f}}}}v^{sdef(\un{e}) + sdef(\un{f})}
\end{equation}
We now wish to show that we can rewrite the Hom formula for $J\mathbb{BS}$Bim from Proposition \ref{JHom} in terms of the spherical defect, so that it matches the above formula (\ref{MHom}).

Recall that we have a standard basis $\{m_x \; | \; x \in \jw \}$ of the spherical module $M$, where $m_x = 1 \otimes \dl_x$.
\begin{lem} \label{Morth}
    The standard basis $\{m_x \; | \; x \in \jw\}$ of $M(J)$ is orthonormal with respect to the form $\lgl \cdot, \cdot \rgl_M$.
\end{lem}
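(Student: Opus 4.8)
The plan is to compute the form $\langle m_x, m_y \rangle_M$ directly using the identification $M(J) \cong b_{w_J}H$ from Proposition \ref{smodule}, together with the explicit expansion of $b_{w_J}$ in the standard basis given in Remark \ref{anti remark}. Recall $m_x \mapsto b_{w_J}\delta_x$ under $\phi$, and by Lemma \ref{wall-crossing}(a) combined with Lemma \ref{decomp}, multiplication by $\delta_x$ for $x \in {}^JW$ sends $b_{w_J}$ to $\delta_{w_J x} + (\text{lower terms})$, with all terms supported on $W_J z$-type cosets — more precisely, $\phi(m_x) = b_{w_J}\delta_x = \sum_{u \in W_J} v^{d_J - l(u)}\delta_{ux}$, using \eqref{bw0} and the fact that $l(ux) = l(u) + l(x)$ for $x$ a minimal coset representative.

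First I would make this expansion precise: from $b_{w_J} = \sum_{u \in W_J} v^{d_J - l(u)}\delta_u$ and $\delta_u \delta_x = \delta_{ux}$ (length-additivity), we get $\phi(m_x) = \sum_{u \in W_J} v^{d_J - l(u)} \delta_{ux}$. Next I would apply the definition \eqref{Mform}, $\langle m_x, m_y\rangle_M = \frac{v^{-d_J}}{\pi(J)} \langle \phi(m_x), \phi(m_y)\rangle$, and expand using bilinearity and the orthonormality of the standard basis $\langle \delta_a, \delta_b \rangle = \delta_{a,b}$. Since the cosets $W_J x$ and $W_J y$ are disjoint when $x \neq y$ (both being minimal representatives of distinct cosets), the supports $\{ux : u \in W_J\}$ and $\{u'y : u' \in W_J\}$ are disjoint, so $\langle m_x, m_y \rangle_M = 0$ for $x \neq y$. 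For $x = y$, we get $\langle m_x, m_x \rangle_M = \frac{v^{-d_J}}{\pi(J)} \sum_{u \in W_J} v^{2(d_J - l(u))} = \frac{v^{-d_J}}{\pi(J)} \cdot v^{2d_J}\sum_{u \in W_J} v^{-2l(u)}$.

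The last step is to check this equals $1$, i.e. that $v^{d_J}\sum_{u \in W_J} v^{-2l(u)} = \pi(J)$. Recalling $\pi(J) = v^{-d_J}\sum_{u \in W_J} v^{2l(u)}$, this amounts to the symmetry $\sum_{u \in W_J} v^{-2l(u)} = v^{-2d_J}\sum_{u \in W_J} v^{2l(u)}$, which follows from the bijection $u \mapsto w_J u$ on the finite group $W_J$ together with the identity $l(w_J u) = d_J - l(u)$ (valid since $w_J$ is the longest element). This gives $\langle m_x, m_x\rangle_M = 1$, completing the proof.

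I expect the only mild subtlety to be bookkeeping with the normalization factor $v^{-d_J}/\pi(J)$ and confirming the length-additivity facts $l(ux) = l(u) + l(x)$ (Lemma \ref{decomp}) and $l(w_J u) = d_J - l(u)$; none of these is a genuine obstacle, as all the needed facts are already recorded in the excerpt. The orthogonality for $x \neq y$ is essentially immediate from disjointness of coset supports, so the computation of the diagonal entries is really the whole content.
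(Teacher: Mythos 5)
Your proof is correct, but it takes a somewhat different route than the paper's. The paper first applies the adjunction property of $\lgl \cdot, \cdot \rgl$ together with $i(b_{w_J}) = b_{w_J}$ (Remark \ref{anti remark}) and $b_{w_J}^2 = \pi(J)b_{w_J}$ to reduce $\lgl b_{w_J}\dl_x, b_{w_J}\dl_y\rgl$ to $\pi(J)\lgl b_{w_J}\dl_x, \dl_y\rgl$; this cancels $\pi(J)$ in the definition \eqref{Mform} up front, and then only a single expansion of $b_{w_J}$ is needed, with exactly one term ($w = e$) contributing when $x = y$. You instead expand both $\phi(m_x)$ and $\phi(m_y)$ directly, collapse the double sum along the diagonal using orthonormality, and then invoke the palindromic symmetry of the Poincar\'e polynomial of $W_J$ (via the bijection $u \mapsto w_J u$ and $l(w_J u) = d_J - l(u)$) to match the result with $\pi(J)$. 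Both are valid; the paper's reduction via the anti-involution is slightly more economical and avoids the Poincar\'e-symmetry step, while your direct computation is more elementary and self-contained, requiring only the orthonormality of the standard basis of $H$ plus the standard length identity $l(w_J u) = d_J - l(u)$.
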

\noindent \textit{Proof.} Under the identification $M(J) \cong b_{w_J} H$, the standard basis element $m_x \in M(J)$ goes to $b_{w_J} \dl_x \in b_{w_J} H$. Therefore by (\ref{Mform}) we have $$\lgl m_x, m_y\rgl_M = \frac{v^{-d_J}}{\pi(J)}\lgl b_{w_J} \dl_x, b_{w_J} \dl_y\rgl.$$
It is a straight-forward consequence of the definition (\ref{bilinear form}) of $\lgl \cdot, \cdot \rgl$ that \begin{align*}\lgl b_{w_J} \dl_x, b_{w_J} \dl_y\rgl & = \lgl i(b_{w_J})b_{w_J} \dl_x, \dl_y\rgl\\
& = \lgl b_{w_J}^2 \dl_x, \dl_y\rgl\\
& = \pi(J)\lgl b_{w_J} \dl_x, \dl_y \rgl,
\end{align*} where $i(b_{w_J}) = b_{w_J}$ by Remark \ref{anti remark}. This gives us $$\lgl m_x, m_y\rgl_M = v^{-d_J} \lgl b_{w_J} \dl_x, \dl_y \rgl.$$ We have from (\ref{bw0}) that $$b_{w_J} = \sum_{w \in W_J} v^{l(w_J) - l(w)} \dl_w,$$ and so $$b_{w_J}\dl_x = \sum_{w \in W_J} v^{l(w_J) - l(w)} \dl_{wx},$$ since $x$ is a minimal coset representative. Therefore if $W_J x \neq W_J y$, then $wx \neq y$ for any $w \in W_J$, so that $$\lgl m_x, m_y\rgl_M = v^{-d_J} \lgl b_{w_J} \dl_x, \dl_y \rgl = 0,$$ by the fact that the standard basis of $H$ is orthonormal with respect to $\lgl \cdot, \cdot \rgl$.

Conversely if $W_J x= W_J y$, then $x = y$ since they are both minimal coset representatives. Therefore $$\lgl b_{w_J} \dl_x, \dl_y\rgl = \lgl \sum_{w \in W_J} v^{l(w_J) - l(w)} \dl_{wy}, \dl_y\rgl = v^{l(w_J)} = v^{d_J}.$$ Hence $$\lgl m_x, m_y\rgl_M = v^{-d_J} \cdot v^{d_J} = 1,$$ and so the $m_x$'s are orthonormal. $\hfill \square$\\

\noindent We will use the notation $m_{ \un{x}^{\un{e}}} := m_z$, where $z$ is the minimal coset representative of $W_J \un{x}^{\un{e}}$. 

\begin{lem} \label{1 bx}
    For any expression $\un{x}$ we have \begin{equation}
        1 \otimes b_{\un{x}} = \sum_{\un{e} \subset \un{x}} v^{sdef({\un{e}})} m_{\un{x}^{\un{e}}}.
    \end{equation}
\end{lem}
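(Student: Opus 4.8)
The plan is to induct on the length $n$ of $\un{x} = (s_1, \dots, s_n)$. The base case $n = 0$ is immediate: $b_{\un{\varnothing}} = 1$, the only subexpression is the empty one, which has $sdef = 0$, and $1 \otimes 1 = m_{id} = m_{\un{\varnothing}^{\un{\varnothing}}}$. For the inductive step, write $\un{x} = (\un{x}', s)$ with $\un{x}' = (s_1, \dots, s_{n-1})$ and $s = s_n$, so that $b_{\un{x}} = b_{\un{x}'} b_s$ and hence $1 \otimes b_{\un{x}} = (1 \otimes b_{\un{x}'}) b_s$. Applying the inductive hypothesis to $\un{x}'$ gives
\[
1 \otimes b_{\un{x}} \;=\; \sum_{\un{e}' \subset \un{x}'} v^{sdef(\un{e}')}\, m_{\un{x}'^{\un{e}'}} b_s .
\]
Since each $\un{e}' \subset \un{x}'$ extends in exactly two ways to a subexpression of $\un{x}$, namely $(\un{e}', 0)$ and $(\un{e}', 1)$, it suffices to prove the local identity
\[
v^{sdef((\un{e}', 0))}\, m_{\un{x}^{(\un{e}', 0)}} \;+\; v^{sdef((\un{e}', 1))}\, m_{\un{x}^{(\un{e}', 1)}} \;=\; v^{sdef(\un{e}')}\, m_{\un{x}'^{\un{e}'}} b_s
\]
for each fixed $\un{e}'$, and then sum over $\un{e}'$.

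To prove the local identity, let $z \in \jw$ be the minimal coset representative of $W_J \un{x}'^{\un{e}'}$, so that $m_{\un{x}'^{\un{e}'}} = m_z$ and the action formula (\ref{smult}) computes $m_z b_s$. The $n$-th step of the coset stroll of $(\un{e}', e_n)$ carries the label $d_n'$ prescribed by (\ref{d_i'}) with $z_{n-1} = z$, and I would run through the three cases of (\ref{smult}). If $zs > z$ and $zs \in \jw$, then $(\un{e}', 0)$ gets label $U0$ and $(\un{e}', 1)$ gets label $U1$; using that $U0$ shifts the spherical defect by $+1$ and $U1$ by $0$, together with $m_{\un{x}^{(\un{e}', 0)}} = m_z$ and $m_{\un{x}^{(\un{e}', 1)}} = m_{zs}$, the left-hand side becomes $v^{sdef(\un{e}')}(m_{zs} + v m_z)$, which is exactly $v^{sdef(\un{e}')} m_z b_s$. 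The case $zs < z$, $zs \in \jw$ is identical with labels $D0$ (defect shift $-1$) and $D1$ (shift $0$), giving $v^{sdef(\un{e}')}(m_{zs} + v^{-1} m_z) = v^{sdef(\un{e}')} m_z b_s$. In the case $zs \notin \jw$ the labels are $X0$ (shift $+1$) and $X1$ (shift $-1$); here the point to watch is that by Lemma \ref{wall-crossing}(b) we have $zs = tz$ for some $t \in J$, whence $W_J zs = W_J z$ and therefore $m_{\un{x}^{(\un{e}', 1)}} = m_z$ as well, so the left-hand side equals $v^{sdef(\un{e}')}(v + v^{-1}) m_z = v^{sdef(\un{e}')} m_z b_s$. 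Summing over $\un{e}'$ completes the induction.

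The argument is essentially bookkeeping, so there is no serious obstacle; the two points that require care are the ones flagged above. First, the defect shifts $+1, 0, -1, 0, +1, -1$ attached to the labels $U0, U1, D0, D1, X0, X1$ must match the definition (\ref{spherical defect}) of $sdef$ — but this is exactly how $sdef$ was set up, so it is a matter of reading off the formula. Second, one must verify that in the $X1$ case the coset stroll does not advance, i.e. $z_n = z_{n-1}$, which is precisely the content of Lemma \ref{wall-crossing}(b). Everything else is a direct translation of the three cases of the spherical action (\ref{smult}) into the combinatorics of subexpressions.
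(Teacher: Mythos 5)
Your proof is correct and follows essentially the same route as the paper: induction on the length of $\un{x}$, multiplication by $b_{s_n}$ via (\ref{smult}), and matching the three cases of that formula against the defect shifts $+1, 0, -1, 0, +1, -1$ for $U0, U1, D0, D1, X0, X1$. The two points you flag for care (the defect bookkeeping and the fact that the coset stroll stalls in the $X$ case) are exactly the content of the paper's argument, so there is nothing to add.
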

\noindent In other words, the coefficient of $m_z$ in $1 \otimes b_{\un{x}}$ is $$ \sum_{\substack{\un{e} \subset \un{x}\\W_J \un{x}^{\un{e}} = W_J z}} v^{sdef(\un{e})}.$$
\noindent \textit{Proof.} We proceed by induction on the length of $\un{x}$. Suppose the statement is true when $l(\un{x}) = n - 1$, and take $\un{x} = (s_1, ..., s_n)$. Set $\un{y} := (s_1, ..., s_{n-1})$.  Then we know by the inductive hypothesis that $$1 \otimes b_{\un{y}} = \sum_{\un{f} \subset \un{y}} v^{sdef(\un{f})} m_{\un{y}^{\un{f}}},$$ so that \begin{equation} \label{lemma 1.4 1}
    1 \otimes b_{\un{x}} = 1 \otimes b_{\un{y}} b_{s_n} = \sum_{\un{f} \subset \un{y}} v^{sdef(\un{f})} m_{\un{y}^{\un{f}}} b_{s_n}.
\end{equation} We can find the product $m_{\un{y}^{\un{f}}} b_{s_n}$ using the formula (\ref{smult}), which we reproduce below.
\begin{equation} \label{smult 2}
m_w b_s = \begin{cases}
    m_{ws} + v^{-1} m_w & \text{if } ws < w, \; ws \in \jw\\
    m_{ws} + vm_{w} & \text{if } ws > w, \; ws \in \jw\\
    (v+v^{-1})m_{w} & \text{if } ws \notin \jw.
\end{cases} \end{equation} Thus each term $m_{\un{y}^{\un{f}}} b_{s_n}$ in (\ref{lemma 1.4 1}) will yield two terms. These two terms will be $m_{\un{x}^{\un{e}}}$ for $\un{e} = (\un{f}, 0)$ and $(\un{f}, 1)$, with some powers of $v$ in front. We just need to show that these powers of $v$ are exactly $sdef(\un{e})$.

Note that the three cases in (\ref{smult 2}) correspond exactly to the cases where $s$ is labelled by $D, U$ or $X$ in the labelling of the coset stroll (see Section \ref{spherical light-leaves}). More precisely, if we replace $w$ with $\un{y}^{\un{f}}$ and $s$ with $s_n$ in the above formula, then the three cases correspond to when the labellings of the coset strolls associated to $(\un{f}, 0)$ and $(\un{f}, 1)$ finish with $D, \; U$ or $X$ respectively. When they finish with $D$, then using the definition of $sdef$ in (\ref{spherical defect}) we will have $$sdef(\un{f}, 0) = sdef(\un{f}) - 1, \; sdef(\un{f}, 1) = sdef(\un{f}),$$ which exactly corresponds to the powers of $v$ in the first case of (\ref{smult 2}). If the expressions finish with $U$, then $$sdef(\un{f}, 0) = sdef(\un{f}) + 1, \; sdef(\un{f}, 1) = sdef(\un{f}),$$ which corresponds to the powers of $v$ in the second case of (\ref{smult 2}). Finally, if the labelling of the coset strolls of $(\un{f}, 0), \; (\un{f}, 1) \subset \un{x}$ finish with $X$, then we will have $$sdef(\un{f}, 0) = sdef(\un{f}) + 1, \; sdef(\un{f}, 1) = sdef(\un{f}) - 1, $$ which corresponds to the final case in (\ref{smult 2}).

Therefore the coefficient of $m_{\un{x}^{\un{e}}}$ in each case will be $v^{sdef(\un{e})}$. $\hfill \square$\\
\\
\noindent \textit{Proof of Proposition \ref{matching ranks}.} Proposition \ref{JHom} tells us that $$\ol{\text{rkHom}^\bullet_{J\mathbb{BS}\text{Bim}}(_{R^J} BS(\un{w}), _{R^J} BS(\un{y}))} = \lgl 1 \otimes b_{\un{w}}, 1 \otimes b_{\un{y}}\rgl_M.$$ Lemmas \ref{Morth} and \ref{1 bx} tell us that \begin{align*}\lgl 1 \otimes b_{\un{x}}, 1 \otimes b_{\un{y}}\rgl_M & = \lgl \sum_{\un{e} \subset \un{x}} v^{sdef(\un{e})} m_{\un{x}^{\un{e}}}, \sum_{\un{f} \subset \un{y}} v^{sdef(\un{f})} m_{\un{y}^{\un{f}}}\rgl_M\\
& = \sum_{\substack{\un{e} \subset \un{x}\\ \un{f} \subset \un{y}\\ W_J \un{x}^{\un{e}} = W_J \un{y}^{\un{f}}}}v^{sdef(\un{e}) + sdef(\un{f})},
\end{align*} which is exactly $\ol{\text{rkHom}^\bullet_{\mathcal{M_{BS}}}(\un{x}, \un{y})}$ given in (\ref{MHom}). $\hfill \square$\\
\\
\noindent We now know that $\mathcal{M}_{BS}$ and $J\mathbb{BS}$Bim have matching Hom space ranks. We will use this fact to show that $\tA: \mathcal{M}_{BS} \rightarrow J\mathbb{BS}$Bim is an equivalence of categories. \\
\\
\noindent \textit{Proof of Theorem \ref{equivalence}.} It is clear that $\tA$ is essentially surjective, so we only need to show it is fully faithful.

We will first show that it is faithful. Consider the following diagram (see Chapter \ref{Standard Diagrammatics}).
\begin{equation} 
    \tikzfig{fig118}
\end{equation}
We can extend this diagram on the left.
\begin{equation} \label{com3}
    \tikzfig{fig119}
\end{equation}
When we proved linear independence of the spherical double-leaves in Section \ref{Section: Proof of linear independence}, we in fact proved that they are linearly independent after mapping from $\mathcal{M}_{BS}$ to $Kar(\mathcal{M}_{BS, Q})$ across the top of (\ref{com3}). This means that the map $\mathcal{M}_{BS} \rightarrow Kar(\mathcal{M}_{BS, Q})$ across the top of (\ref{com3}) is faithful. Furthermore, since by Corollary \ref{Gequivalence} the functor $\tilde{\mathcal{G}}$ is an equivalence\footnote{Reflection-faithful implies faithful.}, we find that the map $\mathcal{M}_{BS} \rightarrow JStd$Bim$_Q$ in (\ref{com3}) is faithful. Therefore the map $\tilde{\mathcal{A}}$ must also be faithful.

Finally, since by Proposition \ref{matching ranks} the graded ranks of the Hom spaces of $\mathcal{M}_{BS}$ and $J\mathbb{BS}$Bim are the same, the fact that $\tA$ is faithful implies that it is full. $\hfill \square$
\begin{cor}
    Suppose our realization is reflection-faithful. Then the diagrammatic spherical category $\mathcal{M} := Kar(\mathcal{M}_{BS})$ is equivalent to the algebraic spherical category $Kar(J\mathbb{BS}\text{Bim})$.
\end{cor}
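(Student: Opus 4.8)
The plan is to bootstrap the statement from Theorem~\ref{equivalence}, which gives an equivalence $\tA : \mathcal{M}_{BS} \xrightarrow{\sim} J\mathbb{BS}\text{Bim}$, using the general principle that an equivalence of additive categories induces an equivalence of their Karoubi envelopes. First I would recall that $Kar(-)$ is a $2$-functor on the $2$-category of additive categories: any additive functor $F : \mathcal{A} \to \mathcal{B}$ extends to $Kar(F) : Kar(\mathcal{A}) \to Kar(\mathcal{B})$ sending an object $(A, e)$ (with $e$ an idempotent) to $(FA, Fe)$, and acting on morphisms in the obvious way. Moreover, if $F$ is fully faithful then so is $Kar(F)$ (a morphism $(A,e) \to (A',e')$ in $Kar(\mathcal{A})$ is just an element $f$ of $\mathrm{Hom}_{\mathcal{A}}(A,A')$ with $e'fe = f$, and fully faithfulness of $F$ identifies these Hom-sets compatibly with the idempotent conditions), and if $F$ is additionally essentially surjective then $Kar(F)$ is too, because every idempotent in $\mathcal{B}$ is, up to the equivalence, conjugate to one coming from $\mathcal{A}$.

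The key steps, in order, are: (1) invoke Theorem~\ref{equivalence} to get that $\tA$ is an equivalence; (2) observe that $\mathcal{M} := Kar(\mathcal{M}_{BS})$ and $Kar(J\mathbb{BS}\text{Bim})$ are by definition the Karoubi envelopes of the two categories in question; (3) apply the functoriality of $Kar(-)$ to produce $Kar(\tA) : Kar(\mathcal{M}_{BS}) \to Kar(J\mathbb{BS}\text{Bim})$; (4) check that fully faithfulness and essential surjectivity are preserved, so that $Kar(\tA)$ is an equivalence. Since $J\mathbb{BS}\text{Bim}$ is a full subcategory of a bimodule category and $Kar(J\mathbb{BS}\text{Bim})$ is, as noted in Section~\ref{Singular Soergel Bimodules and Standard Bimodules}, exactly the algebraic spherical category $\mathrm{Hom}_{\mathcal{S}\mathbb{S}\text{Bim}}(\varnothing, J)$, this identifies $\mathcal{M}$ with the algebraic spherical category. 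One should also remark that these equivalences are compatible with the right module-category structures over $Kar(\mathcal{D}) \cong \mathbb{S}\text{Bim}$, but for the bare statement of the corollary only the underlying equivalence of categories is needed.

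There is essentially no obstacle here: the entire content has already been discharged in Theorem~\ref{equivalence}, and what remains is the standard, purely formal fact that Karoubi completion sends equivalences to equivalences. If I wanted to be careful about one point, it would be verifying that the paper's ad hoc description of $\mathcal{M} = Kar(\mathcal{M}_{BS})$ (as in the analogous passage for $Kar(\mathcal{M}_{BS,Q})$) genuinely agrees with the abstract idempotent-completion construction, i.e.\ that it is idempotent complete and universal; but this is immediate since $J\mathbb{BS}\text{Bim}$ embeds in a genuine (hence idempotent complete) bimodule category, so its idempotent completion is the usual one and $\tA$ transports this across. Hence the proof is the single sentence: apply $Kar(-)$ to the equivalence of Theorem~\ref{equivalence}.

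\begin{proof}
By Theorem~\ref{equivalence}, the functor $\tA : \mathcal{M}_{BS} \to J\mathbb{BS}\text{Bim}$ is an equivalence of categories. Applying the Karoubi envelope is functorial and preserves equivalences of additive categories: $Kar(\tA)$ is fully faithful because $\tA$ is and because morphisms in a Karoubi envelope are precisely the morphisms of the underlying category satisfying an idempotent condition that $\tA$ respects, and $Kar(\tA)$ is essentially surjective because every idempotent in $J\mathbb{BS}\text{Bim}$ is, via $\tA$, identified with an idempotent in $\mathcal{M}_{BS}$. Hence $Kar(\tA) : Kar(\mathcal{M}_{BS}) \to Kar(J\mathbb{BS}\text{Bim})$ is an equivalence. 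Since $\mathcal{M} = Kar(\mathcal{M}_{BS})$ by definition, and $Kar(J\mathbb{BS}\text{Bim}) = \mathrm{Hom}_{\mathcal{S}\mathbb{S}\text{Bim}}(\varnothing, J)$ is the algebraic spherical category, this is the desired equivalence.
\end{proof}
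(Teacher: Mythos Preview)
Your proposal is correct and matches the paper's approach: the paper states this corollary without proof immediately after Theorem~\ref{equivalence}, since it follows formally by applying $Kar(-)$ to the equivalence $\tA$. Your careful unpacking of why Karoubi completion preserves equivalences is more detail than the paper gives, but the underlying argument is the same.
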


So far we have only shown that the diagrammatic and algebraic spherical categories, $\cM$ and $Kar(\jbs)$, are equivalent as categories, with no additional structure. However, they are also equivalent as module categories.

\begin{cor} \label{main theorem}
    Suppose our realization is reflection-faithful. Then the diagrammatic and algebraic spherical categories, $\cM$ and $Kar(\jbs)$, are equivalent as module categories over $Kar(\mathcal{D})$ and $\mathbb{S}$Bim respectively. That is, the following diagram commutes.
    
\begin{center}
    \tikzfig{fig134}
\end{center}
\end{cor}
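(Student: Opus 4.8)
\noindent\emph{Sketch.} The plan is to upgrade the equivalence $\tA : \cM_{BS} \to \jbs$ of Theorem~\ref{equivalence} to an equivalence of right module categories at the Bott--Samelson level, and then to observe that idempotent completion is a $2$-functor, so that it transports this structure to the Karoubi envelopes. First I would pin down the two module actions. On the diagrammatic side, $\mathcal{D}$ is strict monoidal under horizontal concatenation of diagrams and acts strictly on the right of $\cM_{BS}$ by $\un{x}\cdot\un{w} := (\un{x},\un{w})$ on objects and $(\phi,\psi) \mapsto \phi\otimes\psi$ (place $\psi$ to the right of $\phi$) on morphisms; this is well defined because the wall sits on the far left of every diagram, so no wall relation nor the wall plug-in ever involves a strand of the $\mathcal{D}$-factor. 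On the algebraic side, $\mathbb{BS}\text{Bim}$ is strict monoidal under $\otimes_R$ and acts on $\jbs$ by $M\cdot B := M\otimes_R B$. Recall from \cite{SoergelCalculus} that the Elias--Williamson functor $\mathcal{A}:\mathcal{D}\to\mathbb{BS}\text{Bim}$ is strict monoidal, with $\mathcal{A}(\un{w},\un{w}') = BS(\un{w})\otimes_R BS(\un{w}')$, and that after idempotent completion it becomes the monoidal equivalence $Kar(\mathcal{D})\xrightarrow{\sim}\mathbb{S}\text{Bim}$.

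Next I would exhibit $\tA$ as a strict module functor over $\mathcal{A}$. On objects this is tautological: $\tA(\un{x}\cdot\un{w}) = {}_{R^J}BS(\un{x},\un{w}) = \bigl({}_{R^J}BS(\un{x})\bigr)\otimes_R BS(\un{w}) = \tA(\un{x})\cdot\mathcal{A}(\un{w})$, so the module-functor structure isomorphism can be taken to be the identity. On morphisms, $\tA$ is defined generator by generator and agrees with $\mathcal{A}$ on every Soergel generator; since the $\mathcal{D}$-factor of a juxtaposition $\phi\otimes\psi$ never touches the wall, $\tA(\phi\otimes\psi) = \tA(\phi)\otimes_R\mathcal{A}(\psi)$ exactly. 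Hence the module-functor coherence conditions -- the pentagon relating the associativity constraints of the two actions and the triangle for the monoidal unit -- hold strictly, and since $\tA$ is already an equivalence of plain categories by Theorem~\ref{equivalence}, it is an equivalence of right module categories $\cM_{BS}\xrightarrow{\sim}\jbs$ covering $\mathcal{A}$.

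Finally I would pass to idempotent completions. Since Karoubi completion is a $2$-functor, it carries the strict action of $\mathcal{D}$ on $\cM_{BS}$ to a module action of $Kar(\mathcal{D})$ on $\cM = Kar(\cM_{BS})$, the action of $\mathbb{BS}\text{Bim}$ on $\jbs$ to the action of $\mathbb{S}\text{Bim}$ on $Kar(\jbs)$, the monoidal functor $\mathcal{A}$ to the monoidal equivalence $Kar(\mathcal{D})\simeq\mathbb{S}\text{Bim}$, and the module equivalence $\tA$ to a module equivalence $\cM\xrightarrow{\sim}Kar(\jbs)$ over it. Unravelling the meaning of ``module functor over $\mathcal{A}$'' is precisely the assertion that the square in the statement commutes (up to the structure isomorphism, which we arranged to be the identity), which is what we wanted.

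The main obstacle I expect is bookkeeping rather than anything conceptual. One must check carefully that none of the wall relations, nor the derived relations (\ref{wall-bubble}), (\ref{fork-death}), (\ref{wall-fusion}), nor the wall plug-in generator, ever couples the $\cM_{BS}$-factor to the $\mathcal{D}$-factor, so that the right action is genuinely strict and $\tA$ is genuinely a strict module functor; and one must confirm that the copy of the Elias--Williamson equivalence $Kar(\mathcal{D})\simeq\mathbb{S}\text{Bim}$ appearing as a vertical arrow of the square is exactly the idempotent completion of the functor $\mathcal{A}$ used to define $\tA$, rather than a merely naturally-isomorphic variant, so that the square commutes on the nose and not just up to an unspecified $2$-isomorphism.
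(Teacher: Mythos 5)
Your proposal is correct and follows essentially the same route as the paper: first show that $\tA$ respects the right module actions at the Bott--Samelson level (the paper observes this follows directly from how $\tA$ is defined on objects and morphisms, just as you argue via juxtaposition never touching the wall), then pass to Karoubi envelopes. The paper's proof is terse where yours spells out the module-functor scaffolding and the $2$-functoriality of idempotent completion, but the underlying argument is the same.
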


\noindent \textit{Proof.} This is a direct consequence of the fact that the corresponding Bott-Samelson categories are equivalent as module categories. That is, the following diagram commutes.
\begin{center}
    \tikzfig{fig135}
\end{center}
This in turn is a consequence of how the functor $\tilde{\mathcal{A}}$ was defined in Section \ref{The diagrammatic category}. In particular, $\tilde{\mathcal{A}}$ clearly respects the right action of $\mathcal{D}$ on $\mathcal{M}_{BS}$, both on the level of objects and morphisms. Taking Karoubi envelopes of this commuting diagram proves the result. $\hfill \square$

\subsection{Categorification of the Spherical Module}

In this section we will prove that the diagrammatic category $\mathcal{M}$ is indeed a spherical category; that is, that it categorifies the spherical module. The proof follows almost exactly the one at the end of Chapter 6 of \cite{SoergelCalculus}. The crucial facts we will use are that the spherical double leaves form a basis, and that we are assuming $\mathbbm{k}$ is a complete local ring. 

\begin{prop} \label{SSCT}
    For each $w \in {}^JW$, there is a unique summand $C_w$ of $\un{w} \in \mathcal{M}$ (for $\un{w}$ reduced) which is not isomorphic to a shift of any summand of $\un{v}$, for $\un{v}$ a reduced expression for an m.c.r. $v < w$. The object $C_w$ does not depend on the reduced expression $\un{w}$ up to isomorphism. Moreover any indecomposable object in $\mathcal{M}$ is isomorphic to a shift of $C_w$ for some $w \in {}^J W$. Hence, one has a bijection:
    \begin{align*}
{}^JW &\xrightarrow{\sim} \left\{ \begin{gathered} \textit{indecomposable objects in } \mathcal{M} \\ \textit{up to shifts and isomorphism} \end{gathered} \right\} \\
w &\mapsto C_w.
\end{align*}
\end{prop}
\begin{proof}
    Since $\mathbbm{k}$ is a complete local ring, the category $\mathcal{M}$ is Krull-Schmidt (see \cite[Lem.~6.25]{SoergelCalculus}). The proof then follows exactly as that of \cite[Thm.~6.26]{SoergelCalculus}.
\end{proof}
Let us now define a diagrammatic character. Given $w \in {}^J W$, let $\mathcal{M}^{\geq w}$ be the quotient category of $\mathcal{M}$ where any morphism factoring through $\un{y}$ a reduced expression for an m.c.r. $y \ngeq w$ is zero. That is, since double leaves span morphism spaces, any diagram factors through $\un{y}$ for some $y \in {}^J W$, and in $\mathcal{M}^{\geq w}$ we ignore all morphisms which factor through elements $y \ngeq w$ (see Section 6.5 of \cite{SoergelCalculus} for more details). 

Now, let $[\mathcal{M}]$ denote the split Grothendieck group of $\mathcal{M}$. This is a right module over $[Kar(\mathcal{D})] \cong H$.

\begin{defn}
    We define the \textit{diagrammatic character} by 
    \begin{align*}
ch : [\mathcal{M}] &\to M \\
[C] &\mapsto \sum_{w \in {}^JW} \overline{\operatorname{rk} \operatorname{Hom}_{\mathcal{M}^{\ge w}}(C, \underline{w})} m_w.
\end{align*}
\end{defn}
From the fact that double leaves form a basis, one can see that $\operatorname{Hom}_{\mathcal{M}^{\geq w}}(\un{x}, \un{w})$ is a free $R$-module spanned by light-leaves $SLL_{\un{x}, \un{e}}$ for $\un{e}$ expressing $\un{w}$. Thus by Lemma \ref{1 bx}, we have $ch([\un{x}]) = 1 \otimes b_{\un{x}}$. Moreover, abusing notation, we have a diagrammatic character $ch: [Kar(\mathcal{D})] \rightarrow H$ defined completely analogously in \cite[Defn.~6.24]{SoergelCalculus}, and for $\un{x} \in \mathcal{M}, \; \un{y} \in Kar(\mathcal{D})$, we have $ch(\un{x} \cdot \un{y}) = 1 \otimes b_{(\un{x}, \un{y})} = 1 \otimes b_{\un{x}} b_{\un{y}} = ch(\un{x}) ch(\un{y})$, so that $ch$ is a morphism of $H$-modules. 

\begin{prop}
    The diagrammatic character \[ch: [\mathcal{M}] \rightarrow M\] is an isomorphism of $H$-modules. 
\end{prop}
\begin{proof}
    By Proposition \ref{SSCT}, the objects $[C_x]$ form a $\mathbb{Z}[v, v^{-1}]$-basis of $[\mathcal{M}]$. Then we have \[ch([C_x]) = \sum_{w\leq x} g_{w, x} m_w\] for some $g_{w, x} \in \mathbb{Z}[v, v^{-1}]$, with $g_{x, x} = 1$. Therefore the images of $[C_x]$ are upper-triangular with respect to the Bruhat order, and so form a basis. Thus $ch$ maps a basis to a basis, and is therefore an isomorphism. 
\end{proof}

\subsection{Equivalence to Abe's Soergel bimodules}

We have shown that the diagrammatic and algebraic spherical categories are equivalent when our realization is reflection-faithful. Abe has constructed an algebraic spherical category $_J\mathcal{S}$bimod (which he calls one-sided Soergel bimodules) which behaves well even without the assumption of reflection-faithfulness. Throughout this section we impose on our realization the conditions given at the beginning of Section 2.3 of \cite{Abe2023} (as well as those in Section \ref{realizations} of this paper).

\begin{defn}
    Given finitary $J \subset S$, let ${}_J\mathcal{C}$ be defined as follows. An object of ${}_J \mathcal{C}$ is a graded $(R^J, R)$-bimodule $M$ together with a decomposition \[ M \otimes_R Q \cong \bigoplus_{x \in {}^J W} M_Q^x\] upon localization, where $m \cdot r = x(r)m$ for $m \in M_Q^x$. A morphism $\varphi :M \rightarrow N$ in ${}_J \mathcal{C}$ is a homogeneous morphism such that $\varphi(M_Q^x) \subset \varphi(N_Q^x)$ for all $x \in {}^J W$. We let the \textit{support} supp$(M)$ denote the set of $x \in {}^J W$ such that $M_Q^x \neq 0$.
\end{defn}

Recall from Lemma \ref{hom2} and Remark \ref{Q index} that when our realization is faithful, objects in $J \mathbb{BS}\text{Bim}$ satisfy the above decomposition. However, when the realization is not faithful, we may have non-zero morphisms $ \varphi :Q_x \rightarrow Q_y$ when $W_J x \neq W_J y$, and therefore objects of $J \mathbb{BS} \text{Bim}$ do not necessarily satisfy the conditions of the above definition. 
\begin{defn}
    The $R$-bimodule $B_s := R \otimes_{R^s} R(1)$ has a unique structure as an object of $\mathcal{C} := {}_{\varnothing}\mathcal{C}$ with support $\{id, s\}$. We let $\mathcal{BS}bimod$ denote the smallest full subcategory of $\mathcal{C}$ containing $R$, and $B_s$ for all $s \in S$, which is closed under $\oplus, \otimes, (1), (-1)$. 
\end{defn}

\begin{defn}
    Given a finitary subset $J \subset S$, $_J\mathcal{BS}$bimod consists of objects $_{R^J}B$ for $B \in \mathcal{BS}$bimod. The category $_J\mathcal{S}$bimod is the Karoubi envelope of $_J\mathcal{S}$bimod.
\end{defn}
Let $U:{}_{J}\mathcal{BS}\mathrm{bimod}\to J\mathbb{BS}\mathrm{Bim}$ denote the forgetful functor.
\begin{prop}\label{prop:AbeComparisonFunctor}
There exists a functor
\[
\mathcal{F}:\mathcal{M}_{BS}\longrightarrow {}_{J}\mathcal{BS}\mathrm{bimod}
\]
such that the diagram
\begin{equation}\label{eq:AbeCommutativeTriangle}
\begin{tikzcd}[row sep=large, column sep=large]
    & \mathcal{M}_{BS} \arrow[ld, "\mathcal{F}"'] \arrow[rd, "\tilde{\mathcal{A}}"] & \\
    {}_{J} \mathcal{BS}\mathrm{bimod} \arrow[rr, "U"'] & & J\mathbb{BS}\mathrm{Bim}
\end{tikzcd}
\end{equation}
commutes.
\end{prop}
\begin{proof}
    On objects, we define $\mathcal{F}(\un{w}) := {}_{R^J} BS(\un{w})$. On morphisms, $\mathcal{F}$ is defined such that the diagram (\ref{eq:AbeCommutativeTriangle}) commutes. By Corollary \ref{HomStdQ} and Proposition \ref{Std}, this is well-defined.
\end{proof}

\begin{prop}
    The functor $\mathcal{F} :\mathcal{M}_{BS} \rightarrow {}_J\mathcal{BS}\operatorname{bimod}$ is an equivalence of categories.
\end{prop}
\begin{proof}
    It is clear that $\mathcal{F}$ is essentially surjective. By Corollary \ref{Gequivalence}, and the diagram (\ref{com3}), we have that $\tilde{\mathcal{A}}$ is faithful. Therefore by (\ref{eq:AbeCommutativeTriangle}), the functor $\mathcal{F}$ is faithful. Moreover, by the Hom formula in \cite[Thm. ~2.38]{Abe2023}, the graded ranks of Hom spaces are the same. Therefore $\mathcal{F}$ is an equivalence.
\end{proof}

\begin{cor}
    The functor $\mathcal{F} : \mathcal{M} \rightarrow {}_J \mathcal{S}\text{bimod}$ is an equivalence of module categories over $Kar(\mathcal{D}) \cong \mathcal{S}bimod$.
\end{cor}

\bibliographystyle{alpha}
\bibliography{bibliography-2}

@book{Bourbaki,
    author = "Nicolas Bourbaki",
    title = "Lie Groups and Lie Algebras. Chapters 4, 5 and 6",
    year = "2002",
    publisher = "Springer-Verlag"
    }

@article{Deodhar1,
    author = "Vinay Deodhar",
    title = "Some characterizations of Bruhat ordering on a Coxeter group and determination of the relative Mobius function",
    journal = "Invent. Math.",
    year = "1977",
    volume = "39",
    pages = "187--198"
    }

@article{Deodhar2,
    author = "Vinay Deodhar",
    title = "On some geometric aspects of Bruhat orderings II. The parabolic analogue of Kazhdan-Lusztig polynomials",
    journal = "J. Algebra",
    year = "1987",
    volume = "111",
    number = "2",
    pages = "483--506"
    }

@article{Thicker,
    author = "Ben Elias",
    title = "Thicker Soergel Calculus",
    journal = "Proceedings of the London Mathematical Society",
    year = "2016",
    volume = "112",
    number = "5",
    pages = "924-978"
}

@article{two-color,
    author = "Ben Elias",
    title = "The Two-color Soergel Calculus",
    journal = "Composition Mathematica", 
    year = "2016",
    volume = "152",
    number = "2",
    pages = "327--398"}

@book{textbook, 
    author = "Ben Elias and Shotaro Makisumi and Ulrich Thiel and Geordie Williamson",
    title = "Introduction to Soergel Bimodules",
    year = "2020",
    publisher = "Springer"
    }

@article{Hodge,
    author = "Ben Elias and Geordie Williamson",
    title = "The Hodge theory of Soergel bimodules",
    journal = "Ann. of Math.",
    year = "2014",
    volume = "180",
    number = "2",
    pages = "1089--1136"
    }

@misc{SingularLightLeaves,
    author = "Ben Elias and Hankyung Ko and Nicolas Libedinsky and Leonardo Patimo",
    title = "Singular Light Leaves",
    year = "2024",
    note = "arXiv:2401.03053"
}

@article{SoergelCalculus,
    author = "Ben Elias and Geordie Williamson",
    title = "Soergel Calculus",
    journal = "Representation Theory",
    volume = "20",
    pages = "295--374",
    year = "2016"
}

@article{KL,
    author = "David Kazhdan and George Lusztig",
    title = "Representations of Coxeter Groups and Hecke Algebras",
    journal = "Invent. Math.",
    year = "1979",
    volume = "53",
    number = "2",
    pages = "165--184"
}

@article{double-leaves, 
    author = "Nicolas Libedinsky",
    title = "Light leaves and Lusztig's Conjecture",
    journal = "adv. Math.",
    volume = "280",
    year = "2015",
    pages = "772--807"
}

@article{light-leaves,
    author = "Nicolas Libedinsky",
    title = "Sur la catégorie des bimodules de Soergel",
    journal = "J. Algebra",
    year = "2008",
    volume = "320",
    number = "7",
    pages = "2675--2694"
}

@article{RicheWilliamson2021,
    author = "Simon Riche and Geordie Williamson",
    title = "A simple character formula",
    journal = "Ann. H. Lebesgue",
    year = "2021",
    volume = "4",
    pages = "503--535",
    doi = "10.5802/ahl.79",
    url = "https://doi.org/10.5802/ahl.79"
}

@article{Combinatoric,
    author = "Wolfgang Soergel",
    title = "Kazhdan-Lusztig Polynomials and a Combinatoric[s] for Tilting Modules",
    year = "1997",
    journal = "Representation Theory",
    volume = "1",
    pages = "83--114"
}

@article{Soergel,
    author = "Wolfgang Soergel",
    title = "Kazhdan-Lusztig-Polynome und unzerlegbare Bimoduln über Polynomringen",
    journal = "J. Inst. Math. Jussieu",
    volume = "6",
    number = "3",
    pages = "501--525",
    year = "2007"
}

@article{Singular,
    author = "Geordie Williamson",
    title = "Singular Soergel Bimodules",
    journal = "Int. Math. Res. Not.",
    year = "2011",
    volume = "20",
    pages = "4555-4632"
}

@article{Abe2023,
    author = "Noriyuki Abe",
    title = "On one-sided singular Soergel bimodules",
    journal = "J. Algebra",
    year = "2023",
    volume = "633",
    pages = "722--753",
    doi = "10.1016/j.jalgebra.2023.06.040"
}

@article{hazi2024,
  title = {Existence and rotatability of the two-colored {Jones}--{Wenzl} projector},
  author = {Hazi, Amit},
  journal = {Bulletin of the London Mathematical Society},
  volume = {56},
  number = {3},
  pages = {1095--1113},
  year = {2024},
  doi = {10.1112/blms.12983}
}

@article{lusztig1980,
  title = {Some problems in the representation theory of finite {Chevalley} groups},
  author = {Lusztig, George},
  journal = {Proceedings of Symposia in Pure Mathematics},
  volume = {37},
  pages = {313--317},
  year = {1980},
  doi = {10.1090/pspum/037/604598}
}

@article{andersen1994,
  title = {Representations of quantum groups at a pth root of unity and of semisimple groups in characteristic p: independence of p},
  author = {Andersen, H. H. and Jantzen, J. C. and Soergel, W.},
  journal = {Ast{\'e}risque},
  number = {220},
  pages = {321},
  year = {1994}
}

@article{kashiwara1995,
  title = {{Kazhdan}-{Lusztig} conjecture for affine {Lie} algebras with negative level},
  author = {Kashiwara, M. and Tanisaki, T.},
  journal = {Duke Math. J.},
  volume = {77},
  number = {1},
  pages = {21--62},
  year = {1995}
}

@article{kashiwara1996,
  title = {{Kazhdan}-{Lusztig} conjecture for affine {Lie} algebras with negative level. {II}. {Nonintegral} case},
  author = {Kashiwara, M. and Tanisaki, T.},
  journal = {Duke Math. J.},
  volume = {84},
  number = {3},
  pages = {771--813},
  year = {1996}
}

@article{kazhdan1993,
  title = {Tensor structures arising from affine {Lie} algebras. {I}, {II}},
  author = {Kazhdan, D. and Lusztig, G.},
  journal = {J. Amer. Math. Soc.},
  volume = {6},
  number = {4},
  pages = {905--947, 949--1011},
  year = {1993}
}

@article{kazhdan1994iii,
  title = {Tensor structures arising from affine {Lie} algebras. {III}},
  author = {Kazhdan, D. and Lusztig, G.},
  journal = {J. Amer. Math. Soc.},
  volume = {7},
  number = {2},
  pages = {335--381},
  year = {1994}
}

@article{kazhdan1994iv,
  title = {Tensor structures arising from affine {Lie} algebras. {IV}},
  author = {Kazhdan, D. and Lusztig, G.},
  journal = {J. Amer. Math. Soc.},
  volume = {7},
  number = {2},
  pages = {383--453},
  year = {1994}
}

@article{lusztig1994,
  title = {Monodromic systems on affine flag manifolds},
  author = {Lusztig, G.},
  journal = {Proc. Roy. Soc. London Ser. A},
  volume = {445},
  number = {1923},
  pages = {231--246},
  year = {1994}
}

@article{williamson2017,
  title = {{Schubert} calculus and torsion explosion},
  author = {Williamson, Geordie},
  journal = {Journal of the American Mathematical Society},
  volume = {30},
  number = {4},
  pages = {1023--1046},
  year = {2017},
  doi = {10.1090/jams/868}
}
\end{document}